\documentclass[microtype]{gtpart}

\usepackage{pinlabel}
\usepackage[utf8]{inputenc}

\title[The Higher Morita Category of $\mathbb{E}_{n}$-Algebras]{The Higher Morita Category of $\mathbb{E}_{n}$-Algebras}

\author[R Haugseng]{Rune Haugseng}
\givenname{Rune}
\surname{Haugseng}
\address{Department of Mathematical Sciences\\
University of Copenhagen\\
Universitetsparken 5\\
2100 København Ø\\
Denmark}
\email{haugseng@math.ku.dk}
\urladdr{http://sites.google.com/site/runehaugseng}

\subject{primary}{msc2010}{18D50}
\subject{primary}{msc2010}{55P48}
\subject{secondary}{msc2010}{16D20}
\subject{secondary}{msc2010}{18D05}

\keyword{iterated bimodules}
\keyword{$E_{n}$-algebras}
\keyword{higher Morita category}
\arxivreference{1412.8459}
\arxivpassword{sebng}

\volumenumber{}
\issuenumber{}
\publicationyear{}
\papernumber{}
\startpage{}
\endpage{}
\doi{}
\MR{}
\Zbl{}
\received{}
\revised{}
\accepted{}
\published{}
\publishedonline{}
\proposed{}
\seconded{}
\corresponding{}
\editor{}
\version{
}

\usepackage{amsmath,amssymb,amsthm}
\usepackage{url}

\usepackage[shortlabels]{enumitem}
\theoremstyle{theorem}
\newtheorem{thm}{Theorem}[section]
\newtheorem{lemma}[thm]{Lemma}
\newtheorem{propn}[thm]{Proposition}
\newtheorem{cor}[thm]{Corollary}
\newtheorem*{thm*}{Theorem}

\theoremstyle{definition}
\newtheorem{defn}[thm]{Definition}

\newtheorem{notation}[thm]{Notation}

\newtheorem{warning}[thm]{Warning}

\newtheorem{remark}[thm]{Remark}

\newtheorem{conjecture}[thm]{Conjecture}

\usepackage{eucal}

\DeclareSymbolFontAlphabet{\amsbb}{AMSb}
\makeatother
\usepackage{mbboard}
\renewcommand{\mathbb}[1]{\amsbb{#1}}

\newcommand{\blank}{\text{--}}

\newcommand{\defterm}[1]{\emph{#1}}
\newcommand{\isoto}{\xrightarrow{\sim}}
\newcommand{\isofrom}{\xleftarrow{\sim}}
\newcommand{\IFF}{if and only if}
\newcommand{\catname}[1]{\ensuremath{\text{\textup{#1}}}}
\newcommand{\txt}[1]{\ensuremath{\text{\textup{#1}}}}
\newcommand{\Set}{\catname{Set}}
\newcommand{\sSet}{\Set_{\Delta}}
\newcommand{\Cat}{\catname{Cat}}
\newcommand{\CatI}{\catname{Cat}_\infty}
\newcommand{\LCatI}{\widehat{\catname{Cat}}_\infty}

\newcommand{\Mod}{\catname{Mod}}

\newcommand{\Fun}{\txt{Fun}}

\newcommand{\Map}{\txt{Map}}
\newcommand{\Hom}{\txt{Hom}}

\newcommand{\op}{\txt{op}}
\newcommand{\icat}{$\infty$-category}
\newcommand{\icats}{$\infty$-categories}
\newcommand{\icatl}{$\infty$-categorical}

\newcommand{\xto}[1]{\xrightarrow{#1}}

\usepackage{tikz}
\usetikzlibrary{matrix,arrows}
\usepackage{tikz-cd}

\newcommand{\ltikzcd}[1]{{\setlength\mathsurround{0pt} \begin{tikzcd}#1\end{tikzcd}}}

\newcommand{\csquare}[8]{ %
\[ %
\begin{tikzpicture} %
\matrix (m) [matrix of math nodes,row sep=3em,column sep=2.5em,text height=1.5ex,text depth=0.25ex] %
{ #1 \pgfmatrixnextcell #2 \\ %
  #3 \pgfmatrixnextcell #4 \\ }; %
\path[->,font=\footnotesize] %
(m-1-1) edge node[auto] {$#5$} (m-1-2)%
(m-1-1) edge node[left] {$#6$} (m-2-1)%
(m-1-2) edge node[auto] {$#7$} (m-2-2)%
(m-2-1) edge node[below] {$#8$} (m-2-2);%
\end{tikzpicture}%
\]%
}

\newcommand{\nolabelcsquare}[4]{\csquare{#1}{#2}{#3}{#4}{}{}{}{}}

\newcommand{\liftcsquare}[9]{ %
\[ %
\begin{tikzpicture} %
\matrix (m) [matrix of math nodes,row sep=3em,column sep=2.5em,text height=1.5ex,text depth=0.25ex] %
{ #1 \pgfmatrixnextcell #2 \\ %
  #3 \pgfmatrixnextcell #4 \\ }; %
\path[->,font=\footnotesize] %
(m-1-1) edge node[auto] {$#5$} (m-1-2)%
(m-1-1) edge node[left] {$#6$} (m-2-1)%
(m-1-2) edge node[auto] {$#7$} (m-2-2)%
(m-2-1) edge node[below] {$#8$} (m-2-2);%
\path[->,dashed,font=\footnotesize](m-2-1) edge node[above] {$#9$} (m-1-2);
\end{tikzpicture}%
\]%
}

\newcommand{\opctriangle}[6]{ %
\[ %
\begin{tikzpicture} %
\matrix (m) [matrix of math nodes,row sep=3em,column sep=1.2em,text height=1.5ex,text depth=0.25ex] %
{  #1 \pgfmatrixnextcell \pgfmatrixnextcell #2 \\ %
  \pgfmatrixnextcell #3 \pgfmatrixnextcell \\ %
}; %
\path[->,font=\footnotesize] %
(m-1-1) edge node[above] {$#4$} (m-1-3)%
(m-1-1) edge node[below left] {$#5$} (m-2-2)%
(m-1-3) edge node[below right] {$#6$} (m-2-2);%
\end{tikzpicture}%
\]%
}

\newcommand{\id}{\txt{id}}
\DeclareMathOperator{\colimP}{colim}
\newcommand{\colim}{\mathop{\colimP}}
\newcommand{\simp}{\bbDelta}

\newcommand{\Alg}{\catname{Alg}}
\newcommand{\Opd}{\catname{Opd}}
\newcommand{\OpdI}{\Opd_{\infty}}
\newcommand{\iopd}{$\infty$-operad}
\newcommand{\iopds}{$\infty$-operads}

\newcommand{\nsiopd}{non-symmetric $\infty$-operad}
\newcommand{\nsiopds}{non-symmetric $\infty$-operads}
\newcommand{\gnsiopd}{generalized non-symmetric $\infty$-operad}
\newcommand{\gnsiopds}{generalized non-symmetric $\infty$-operads}

\newcommand{\RR}{\mathbb{R}}
\newcommand{\Seg}{\txt{Seg}}
\newcommand{\MonI}{\Mon_{\infty}}
\newcommand{\OpdIS}{\OpdI^{\Sigma}}
\newcommand{\AlgS}{\Alg^{\Sigma}}
\newcommand{\Mon}{\txt{Mon}}
\newcommand{\MonS}{\txt{Mon}^{\Sigma}}
\newcommand{\FUN}{\txt{FUN}}

\newcommand{\Dn}{\simp^{n}}
\newcommand{\Dnop}{\simp^{n,\op}}
\newcommand{\DnI}{\Dn_{/I}}
\newcommand{\DnIop}{\Dnop_{/I}}
\newcommand{\DnC}{\Dn_{/C}}
\newcommand{\DnCop}{(\DnC)^{\op}}

\newcommand{\LOpdI}{\widehat{\txt{Opd}}_{\infty}}
\newcommand{\OpdIDn}{\OpdI^{\Dn}}
\newcommand{\OpdIDng}{\OpdI^{\Dn,\txt{gen}}}

\newcommand{\LOpdIDng}{\LOpdI^{\Dn,\txt{gen}}}

\newcommand{\Lbrn}{\bbLambda_{/[n]}}
\newcommand{\Lbrnop}{\Lbrn^{\op}}
\newcommand{\Lbri}{\bbLambda_{/[i]}}
\newcommand{\Lbriop}{\Lbri^{\op}}
\newcommand{\Ln}{\bbLambda^{n}}
\newcommand{\Lnop}{\bbLambda^{n,\op}}
\newcommand{\LnI}{\Ln_{/I}}
\newcommand{\LnIop}{\Lnop_{/I}}

\newcommand{\AlgDn}{\catname{Alg}^{n}}
\newcommand{\LnIP}{\LnI[\Phi]}

\newcommand{\LIP}{\bbLambda_{/[n]}[\phi]}

\newcommand{\LnP}{\bbLambda_{/[n]}[\phi]}

\newcommand{\DnIAop}{(\DnI)^{\amalg,\op}}

\newcommand{\Cell}{\txt{Cell}}
\newcommand{\Celln}{\Cell^{n}}
\newcommand{\CellnI}{\Celln_{/I}}
\newcommand{\CellnIop}{\Cell^{n,\op}_{/I}}

\newcommand{\ALG}{\txt{ALG}}

\newcommand{\Dniopd}{$\Dn$-$\infty$-operad}
\newcommand{\Dniopds}{$\Dn$-$\infty$-operads}
\newcommand{\gDniopd}{generalized $\Dn$-$\infty$-operad}
\newcommand{\gDniopds}{generalized $\Dn$-$\infty$-operads}
\newcommand{\dniopd}{$\Dn$-$\infty$-operad}
\newcommand{\dniopds}{$\Dn$-$\infty$-operads}
\newcommand{\gdniopd}{generalized $\Dn$-$\infty$-operad}
\newcommand{\gdniopds}{generalized $\Dn$-$\infty$-operads}

\newcommand{\siopd}{symmetric \iopd{}}
\newcommand{\siopds}{symmetric \iopds{}}
\newcommand{\gsiopd}{generalized \siopd{}}
\newcommand{\gsiopds}{generalized \siopds{}}

\newcommand{\angled}[1]{\langle #1 \rangle}

\newcommand{\En}{\mathbb{E}_{n}}

\newcommand{\LMonI}{\widehat{\txt{Mon}}_{\infty}}
\newcommand{\LMonIDn}{\LMonI^{\Dn}}
\newcommand{\LMonIDnGR}{\LMonI^{\Dn,\txt{GRTP}}}
\newcommand{\LMonIGR}{\LMonI^{\txt{GRTP}}}
\newcommand{\LMonIDnGRtens}{\LMonI^{\Dn,\txt{GRTP},\times}}

\newcommand{\Gop}{\bbGamma^{\op}}

\newcommand{\Algn}{\Alg^{n}}

\newcommand{\falg}{\mathfrak{alg}}

\newcommand{\fAlg}{\mathfrak{Alg}}
\newcommand{\fAlgn}{\fAlg_{n}}
\newcommand{\fALG}{\mathfrak{ALG}}
\newcommand{\fALGn}{\fALG_{n}}
\newcommand{\ofALG}{\overline{\fALG}}
\newcommand{\ofALGn}{\ofALG_{n}}

\newcommand{\Dop}{\simp^{\op}}

\newcommand{\Algns}{\Alg^{1}}

\newcommand{\OpdInsg}{\OpdI^{\simp,\txt{gen}}}
\newcommand{\Bimod}{\txt{Bimod}}
\newcommand{\Ass}{\txt{Ass}}

\newcommand{\act}{\txt{act}}

\newcommand{\rcite}[2]{\cite[#2]{#1}}

\setcounter{tocdepth}{2}

\begin{document}

\begin{abstract}
  We introduce simple models for associative algebras and bimodules in
  the context of non-symmetric \iopds{}, and use these to construct an
  $(\infty,2)$-category of associative algebras, bimodules, and
  bimodule homomorphisms in a monoidal \icat{}. By working with
  \iopds{} over $\Dnop$ we iterate these definitions and generalize
  our construction to get an $(\infty,n+1)$-category of
  $\mathbb{E}_{n}$-algebras and iterated bimodules in an
  $\mathbb{E}_{n}$-monoidal \icat{}. Moreover, we show that if
  $\mathcal{C}$ is an $\mathbb{E}_{n+k}$-monoidal \icat{} then the
  $(\infty,n+1)$-category of $\mathbb{E}_{n}$-algebras in
  $\mathcal{C}$ has a natural $\mathbb{E}_{k}$-monoidal structure. We
  also identify the mapping $(\infty,n)$-categories between two
  $\mathbb{E}_{n}$-algebras, which allows us to define interesting
  non-connective deloopings of the Brauer space of a commutative ring
  spectrum.
\end{abstract}
\begin{asciiabstract}
  We introduce simple models for associative algebras and bimodules in
  the context of non-symmetric infinity-operads, and use these to
  construct an (infinity,2)-category of associative algebras,
  bimodules, and bimodule homomorphisms in a monoidal
  infinity-category. By working with infinity-operads over
  Delta^{n,op} we iterate these definitions and generalize our
  construction to get an (infinity,n+1)-category of E_n-algebras and
  iterated bimodules in an E_n-monoidal infinity-category. Moreover,
  we show that if C is an E_{n+k}-monoidal infinity-category then the
  (infinity,n+1)-category of E_n-algebras in C has a natural
  E_k-monoidal structure. We also identify the mapping
  (infinity,n)-categories between two E_n-algebras, which allows us to
  define interesting non-connective deloopings of the Brauer space of
  a commutative ring spectrum.
\end{asciiabstract}

\maketitle

\tableofcontents

\section{Introduction}
The goal of this paper is to construct higher categories of
$\mathbb{E}_{n}$-algebras and their iterated bimodules, using a
completely algebraic or combinatorial approach to these objects, and
establish some of their basic properties. Our construction is
motivated by the interesting connections of these higher categories to
topological quantum field theories, and a notion of ``higher Brauer
groups'' that can be extracted from them. We will discuss these
potential applications, both of which we intend to explore further in future
work, after summarizing the main results of the present paper.

\subsection{Summary of Results}
If $\mathbf{C}$ is a monoidal category, then the associative algebra
objects\footnote{Also commonly called associative monoids, but we will
  reserve the term monoid for the case when the tensor product in
  $\mathbf{C}$ is the Cartesian product.} in $\mathbf{C}$ and their
bimodules form a bicategory $\fAlg_{1}(\mathbf{C})$. More precisely,
this bicategory has
\begin{itemize}
\item associative algebras in $\mathbf{C}$ as objects,
\item $A$-$B$-bimodules in $\mathbf{C}$ as 1-morphisms from $A$ to $B$,
\item bimodule homomorphisms as 2-morphisms,
\end{itemize}
with composition of 1-morphisms given by taking tensor products: if
$M$ is an $A$-$B$-bimodule and $N$ is a $B$-$C$-bimodule then their
composite is $M \otimes_{B}N$ with its natural $A$-$C$-bimodule
structure. Moreover, if $\mathbf{C}$ is a symmetric monoidal category,
such as $\Mod_{R}$ for $R$ a commutative ring, then
$\fAlg_{1}(\mathbf{C})$ inherits a symmetric monoidal
structure.\footnote{Although it is intuitively clear that the tensor
  product on $\mathbf{C}$ induces such a symmetric monoidal structure,
  this seems to have been completely defined only quite recently by
  Shulman in \cite{ShulmanSymMonBicat}, following a construction of a
  braided monoidal structure by Garner and Gurski in
  \cite{GarnerGurskiTricat}. Considering the difficulty of even
  defining symmetric monoidal bicategories in full generality, this is
  perhaps not entirely unsurprising ---
  cf. \cite[\S 2.1]{SchommerPries2TQFT} for a discussion of the
  history of such definitions.}  When $R$ is a commutative ring, this
symmetric monoidal bicategory $\fAlg_{1}(\Mod_{R})$
organizes a wealth of interesting algebraic information --- for
example, two $R$-algebras are equivalent in $\fAlg_{1}(\Mod_{R})$ precisely
when they are Morita equivalent, i.e. have equivalent categories of
modules.

Since all the concepts involved have derived analogues, it is
reasonable to expect that there is a derived or higher-categorical
version of the bicategory $\fAlg_{1}(\Mod_{R})$, based on chain complexes of
$R$-modules up to quasi-isomorphism. More generally, it should be
possible to allow
$R$ to be a differential graded algebra --- or even a ring
spectrum, with chain complexes replaced by $R$-modules in spectra up
to stable weak equivalence.

In this paper we will indeed construct such generalizations of the
bicategory of algebras and bimodules. However, the coherence issues
that must be solved to define these seem intractable from the point of
view of classical (enriched) category theory. To avoid this problem,
we instead work in the setting of \icats{}. 

Roughly speaking, an \emph{\icat{}} (or \emph{$(\infty,1)$-category})
is a structure that has objects and morphisms like a category, but
also ``homotopies'' (or invertible 2-morphisms) between morphisms,
``homotopies between homotopies'' (or invertible 3-morphisms), and so
on. The morphisms can be composed, but the composition is not strictly
associative, only associative up to a coherent choice of (higher)
homotopies. Using homotopy theory there are a number of ways of making
this idea precise in such a way that one can actually work with the
resulting structures; we will make use of the theory of
\emph{quasicategories} as developed by Joyal and Lurie~\cite{HTT},
which is by far the best-developed variant.

Similarly, one can consider \emph{$(\infty,n)$-categories} for $n >
1$; these have $i$-morphisms for all $i$ that are required to be
invertible when $i > n$, and are thus the ``$\infty$-version'' of
$n$-categories. We will encounter them in the guise of Barwick's
\emph{$n$-fold Segal spaces} \cite{BarwickThesis}, which we will
review below in \S\ref{subsec:inftyn}.

In this higher-categorical setting there is a natural notion of a
\emph{monoidal} \icat{}, i.e. an \icat{} equipped with a tensor
product that is associative up to coherent homotopy. Our first main
result, which we will prove in \S\ref{sec:algbimod}, is a construction
of an $(\infty,2)$-category $\fAlg_{1}(\mathcal{C})$ of algebras,
bimodules, and bimodule homomorphisms in any monoidal \icat{}
$\mathcal{C}$ that satisfies some mild technical assumptions.

In the \icatl{} setting it is also natural to ask how this structure
extends to \emph{$\mathbb{E}_{n}$-algebras}. In the context of
ordinary categories, an object equipped with two compatible
associative multiplications is a commutative algebra. When we pass to
higher categories, however, this is no longer true. The most familiar
example of this phenomenon is iterated algebras in the 2-category of
categories --- if we consider associative algebras in the appropriate
2-categorical sense, these are monoidal categories; categories with
two compatible monoidal structures are then braided monoidal
categories, and ones with three or more monoidal structures are
symmetric monoidal categories. In general, objects with $k$ compatible
associative algebra structures in an $n$-category are commutative
algebras for $k > n$ --- this is a form of the Baez-Dolan
stabilization hypothesis\footnote{See \cite[Corollary 5.1.1.7]{HA}
  for a proof of this statement.}; in other words, in an $n$-category
compatible associative algebra structures give $n+1$ different
algebraic structures.  For an \icat{}, then, objects equipped with
multiple compatible multiplications give an infinite sequence of
algebraic structures lying between associative and commutative
algebras, namely the $\En$-algebras for $n = 1,2,\ldots$.\footnote{The
  Dunn-Lurie Additivity Theorem \cite[Theorem 5.1.2.2]{HA} says that
  this iterative definition agrees with the classical definition in
  terms of configuration spaces of little discs in $\mathbb{R}^{n}$.}
In particular, we can consider $\En$-algebras in the \icat{} $\CatI$
of \icats{}, which gives the notion of \emph{$\En$-monoidal \icats{}},
i.e. \icats{} equipped with $n$ compatible tensor products.

The general version of our first main result, which we will prove in
\S\ref{subsec:np1foldcat}, is then a construction of
$(\infty,n+1)$-categories of $\En$-algebras in any nice $\En$-monoidal
\icat{}:
\begin{thm}
  Let $\mathcal{C}$ be a nice $\En$-monoidal \icat{}. Then there
  exists an $(\infty,n+1)$-category $\fAlg_{n}(\mathcal{C})$ whose
  objects are $\En$-algebras in $\mathcal{C}$, with 1-morphisms given
  by $\mathbb{E}_{n-1}$-algebras in bimodules in $\mathcal{C}$,
  2-morphisms by $\mathbb{E}_{n-2}$-algebras in bimodules in bimodules
  in $\mathcal{C}$, and so forth.
\end{thm}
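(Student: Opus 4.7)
The plan is to build $\fAlg_{n}(\mathcal{C})$ as an $(n+1)$-fold Segal space by exploiting the combinatorial machinery of \dniopds{} developed earlier in the paper, so that the construction proceeds uniformly in $n$ instead of by an ad hoc iteration. Concretely, I would construct a functor $\Dop \times \cdots \times \Dop \to \mathcal{S}$ (with $n+1$ copies of $\Dop$) whose value at $([k_{0}],\ldots,[k_{n}])$ is the space of algebras in $\mathcal{C}$ over a specific \gdniopd{} $\ofALGn_{[k_{0},\ldots,k_{n}]}$ encoding a grid of $\En$-algebras together with iterated bimodules between them. The \gdniopd{} $\ofALGn$ is the iterated analogue of the \nsiopd{} whose algebras encode a chain of ordinary algebras and bimodules from \S\ref{sec:algbimod}; it is designed so that its algebras in $\mathcal{C}$ are exactly the grids of $\En$-algebras with $\mathbb{E}_{n-i}$-algebras in appropriate $i$-fold bimodules at each stratum.

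First I would give the case $n = 1$ cleanly as an $(\infty,2)$-category, which reduces to checking the Segal conditions for the simplicial diagram of algebras and bimodules in $\mathcal{C}$ and then passing to the complete Segal space completion; this is the content of \S\ref{sec:algbimod}. For general $n$, I would then assemble the \gdniopds{} $\ofALGn_{[k_{0},\ldots,k_{n}]}$ into an $(n+1)$-fold simplicial object in $\LOpdIDng$, use the description of $\En$-monoidal \icats{} in terms of $\Dn$-$\infty$-operads to define the $(n+1)$-fold simplicial space of algebras with values in $\mathcal{C}$, and then verify (i) the Segal conditions in each of the $n+1$ simplicial directions and (ii) completeness. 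Segal condition $(i)$ in direction $j$ will reduce, via the operadic Kan extension description of algebras, to the statement that composition of iterated bimodules in the $j$-th tensor direction is computed by a relative tensor product — a fact that follows from the behavior of operadic colimits over the appropriate spine inclusions in $\bbLambda^{n}_{/[k_{j}]}$. Completeness then follows formally by taking a fibrewise completion, since equivalences at each level are determined by invertibility of the underlying bimodules.

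The main obstacle I expect is the Segal condition in the interior directions $1 \leq j \leq n$, because in those directions the composition of iterated bimodules takes place relative to the $\En$-algebra structure sitting at the endpoints, and one must show that relative tensor products in the different $\En$-directions commute with the required Beck-Chevalley squares. This amounts to a careful analysis of how the \gdniopd{} $\ofALGn_{[k_{0},\ldots,k_{n}]}$ sits inside $\ofALGn_{[k_{0},\ldots,k_{n}]}^{\act}$ and, ultimately, to an application of the compatibility results for operadic left Kan extensions along maps of $\Dn$-$\infty$-operads established in the structural sections of the paper. Once this is in place, the remaining steps — flagging that the resulting object is a genuine $(\infty, n+1)$-category, identifying objects as $\En$-algebras, and identifying $i$-morphisms as the claimed $\mathbb{E}_{n-i}$-algebras in iterated bimodules — are direct readouts from the definition of $\ofALGn$.
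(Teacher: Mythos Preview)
Your outline has the right flavor but the architecture is off in a way that matters. The paper does \emph{not} build an $(n+1)$-fold simplicial space by parametrizing algebras over operads indexed by $(n+1)$-tuples $([k_{0}],\ldots,[k_{n}])$. It constructs an $n$-uple category object in $\CatI$: for each $I = ([k_{1}],\ldots,[k_{n}]) \in \Dn$ one takes the \emph{$\infty$-category} $\fALG_{n}(\mathcal{C})_{I}$ of composite $\DnIop$-algebras (a full subcategory of $\Algn_{\DnIop}(\mathcal{C})$). The $(n+1)$th direction then comes from the inclusion $i \colon \Cat^{n}(\CatI) \hookrightarrow \Cat^{n+1}(\mathcal{S})$, i.e.\ from homomorphisms of algebras, not from an extra bimodule direction encoded in the indexing operad. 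There is no $\simp_{/[k_{0}]}$-type operad whose algebras are chains of algebra homomorphisms, so your scheme with $n+1$ indices would either collapse one direction or accidentally describe $\fAlg_{n+1}$.

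The actual technical pillars are also not the ones you anticipate. The Segal condition is Corollary~\ref{cor:ALGnseg}: one identifies $\fALG_{n}(\mathcal{C})_{I} \simeq \Algn_{\LnIop}(\mathcal{C})$ and then shows that the inclusion $\DnIAop \hookrightarrow \LnIop$ is a trivial cofibration in the categorical-pattern model structure (Proposition~\ref{propn:cellmodel}), which reduces via a product-of-patterns argument to the $n=1$ case --- an intricate filtration by anodyne maps. Functoriality, i.e.\ that composite algebras pull back to composite algebras along maps in $\Dn$, is a \emph{separate} cofinality argument (Proposition~\ref{propn:cellcoinitial}), again reduced to $n=1$. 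Your anticipated Beck--Chevalley obstruction for commuting relative tensor products in different directions never arises: compositeness is defined via operadic left Kan extension along a single inclusion $\tau_{I} \colon \LnIop \hookrightarrow \DnIop$, and the ``good relative tensor products'' hypothesis handles all directions at once (Lemma~\ref{lem:Dnmongrtp}). Finally, completeness is not verified but \emph{imposed}: $\fAlg_{n}(\mathcal{C})$ is by definition $L_{n+1}U_{\Seg}i\fALG_{n}(\mathcal{C})$.
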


Here the precise meaning of ``nice'' amounts to the existence of
well-behaved relative tensor products over algebras in $\mathcal{C}$,
which is needed to have well-defined compositions in these higher
categories. For example, we can take $\mathcal{C}$ to be the
(symmetric monoidal) \icat{} $\Mod_{R}$ of modules over a commutative
ring spectrum $R$ or the ``derived \icat{}'' $\mathcal{D}_{\infty}(R)$
of modules over an associative ring $R$, obtained by inverting the
quasi-isomorphisms in the category of chain complexes of $R$-modules
(more generally, we can consider the analogous localization of the
category of dg-modules over a dg-algebra $R$).

If $\mathcal{C}$ is a symmetric monoidal \icat{}, we will also show
that $\fAlg_{n}(\mathcal{C})$ inherits a symmetric monoidal
structure. More precisely, our second main result (proved in
\S\ref{subsec:ALGnmon}) is as follows:
\begin{thm}
  If $\mathcal{C}$ is a nice $\mathbb{E}_{m+n}$-monoidal \icat{}, then
  the $(\infty,n+1)$-category $\fAlg_{n}(\mathcal{C})$ inherits a
  natural $\mathbb{E}_{m}$-monoidal structure.
\end{thm}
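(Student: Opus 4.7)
The plan is to combine the Dunn--Lurie additivity theorem with a product-preservation property of the construction $\mathcal{C}\mapsto \fAlg_{n}(\mathcal{C})$. I would proceed in three stages: (i) promote $\fAlg_{n}$ to a functor of $\infty$-categories, (ii) verify that it preserves finite products, and (iii) transport an $\mathbb{E}_{m}$-algebra structure across it using additivity.

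For (i), I would extend $\fAlg_{n}$ to a functor from a suitable $\infty$-category of nice $\En$-monoidal \icats{} (morphisms being $\En$-monoidal functors that preserve the relative tensor products required by niceness) to the $\infty$-category of $(\infty,n+1)$-categories. Because $\fAlg_{n}(\mathcal{C})$ is built as an iterated Segal object out of the operadic data over $\Dnop$ that encodes $\En$-algebras and iterated bimodules in $\mathcal{C}$, the functoriality should be essentially automatic from that of the operadic input; the genuine content is that preservation of relative tensor products in $\mathcal{C}$ yields a functor compatible with the composition of iterated bimodules in $\fAlg_{n}(\mathcal{C})$.

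For (ii), I would show that an algebra, a bimodule, or an iterated bimodule in a product monoidal \icat{} $\mathcal{C}\times\mathcal{D}$ decomposes canonically as a pair of the same data in each factor. This is a Segal-type assertion that reduces inductively to the elementary fact that multiplicative data in a product splits, and so gives a natural equivalence $\fAlg_{n}(\mathcal{C}\times\mathcal{D})\simeq \fAlg_{n}(\mathcal{C})\times\fAlg_{n}(\mathcal{D})$ of $(\infty,n+1)$-categories. Then for (iii), Dunn--Lurie additivity identifies an $\mathbb{E}_{m+n}$-monoidal \icat{} with an $\mathbb{E}_{m}$-algebra object in the $\infty$-category of $\En$-monoidal \icats{} under the Cartesian product, and niceness descends through this identification since the relevant relative tensor products are computed in the underlying $\En$-monoidal structure. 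A product-preserving functor carries Cartesian $\mathbb{E}_{m}$-algebras to $\mathbb{E}_{m}$-algebras, so applying $\fAlg_{n}$ yields an $\mathbb{E}_{m}$-algebra in $(\infty,n+1)$-categories under the Cartesian product, i.e.\ an $\mathbb{E}_{m}$-monoidal $(\infty,n+1)$-category.

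The main obstacle will be implementing (i) and (ii) rigorously in the combinatorial framework of generalized non-symmetric $\infty$-operads over $\Dnop$ used in the paper. In particular, extending functoriality to the level of $\En$-monoidal natural transformations (so that one obtains an honest functor of $\infty$-categories rather than just a map on equivalence classes), and verifying product preservation at all higher morphism levels (a Segal decomposition statement for the iterated Segal objects defining $\fAlg_{n}$), will require genuine technical work beyond the informal bimodule-theoretic intuition sketched here.
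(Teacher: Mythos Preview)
Your approach is essentially the same as the paper's: both arguments rest on (a) promoting $\fAlg_{n}$ to a functor out of a suitable \icat{} of nice $\En$-monoidal \icats{}, (b) establishing a monoidal compatibility for this functor with respect to Cartesian products, and (c) invoking additivity to view an $\mathbb{E}_{m+n}$-monoidal \icat{} as an $\mathbb{E}_{m}$-algebra in $\En$-monoidal \icats{} and then transporting that algebra structure along the functor. The only real difference is in step (b): you aim to prove that $\fAlg_{n}$ \emph{strongly} preserves finite products, whereas the paper instead shows that $\fALG_{n}$ is \emph{lax} symmetric monoidal, by constructing an explicit coCartesian fibration of \gsiopds{} $\fALGn^{\otimes} \to \Dnop \times \LMonIDnGRtens$ out of the algebra-fibration machinery of the appendix, and then observes that the passage $L_{n+1}U_{\Seg}i$ to the underlying $(\infty,n+1)$-category is product-preserving. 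Lax monoidality is already enough to carry $\mathbb{E}_{m}$-algebras to $\mathbb{E}_{m}$-algebras, so the paper's route sidesteps the need to verify that the comparison map $\fALG_{n}(\mathcal{C}) \times \fALG_{n}(\mathcal{D}) \to \fALG_{n}(\mathcal{C}\times\mathcal{D})$ is an equivalence at every simplicial level; on the other hand, your stronger product-preservation claim is plausible (algebras, bimodules, and the sifted colimits computing relative tensor products all split over a product) and would yield the same conclusion with somewhat less fibrational overhead, at the cost of the Segal-level verification you already flag as the main obstacle.
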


Finally, our third main result, which we prove in
\S\ref{subsec:Algnmaps}, explains how the $(\infty,n+1)$-categories
$\fAlg_{n}(\mathcal{C})$ are related for different $n$:
\begin{thm}\label{thm:maps} 
  Suppose $\mathcal{C}$ is a nice $\mathbb{E}_{n}$-monoidal
  \icat{}. Then for any $\mathbb{E}_{n}$-algebras $A$ and $B$ in
  $\mathcal{C}$, the $(\infty,n)$-category
  $\fAlg_{n}(\mathcal{C})(A,B)$ of maps from $A$ to $B$ is equivalent
  to $\fAlg_{n-1}(\Bimod_{A,B}(\mathcal{C}))$, where
  $\Bimod_{A,B}(\mathcal{C})$ is the \icat{} of $A$-$B$-bimodules in
  $\mathcal{C}$ equipped with a natural $\mathbb{E}_{n-1}$-monoidal
  structure. In particular, if $I$ is the unit of the monoidal
  structure then $\fAlg_{n}(\mathcal{C})(I,I) \simeq
  \fAlg_{n-1}(\mathcal{C})$.
\end{thm}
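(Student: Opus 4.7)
My plan is to work with the explicit $(n+1)$-fold Segal space model of $\fAlg_n(\mathcal{C}) = \oALGn(\mathcal{C})$ constructed earlier in the paper. By definition of an $(n+1)$-fold Segal space, the mapping $(\infty,n)$-category $\fAlg_n(\mathcal{C})(A,B)$ between two objects is obtained by taking the fiber of the source-target map $\ALGn(\mathcal{C})_{[1]} \to \ALGn(\mathcal{C})_{[0]} \times \ALGn(\mathcal{C})_{[0]}$ in the outermost simplicial direction of the underlying $(n+1)$-fold simplicial space, over the point $(A,B)$. This fiber is itself an $n$-fold Segal space, and the goal is to identify it with $\fAlg_{n-1}(\Bimod_{A,B}(\mathcal{C}))$.

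First I would unpack the definition so that the objects of this fiber are visibly $\mathbb{E}_{n-1}$-algebras in $\Bimod_{A,B}(\mathcal{C})$, matching the informal description of 1-morphisms in $\fAlg_n(\mathcal{C})$ as $\mathbb{E}_{n-1}$-algebras in bimodules. Iterating this identification across the remaining simplicial directions, the higher cells of the fiber correspond to iterated bimodules in $\Bimod_{A,B}(\mathcal{C})$, matching the cells of $\fAlg_{n-1}(\Bimod_{A,B}(\mathcal{C}))$. Compatibility with the Segal-space structure should be essentially automatic, since both constructions are produced by the same formalism of algebras over $\Dn$-\iopds{}, merely applied at different levels of the simplicial hierarchy.

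The crucial intermediate step is constructing the natural $\mathbb{E}_{n-1}$-monoidal structure on $\Bimod_{A,B}(\mathcal{C})$ and checking that it matches the one implicit in the fiber computation. Intuitively, the $\En$-structure on $A$ and $B$ provides $n$ compatible multiplications; one of these is ``absorbed'' by the left and right actions defining the bimodule \icat{}, while the remaining $n-1$ commuting multiplications on $A$, $B$, and $\mathcal{C}$ descend to a coherent $\mathbb{E}_{n-1}$-monoidal structure on $\Bimod_{A,B}(\mathcal{C})$. Formally, this structure should be extracted from the fiber computation itself: the restrictions to the $n$ inner simplicial directions of $\ALGn$ above the pair $(A,B)$ organize into an $n$-fold Segal object witnessing the $\mathbb{E}_{n-1}$-monoidal structure, and the Dunn-Lurie iterative description of $\En$-algebras in terms of algebras of algebras makes this identification precise.

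The main obstacle will be checking that the $\mathbb{E}_{n-1}$-monoidal structure on $\Bimod_{A,B}(\mathcal{C})$ produced in this way is itself ``nice'' in the sense required to form $\fAlg_{n-1}(\Bimod_{A,B}(\mathcal{C}))$, i.e.\ that it admits the relative tensor products needed to compose bimodules of bimodules. This should reduce to transferring the niceness hypothesis on $\mathcal{C}$ to $\Bimod_{A,B}(\mathcal{C})$; the relevant geometric-realization-preservation and colimit-existence properties should descend since bimodule \icats{} are computed as limits of suitable bar-like diagrams in $\mathcal{C}$. Once the general identification is proved, the special case $A = B = I$ follows immediately, because $\Bimod_{I,I}(\mathcal{C}) \simeq \mathcal{C}$ as an $\mathbb{E}_{n-1}$-monoidal \icat{} (the unit trivializes one of the $n$ multiplications), giving $\fAlg_n(\mathcal{C})(I,I) \simeq \fAlg_{n-1}(\mathcal{C})$.
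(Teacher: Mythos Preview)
Your overall strategy is the same as the paper's (proved as Theorem~\ref{thm:Algnmap} via Corollary~\ref{cor:ALGpullback}): compute the mapping object as a fiber of the source--target map at the level of the $n$-uple category object $\fALG_n(\mathcal{C})$, identify that fiber with $\fALG_{n-1}(\Bimod_{A,B}(\mathcal{C}))$, and then pass to the associated $(\infty,n)$-categories.

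There is, however, a gap in the last step. You write ``$\fAlg_n(\mathcal{C}) = \oALGn(\mathcal{C})$'' and treat $\fAlg_n(\mathcal{C})$ as if it \emph{were} the $(n+1)$-fold Segal object, but by definition it is the \emph{completion} $L_{n+1}U_{\Seg}i\fALG_n(\mathcal{C})$. So after identifying the fiber at the uncompleted level you still need to show that the mapping $(\infty,n)$-category in the completion is the $L_n$-completion of that fiber. This is the content of the paper's Lemma~\ref{lem:mapcompl}; it requires knowing that $L_{n+1}$ factors as a levelwise completion $L_{n,*}$ followed by a fully faithful functor $\Lambda$, and that $L_n$ preserves pullbacks over constant diagrams --- neither of which is automatic. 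Separately, you call the levelwise fiber identification ``essentially automatic'', but matching the \emph{composite} $\DnIop$-algebras on both sides (Proposition~\ref{propn:BimodComposite}) and constructing the $\mathbb{E}_{n-1}$-monoidal structure on $\Bimod_{A,B}(\mathcal{C})$ as the pullback of a coCartesian fibration $\Bimod^{\otimes}(\mathcal{C}) \to \Ass^{\otimes}(\mathcal{C}) \times_{\Dnop} \Ass^{\otimes}(\mathcal{C})$ of $\Dn$-monoidal \icats{} both rest on the work in \S\ref{subsec:bimodfib}, especially Proposition~\ref{propn:BimodFibcoCart}; this is where the real content lies, not in the formal unpacking.
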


\subsection{Higher Brauer Groups}
If $\mathbf{C}$ is a symmetric monoidal category, we say that an
object $X \in \mathbf{C}$ is \emph{invertible} if there exists another
object $X^{-1}$ such that $X \otimes X^{-1}$ is isomorphic to the
identity; by considering the homotopy 1-category this gives a notion
of invertible objects in any symmetric monoidal $(\infty,n)$-category.

In particular, if $R$ is a commutative ring then the invertible
objects of $\fAlg_{1}(\Mod_{R})$ are those associative $R$-algebras $A$ that
have an inverse $A^{-1}$ in the sense that $A \otimes_{R} A^{-1}$ is
Morita equivalent to $R$ --- these are precisely the \emph{Azumaya
  algebras} over $R$. By considering these invertible objects and the
invertible 1- and 2-morphisms between them we obtain a symmetric
monoidal 2-groupoid $\mathfrak{Br}_{1}(R)$ with very interesting homotopy
groups:
\begin{itemize}
\item $\pi_{0}\mathfrak{Br}_{1}(R)$, i.e. the set of isomorphism classes
  of objects in $\mathfrak{Br}_{1}(R)$, is the classical Brauer group of
  Azumaya $R$-algebras,
\item $\pi_{1}\mathfrak{Br}_{1}(R)$ is the Picard group of invertible
  $R$-modules,
\item $\pi_{2}\mathfrak{Br}_{1}(R)$ is the group $R^{\times}$ of
  multiplicative units in $R$.
\end{itemize}
Moreover, the ``loop space'' $\Omega \mathfrak{Br}_{1}(R) =
\mathfrak{Br}_{1}(R)(R, R)$ is the \emph{Picard groupoid} of
invertible $R$-modules and isomorphisms.

Using the results of this paper, we can also consider the invertible
objects in $\fAlg_{n}(\mathcal{C})$ for any suitable symmetric
monoidal \icat{} $\mathcal{C}$. Restricting to the invertible $i$-morphisms
between these for all $i$, we get a symmetric monoidal $\infty$-groupoid
$\mathfrak{Br}_{n}(\mathcal{C})$, or equivalently an
$\mathbb{E}_{\infty}$-space; we will call this the \emph{$n$-Brauer
  space} of $\mathcal{C}$. It is evident from the definition of the invertible
objects that this $\mathbb{E}_{\infty}$-space is \emph{grouplike},
i.e. the induced multiplication on
$\pi_{0}\mathfrak{Br}_{n}(\mathcal{C})$ makes this monoid a group, and
so it corresponds to a connective spectrum.

It follows immediately from Theorem~\ref{thm:maps} that the loop space
$\Omega \mathfrak{Br}_{n}(\mathcal{C})$ is equivalent to
$\mathfrak{Br}_{n-1}(\mathcal{C})$. Thus the $n$-Brauer spaces
$\mathfrak{Br}_{n}(\mathcal{C})$ are a sequence of deloopings, and so
we can combine these spaces into a non-connective ``Brauer spectrum''
$\mathfrak{BR}(\mathcal{C})$ with \[\pi_{-k}\mathfrak{BR}(\mathcal{C})
= \pi_{n-k}\mathfrak{Br}_{n}(\mathcal{C})\] for $n \geq k$.

If $R$ is a commutative ring spectrum, the ``$n$-Brauer groups''
\[\txt{Br}_{n}(R) := \pi_{-n} \mathfrak{BR}(\Mod_{R}) =
\pi_{0}\mathfrak{Br}_{n}(\Mod_{R})\] can be thought of as consisting of
the $\mathbb{E}_{n}$-analogues of (derived) Azumaya algebras,
considered up to an $\mathbb{E}_{n}$-variant of Morita equivalence. In
particular,
\begin{itemize}
\item for $n = 1$ we recover the Brauer groups of commutative ring
  spectra and the derived Brauer groups of commutative rings, as
  studied by To\"{e}n~\cite{ToenAzumaya}, Szymik~\cite{SzymikBrauer},
  Baker-Richter-Szymik~\cite{BakerRichterSzymik},
  Antieau-Gepner~\cite{AntieauGepnerBrauer}, and others;
\item for $n = 0$ we recover the Picard group of invertible
  $R$-modules, as studied by
  Hopkins-Mahowald-Sadofsky~\cite{HopkinsMahowaldSadofsky},
  May~\cite{MayPic}, Mathew-Stojanoska~\cite{MathewStojanoska}, and
  others.
\end{itemize}
The ``negative Brauer groups'' (i.e. the positive homotopy groups of
$\mathfrak{BR}(\Mod_{R})$) are also easy to describe: for $* < 0$ we
get the homotopy groups of the units of $R$, i.e. $\txt{Br}_{*}(R) =
\pi_{1-*}(\Omega^{\infty}R^{\times})$, where
$\Omega^{\infty}R^{\times}$ denotes the components of
$\Omega^{\infty}R$ lying over the units in $\pi_{0}R$; for $* < -1$ we
thus have $\txt{Br}_{*}(R) = \pi_{1-*}(R)$.

A fascinating question for future research is whether the spaces
$\mathfrak{Br}_{n}(R)$ for $R$ a (connective) commutative ring
spectrum satisfy étale descent in the same way as the Brauer spaces
$\mathfrak{Br}_{1}(R)$ (as proved by To\"{e}n~\cite{ToenAzumaya} and
Antieau-Gepner~\cite{AntieauGepnerBrauer}). 

If the étale-local triviality results of the same authors for
$\mathfrak{Br}_{1}(R)$ also extend to $n > 1$, it should be possible
to use the resulting descent spectral sequence to compute the higher
Brauer groups in some simple cases. In fact, this would imply that the 
higher Brauer groups are closely related to \'{e}tale cohomology;
generalizing the known results for $n = 1$ and $0$ one
might optimistically conjecture that in general 
\[ \txt{Br}_{n}(R) \cong \mathrm{H}^{n}_{\txt{\'{e}t}}(R; \mathbb{Z})
\times H^{n+1}_{\txt{\'{e}t}}(R; \mathbb{G}_{m}),\] where the first
factor occurs since we are considering non-connective $R$-modules (or
chain complexes of $R$-modules that are not required to be $0$ in
negative degrees).

\subsection{Topological Quantum Field Theories}
\emph{Topological quantum field theories} (or \emph{TQFTs}) were
introduced by Atiyah~\cite{AtiyahTQFT} as a way of formalizing
mathematically some particularly simple examples of quantum field
theories constructed by Witten. The original definition is quite
easy to state:
\begin{defn}
  Let $\txt{Bord}(n)$ be the category with objects closed
  $(n-1)$-manifolds and morphisms (diffeomorphism classes of)
  $n$-dimensional cobordisms between these (thus a morphism from $M$
  to $N$ is an $(n+1)$-manifold with boundary $B$, with an
  identification of $\partial B$ with $M \amalg N$). The disjoint union of manifolds gives a
  symmetric monoidal structure on $\txt{Bord}(n)$, and an
  \emph{$n$-dimensional topological quantum field theory} valued in a
  symmetric monoidal category $\mathbf{C}$ is a symmetric monoidal
  functor $\txt{Bord}(n) \to \mathbf{C}$. 
\end{defn}
Requiring the manifolds and cobordisms to be equipped with various
structures, such as orientations or framings, gives different variants
of the category $\txt{Bord}(n)$.  We get various flavours of TQFTs,
such as oriented or framed TQFTs, by considering these different
versions of $\txt{Bord}(n)$. In examples the category $\mathbf{C}$ is
usually the category $\txt{Vect}_{\mathbb{C}}$ of complex vector
spaces.

One reason mathematicians became interested in TQFTs is that they lead
to interesting invariants of manifolds: if $\mathcal{Z} \colon
\txt{Bord}(n) \to \txt{Vect}_{\mathbb{C}}$ is an n-dimensional TQFT,
then $\mathcal{Z}$ assigns a complex number to any closed $n$-manifold
$M$ --- we can consider $M$ as a cobordism from the empty set to the
empty set, and since this is the unit of the monoidal structure on
$\txt{Bord}(n)$, $\mathcal{Z}(M)$ is a linear map $\mathbb{C} \to
\mathbb{C}$, which is given by multiplication with a complex number.

To compute the number $\mathcal{Z}(M)$ we can cut $M$ along suitable
submanifolds of codimension 1 and use the functoriality of
$\mathcal{Z}$. This is enough to compute these invariants in very low
dimensions ($n \leq 2$). In higher dimensions, however, we would
like to be able to cut our manifolds in more flexible ways, for
example by choosing a triangulation of $M$, to make the invariants
more computable. This led mathematicians to consider the notion of
\emph{extended} topological quantum field theories; this was
formalized by Baez and Dolan \cite{BaezDolanTQFT} in the language of
\emph{$n$-categories} (building on earlier work by, among others,
Freed~\cite{FreedHigherAlg} and Lawrence~\cite{LawrenceTQFT}).

\begin{remark}
  For the definition of Baez and Dolan we consider an $n$-category
  $\txt{Bord}_{n}$ whose objects are compact $0$-manifolds, with
  morphisms given by 1-dimensional cobordisms between 0-manifolds, and
  in general $i$-morphisms for $i = 1,\ldots,n$ given by
  $i$-dimensional cobordisms between manifolds with corners. (For the
  $n$-morphisms we take diffeomorphism classes of these.) The disjoint
  union should equip this with a symmetric monoidal structure, but
  giving a precise definition of this symmetric monoidal $n$-category
  becomes increasingly intractable as $n$ increases. A complete
  definition has been given by
  Schommer-Pries~\cite{SchommerPries2TQFT} in the case $n = 2$, but
  for larger $n$ it seems that an appropriate notion of symmetric
  monoidal $n$-category has not even been defined
.
\end{remark}

\begin{defn}
  Given such a symmetric monoidal $n$-category $\txt{Bord}_{n}$, an
  \emph{$n$-dimensional extended TQFT} valued in a symmetric monoidal
  $n$-category $\mathcal{C}$ is a symmetric monoidal functor
  $\txt{Bord}_{n} \to \mathcal{C}$. As before, considering various
  structures on the manifolds in $\txt{Bord}_{n}$ gives different
  flavours of field theories, such as framed, oriented, and
  unoriented.
\end{defn}
Baez and Dolan also conjectured that there is a simple classification
of \emph{framed} extended topological quantum field theories:
\begin{conjecture}[Cobordism Hypothesis]
  A framed extended TQFT $\mathcal{Z} \colon \txt{Bord}^{\txt{fr}}_{n}
  \to \mathcal{C}$ is classified by the object $\mathcal{Z}(*) \in
  \mathcal{C}$. Moreover, the objects of $\mathcal{C}$ that correspond
  to framed TQFTs admit a simple algebraic description: they are
  precisely the \emph{$n$-dualizable objects}. (We refer to \cite[\S
  2.3]{LurieCob} for a precise definition of $n$-dualizable objects.)
\end{conjecture}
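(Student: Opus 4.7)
The plan is to establish a universal property: show that $\txt{Bord}^{\txt{fr}}_{n}$ is the free symmetric monoidal $(\infty,n)$-category with duals generated by a single object, so that symmetric monoidal functors to $\mathcal{C}$ correspond to $n$-dualizable objects of $\mathcal{C}$. Following the broad outline of Lurie's program, I would first make precise what ``$n$-dualizable'' means: an object $X$ is $1$-dualizable if it has a dual in the homotopy category, and inductively $n$-dualizable if it is $1$-dualizable and the evaluation and coevaluation $1$-morphisms admit adjoints that are themselves $(n-1)$-dualizable in the appropriate hom-$(\infty,n-1)$-category. The framing $\txt{Bord}^{\txt{fr}}_{n}$ is the version where all tangent bundles are trivialized, eliminating the $O(n)$-action that appears in the unoriented/oriented versions, so that the universal property involves no extra homotopy fixed-point data.

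Next, I would construct $\txt{Bord}^{\txt{fr}}_{n}$ as an $n$-fold Segal space along the lines of Barwick, so that it fits into the framework of the earlier sections of the paper; the $i$-morphism space is roughly the space of $i$-dimensional framed cobordisms with corners, stratified by height functions. The heart of the argument is then an induction on $n$, combined with Morse/handle-theoretic induction within each dimension. The idea is that a cobordism admits a framed handle decomposition, and attachment of a $k$-handle corresponds to insertion of an elementary piece of duality data at level $k$; since the target is assumed to have all such data (by $n$-dualizability), one can define the value of the putative functor on any cobordism by decomposing it.

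The third step is to show that the extension does not depend on the chosen decomposition. This amounts to the familiar fact, due to Cerf, that the space of Morse functions on a fixed cobordism (relative to prescribed boundary framings) is connected up to strata of codimension one, where handle slides and births/deaths occur; these moves must then be matched with coherence cells in the free symmetric monoidal $(\infty,n)$-category with duals. Combining this with the fact that the space of framings on a $k$-handle of dimension $n$ is $O(n)/O(n-k)$-torsorial in a controlled way, one reduces the whole classification to a statement about the homotopy type of $\txt{Bord}^{\txt{fr}}_{n}$ as an $(\infty,n)$-category with adjoints. Finally, once the universal property is in hand, the equivalence $\Fun^{\otimes}(\txt{Bord}^{\txt{fr}}_{n},\mathcal{C}) \simeq \mathcal{C}^{\sim,\txt{fd}}$ (the underlying $\infty$-groupoid of $n$-dualizable objects) follows by translating the universal property through evaluation at the point.

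The main obstacle, and the reason this conjecture is genuinely hard rather than a formal consequence of unwinding definitions, is the coherence problem in the third step: verifying that every relation between handle decompositions of framed cobordisms --- isotopies, handle slides, cancellations, and their higher analogues parametrized by spaces of Morse functions and framings --- is already enforced by the axioms of a symmetric monoidal $(\infty,n)$-category with duals. Concretely, this requires a delicate analysis of the interplay between the topology of $O(n)$, the stratification of the space of generalized Morse functions, and the coherence data in the free symmetric monoidal $(\infty,n)$-category with duals on a point. I would expect the bulk of any honest proof to live in this step, with the framing hypothesis playing the essential role of killing an $O(n)$-homotopy-fixed-point obstruction that otherwise reintroduces the extra data appearing in the general (tangentially structured) cobordism hypothesis.
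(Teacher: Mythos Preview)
The paper does not prove this statement. It is stated there as a \emph{conjecture} (the Cobordism Hypothesis of Baez--Dolan), included purely as background and motivation in the introductory discussion of topological quantum field theories. Immediately after stating it, the paper notes that Lurie has given a detailed proof sketch in \cite{LurieCob}, and then moves on; there is no argument for the Cobordism Hypothesis anywhere in the body of the paper.

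Your proposal is a reasonable high-level summary of Lurie's strategy from \cite{LurieCob} --- the universal property of $\txt{Bord}^{\txt{fr}}_{n}$, the inductive definition of $n$-dualizability, handle decompositions, and Cerf theory for the independence of choices --- but none of this is carried out in the present paper, and there is nothing to compare your outline against. If the task is to reproduce the paper's own proof of this statement, the correct answer is that there is no such proof: the statement functions here as context, not as a result being established.
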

At the time, however, the foundations for higher category theory
required to realize their ideas did not yet exist. The necessary
foundations have only been developed during the past decade, with the
work of Barwick, Bergner, Joyal, Lurie, Rezk, and many others. The
resulting theory of \emph{$(\infty,n)$-categories} is often easier to
work with than the more restricted notion of $n$-category --- in
particular, it is not hard to give a good definition of symmetric
monoidal $(\infty,n)$-categories for arbitrary $n$.

We can then consider an $(\infty,n)$-category
$\txt{Bord}_{(\infty,n)}$ of cobordisms, where we take diffeomorphisms as
our $(n+1)$-morphisms, smooth homotopies as the $(n+2)$-morphisms, and
so on. This also turns out to be much easier to define than the
analogous $n$-category; a sketch of a definition is given by
Lurie~\cite{LurieCob}, and the full details of the construction have
recently been worked out by Calaque and
Scheimbauer~\cite{CalaqueScheimbauerCob}.

It is then natural to define extended TQFTs valued in a symmetric
monoidal $(\infty,n)$-category as symmetric monoidal functors from
$\txt{Bord}_{(\infty,n)}$. In this more general setting, Lurie was
able to prove the Cobordism Hypothesis (although so far only a
detailed sketch \cite{LurieCob} of the proof has appeared). In fact,
Lurie also proves classification theorems for other flavours of TQFTs,
such as oriented or unoriented ones, in terms of the homotopy fixed
points for an action of the orthogonal group $O(n)$ on the space of
$n$-dualizable objects in any symmetric monoidal
$(\infty,n)$-category.

The Cobordism Hypothesis works for an arbitrary symmetric monoidal
$(\infty,n)$-category, and so leaves open the question of what the
appropriate target is for the interesting field theories that arise in
physics and geometry. Motivation from physics
(cf. \cite{FreedHigherAlg,KapustinICM}) suggests that in general a
TQFT should assign an $(n - k-1)$-category enriched in vector spaces,
or more generally in chain complexes of vector spaces, to a closed
$k$-manifold.\footnote{To non-closed manifolds it should assign a
higher-categorical generalization of the notion of a bimodule or
profunctor between enriched categories, which is somewhat complicated
to define.}

The higher category of $\mathbb{E}_{n}$-algebras and iterated
bimodules we will construct here can be considered as a special case
of this general target: $\mathbb{E}_{n}$-algebras in some \icat{}
$\mathcal{C}$ are the same thing as $(\infty,n)$-categories enriched
in $\mathcal{C}$ that have one object, one 1-morphism, \ldots, and one
$(n-1)$-morphism. In fact, it is possible to extend the definitions we
consider here to get definitions of enriched $(\infty,n)$-categories
and iterated bimodules between them; we hope to use these to construct
an $(\infty,n+1)$-category of enriched $(\infty,n)$-categories in a
sequel to this paper.

Although not completely general, the TQFTs valued in the symmetric
monoidal $(\infty,n+1)$-category of $\mathbb{E}_{n}$-algebras are
still very interesting. This situation is discussed in
\cite[\S 4.1]{LurieCob}, where the following results are stated
without proof:
\begin{conjecture}\ 
  \begin{enumerate}[(i)]
  \item All $\mathbb{E}_{n}$-algebras in $\mathcal{C}$ are
    $n$-dualizable in $\fAlg_{n}(\mathcal{C})$, and so give rise to
    framed $n$-dimensional extended TQFTs. (More precisely, all
    objects of $\fAlg_{n}(\mathcal{C})$ are dualizable, and all
    $i$-morphisms have adjoints for $i = 1, \ldots, n-1$.)
  \item The framed $n$-dimensional extended TQFT associated to an
    $\mathbb{E}_{n}$-algebra $A$ is given by the \emph{factorization
      homology} or \emph{topological chiral homology} of $A$. (These
    invariants were first introduced by Lurie~\cite[\S 5.5]{HA} and
    also independently by Andrade~\cite{AndradeThesis}, and have since
    been extensively developed by a number of other authors, in
    particular Francis and collaborators; see for example
    \cite{FrancisTgtCx,AyalaFrancisTanakaFactHlgy,AyalaFrancisRozenblyumFactHlgy}. An
    overview can also be found in Ginot's lecture notes
    \cite{GinotFactNotes}.)
  \item An $\mathbb{E}_{n}$-algebra $A$ is $(n+1)$-dualizable \IFF{}
    it is dualizable as a module over its $S^{k}$-factorization
    homology for all $k = -1,0,1,\ldots,n-1$. (For $n = 1$, this is
    equivalent to $A$ being \emph{smooth} and \emph{proper} ---
    cf. \cite[\S 4.6.4]{HA}.)
  \end{enumerate}
\end{conjecture}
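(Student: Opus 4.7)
The plan is to attack the three parts of the conjecture in order, using Theorem~\ref{thm:maps} and the cobordism hypothesis as black boxes wherever possible.

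For part (i), I would argue by induction on $n$. The base case $n=1$ is the classical fact that every associative algebra $A$ is dualizable in $\fAlg_{1}(\mathcal{C})$, with dual $A^{\op}$, evaluation and coevaluation both given by $A$ viewed as an $A$-$A^{\op}$-bimodule, and the triangle identities reducing to $A \otimes_{A} A \simeq A$. For the inductive step, the dual of an $\mathbb{E}_{n}$-algebra $A$ should be $A$ equipped with the $\mathbb{E}_{n}$-structure obtained by reversing one chosen $\mathbb{E}_{1}$-factor of the Dunn--Lurie additivity decomposition $\mathbb{E}_{n} \simeq \mathbb{E}_{1} \otimes \mathbb{E}_{n-1}$. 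To produce adjoints at level $i \leq n-1$, apply Theorem~\ref{thm:maps} to rewrite the hom $(\infty,n)$-category $\fAlg_{n}(\mathcal{C})(A,B)$ as $\fAlg_{n-1}(\Bimod_{A,B}(\mathcal{C}))$, where the target is $\mathbb{E}_{n-1}$-monoidal, and invoke the inductive hypothesis applied to the $\mathbb{E}_{n-1}$-algebra representing the $i$-morphism in question.

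For part (ii), by the cobordism hypothesis a framed $n$-dimensional TQFT valued in $\fAlg_{n}(\mathcal{C})$ is determined up to contractible ambiguity by its value on a point, so it suffices to verify that $\int_{(-)} A \colon \txt{Bord}^{\txt{fr}}_{(\infty,n)} \to \fAlg_{n}(\mathcal{C})$ is a symmetric monoidal functor with $\int_{\ast} A = A$. Symmetric monoidality follows from $\otimes$-excision for factorization homology, and the value at a point is immediate. The substantive content is that cutting a framed $i$-cobordism with corners along a codimension-$1$ framed submanifold corresponds, under iterated applications of Theorem~\ref{thm:maps}, to the relative tensor product that computes composition of $i$-morphisms in $\fAlg_{n}(\mathcal{C})$. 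This should follow from an excision statement for factorization homology on manifolds with corners, together with a careful matching of the $\mathbb{E}_{n-i}$-algebra structure on the iterated bimodules produced by the two constructions.

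For part (iii), $(n+1)$-dualizability imposes, on top of (i), the requirement that the evaluation and coevaluation of the $A \dashv A^{\op}$ duality themselves admit adjoints. Iterating Theorem~\ref{thm:maps} translates this into dualizability of $A$ as a module over the centralizers of its various $\mathbb{E}_{k}$-structures; by Dunn--Lurie additivity these centralizers are precisely the factorization homologies $\int_{S^{k}} A$, with the convention $\int_{S^{-1}} A = I$ for the empty sphere. This accounts for the range $k = -1, 0, 1, \ldots, n-1$, and the base case $n=1$ recovers the classical criterion that a $2$-dualizable algebra is a smooth and proper algebra (dualizable over the base unit and over $\int_{S^{0}} A = A \otimes A^{\op}$). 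The main obstacle throughout is the compatibility of the algebraically defined $\fAlg_{n}(\mathcal{C})$ with the geometrically defined $\txt{Bord}^{\txt{fr}}_{(\infty,n)}$: part (ii) in particular depends on a detailed construction of the latter and a precise, checkable form of the cobordism hypothesis, neither of which is fully developed in the currently available literature, and the adjointability bookkeeping in (i) and (iii) is itself delicate since models for $i$-morphisms involve iterated bimodules whose adjoints must be tracked carefully through repeated applications of Theorem~\ref{thm:maps}.
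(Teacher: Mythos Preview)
The paper does not prove this statement --- it is explicitly labeled a \emph{Conjecture}, attributed to Lurie's \cite[\S 4.1]{LurieCob}, and left open. The paper's only commentary is that Scheimbauer's thesis \cite{ScheimbauerThesis} confirms (i) and (ii) for a geometric (factorization-algebra) variant of $\fAlg_{n}(\mathcal{C})$, and that the author \emph{hopes} to use Theorem~\ref{thm:maps} to give algebraic proofs of (i) and (iii) in future work. So there is no proof in the paper to compare against.

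Your sketch is broadly in the spirit of what the paper gestures at for (i) and (iii): reduce via Theorem~\ref{thm:maps} to adjointability of $1$-morphisms, then induct. The paper in fact states exactly this reduction --- that (i) is equivalent to $1$-morphisms having adjoints for all $n \geq 2$ --- but does not carry it out. Your base case and the identification of centralizers with $\int_{S^{k}} A$ are plausible, but both steps hide real work (producing the adjoint bimodule and verifying the unit/counit conditions inside $\fAlg_{n}(\mathcal{C})$ as constructed here, not in a geometric model). For (ii), you lean on the cobordism hypothesis and $\otimes$-excision; the paper instead points to Scheimbauer's direct geometric construction, and elsewhere notes that comparing the algebraic $\fAlg_{n}(\mathcal{C})$ to the factorization-algebra version would require a stratified additivity theorem not yet available. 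You correctly flag this compatibility as the main obstacle, which is honest --- but it means your argument for (ii) is circular unless that comparison is established independently. In short: your outline is reasonable and matches the paper's stated intentions, but it is a program, not a proof, and the paper itself does not claim otherwise.
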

Scheimbauer~\cite{ScheimbauerThesis} has constructed factorization
homology as an extended TQFT valued in a geometric variant of
$\fAlg_{n}(\mathcal{C})$ (defined using locally constant factorization
algebras on certain stratifications of $\RR^{n}$), which confirms the
first two parts of this conjecture. It follows from
Theorem~\ref{thm:maps} that (i) is equivalent to the 1-morphisms in
$\fAlg_{n}(\mathcal{C})$ having adjoints for all $n \geq 2$, and we
hope to use this to give algebraic proofs of (i) and (iii).

\subsection{Related Work}
As already mentioned, a geometric construction of
$(\infty,n+1)$-categories closely related to $\fAlgn(\mathcal{C})$ has
been worked out by Scheimbauer~\cite{ScheimbauerThesis}. However, the
natural definition of bimodules in the factorization algebra setting
is not quite the same as ours: the bimodules that arise from
factorization algebras are \emph{pointed}. If
$\fAlgn^{\txt{FA}}(\mathcal{C})$ denotes Scheimbauer's
$(\infty,n+1)$-category of $\mathbb{E}_{n}$-algebras in $\mathcal{C}$,
we therefore expect the relation to our work to be as follows:
\begin{conjecture}
  Let $\mathcal{C}$ be a nice $\mathbb{E}_{n}$-monoidal \icat{}. Then
  $\fAlgn^{\txt{FA}}(\mathcal{C})$ is equivalent to
  $\fAlgn(\mathcal{C}_{I/})$.
\end{conjecture}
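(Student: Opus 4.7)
The plan is to construct a comparison functor $\Phi \colon \fAlgn^{\txt{FA}}(\mathcal{C}) \to \fAlgn(\mathcal{C}_{I/})$ and prove it is an equivalence by induction on $n$, using Theorem~\ref{thm:maps} at the inductive step to reduce the comparison to one between mapping $(\infty,n)$-categories. The base case $n = 0$ is trivial, since both sides reduce to $\mathcal{C}_{I/}$. On objects of $\fAlgn(\mathcal{C})$, an $\mathbb{E}_{n}$-algebra $A$ comes equipped with a canonical unit map $I \to A$, so the underlying-object functor automatically factors through $\mathcal{C}_{I/}$, matching what locally constant factorization algebras on $\mathbb{R}^{n}$ produce on the factorization-algebra side.

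First I would unpack Scheimbauer's construction in enough detail to locate the pointing. A locally constant factorization algebra on a stratification $\mathbb{R}^{k} \subset \mathbb{R}^{n}$ restricts along the open stratum to an $\mathbb{E}_{n}$-algebra $A$ and along the closed stratum to an $\mathbb{E}_{k}$-algebra $B$, and carries a structure map from a punctured neighborhood of the stratum which, after taking transverse factorization homology, gives $B$ the structure of an $A$-$A$-bimodule (in the $\mathbb{E}_{k}$-sense) together with a preferred basepoint $I \to B$. Iterating this observation over nested stratifications yields a full pointed iterated-bimodule datum, and these restrictions should assemble into the comparison functor $\Phi$, once one has identified Scheimbauer's $(\infty,n+1)$-categorical model of such data with the $\Dn$-operadic model used to define $\fAlgn$.

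For the inductive step, fix $\mathbb{E}_{n}$-algebras $A$ and $B$. By Theorem~\ref{thm:maps}, the mapping $(\infty,n)$-category on the target side is $\fAlg_{n-1}(\Bimod_{A,B}(\mathcal{C}_{I/}))$, which one identifies with $\fAlg_{n-1}(\Bimod_{A,B}(\mathcal{C})_{I_{A,B}/})$ for the canonical bimodule $I_{A,B}$ induced by the units. On the source side, the mapping $(\infty,n)$-category in $\fAlgn^{\txt{FA}}(\mathcal{C})$ is modelled by locally constant factorization algebras on $\mathbb{R}^{n}$ stratified by a codimension-one hyperplane separating $A$ from $B$; restriction to a transverse $\mathbb{R}^{n-1}$ should realise this as $\fAlg_{n-1}^{\txt{FA}}$ of the pointed bimodule category, with the $\mathbb{E}_{n-1}$-monoidal structure on the latter supplied by Dunn-Lurie additivity applied to the transverse directions. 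Applying the inductive hypothesis to the nice $\mathbb{E}_{n-1}$-monoidal \icat{} $\Bimod_{A,B}(\mathcal{C})_{I_{A,B}/}$ then closes the induction.

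The hard part will be the geometric-to-algebraic translation itself: bridging Scheimbauer's stratified factorization-algebra model with the combinatorial $\Lnop$-operadic framework of this paper requires an explicit dictionary between constructible stratifications of $\mathbb{R}^{n}$ and cells indexing $\fAlgn$, together with a proof that the two notions of composition — geometric gluing of stratified factorization algebras versus iterated relative tensor product over intermediate algebras — genuinely agree. Closely related is verifying that the $\mathbb{E}_{n-1}$-monoidal structure on $\Bimod_{A,B}(\mathcal{C})_{I_{A,B}/}$ produced geometrically by Scheimbauer coincides with the one produced by Theorem~\ref{thm:maps}; this is an instance of the additivity/interchange compatibility that tends to be the main technical obstruction in comparing operadic and geometric approaches to $\En$-algebras, and I expect it will require a direct operadic argument rather than a purely formal one.
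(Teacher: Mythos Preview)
The statement you are attempting to prove is explicitly labeled a \emph{conjecture} in the paper; there is no proof to compare against. Immediately following the conjecture, the paper outlines the approach the author expects to take: a direct identification of the iterated bimodules used here with algebras for ``little discs'' \iopds{} on stratifications of $\mathbb{R}^{n}$, i.e.\ a stratified generalization of the Dunn--Lurie additivity theorem, citing forthcoming work of Ayala and Hepworth as the intended input and deferring the comparison to a sequel.

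Your proposal is thus not a competing proof but an alternative strategy for an open problem. The paper's suggested route is a global, level-by-level identification of the two $(n+1)$-uple structures via stratified additivity; your route is instead an induction on $n$ via mapping $(\infty,n)$-categories, using Theorem~\ref{thm:maps} on the algebraic side and an analogous (unproved) identification of mapping objects on the factorization-algebra side. Your approach has the advantage of localizing the geometric input to a single codimension-one stratum at each stage, but it requires two ingredients that you correctly flag as gaps: (a) the factorization-algebra analogue of Theorem~\ref{thm:maps}, namely that Scheimbauer's mapping $(\infty,n)$-category is $\fAlg_{n-1}^{\txt{FA}}$ of a suitable pointed bimodule category, and (b) that the $\mathbb{E}_{n-1}$-monoidal structure on $\Bimod_{A,B}(\mathcal{C})$ arising geometrically from transverse directions agrees with the one constructed algebraically here. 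Ingredient (b) is essentially the same stratified-additivity input the paper identifies as the crux, so your induction does not avoid the hard step --- it repackages it. You should also make explicit that your inductive argument establishes full faithfulness of $\Phi$; essential surjectivity on objects (that every $\En$-algebra in $\mathcal{C}_{I/}$ arises from a locally constant factorization algebra on $\mathbb{R}^{n}$) must be checked separately, though this is standard.
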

In order to carry out such a comparison, one would need to know that
the iterated bimodules we consider can equivalently be described as
algebras for \iopds{} of ``little discs'' on certain stratifications
of $\mathbb{R}^{n}$ --- this would be a generalization of the
Dunn-Lurie additivity theorem for $\mathbb{E}_{n}$-algebras. Such a
result appears to follow from forthcoming work of Ayala and Hepworth
(extending \cite{AyalaHepworthThetan}); we hope to use this to 
compare the algebraic version of $\fAlg_{n}(\mathcal{C})$ we
construct here to the factorization-algebra-based version of
Scheimbauer in a sequel to this paper.

An alternative geometric construction of $\fAlgn(\mathcal{C})$ is also
part of unpublished work of Ayala, Francis, and Rozenblyum, related to
the construction sketched in the work of Morrison and Walker on the
blob complex~\cite{MorrisonWalkerBlob}.

In the case $n = 1$, an alternative construction of the double
\icats{} $\fAlg_{1}(\mathcal{C})$ using symmetric \iopds{} can be
extracted from \cite[\S 4.4]{HA}. Indeed, many of the results in
\S\ref{sec:algbimod} are simply non-symmetric variants of Lurie's ---
the main advantage of our setup is that our results generalize
easily to $n > 1$.

A bicategory of dg-algebras and dg-bimodules, considered up to
quasi-isomorphism, is discussed in \cite{JohnsonDerMor}. This should
be the homotopy bicategory of our $(\infty,2)$-category of algebras
and bimodules in the corresponding ``derived \icat{}'' of chain
complexes.

Finally, an extension of our construction has been obtained by
Johnson-Freyd and Scheimbauer: in \cite{JohnsonFreydScheimbauerLax}
they show that given an $\mathbb{E}_{k}$-monoidal
$(\infty,n)$-category $\mathcal{C}$, our construction (as well as that
of Scheimbauer) can be used to obtain an $(\infty,n+k)$-category of
$\mathbb{E}_{k}$-algebras in $\mathcal{C}$.

\subsection{Overview}
We being by introducing our models for associative algebras, bimodules
and their tensor products in \S\ref{sec:cartalgbimod}; we discuss them
here only in the context of Cartesian monoidal \icats{}, i.e. ones where
the monoidal structure is the Cartesian product, as this allows us to
clarify their underlying meaning without introducing the machinery of
\iopds{}. Next, in \S\ref{sec:cartenalg} we discuss how iterating
these definitions give models for $\En$-algebras and iterated
bimodules, again in the Cartesian setting. In \S\ref{sec:algbimod} we then
construct the $(\infty,2)$-categories $\fAlg_{1}(\mathcal{C})$ for
$\mathcal{C}$ a general monoidal \icat{}, using \nsiopds{}.  By
working with a notion of \emph{\iopds{} over $\Dnop$} the technical
results we prove for associative algebras turn out to extend to the
setting of $\En$-algebras for $n > 2$, and so in \S\ref{sec:enalg} we
construct the $(\infty,n+1)$-categories $\fAlgn(\mathcal{C})$ without
much more work; we also consider the functoriality of these
$(\infty,n+1)$-categories and their natural monoidal structures, and
finish by identifying their mapping $(\infty,n)$-categories. Finally,
in the appendix we discuss the technical results we need about
\Dniopds{}; these are mostly straightforward variants of results from
\cite{HA}.

\subsection{Notation and Terminology}
This paper is written in the language of \icats{}, as developed in the
guise of quasicategories in the work of Joyal~\cite{JoyalQCNotes} and
Lurie~\cite{HTT,HA}. This means that terms such as ``colimit'', ``Kan
extension'' and ``commutative diagram'' are used (unless otherwise
specified) in their \icatl{} (or ``fully weak'') senses --- for
example, a commutative diagram of shape $\mathcal{I}$ in an \icat{}
$\mathcal{C}$ means a functor of \icats{} $\mathcal{I} \to
\mathcal{C}$, and thus means a diagram that commutes up to a coherent
choice of (higher) homotopies that is specified by this diagram. In
general, we reuse the notation and terminology used by Lurie in
\cite{HTT,HA}; here are some exceptions and reminders:
\begin{itemize}
\item $\simp$ is the simplicial indexing category, with objects the
  non-empty finite totally ordered sets $[n] := \{0, 1, \ldots, n\}$
  and morphisms order-preserving functions between them. Similarly,
  $\simp_{+}$ denotes the augmented simplicial indexing category,
  which also includes the empty set $[-1] = \emptyset$.
\item To avoid clutter, we write $\simp^{n}$ for the product
  $\simp^{\times n}$, and use $\DnIop$ to mean $((\simp^{\times n})_{/I})^{\op}$ for
  any $I \in \simp^{n}$.
\item $\bbGamma^{\op}$ is the category of pointed finite sets.
\item Generic categories are generally denoted by single capital
  bold-face letters ($\mathbf{A},\mathbf{B},\mathbf{C}$) and generic
  \icats{} by single caligraphic letters
  ($\mathcal{A},\mathcal{B},\mathcal{C}$). Specific categories and
  \icats{} both get names in the normal text font.
\item $\sSet$ is the category of simplicial sets, i.e. the category
  $\Fun(\simp^{\op}, \Set)$ of set-valued presheaves on $\simp$.
\item $\mathcal{S}$ is the \icat{} of spaces; this can be defined as
  the coherent nerve $\mathrm{N}\sSet^{\circ}$ of the full
  subcategory $\sSet^{\circ}$ of the category $\sSet$ spanned by the
  Kan complexes, regarded as a simplicial category via the internal Hom.
\item We make use of the theory of \emph{Grothendieck
    universes} to allow us to define ($\infty$-)categories without
  being limited by set-theoretical size issues; specifically, we fix
  three nested universes, and refer to sets contained in them as
  \emph{small}, \emph{large}, and \emph{very large}. When $\mathcal{C}$
  is an \icat{} of small objects of a certain type, we generally refer
  to the corresponding \icat{} of large objects as
  $\widehat{\mathcal{C}}$, without explicitly defining this
  object. For example, $\CatI$ is the (large) \icat{} of small
  \icats{}, and $\LCatI$ is the (very large) \icat{}
  of large \icats{}.
\item If $\mathcal{C}$ is an \icat{}, we write $\iota \mathcal{C}$ for
  the \emph{interior} or \emph{underlying space} of $\mathcal{C}$,
  i.e. the largest subspace of $\mathcal{C}$ that is a Kan complex.
\item If a functor $f \colon \mathcal{C} \to \mathcal{D}$ (of
  \icats{}) is left adjoint to a functor $g \colon \mathcal{D} \to
  \mathcal{C}$, we will refer to the adjunction as $f \dashv g$.
\item We will say that a functor $f \colon \mathcal{C} \to
   \mathcal{D}$ of \icats{} is \emph{coinitial} if the opposite
   functor $f^{\op} \colon \mathcal{C}^{\op} \to \mathcal{D}^{\op}$ is
   cofinal in the sense of \cite[\S 4.1.1]{HTT}.
 \item If $K$ is a simplicial set, we denote the cone points of the
   simplicial sets $K^{\triangleright}$ and $K^{\triangleleft}$,
   obtained by freely adjoining a final and an initial object to $K$,
   by $\infty$ and $-\infty$, respectively.
 \item We say an \icat{} (or more generally any simplicial set)
   $\mathcal{C}$ is \emph{weakly contractible} if the map $\mathcal{C}
   \to \Delta^{0}$ is a weak equivalence in the Kan-Quillen model
   structure (as opposed to the Joyal model structure). This is
   equivalent to the $\infty$-groupoid obtained by inverting all the
   morphisms in $\mathcal{C}$ being trivial, and to the geometric
   realization of the simplicial set $\mathcal{C}$ being a
   contractible topological space.
\end{itemize}

\subsection{Some Key Concepts}
As an aid to readers who are not intimately familiar with \cite{HTT},
in this subsection we briefly introduce some key concepts that we will
make use of throughout this paper, namely \emph{coCartesian
  fibrations}, \emph{cofinal functors}, and \emph{relative
  (co)limits}.

\begin{defn}
  If $f \colon \mathcal{E} \to \mathcal{B}$ is a functor of \icats{},
  a morphism $\epsilon \colon e \to e'$ in $\mathcal{E}$ lying over
  $\beta \colon b \to b'$ in $\mathcal{B}$ is \emph{$p$-coCartesian
    morphism} if for every $x \in \mathcal{E}$ the
  commutative\footnote{Recall that this means commutative in the
    \icatl{} sense, so the square really includes the data of a
    homotopy between the two composites, which we do not explicitly
    indicate.} square \csquare{\Map_{\mathcal{E}}(e',
    x)}{\Map_{\mathcal{E}}(e,x)}{\Map_{\mathcal{B}}(b',
    f(x))}{\Map_{\mathcal{B}}(b, f(x))}{\epsilon^{*}}{}{}{\beta^{*}}
  is Cartesian, i.e. it is a pullback\footnote{Note that this means
    that it is a pullback in the \icatl{} sense --- if we choose some
    concrete model for these mapping spaces as simplicial sets, this
    is equivalent to the corresponding diagram of simplicial sets
    being a \emph{homotopy} pullback.} square. 
\end{defn}
This is equivalent to the induced map on
fibres \[\Map_{\mathcal{E}}(e',x)_{f} \to
\Map_{\mathcal{E}}(e,x)_{\phi \circ \beta}\] being an equivalence for
all maps $\phi \colon b' \to f(x)$, so this definition gives a natural
\icatl{} generalization of coCartesian morphisms in ordinary category
theory.
\begin{defn}
  We say that a functor of \icats{} $f \colon \mathcal{E} \to
  \mathcal{B}$ is a \emph{coCartesian fibration} if for every $e \in
  \mathcal{E}$ and $\beta \colon f(e) \to b$ there exists an
  $f$-coCartesian morphism $e \to \beta_{!}e$ over $\beta$;
  coCartesian fibrations are thus the natural \icatl{} version of
  \emph{Grothendieck opfibrations}.
\end{defn}
If we think of $f$ as a map of simplicial sets, and
assume (as we are free to do up to equivalence) that it is an inner
fibration, then this definition can be reformulated more concretely in
terms of the existence of liftings for certain horns, which is the
definition given in \cite[\S 2.4.2]{HTT}.

If $f \colon \mathcal{E} \to \mathcal{B}$ is a coCartesian fibration,
then \cite[Corollary 3.2.2.12]{HTT} implies that the induced functor
$\Fun(K, \mathcal{E}) \to \Fun(K, \mathcal{B})$ is also a coCartesian
fibration, for any $K$. Given diagrams $p \colon K \to \mathcal{E}$
and $\overline{q} \colon K^{\triangleright} \to \mathcal{B}$ with
$f\circ p = q := \overline{q}|_{K}$ we can therefore define a
\emph{coCartesian pushforward} of $p$ to a diagram $p' \colon K \to
\mathcal{E}_{\overline{q}(\infty)}$ lying in the fibre over
$\overline{q}(\infty)$, by regarding $\overline{q}$ as a morphism in
$\Fun(K, \mathcal{B})$ from $q$ to the constant functor at
$\overline{q}(\infty)$ and choosing a coCartesian morphism over this
with source $p$.

Grothendieck proved (in \cite{SGA1}) that Grothendieck opfibrations
over a category $\mathbf{C}$ correspond to (pseudo)functors from
$\mathbf{C}$ to the category of categories. Lurie's
\emph{straightening equivalence} from \cite[\S 3.2]{HTT} establishes
an analogous equivalence between coCartesian fibrations over an
\icat{} $\mathcal{C}$ and functors from $\mathcal{C}$ to the \icat{}
$\CatI$ of \icats{}. For more details on coCartesian fibrations, and
the dual concept of \emph{Cartesian} fibrations, see \cite[\S\S
  2.4 and \S 3.2]{HTT}, especially \S 2.4.1--4.

\begin{defn}
  A functor $F \colon \mathcal{A} \to \mathcal{B}$ of \icats{} is
  \emph{cofinal} if for every diagram $p \colon
  \mathcal{B} \to \mathcal{C}$, the induced functor $\mathcal{C}_{p/}
  \to \mathcal{C}_{p\circ F/}$ is an equivalence. Dually, $F$ is
  \emph{coinitial} if $F^{\op} \colon \mathcal{A}^{\op} \to
  \mathcal{B}^{\op}$ is cofinal, i.e. the functor $\mathcal{C}_{/p}
  \to \mathcal{C}_{/p \circ F}$ is an equivalence for every $p$.
\end{defn}
Since a colimit of $p$ is the same thing as a final object in
$\mathcal{C}_{p/}$, we see that if $F$ is cofinal then $p$ has a
colimit \IFF{} $p \circ F$ has a colimit, and these colimits are
necessarily given by the same object in $\mathcal{C}$. The key
criterion for cofinality is \cite[Theorem 4.1.3.1]{HTT}: $F \colon
\mathcal{A} \to \mathcal{B}$ is cofinal \IFF{} for every $b \in
\mathcal{B}$ the slice \icat{} $\mathcal{A}_{b/} := \mathcal{A}
\times_{\mathcal{B}} \mathcal{B}_{b/}$ is weakly contractible. For
more details, see \cite[\S 4.1]{HTT}, especially \S 4.1.1.

\begin{defn}
  Given a functor of \icats{} $f \colon \mathcal{E} \to \mathcal{B}$
  we say a diagram $\overline{p} \colon K^{\triangleright} \to
  \mathcal{E}$ is a \emph{colimit relative to $f$} (or an
  \emph{$f$-colimit}) of $p := \overline{p}|_{K}$ if the commutative
  square of \icats{}
  \nolabelcsquare{\mathcal{E}_{\overline{p}/}}{\mathcal{B}_{f\overline{p}/}}{\mathcal{E}_{p/}}{\mathcal{B}_{fp/}}
  is Cartesian, i.e. the induced functor
  \[ \mathcal{E}_{\overline{p}/} \to \mathcal{E}_{p/}
  \times_{\mathcal{B}_{fp/}} \mathcal{B}_{f\overline{p}/} \]
  is an equivalence.
\end{defn}
Ordinary colimits in $\mathcal{E}$ are the same thing as colimits
relative to the functor $\mathcal{E} \to *$ to the terminal
\icat{}. Notice also that if $\overline{p} \colon K^{\triangleright}
\to \mathcal{E}$ is a diagram such that $f\overline{p}$ is a colimit
in $\mathcal{B}$, then $\overline{p}$ is an $f$-colimit \IFF{} it is a
colimit in $\mathcal{E}$.

We can also reformulate the definition in terms of mapping spaces:
$\overline{p} \colon K^{\triangleright} \to \mathcal{E}$ is an
$f$-colimit \IFF{} for every $e \in \mathcal{E}$, the commutative
square \nolabelcsquare{\Map_{\mathcal{E}}(\overline{p}(\infty),
  e)}{\lim_{k \in K} \Map_{\mathcal{E}}(p(k),
  e)}{\Map_{\mathcal{B}}(f\overline{p}(\infty), f(e))}{\lim_{k \in K}
  \Map_{\mathcal{E}}(fp(k), f(e))} is Cartesian, or equivalently
(since limits commute) \IFF{} for every map $\phi \colon
f\overline{p}(\infty) \to f(e)$ the map on fibres
\[ \Map_{\mathcal{E}}(\overline{p}(\infty), e)_{\phi} \to \lim_{k
  \in K} \Map_{\mathcal{E}}(p(k), e)_{\phi \circ (fp(k) \to
  f\overline{p}(\infty))} \]
is an equivalence.

If $f$ is a coCartesian fibration, then it follows from
\cite[Propositions 4.3.1.9 and 4.3.1.10]{HTT} that a diagram
$\overline{p} \colon K^{\triangleright} \to \mathcal{E}$ with $x =
f\overline{p}(\infty)$ is an $f$-colimit \IFF{} the coCartesian
pushforward of $\overline{p}$ to the fibre over $x$ is a colimit in
$\mathcal{E}_{x}$, and for every morphism $\phi \colon x \to y$ in
$\mathcal{B}$ the functor $\phi_{!} \colon \mathcal{E}_{x}\to
\mathcal{E}_{y}$ induced by the coCartesian morphisms over $\phi$
preserves this colimit. If $f\overline{p}$ is a colimit diagram in
$\mathcal{B}$, then this gives a useful criterion for relating
colimits in $\mathcal{E}$ to colimits in the fibres of $f$. For more
on relative colimits (and the dual concept of relative limits), see
\rcite{HTT}{\S 4.3.1}.

\subsection{Acknowledgements}
I thank Clark Barwick and Chris Schommer-Pries for sharing their work
on operator categories, of which much of the material in
\S\ref{sec:dnop} is a special case. In addition, I thank David Gepner
and Owen Gwilliam for helpful discussions about this project.

\section{Algebras and Bimodules in the Cartesian
  Setting}\label{sec:cartalgbimod}

Our goal in this section is to introduce the models for algebras and
bimodules we will use in this paper, and to motivate our approach to
defining an $(\infty,2)$-category of these. Here we will only consider
the case where the monoidal \icat{} these take values in has the
Cartesian product as its tensor product --- to consider general
monoidal \icats{} we must work in the context of (non-symmetric)
\iopds{}, and this extra layer of formalism can potentially obscure
the simple underlying meaning of our definitions. In
\S\ref{subsec:assalg} we recall how associative monoids can be
modelled as certain simplicial objects, and in
\S\ref{subsec:cartbimod} we will see that bimodules can similarly be
described as certain presheaves on the slice category
$\simp_{/[1]}$. Next, in \S\ref{subsec:simp2tens} we discuss how
relative tensor products of bimodules can be described in this
context, using presheaves on $\simp_{/[2]}$. In \S\ref{subsec:segsp}
we recall that a more general class of simplicial objects can be used
to model internal categories in an \icat{} --- in particular, we
review Rezk's \emph{Segal spaces}, which are a model for \icats{}. We
then indicate in \S\ref{subsec:cartcatalg} how, by considering certain
presheaves on $\simp_{/[n]}$ for arbitrary $n$, we can construct a
Segal space that describes an \icat{} of algebras or bimodules --- or
more generally a double \icat{} of these, from which the desired
$(\infty,2)$-category can be extracted.

\subsection{$\simp$ and Associative Algebras}\label{subsec:assalg}
The observation that simplicial spaces satisfying a certain ``Segal
condition'' give a model for $A_{\infty}$-spaces, i.e. spaces equipped
with a homotopy-coherently associative multiplication, goes back to
unpublished work of Segal. Formulated in the language of \icats{},
Segal's definition of a homotopy-coherently associative monoid, which
in the \icatl{} setting is the only meaningful notion of an
associative monoid, is the following:
\begin{defn}
  Let $\mathcal{C}$ be an \icat{} with finite products. An
  \emph{associative monoid} in $\mathcal{C}$ is a simplicial object
  $A_{\bullet} \colon \simp^{\op} \to \mathcal{C}$ such that for every
  $[n]$ in $\simp$ the natural map
  \[ A_{n} \to A_{1} \times \cdots \times A_{1}, \] induced by the
  maps $\rho_{i} \colon [1] \to [n]$ in $\simp$ that send $0$ to $i-1$
  and $1$ to $i$, is an equivalence.
\end{defn}
To see that this definition makes sense, observe that the inner face
map $d_{1} \colon [1] \to [2]$ induces a multiplication
\[ A_{1}\times A_{1} \isofrom A_{2} \xto{d_{1}} A_{1},\]
and the degeneracy $s_{0} \colon [1] \to [0]$ induces a unit
\[ * \isofrom A_{0} \xto{s_{0}} A_{1}. \] To see that the
multiplication is associative, observe that the
comutative square
\csquare{A_{3}}{A_{2}}{A_{2}}{A_{1},}{d_{1}}{d_{2}}{d_{1}}{d_{1}}
exhibits a homotopy between the two possible multiplications
$A_{1}^{\times 3} \to A_{1}$. Similarly, the higher-dimensional cubes
giving compatibilities between the different composites of face maps
$[1] \to [n]$ exhibit the higher coherence homotopies for the
associative monoid.

\subsection{$\simp_{/[1]}$ and Bimodules}\label{subsec:cartbimod}
We will now see that, just as simplicial objects give a natural notion of
associative monoids, presheaves on the slice category $\simp_{/[1]}$
give a model for bimodules between associative monoids:
\begin{defn}
  Let $\mathcal{C}$ be an \icat{} with finite products. A
  \emph{bimodule} in $\mathcal{C}$ is a functor \[M \colon \Dop_{/[1]}
  \to \mathcal{C}\] such that for every object $\phi \colon [n] \to [1]$ in
  $\simp_{/[1]}$, the natural map
  \[ M(\phi) \to M(\phi\rho_{1}) \times \cdots \times
  M(\phi\rho_{n}), \]
  induced by composition with the maps $\rho_{i} \colon [1] \to [n]$,
  is an equivalence.
\end{defn}
To see that such objects can indeed be interpreted as bimodules,
observe that the category $\simp_{/[1]}$ can be described as having
objects sequences $(i_{0},\ldots,i_{n})$ where $0 \leq i_{k}\leq
i_{k+1} \leq 1$, with a unique morphism
$(i_{\phi(0)},\ldots,i_{\phi(n)}) \to (i_{0},\ldots,i_{m})$ for every
$\phi \colon [n] \to [m]$ in $\simp$. In terms of this description a functor $M \colon \Dop_{/[1]} \to \mathcal{C}$ is a bimodule
\IFF{} the object $M(i_{0},\ldots,i_{n})$ decomposes as
$M(i_{0},i_{1}) \times \cdots \times M(i_{n-1},i_{n})$. Thus every
object decomposes as a product of $M(0,0)$, $M(0,1)$ and $M(1,1)$. 

The two maps $[0] \to [1]$ induce functors $\simp \to \simp_{/[1]}$
--- these are the inclusions of the full subcategories of
$\simp_{/[1]}$ with objects of the form $(0,\ldots,0)$ and
$(1,\ldots,1)$. Restricting along these we see that $M(0,0)$ and
$M(1,1)$ are associative monoids. The maps $(0,1) \to (0,0,1)$ and
$(0,1) \to (0,1,1)$ in $\simp_{/[1]}$ give multiplications
\[ M(0,0) \times M(0,1) \isofrom M(0,0,1) \to M(0,1), \]
\[ M(0,1) \times M(1,1) \isofrom M(0,1,1) \to M(0,1), \] which exhibit
$M(0,1)$ as a left $M(0,0)$-module and a right
$M(1,1)$-module. Moreover, the commutative square
\nolabelcsquare{M(0,0,1,1)}{M(0,1,1)}{M(0,0,1)}{M(0,1)} implies that
these module structures are compatible. The remaining data given by
$M$ shows that these actions are homotopy-coherently associative and
compatible with the multiplications in $M(0,0)$ and $M(1,1)$.

\subsection{$\simp_{/[2]}$ and Tensor Products of Bimodules}\label{subsec:simp2tens}
We can similarly define \emph{$\simp_{/[2]}$-monoids} as
certain presheaves on $\simp_{/[2]}$. If we think of $\simp_{/[2]}$ as
having objects sequences $(i_{0},\ldots,i_{m})$ with $0 \leq i_{k}
\leq i_{k+1} \leq 2$, then we can phrase the definition as follows:
\begin{defn}
  Let $\mathcal{C}$ be an \icat{} with finite products. Then a
  \defterm{$\simp_{/[2]}$-monoid} in $\mathcal{C}$ is a functor
  $M \colon \simp_{/[2]}^{\op} \to \mathcal{C}$ such that for every
  object $(i_{0},\ldots,i_{m})$, the natural map
  \[ M(i_{0},\ldots,i_{m}) \to M(i_{0},i_{1}) \times \cdots \times
  M(i_{n-1},i_{n}),\]
  induced by composition with the maps $\rho_{i}$, is an equivalence.
\end{defn}
Unravelling this definition, we see that a $\simp_{/[2]}$-monoid $M$ in
$\mathcal{C}$ is given by the data of
\begin{itemize}
\item three associative monoids $M_{0} = M(0,0), M_{1} = M(1,1)$ and
  $M_{2} = M(2,2)$, given
  by the restrictions of $M$ along the three natural inclusions
  $\simp^{\op} \to \simp^{\op}_{/[2]}$,
\item three bimodules: an $M_{0}$-$M_{1}$-bimodule $M(0,1)$, an
  $M_{1}$-$M_{2}$-bimodule $M(1,2)$, and an
  $M_{0}$-$M_{2}$-bimodule $M(0,2)$, given by the restrictions of
  $M$ along the three natural inclusions $\simp^{\op}_{/[1]} \to
  \simp^{\op}_{/[2]}$,
\item an $M_{1}$-balanced map $M(0,1) \times M(1,2) \simeq M(0,1,2)
  \to M(0,2)$, which we can think of as the restriction of $M$ along
  the inclusion $j \colon \simp_{+}^{\op} \to \simp^{\op}_{/[2]}$ that
  sends $[n]$ to $(0,1,\ldots, 1,2)$ (with $(n+1)$ 1's, for $n =
  -1,0,\ldots$).
\end{itemize}
We would like to understand what it means for the bimodule $M(0,2)$ to be the
tensor product $M(0,1) \otimes_{M_{1}} M(1,2)$ in terms of this data.
In classical algebra, if $A$ is an assocative algebra and $M$ is a
right and $N$ a left $A$-module, the tensor product $M \otimes_{A} N$
can be defined as the reflexive coequalizer of the two multiplication
maps $M \times A \times N \to M \times N$. As usual, in the \icatl{}
setting this coequalizer must be replaced by its ``derived'' version,
namely the colimit of a simplicial diagram, commonly known as the
``bar construction'': specifically, this is the diagram $B(M, A,
N)_{\bullet} := M \times A^{\times \bullet} \times N$ with face maps
given by multiplications and degeneracies determined by the unit of
$A$. 

For a $\simp_{/[2]}$-monoid $M$, this diagram is precisely the
restriction of the augmented simplicial diagram $j$ to
$\simp^{\op}$. Thus, the bimodule $M(0,2)$ is a tensor product
precisely when $j$ is a colimit diagram, which leads us to make the
following definition:
\begin{defn}
  We say a $\Dop_{/[2]}$-monoid $M$ in $\mathcal{C}$ is
  \emph{composite} if the map
\[ \simp_{+}^{\op} \xto{j} \simp^{\op}_{/[2]} \xto{M} \mathcal{C}\] is a
colimit diagram.
\end{defn}

\subsection{$\simp$ and $\infty$-Categories}\label{subsec:segsp}
As originally observed by Rezk~\cite{RezkCSS}, a generalization of
Segal's definition of associative monoids gives a model for \icats{},
namely \emph{Segal spaces}. In the \icatl{} context, these are a
special case of the natural definition of an internal category or
category object:
\begin{defn}
  Let $\mathcal{C}$ be an \icat{} with finite limits. A
  \defterm{category object} in $\mathcal{C}$ is a simplicial object
  $X_{\bullet} \colon \simp^{\op} \to \mathcal{C}$ such that for all
  $[n] \in \simp$ the natural map
  \[ X_{n} \to X_{1} \times_{X_{0}} \cdots \times_{X_{0}} X_{1},\]
  induced by the maps $\rho_{i} \colon [1] \to [n]$ and the maps $[0]
  \to [n]$, is an equivalence. We write $\Cat(\mathcal{C})$ for the
  full subcategory of $\Fun(\Dop, \mathcal{C})$ spanned by the
  category objects.
\end{defn}
A \emph{Segal space} is a category object in the \icat{} $\mathcal{S}$
of spaces. We can think of a Segal space $X_{\bullet}$ as having a
space $X_{0}$ of ``objects'' and a space $X_{1}$ of ``morphisms''; the
face maps $X_{1} \rightrightarrows X_{0}$ assign the source and target
object to each morphism, and the degeneracy $s_{0} \colon X_{0} \to
X_{1}$ assigns an identity morphism to every object. Then $X_{n}\simeq
X_{1} \times_{X_{0}} \cdots \times_{X_{0}} X_{1}$ is the space of
composable sequences of $n$ morphisms, and the face map $d_{1} \colon
[1] \to [2]$ gives a composition
\[ X_{1}\times_{X_{0}} X_{1} \isofrom X_{2} \xto{d_{1}} X_{1}.\] The
remaining data in $X_{\bullet}$ gives the homotopy-coherent
associativity data for this composition and its compatibility with
the identity maps.

\begin{remark}
We can regard the \icat{} $\CatI$ of \icats{} as the localization of
the \icat{} of Segal spaces at the fully faithful and essentially
surjective functors (in the appropriate homotopically correct
sense). The main theorem of \cite{RezkCSS} is that this localization
is given by the full subcategory $\txt{CSS}(\mathcal{S})$ of
$\Cat(\mathcal{S})$ spanned by the \emph{complete} Segal spaces. It
was proved by Joyal and Tierney~\cite{JoyalTierney} that the model
category of complete Segal spaces is Quillen equivalent to Joyal's
model category of quasicategories, and so the \icat{} $\CatI$, defined
using quasicategories, is equivalent to $\txt{CSS}(\mathcal{S})$.
\end{remark}

We will also make use of category objects in $\CatI$. These give a
notion of \emph{double \icats{}}, just as double categories can be
thought of as internal categories in $\Cat$. We will see below in
\S\ref{subsec:inftyn} that, just as a double category has two
underlying bicategories, a double \icat{} has two underlying
$(\infty,2)$-categories.

\subsection{$\simp_{/[n]}$ and the $(\infty,2)$-Category of Algebras
  and Bimodules}\label{subsec:cartcatalg}

As a preliminary to discussing the $(\infty,2)$-category of algebras
and bimodules in an \icat{} $\mathcal{C}$ with finite products, let us
consider the underlying \icat{} $\falg_{1}(\mathcal{C})$ of algebras
and bimodules as a Segal space. From our discussion so far, we have an
obvious choice for the space $\falg_{1}(\mathcal{C})_{0}$ of objects,
namely the space of associative monoids in $\mathcal{C}$, and for the
space $\falg_{1}(\mathcal{C})_{1}$ of morphisms, namely the space of
$\simp_{/[1]}$-monoids in $\mathcal{C}$. These spaces are simply the
appropriate collections of connected components in the spaces
$\Map(\simp^{\op}, \mathcal{C})$ and $\Map(\simp^{\op}_{/[1]},
\mathcal{C})$, respectively. The source and target maps are induced by
composition with $d_{1}$ and $d_{0} \colon [0] \to [1]$, and
composition with $s_{0} \colon [1] \to [0]$ sends a monoid $A$
to $A$ considered as an $A$-$A$-bimodule, giving the correct
identity morphisms. 

In order to construct a Segal space, the space 
$\falg_{1}(\mathcal{C})_{2}$ must be equivalent to
$\falg_{1}(\mathcal{C})_{1} \times_{\falg_{1}(\mathcal{C})_{0}}
\falg_{1}(\mathcal{C})_{1}$. On the other hand, the composition in
$\falg_{1}(\mathcal{C})$ should be given by relative tensor products
of bimodules, which we saw above corresponds to taking a composite
$\simp_{/[2]}$-monoid and composing with the middle face map $d_{1}
\colon [1] \to [2]$ --- this suggests that the space
$\falg_{1}(\mathcal{C})_{2}$ should be the space of composite
$\simp_{/[2]}$-monoids. Luckily, it will turn out that the space of
composite $\simp_{/[2]}$-monoids is indeed equivalent to $\falg_{1}(\mathcal{C})_{1} \times_{\falg_{1}(\mathcal{C})_{0}}
\falg_{1}(\mathcal{C})_{1}$ via the appropriate forgetful maps, so
this does actually make sense.

To define the spaces $\falg_{1}(\mathcal{C})_{n}$ for general $n$, we
similarly consider \emph{composite $\simp_{/[n]}$-monoids} for
arbitrary $n$. If we think of $\simp_{/[n]}$ as having objects
sequences $(i_{0},\ldots,i_{m})$ with $0 \leq i_{k} \leq i_{k+1} \leq
n$, we have the following definition:
\begin{defn}
  Let $\mathcal{C}$ be an \icat{} with finite products. Then a
  \defterm{$\simp_{/[n]}$-monoid} in $\mathcal{C}$ is a functor
  $M \colon (\simp_{/[n]})^{\op} \to \mathcal{C}$ such that for every
  object $(i_{0},\ldots,i_{m})$, the natural map
  \[ M(i_{0},\ldots,i_{m}) \to M(i_{0},i_{1}) \times \cdots \times
  M(i_{n-1},i_{n}),\]
  induced by composition with the maps $\rho_{i}$, is an equivalence.
\end{defn}
A $\simp_{/[n]}$-monoid in $\mathcal{C}$ describes
\begin{itemize}
\item $n+1$ associative monoids $M_{0} = M(0,0), M_{1} =
  M(1,1),\ldots, M_{n} = M(n,n)$,
\item for each pair $(i,j)$ with $0 \leq i < j \leq n$, an
  $M_{i}$-$M_{j}$-bimodule $M(i,j)$,
\item for each triple $(i,j,k)$ with $0 \leq i < j < k \leq n$, an
  $M_{j}$-balanced map $M(i,j) \times M(j,k) \to M(i,k)$, compatible
  with the actions of $M_{i}$ and $M_{k}$,
\item such that these bilinear maps are compatible, e.g. if $0 \leq i
  < j < k < l \leq n$ then the diagram \nolabelcsquare{M(i,j) \times
    M(j,k) \times M(k,l)}{M(i,j) \times M(j,l)}{M(i,k) \times
    M(k,l)}{M(i,l)} commutes.
\end{itemize}
Composition with the maps $\phi_{*} \colon \simp_{/[n]} \to
\simp_{/[m]}$ given by composition with a map $\phi \colon [n] \to
[m]$ in $\simp$ takes $\simp_{/[m]}$-monoids in $\mathcal{C}$ to
$\simp_{/[n]}$-monoids.

We say that a $\simp_{/[n]}$-monoid $M$ is \emph{composite} if these
maps exhibit the bimodule $M(i,j)$ as the iterated tensor product 
\[ M(i,i+1) \otimes_{M_{i+1}} M(i+1,i+2) \otimes_{M_{i+2}} \cdots
\otimes_{M_{j-1}} M(j-1,j).\] As in the case $n = 2$, this condition
can be formulated precisely in terms of certain (multi)simplicial
diagrams being colimits --- we will discuss this in more detail below in
\S\ref{subsec:bimodopd}.

If $\falg_{1}(\mathcal{C})_{n}$ denotes the
space of composite $\simp_{/[n]}$-monoids, then the main results of
the \S\ref{sec:algbimod} will tell us:
\begin{itemize}
\item The composite monoids are preserved under composition with the
    maps $\simp_{/[n]} \to \simp_{/[m]}$ coming from maps in
    $\simp$. Thus the spaces $\falg_{1}(\mathcal{C})_{\bullet}$
    fit together into a simplicial space.
\item The spaces $\falg_{1}(\mathcal{C})_{\bullet}$ satisfy the Segal
  condition, i.e. the map
  \[ \falg_{1}(\mathcal{C})_{n} \to \falg_{1}(\mathcal{C})_{1}
  \times_{\falg_{1}(\mathcal{C})_{0}} \cdots
  \times_{\falg_{1}(\mathcal{C})_{0}} \falg_{1}(\mathcal{C})_{1} \] is
  an equivalence for all $n$.
\end{itemize}
In other words, $\falg_{1}(\mathcal{C})_{\bullet}$ is a Segal space.
This (or more precisely its completion) is our \icat{} of algebras
and bimodules. 

We can just as easily consider the \emph{\icats{}}
$\fALG_{1}(\mathcal{C})_{n}$ of composite $\simp_{/[n]}$-monoids,
i.e. the appropriate full subcategories of $\Fun(\simp_{/[n]}^{\op},
\mathcal{C})$. We'll show that these form a category object
$\fALG_{1}(\mathcal{C})$ in $\CatI$, i.e. a double \icat{} --- this
has associative monoids as objects, algebra homomorphisms as vertical
morphisms, bimodules as horizontal morphisms, and bimodule
homomorphisms as commutative squares. As we will see below in
\S\ref{subsec:inftyn}, from this double \icat{} we can then extract an
$(\infty,2)$-category $\fAlg_{1}(\mathcal{C})$ of algebras, bimodules,
and bimodule homomorphisms.

\section{$\mathbb{E}_{n}$-Algebras and Iterated Bimodules in the
  Cartesian Setting}\label{sec:cartenalg}

The definitions we considered in \S\ref{sec:cartalgbimod} can be iterated,
and in this section we will discuss how this leads to an
$(\infty,n+1)$-category of $\En$-algebras, again in the Cartesian
case. In \S\ref{subsec:carten} we consider iterated $\simp$-monoids,
which gives a model for $\En$-algebras. Then in \S\ref{subsec:uple} we
see that, similarly, iterating the notion of category object gives
\emph{$n$-uple} \icats{}, in the form of $n$-uple Segal spaces. This
leads to a notion of $(\infty,n)$-categories in the form of Barwick's
\emph{iterated Segal spaces}, which we review in
\S\ref{subsec:inftyn}; this is the model of $(\infty,n)$-categories we
will use below in \S\ref{sec:enalg}. Finally, in
\S\ref{subsec:cartitbimod} we indicate how the definition of the
double \icat{} of algebras and bimodules can be iterated to get
$(n+1)$-uple \icats{} of $\En$-algebras in a Cartesian monoidal
\icat{}.

\subsection{$\Dn$ and $\mathbb{E}_{n}$-Algebras}\label{subsec:carten}
The Dunn-Lurie Additivity Theorem \rcite{HA}{Theorem 5.1.2.2} implies
that, in the \icatl{} setting, $\mathbb{E}_{n}$-algebras in some
\icat{} $\mathcal{C}$ are equivalent to associative algebras in
$\mathbb{E}_{n-1}$-algebras in $\mathcal{C}$. In the Cartesian case
we would thus expect that associative monoids in associative monoids
in \ldots{} in $\mathcal{C}$ give a model for $\En$-algebras in
$\mathcal{C}$ --- we will prove a precise version of this claim below
in \S\ref{subsec:dnmonoid}. Unwinding the definition, we see that
these objects can be described as certain multisimplicial objects in
$\mathcal{C}$:
\begin{defn}
  Let $\mathcal{C}$ be an \icat{} with finite products. A
  \emph{$\Dn$-monoid} in $\mathcal{C}$ is a multisimplicial object
  \[ A_{\bullet,\ldots,\bullet} \colon \Dnop \to  \mathcal{C} \]
  such that for every object $([i_{1}],\ldots,[i_{n}]) \in \Dn$,
  the natural map
  \[ A_{i_{1},\ldots,i_{n}} \to \prod_{j_{1} = 1}^{i_{1}} \cdots
  \prod_{j_{n} = 1}^{i_{n}} A_{1,\ldots,1},\]
  induced by the maps $(\rho_{j_{1}},\ldots,\rho_{j_{n}})$,
  is an equivalence.
\end{defn}

\begin{remark}
  It is convenient to introduce some notation to simplify this
  definition: let $C_{n}$ denote the object $([1],\ldots,[1])$ in
  $\Dnop$, and for $I \in \Dnop$ let $|I|$ denote the set of
  (levelwise) inert maps $C_{n} \to I$, i.e. the maps
  $(\rho_{i_{1}},\ldots,\rho_{i_{n}})$. Then the Segal condition for
  a $\Dn$-monoid $A$ can be stated as: for every $I \in \Dnop$, the
  natural map $A_{I} \to A^{\times |I|}_{C_{n}}$ induced by the maps
  in $|I|$ is an equivalence.
\end{remark}

\subsection{$\Dn$ and $n$-uple $\infty$-Categories}\label{subsec:uple}
Just as we can iterate the notion of associative monoid to get a
definition of $\En$-algebras in the Cartesian setting, we can iterate
the definition of a category object to get a definition of
\emph{$n$-uple internal categories}. To state this definition more
explicitly, it is useful to first introduce some notation:

\begin{defn}\label{defn:activeinert}
  A morphism $f \colon [n] \to [m]$ in $\simp$ is \defterm{inert} if
  it is the inclusion of a sub-interval of $[m]$, i.e. $f(i) = f(0)+i$
  for all $i$, and \defterm{active} if it preserves the extremal
  elements, i.e.  $f(0) = 0$ and $f(n) = m$. More generally, we say a
  morphism $(f_{1},\ldots,f_{n})$ in $\Dn$ is \emph{inert} or
  \emph{active} if each $f_{i}$ is inert or active. We write
  $\Dn_{\txt{act}}$ and $\Dn_{\txt{int}}$ for the subcategories of
  $\Dn$ with active and inert morphisms, respectively.
\end{defn}

\begin{lemma}
  The active and inert morphisms form a factorization system on
  $\Dn$.
\end{lemma}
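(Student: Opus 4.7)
The plan is to reduce immediately to the case $n=1$, since the notions of active and inert morphism in $\Dn$ are defined componentwise and a product of factorization systems is again a factorization system: given the factorization system on each copy of $\simp$, factoring each component of a morphism $(f_{1},\ldots,f_{n})$ in $\Dn$ and then recombining yields the required factorization in $\Dn$, and the lifting/orthogonality condition in $\Dn$ reduces to the analogous condition in each factor.

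For $\simp$ itself, I would first check the easy structural conditions: identities are both active and inert, and the composite of two active (respectively, two inert) maps is again active (respectively, inert). Then I would produce the factorization of an arbitrary $f \colon [n] \to [m]$ by setting $k := f(n) - f(0)$ and defining
\[ \alpha \colon [n] \to [k], \quad i \mapsto f(i) - f(0), \qquad \iota \colon [k] \to [m], \quad j \mapsto f(0) + j, \]
so that $\alpha$ is active, $\iota$ is inert, and $\iota \circ \alpha = f$. This factorization is forced: any inert map in the image factorization must be the inclusion of the interval spanned by $f(0)$ and $f(n)$, and the remaining active map is then determined.

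The real content is the orthogonality property. Given a commuting square with $\alpha \colon [a] \to [c]$ active on the left and $\iota \colon [b] \to [d]$ inert on the right, filling in $h \colon [a] \to [b]$ on top and $k \colon [c] \to [d]$ on the bottom, I would define the diagonal lift by $\phi \colon [c] \to [b]$, $j \mapsto k(j) - \iota(0)$. The key small checks are: $\phi$ lands in $[b]$ because $k(0) = k(\alpha(0)) = \iota(h(0)) \geq \iota(0)$ and $k(c) = k(\alpha(a)) = \iota(h(a)) \leq \iota(0) + b$, combined with monotonicity of $k$; the two triangles commute by direct computation using that $\iota$ is the affine embedding $j \mapsto \iota(0) + j$; and uniqueness of $\phi$ follows from injectivity of $\iota$, which forces $\phi(j) = k(j) - \iota(0)$.

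No step looks like a serious obstacle — this is essentially the observation that a monotone map of totally ordered finite sets is uniquely determined by its image together with the data of how it maps to that image. The one place where one must be a touch careful is verifying that the candidate lift $\phi$ actually takes values in $[b]$, which uses both the commutativity of the outer square and the order-preserving property of $k$; everything else is bookkeeping, and the extension from $\simp$ to $\Dn$ is then purely formal.
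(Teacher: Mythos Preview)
Your proposal is correct and carries out explicitly what the paper only gestures at: the paper's proof simply cites this as a special case of \rcite{BarwickOpCat}{Lemma 8.3} and remarks that it is ``easy to check by hand.'' Your argument is precisely that hand-check, with the componentwise reduction to $n=1$ and the explicit construction of the factorization and orthogonality lift in $\simp$; nothing is missing.
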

\begin{proof}
  This is a special case of \rcite{BarwickOpCat}{Lemma 8.3}; it is
  also easy to check by hand.
\end{proof}

\begin{remark}
  Since the objects of $\Dn$ have no non-trivial automorphisms, the
  factorizations into active and inert morphisms are actually
  \emph{strictly} unique, rather than just unique up to isomorphism.
\end{remark}

\begin{defn}
  Let $S$ be a subset of $\{1,\ldots,n\}$. We write $C_{S} :=
  ([i_{1}], \ldots, [i_{n}])$ where $i_{j}$ is $1$ for $j \in S$ and
  $0$ otherwise. We refer to the objects $C_{S}$ as \emph{cells} and
  write $\Celln$ for the full subcategory of $\Dn_{\txt{int}}$ spanned
  by the objects $C_{S}$ for all $S \subseteq \{1,\ldots,n\}$. Note
  that we have $C_{n} = C_{\{1,\ldots,n\}}$.
\end{defn}

\begin{remark}
  The category $\Celln$ is equivalent to the product
  $(\Cell^{1})^{\times n}$, where $\Cell^{1}$ is the category with
  objects $[0]$ and $[1]$ and the two inclusions $[0] \to [1]$ as its
  only non-identity morphisms.
\end{remark}

\begin{defn}
  For $I \in \Dn$, we write $\CellnI$ for the category
  $(\Dn_{\txt{int}})_{/I} \times_{\Dn_{\txt{int}}} \Celln$ of
  inert morphisms from cells to $I$.
\end{defn}

\begin{defn}
  Let $\mathcal{C}$ be an \icat{} with finite limits. An
  \defterm{$n$-uple category object} in $\mathcal{C}$ is a
  multisimplicial object
  $X_{\bullet,\ldots,\bullet} \colon \Dnop \to \mathcal{C}$ such that for all
  $I = ([i_{1}],\ldots,[i_{n}]) \in \simp$ the natural map
  \[ X_{I} \to \lim_{C \to I \in \CellnIop} X_{C} \]
  is an equivalence. We write $\Cat^{n}(\mathcal{C})$ for the full
  subcategory of $\Fun(\Dnop, \mathcal{C})$ spanned by the $n$-uple
  category objects.
\end{defn}

\begin{remark}
  To see that this is equivalent to iterating the definition of a
  category object in $\mathcal{C}$, observe that for $I =
  ([i_{1}],\ldots,[i_{n}])$ in $\Dn$, the category $\CellnI$ is simply
  the product $\Cell^{1}_{/[i_{1}]} \times \cdots \times
  \Cell^{1}_{/[i_{n}]}$, and so decomposing the limit we see that
  $X_{\bullet,\ldots,\bullet}$ is an $n$-uple category object \IFF{}
  $X_{i,\bullet,\ldots,\bullet}$ is an $(n-1)$-uple category object
  for all $i$, and $X_{\bullet}$ is a category object in
  $(n-1)$-simplicial objects in $\mathcal{C}$.
\end{remark}

If $\mathcal{C}$ is the \icat{} $\mathcal{S}$ of spaces, an $n$-uple category
object $X_{\bullet,\ldots,\bullet}$ can be thought of as consisting of
\begin{itemize}
 \item a space $X_{0,\ldots,0}$ of objects
  \item spaces $X_{1,0,\ldots,0}$, \ldots,
    $X_{0,\ldots,0,1}$ of $n$ different kinds of 1-morphism,
        each with a source and target in $X_{0,\ldots,0}$,
  \item spaces $X_{1,1,0,\ldots,0}$, etc., of ``commutative
    squares'' between any two kinds of 1-morphism,
  \item spaces $X_{1,1,1,0,\ldots,0}$, etc., of
    ``commutative cubes'' between any three kinds of 1-morphism,
  \item \ldots
  \item a space $X_{1,1,\ldots,1}$ of ``commutative
    $n$-cubes'',
\end{itemize}
together with units and coherently homotopy-associative composition
laws for all these different types of morphisms. In other words, an
$n$-uple category object in $\mathcal{S}$ can be regarded as an
\emph{$n$-uple \icat{}}.

\begin{remark}
  Since \icats{} can be thought of as (complete) Segal spaces,
  i.e. category objects in $\mathcal{S}$, we can think of $n$-uple
  category objects in $\CatI$ as \emph{$(n+1)$-uple \icats{}}. More
  precisely, regarding $\CatI$ as the \icat{} of complete Segal spaces
  we have an inclusion $\CatI \hookrightarrow \Cat(\mathcal{S})$, and
  this induces an inclusion $\Cat^{n}(\CatI) \hookrightarrow
  \Cat^{n+1}(\mathcal{S})$.
\end{remark}

\subsection{$\Dn$ and $(\infty,n)$-Categories}\label{subsec:inftyn}
We can view $(\infty,n)$-categories as given by the same kind of data
as an $n$-uple \icat{}, except that there is only one type of
1-morphism, so to define $(\infty,n)$-categories as a special kind of
$n$-uple \icat{} we want to require certain spaces to be
``trivial''. This leads to Barwick's definition of an $n$-fold Segal
object in an \icat{}:
\begin{defn}
  Suppose $\mathcal{C}$ is an \icat{} with finite limits. A
  \emph{1-fold Segal object} in $\mathcal{C}$ is just a category
  object in $\mathcal{C}$. For $n > 1$ we inductively define an
  \emph{$n$-fold Segal object} in $\mathcal{C}$ to be an $n$-uple
  category object $\mathcal{D}$ such that
  \begin{enumerate}[(i)]
  \item the $(n-1)$-uple category object
    $\mathcal{D}_{0,\bullet,\ldots,\bullet}$ is constant,
  \item the $(n-1)$-uple category object
    $\mathcal{D}_{k,\bullet,\ldots,\bullet}$ is an $(n-1)$-fold Segal
    object for all $k$.
  \end{enumerate}
  We write $\Seg_{n}(\mathcal{C})$ for the full subcategory of
  $\Cat^{n}(\mathcal{C})$ spanned by the $n$-fold Segal objects. When
  $\mathcal{C}$ is the \icat{} $\mathcal{S}$ of spaces, we refer to
  $n$-fold Segal objects in $\mathcal{S}$ as \emph{$n$-fold Segal spaces}.
\end{defn}
\begin{remark}
  Unwinding the definition, we see that an $n$-fold Segal space $X$ consists of
\begin{itemize}
\item a space $X_{0,\ldots,0}$ of objects,
\item a space $X_{1,0,\ldots,0}$ of 1-morphisms,
\item a space $X_{1,1,0,\ldots,0}$ of 2-morphisms,
\item \ldots
\item a space $X_{1,\ldots,1}$ of $n$-morphisms,
\end{itemize}
together with units and coherently homotopy-associative composition
laws for these morphisms.
\end{remark}

Given a double category object $X \colon \simp^{2,\op} \to \mathcal{C}$,
there is a canonical way to extract a 2-fold Segal object $X'$:
\begin{itemize}
\item We take $X'_{0,\bullet}$ to be the constant simplicial object at $X_{0,0}$.
\item For $n > 0$ we define $X'_{n,\bullet}$ to be the pullback
  \nolabelcsquare{X'_{n,\bullet}}{X_{n,\bullet}}{X'_{0,\bullet}}{X_{0,\bullet},}
  where the bottom horizontal map is induced by the degeneracies. This
  amounts to forgetting the objects of $X_{0,1}$ that are not in the
  image of the degeneracy map $X_{0,0} \to X_{0,1}$ --- i.e. we are
  forgetting all the non-trivial 1-morphisms of one kind.
\end{itemize}
This construction can be iterated to extract an $n$-fold Segal object
from an $n$-uple category object --- in fact, by permuting the $n$
coordinates we can extract $n$ different Segal objects. More formally,
we have:
\begin{propn}[\rcite{spans}{Proposition 2.13}]
  Let $\mathcal{C}$ be an \icat{} with finite limits. The inclusion $\Seg_{n}(\mathcal{C}) \hookrightarrow
  \Cat^{n}(\mathcal{C})$ has a right adjoint $U_{\Seg} \colon
  \Cat^{n}(\mathcal{C}) \to \Seg_{n}(\mathcal{C})$. \qed
\end{propn}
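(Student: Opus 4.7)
I would proceed by induction on $n$, using the pullback construction indicated in the text to enforce the constancy condition one simplicial coordinate at a time. The base case $n = 1$ is vacuous since $\Seg_1(\mathcal{C}) = \Cat(\mathcal{C})$. For $n > 1$, I would view an $n$-uple category object as a category object in $\Cat^{n-1}(\mathcal{C})$. The inductive hypothesis supplies a right adjoint $U^{(n-1)}$ to the inclusion $\Seg_{n-1}(\mathcal{C}) \hookrightarrow \Cat^{n-1}(\mathcal{C})$. Since $U^{(n-1)}$ preserves limits, postcomposition with it sends category objects in $\Cat^{n-1}(\mathcal{C})$ to category objects in $\Seg_{n-1}(\mathcal{C})$, yielding a right adjoint to the inclusion $\Cat(\Seg_{n-1}(\mathcal{C})) \hookrightarrow \Cat^n(\mathcal{C})$; call its value on $X$ the object $Y$.

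It remains to construct a right adjoint to the inclusion $\Seg_n(\mathcal{C}) \hookrightarrow \Cat(\Seg_{n-1}(\mathcal{C}))$ picking out those $Y$ whose object of objects $Y_0$ is constant. Let $c \colon \mathcal{C} \to \Seg_{n-1}(\mathcal{C})$ denote the constant-diagram functor, equipped with its canonical counit $\epsilon \colon c(Y_{0,\ldots,0}) \to Y_0$. Define
\[
Z_k := Y_k \times_{Y_0^{\times (k+1)}} c(Y_{0,\ldots,0})^{\times (k+1)},
\]
the pullback over the $(k+1)$ vertex maps $Y_k \to Y_0$ and the $(k+1)$-fold power of $\epsilon$. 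This assembles into a simplicial diagram $Z_{\bullet}$ with $Z_0 \simeq c(Y_{0,\ldots,0})$, hence constant. Using the Segal decomposition $Y_k \simeq Y_1 \times_{Y_0} \cdots \times_{Y_0} Y_1$ and the fact that $c(-)$ preserves limits, a direct fibre-product computation shows $Z_k \simeq Z_1 \times_{Z_0} \cdots \times_{Z_0} Z_1$, so that $Z$ is a category object; together with the constancy of $Z_0$ this exhibits $Z$ as an $n$-fold Segal object.

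Finally, for the universal property: given any $n$-fold Segal object $W$ and any morphism $f \colon W \to X$ in $\Cat^n(\mathcal{C})$, the component $f_0 \colon W_0 \to Y_0$ factors uniquely through $\epsilon$ via $c$ applied to $W_{0,\ldots,0} \to Y_{0,\ldots,0}$, because $W_0 \simeq c(W_{0,\ldots,0})$. The universal property of the pullback then produces a unique lift $W_k \to Z_k$ for each $k$, assembling into the desired factorization $W \to U_{\Seg} X := Z$. The step I expect to demand the most care is verifying that the pullback construction preserves the Segal condition in the first simplicial coordinate --- the fibre-product computation sketched above --- and, more technically, keeping track of naturality in the remaining $n-1$ simplicial coordinates, which is handled cleanly by interpreting every pullback in sight as taken inside $\Seg_{n-1}(\mathcal{C})$ (where limits exist because $\mathcal{C}$ has finite limits and $\Seg_{n-1}(\mathcal{C})$ is closed under them inside $\Fun(\simp^{n-1,\op},\mathcal{C})$).
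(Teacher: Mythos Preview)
The paper does not prove this proposition here; it simply cites \rcite{spans}{Proposition 2.13} and ends with a \qed. Your argument is correct and is precisely the standard proof --- in fact it is a careful version of the informal description the paper gives in the paragraph immediately preceding the proposition (pulling back against the constant object at $X_{0,\ldots,0}$). Your $(k+1)$-fold pullback along all vertex maps is the right way to make that sketch precise: it amounts to writing $Z \simeq Y \times_{\mathrm{cosk}_0 Y_0} \mathrm{cosk}_0\, c(Y_{0,\ldots,0})$, from which both the Segal condition and the universal property follow cleanly, exactly as you verify.
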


Although $n$-fold Segal spaces describe $(\infty,n)$-categories, the
\icat{} $\Seg_{n}(\mathcal{S})$ is not the correct homotopy theory of
$(\infty,n)$-categories, as we have not inverted the appropriate class
of fully faithful and essentially surjective maps. This localization
can be obtained by restricting to the full subcategory
$\txt{CSS}_{n}(\mathcal{S})$ of \emph{complete} $n$-fold Segal spaces,
as proved by Barwick~\cite{BarwickThesis}; we denote the localization
$\Seg_{n}(\mathcal{S}) \to \txt{CSS}_{n}(\mathcal{S})$ by $L_{n}$, but
we will not need the details of the definition in this paper.

\begin{remark}\label{rmk:complsegnmon}
  There is a canonical way to extract an $(\infty,n)$-category from an
  $n$-uple \icat{} $\mathcal{C}$, namely the completion
  $L_{n}U_{\Seg}\mathcal{C}$ of the underlying $n$-fold Segal space of
  $\mathcal{C}$. Moreover, the functor $L_{n}U_{\Seg}\colon
  \Cat^{n}(\mathcal{S}) \to \Cat_{(\infty,n)}$ is symmetric monoidal
  with respect to the Cartesian product --- since $U_{\Seg}$ is a
  right adjoint it preserves products, and $L_{n}$ preserves products
  by \rcite{spans}{Lemma 2.21}. In particular, if $\mathcal{C}$ is an
  $\mathbb{E}_{m}$-monoidal $n$-uple \icat{}, then
  $L_{n}U_{\Seg}\mathcal{C}$ is an $\mathbb{E}_{m}$-monoidal
  $(\infty,n)$-category. Similarly, we can extract an underlying
  $(\infty,n+1)$-category from an $n$-uple category object
  $\mathcal{C}$ in $\CatI$ as $L_{n+1}U_{\Seg}i\mathcal{C}$ where $i$
  denotes the inclusion $\Cat^{n}(\CatI) \hookrightarrow
  \Cat^{n+1}(\mathcal{S})$. The functor $L_{n+1}U_{\Seg}i \colon
  \Cat^{n}(\CatI) \to \Cat_{(\infty,n+1)}$ also preserves products,
  since $i$ is another right adjoint.
\end{remark}

\subsection{$\DnI$ and Iterated Bimodules}\label{subsec:cartitbimod}
We will now consider how to extend the definition of the double
\icat{} $\fALG_{1}(\mathcal{C})$ of algebras, algebra homomorphisms,
and bimodules in $\mathcal{C}$ we outlined above to get an
$(n+1)$-uple \icat{} $\fALG_{n}(\mathcal{C})$ of $\En$-algebras.  We
take the \icat{} $\fALG_{1}(\mathcal{C})_{0,\ldots,0}$ of objects to
be the \icat{} of $\Dn$-monoids in $\mathcal{C}$ --- a full
subcategory of $\Fun(\Dnop, \mathcal{C})$. To define the remaining
structure, we first observe that we can iterate the definition of
$\simp_{/[i]}$-monoids to get a notion of $\DnI$-monoids for all $I
\in \Dn$:
\begin{defn}
  Let $\mathcal{C}$ be an \icat{} with products, and suppose $I \in
  \Dn$. A \emph{$\DnI$-monoid} in $\mathcal{C}$ is a functor
  $X \colon \DnIop \to \mathcal{C}$ such that for every object $\phi \colon J
  \to I$, the natural map
  \[ X(\phi) \to \prod_{\alpha \in |J|} X(\phi\circ\alpha)\] is
  an equivalence.
\end{defn}
Just as in the case $n = 1$, however, we do not want
$\fALGn(\mathcal{C})_{I}$ to contain \emph{all} the $\DnI$-monoids,
only those that are ``composite'' in the sense that they decompose
appropriately as tensor products. We will define this notion precisely
below in \S\ref{subsec:Enbimod}. The main result of this paper,
restricted to the Cartesian case, is then that this does indeed give
an $(n+1)$-uple \icat{}. More precisely, if for every $I \in \Dn$, we let
$\fALG_{n}(\mathcal{C})_{I}$ denote the \icat{} of composite
$\DnI$-monoids (a full subcategory of $\Fun(\DnIop,
\mathcal{C})$), then:
  \begin{itemize}
  \item The composite monoids are preserved under composition with the
    maps $\DnI \to \Dn_{/J}$ coming from maps in
    $\Dn$. Thus the objects $\fALG_{n}(\mathcal{C})_{\bullet,\ldots,\bullet}$
    define a multisimplicial \icat{}.
  \item The \icats{} $\fALG_{n}(\mathcal{C})_{\bullet,\ldots,\bullet}$
    satisfy the Segal condition, i.e. the map
  \[ \fALG_{n}(\mathcal{C})_{I} \to \lim_{C\to I \in \CellnIop}
  \fALG_{n}(\mathcal{C})_{C}\] is an equivalence for all $n$.
  \end{itemize}
In other words, $\fALG_{n}(\mathcal{C})$ is an $n$-uple category object in
$\CatI$. From this we can then extract an $(\infty,n+1)$-category
$\fAlg_{n}(\mathcal{C})$ as the underlying complete $(n+1)$-fold Segal
space $L_{n+1}U_{\Seg}i\fALG_{n}(\mathcal{C})$, as discussed above.

\section{Algebras and Bimodules}\label{sec:algbimod}
In \S\ref{sec:cartalgbimod} we sketched our approach to constructing a
double \icat{} of algebras and bimodules in the Cartesian case,
i.e. when the algebras are defined with respect to the monoidal
structure given by the Cartesian product. However, although this case
is certainly not without interest, many key examples of symmetric
monoidal \icats{} where we want to consider algebras and bimodules
have non-Cartesian tensor products --- for example: spectra, modules
over a ring spectrum, or the ``derived \icat{}'' of chain complexes in
an abelian category with quasi-isomorphisms inverted. To extend our
definitions to apply also to such non-Cartesian examples, we will work
with the theory of \emph{$\infty$-operads}. Specifically, in this
section we will make use of the theory of \emph{non-symmetric
  \iopds{}} to construct a double \icat{} $\fALG_{1}(\mathcal{C})$ of
associative algebras in any nice monoidal \icat{} $\mathcal{C}$, with
algebra homomorphisms and bimodules as the two kinds of 1-morphisms.

In \S\ref{subsec:nsop} we recall the basics of \nsiopds{}, and then in
\S\ref{subsec:bimodopd} we observe that using these the definition of
bimodules we discussed above in \S\ref{sec:cartalgbimod} has a
natural extension to the non-Cartesian setting, which lets us define
the \icats{} $\fALG_{1}(\mathcal{C})_{k}$ that will make up the
simplicial \icat{} $\fALG_{1}(\mathcal{C})$. In
\S\ref{subsec:alg1segcond} we check that these \icats{} satisfy the Segal
condition, and in \S\ref{subsec:functcomp} we show that they are
functorial and so do indeed form a simplicial object in
$\CatI$. Finally, in \S\ref{subsec:bimodfib} we study the forgetful
functor from bimodules to pairs of algebras in more detail --- the
results we prove here will be used below in \S\ref{subsec:Algnmaps}.

\subsection{Non-Symmetric $\infty$-Operads}\label{subsec:nsop}
In this subsection we will review some basic notions from the theory
of non-symmetric \iopds{}. For more motivation for these definitions,
we refer the reader to the extensive discussion in \rcite{enr}{\S
  2.1--2.2}. 

In ordinary category theory a monoidal category can be viewed as being
precisely an associative monoid in the 2-category of categories,
provided we interpret ``associative monoid'' in an appropriately
2-categorical sense. Similarly, we can define a \emph{monoidal
  \icat{}} to be an associative monoid in the \icat{} $\CatI$ of
\icats{}. As we saw in \S\ref{subsec:assalg}, we can take this to mean
a simplicial object in $\CatI$ satisfying a ``Segal condition''. Using
Lurie's straightening equivalence, we get an equivalent definition of monoidal
\icats{} as certain coCartesian fibrations over $\Dop$:
\begin{defn}
  A \emph{monoidal \icat{}} is a coCartesian fibration
  $\mathcal{C}^{\otimes}\to \simp^{\op}$ such that for each $[n]$ the map
  $\mathcal{C}^{\otimes}_{[n]} \to
  (\mathcal{C}^{\otimes}_{[1]})^{\times n}$, induced by the
  coCartesian morphisms over the maps $\rho_{i}$ in $\simp^{\op}$, is
  an equivalence.
\end{defn}

One advantage of this definition is that it  can be weakened to give a
definition of \emph{non-symmetric \iopds{}}:
\begin{defn}\label{defn:nsiopd}
  A \emph{non-symmetric \iopd{}} is a functor of \icats{} $\pi \colon
  \mathcal{O} \to \simp^{\op}$ such that:
  \begin{enumerate}[(i)]
  \item For every inert morphism $\phi \colon [m] \to [n]$  in
    $\simp^{\op}$ and every $X \in \mathcal{O}_{[n]}$ there exists
    a $\pi$-coCartesian morphism $X \to \phi_{!}X$ over $\phi$.
  \item For every $[n] \in \simp^{\op}$ the functor
    \[\mathcal{O}_{[n]} \to
    (\mathcal{O}_{[1]})^{\times n}\] induced by the
    coCartesian morphisms over the inert maps $\rho_{i}$ ($i =
    1,\ldots,n$) is an equivalence of \icats{}.
  \item For every morphism $\phi \colon [n] \to [m]$ in $\simp^{\op}$,
    $X \in \mathcal{O}_{[n]}$,
    and $Y \in \mathcal{O}_{[m]}$, composition with the
    coCartesian morphisms $Y \to Y_{i}$ over the inert morphisms
    $\rho_{i}$ gives an equivalence
    \[ \Map_{\mathcal{O}}^{\phi}(X, Y) \isoto \prod_{i}
    \Map_{\mathcal{O}}^{\rho_{i} \circ \phi}(X, Y_{i}),\] where
    $\Map_{\mathcal{O}}^{\phi}(X,Y)$ denotes the subspace of
    $\Map_{\mathcal{O}}(X,Y)$ of morphisms that map to $\phi$ in
    $\simp^{\op}$. (Equivalently, $Y$ is a \emph{$\pi$-limit} of the
    $Y_{i}$'s in the sense of \cite[\S 4.3.1]{HTT}.)
  \end{enumerate}
\end{defn}

\begin{remark}
  To see how this definition is related to the usual notion of
  non-symmetric (coloured) operad (or multicategory), recall that to
  any non-symmetric (coloured) operad (or multicategory) in sets we
  can associate its \emph{category of operators}, which is a category
  over $\Dop$. These categories of operators are charaterized
  precisely by the 1-categorical analogues of conditions (i)--(iii)
  above --- for more details see \rcite{enr}{\S 2.2}.
\end{remark}

\begin{remark}
  This definition is a special case of Barwick's notion of an \iopd{}
  over an operator category \cite{BarwickOpCat}, namely the case where
  the operator category is the category $\mathbb{O}$ of finite ordered
  sets.
\end{remark}

\begin{remark}
  Since $\Dop$ is an ordinary category, a map $\mathcal{O} \to \Dop$
  where $\mathcal{O}$ is an \icat{} is automatically an inner
  fibration by \rcite{HTT}{Proposition 2.3.1.5}.
\end{remark}

\begin{defn}
  If $\mathcal{O}$ and $\mathcal{P}$ are
  \nsiopds{}, a \defterm{morphism of \nsiopds{}} from
  $\mathcal{O}$ to $\mathcal{P}$ is a commutative
  diagram
  \opctriangle{\mathcal{O}}{\mathcal{P}}{\simp^{\op}}{\phi}{}{}
  such that $\phi$ carries coCartesian morphisms in
  $\mathcal{O}$ that map to inert morphisms in $\simp^{\op}$
  to coCartesian morphisms in $\mathcal{P}$. We will also
  refer to a morphism of \nsiopds{} $\mathcal{O} \to
  \mathcal{P}$ as an \emph{$\mathcal{O}$-algebra}
  in $\mathcal{P}$. We write $\Alg^{1}_{\mathcal{O}}(\mathcal{P})$ for
  the \icat{} of $\mathcal{O}$-algebras in $\mathcal{P}$, defined as a
  full subcategory of the \icat{} of functors from $\mathcal{O}$ to
  $\mathcal{P}$ over $\Dop$.
\end{defn}

We will actually need to work with a somewhat more general notion than
that of non-symmetric \iopd{}. To introduce this, recall from
\S\ref{subsec:segsp} that a \emph{double \icat{}} can be defined as a
simplicial object in $\CatI$ that satisfies a more general variant of
the Segal condition that defines monoids. Reformulating this in terms
of coCartesian fibrations, we get the following analogue of our
definition of a monoidal \icat{} above:
\begin{defn}
  A \emph{double \icat{}} is a coCartesian fibration $\mathcal{M} \to
  \simp^{\op}$ such that for each $[n]$ the map
  \[\mathcal{M}_{[n]} \to \mathcal{M}_{[1]} \times_{\mathcal{M}_{[0]}}
  \cdots \times_{\mathcal{M}_{[0]}} \mathcal{M}_{[1]},\] induced by
  the coCartesian morphisms over the maps $\rho_{i}$ and the maps $[n]
  \to [0]$ in $\simp^{\op}$, is an equivalence.
\end{defn}
Now we can contemplate the analogous variant of the definition of a
\nsiopd{}:
\begin{defn}\label{defn:gnsiopd}
  A \emph{generalized non-symmetric \iopd{}} is a functor of \icats{}
  $\pi \colon \mathcal{O} \to \simp^{\op}$ such that:
  \begin{enumerate}[(i)]
  \item For every inert morphism $\phi \colon [m] \to [n]$  in
    $\simp^{\op}$ and every $X \in \mathcal{O}_{[n]}$ there exists
    a $\pi$-coCartesian morphism $X \to \phi_{!}X$ over $\phi$.
  \item For every $[n] \in \simp^{\op}$ the functor
    \[\mathcal{O}_{[n]} \to \mathcal{O}_{[1]}
    \times_{\mathcal{O}_{[0]}} \cdots \times_{\mathcal{O}_{[0]}}
    \mathcal{O}_{[1]}\] induced by the coCartesian arrows over the
    inert maps $\rho_{i}$ ($i = 1,\ldots,n$) and the maps $[n] \to
      [0]$ is an equivalence of \icats{}.
    \item Given $Y \in \mathcal{O}_{[m]}$, choose a coCartesian lift
      of the diagram of inert morphisms from $[m]$ to $[1]$ and $[0]$:
      let $Y \to Y_{(i-1)i}$ be a coCartesian morphism over the map
      $\rho_{i} \colon [m] \to [1]$ ($i = 1,\ldots,m$) and let $Y \to
      Y_{i}$ ($i =0,\ldots,m$) be a coCartesian morphism over the map
      $\sigma_{i}\colon [m] \to [0]$ corresponding to the inclusion of
      $\{i\}$ in $[m]$. Then for any map $\phi \colon [n] \to [m]$ in
      $\Dop$ and $X \in \mathcal{O}_{[n]}$, composition with these
      coCartesian morphisms induces an equivalence
      \[ \Map_{\mathcal{O}}^{\phi}(X, Y) \isoto
      \Map_{\mathcal{O}}^{\rho_{1} \circ \phi}(X, Y_{01})
      \times_{\Map_{\mathcal{O}}^{\sigma_{1} \circ \phi}(X, Y_{1})}
      \cdots \times_{\Map_{\mathcal{O}}^{\sigma_{m-1} \circ \phi}(X,
        Y_{m-1})} \Map_{\mathcal{O}}^{\rho_{1} \circ \phi}(X,
      Y_{(m-1)m}).\] (Equivalently, any coCartesian lift of the
      diagram of inert maps from $[m]$ to $[1]$ and $[0]$ is a
      $\pi$-limit diagram in $\mathcal{O}$.)
  \end{enumerate}
\end{defn}
\begin{remark}
  As discussed in \rcite{enr}{\S 2.3--2.4}, \gnsiopds{} are an
  \icatl{} analogue of the \textbf{fc}-\emph{multicategories} of Leinster
  \cite{LeinsterHigherOpds} (also called \emph{virtual double
    categories} in \cite{CruttwellShulman}), which are a common
  generalization of double categories and multicategories.
\end{remark}
We can define morphisms of \gnsiopds{} in the same way as we define
morphisms of \nsiopds{}, i.e. as maps over $\Dop$ that preserve
coCartesian morphisms over inert morphisms. Again, we will refer to a
morphism $\mathcal{M} \to \mathcal{N}$ of \gnsiopds{} as an
\emph{$\mathcal{M}$-algebra} in $\mathcal{N}$, and define an \icat{}
$\Algns_{\mathcal{M}}(\mathcal{N})$ of these as a full subcategory of
the \icat{} of functors from $\mathcal{M}$ to $\mathcal{N}$ over
$\Dop$.

\subsection{Bimodules and their Tensor Products}\label{subsec:bimodopd}
We now have a natural way to extend the definitions of
\S\ref{sec:cartalgbimod} to the non-Cartesian setting because of the
following observation:
\begin{lemma}
  The projection $\simp_{/[n]}^{\op} \to \simp^{\op}$ is a double
  \icat{} for all $[n] \in \simp$.
\end{lemma}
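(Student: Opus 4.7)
The plan is to verify the two clauses in the definition of a double $\infty$-category directly, exploiting the fact that $\simp_{/[n]}$ is an ordinary $1$-category so that the $\infty$-categorical content is essentially trivial; what remains is combinatorial.

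First I would check that $\pi \colon \simp_{/[n]}^{\op} \to \simp^{\op}$ is a coCartesian fibration. It suffices to observe that the slice projection $\simp_{/[n]} \to \simp$ is a Grothendieck fibration of $1$-categories: given $\psi \colon [m] \to [n]$ and $\alpha \colon [k] \to [m]$ in $\simp$, the composite $\psi\alpha \colon [k] \to [n]$ equipped with the evident triangle provides a Cartesian lift of $\alpha$. Passing to opposite categories (and hence to $\infty$-categories via the nerve) yields $\pi$-coCartesian morphisms over every arrow of $\simp^{\op}$, not just the inert ones, so in particular $\pi$ is a coCartesian fibration.

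Next I would verify the Segal condition. The fiber $\mathcal{M}_{[k]} := (\simp_{/[n]}^{\op})_{[k]}$ is the discrete set $\Hom_{\simp}([k],[n])$, and the coCartesian pushforward along $\rho_i \colon [1] \to [k]$ sends $\phi$ to $(\phi(i-1), \phi(i))$, while pushforward along the inclusion $\sigma_i \colon [0] \to [k]$ sends $\phi$ to $\phi(i)$. The Segal comparison map
\[
\Hom_{\simp}([k],[n]) \longrightarrow \Hom_{\simp}([1],[n]) \times_{\Hom_{\simp}([0],[n])} \cdots \times_{\Hom_{\simp}([0],[n])} \Hom_{\simp}([1],[n])
\]
therefore sends $\phi$ to the tuple of adjacent pairs $\bigl((\phi(0),\phi(1)), \ldots, (\phi(k-1),\phi(k))\bigr)$. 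This is manifestly a bijection, since a weakly monotone sequence $a_0 \le a_1 \le \cdots \le a_k$ in $[n]$ is the same datum as a sequence of composable edges with matching endpoints.

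The main obstacle, if any, is just bookkeeping: one must keep track of variance when passing between $\simp_{/[n]}$ and its opposite, and remember that pulling back along $\alpha$ in $\simp_{/[n]}$ corresponds to pushing forward along $\alpha$ viewed in $\simp^{\op}$. Beyond this, both steps reduce to elementary combinatorics of monotone sequences, with no homotopical subtleties to overcome.
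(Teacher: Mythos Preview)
Your proof is correct and follows essentially the same route as the paper's: the paper observes that $\simp_{/[n]}^{\op} \to \simp^{\op}$ is the opfibration associated to $\Hom_{\simp}(\blank,[n])$ and then deduces the Segal condition from the iterated pushout $[k] \simeq [1] \amalg_{[0]} \cdots \amalg_{[0]} [1]$ in $\simp$, which is exactly the bijection you spell out by hand. Your version is simply a more explicit unpacking of the same two steps.
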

\begin{proof}
  This projection is the opfibration  associated to the functor
  \[ \Hom_{\simp}(\blank, [n]) \colon \simp^{\op} \to \Set.\] It thus
  suffices to check that this functor satisfies the Segal
  condition, which it does since $[k]$ is the iterated pushout $[1]
  \amalg_{[0]} \cdots \amalg_{[0]} [1]$ in $\simp$.
\end{proof}

\begin{remark}
  As a double ($\infty$-)category, $\Dop_{/[n]}$ is rather degenerate:
  it is the double category corresponding to the category (or
  partially ordered set)
  \[ 0 \to 1 \to \cdots \to n.\] 
  In particular, it has no non-trivial
  morphisms in one direction.
\end{remark}

\begin{defn}
  Let $\mathcal{C}$ be a monoidal \icat{}. An \emph{associative
    algebra object} in $\mathcal{C}$ is a $\simp^{\op}$-algebra, and a
  \emph{bimodule} in $\mathcal{C}$ is a $\simp^{\op}_{/[1]}$-algebra.
\end{defn}
Thus, to define the double \icat{} $\fALG_{1}(\mathcal{C})$, a
natural choice for the \icat{} of objects is
$\Alg^{1}_{\Dop}(\mathcal{C})$ and for the \icat{} of morphisms it is
$\Alg^{1}_{\Dop_{/[1]}}(\mathcal{C})$. At the next level, we want to
consider a full subcategory of
$\Alg^{1}_{\simp^{\op}_{/[2]}}(\mathcal{C})$ consisting of
``composite'' $\simp^{\op}_{/[2]}$-algebras. We want the composition
of bimodules in $\fALG_{1}(\mathcal{C})$ to be given by tensor
products, so the composite $\simp_{/[2]}^{\op}$-algebras should be
those algebras $M$ where $M(0,2)$ is exhibited as the tensor product
$M(0,1)\otimes_{M(1,1)}M(1,2)$. As discussed in
\S\ref{subsec:simp2tens}, this amounts to the diagram
$\simp_{+}^{\op} \to \mathcal{C}$,
obtained by taking the coCartesian pushforward of 
\[ \Dop_{+} \xto{j} \simp_{/[2]}^{\op} \to \mathcal{C}^{\otimes} \] to
the fibre over $[1]$, being a colimit diagram. To get a more
convenient version of this condition, and its generalization to
$\simp_{/[n]}$-algebras, it will be useful to reformulate it in terms
of operadic Kan extensions. In order to do this, we must first
introduce some notation:
\begin{defn}
  A morphism $\phi \colon [k] \to [m]$ in $\simp$ is \emph{cellular}
  if $\phi(i+1) \leq \phi(i)+1$ for all $i = 0,\ldots,k$. We write
  $\Lbrn$ for the full subcategory of $\simp_{/[n]}$ spanned by the
  cellular maps. (In other words, $\Lbrn$ is the full subcategory of
  $\simp_{/[n]}$ spanned by the objects $(i_{0},\ldots,i_{k})$ where
  $i_{t+1}-i_{t} \leq 1$.)
\end{defn}

\begin{lemma}\label{lem:Lbrnopgnsiopd}
  The projection $\Lbrnop \to \simp^{\op}$ is a \gnsiopd{}, and the
  inclusion $\tau_{n} \colon \Lbrnop \hookrightarrow
  \simp^{\op}_{/[n]}$ is a morphism of \gnsiopds{}.
\end{lemma}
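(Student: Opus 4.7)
My plan is to derive both claims from the fact that $\simp^{\op}_{/[n]} \to \simp^{\op}$ is already a double \icat{}, hence in particular a \gnsiopd{}. The key observation is that the full subcategory $\Lbrnop \subseteq \simp^{\op}_{/[n]}$ is closed under all of the coCartesian pushforwards that appear in Definition~\ref{defn:gnsiopd}: explicitly, an object of $\Lbrnop$ over $[k]$ is a sequence $(i_{0},\ldots,i_{k})$ with $i_{t+1}-i_{t}\in\{0,1\}$, the coCartesian pushforward along an inert $\rho \colon [m]\to [k]$ in $\simp^{\op}$ is given by the subsequence $(i_{a},\ldots,i_{a+m})$, and the pushforward along the unique map $[k]\to [0]$ picking out $\{t\}$ is $i_{t}$. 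Subsequences of cellular sequences are trivially cellular, and length-zero sequences are vacuously cellular, so these pushforwards all stay inside $\Lbrnop$.

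First I would verify the three conditions of Definition~\ref{defn:gnsiopd} in turn. Condition (i) follows immediately from the closure observation: we can simply take the coCartesian lift in $\simp^{\op}_{/[n]}$, and it lies in $\Lbrnop$. Condition (iii) is also automatic because $\tau_{n}$ is a full subcategory inclusion, so for objects $X,Y \in \Lbrnop$ the mapping spaces $\Map_{\Lbrnop}^{\phi}(X,Y)$ agree with $\Map_{\simp^{\op}_{/[n]}}^{\phi}(X,Y)$, and both the morphisms $Y\to Y_{(i-1)i}$ and $Y\to Y_{i}$ and all the factors on the right-hand side of the equivalence in (iii) exist in $\Lbrnop$ by the closure observation; so the equivalence transports from $\simp^{\op}_{/[n]}$. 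Condition (ii) — the Segal condition — reduces to a statement about fibres, which in this case are discrete (the category $\simp_{/[n]}$ is an ordinary category with no nontrivial automorphisms); the Segal map then sends a cellular $(i_{0},\ldots,i_{k})$ to the collection of adjacent pairs $(i_{j-1},i_{j})$ glued over the vertices $i_{j}$, and this is a bijection since each $(i_{j-1},i_{j})$ is cellular of length one and the gluing reassembles the sequence uniquely.

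For the second assertion, the projection $\Lbrnop \to \simp^{\op}$ factors through $\tau_{n}$, so commutativity over $\simp^{\op}$ is built in; and by the closure observation the coCartesian lifts we produced in $\Lbrnop$ over inert morphisms of $\simp^{\op}$ are literally the same as those in $\simp^{\op}_{/[n]}$, so $\tau_{n}$ carries them to coCartesian morphisms tautologically. No step is really an obstacle — the entire proof is a straightforward check — but the item requiring the most care is confirming that the pushforward formulas above really compute the coCartesian lifts in $\simp^{\op}_{/[n]}$ (which follows from the lemma identifying $\simp^{\op}_{/[n]}$ as the opfibration associated to $\Hom_{\simp}(\blank,[n])$, as used in the proof that it is a double \icat{}).
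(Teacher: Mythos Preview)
Your proof is correct and follows essentially the same idea as the paper. The only difference is packaging: the paper factors the argument through a general lemma (Lemma~\ref{lem:fullsubgnsiopd}) stating that if $\mathcal{P}$ is the full subcategory of a \gnsiopd{} $\mathcal{O}$ spanned by those objects whose inert pushforwards to $\mathcal{O}_{[1]}$ land in a prescribed full subcategory $\mathcal{C}\subseteq\mathcal{O}_{[1]}$, then $\mathcal{P}\to\Dop$ is again a \gnsiopd{} and the inclusion is a morphism of \gnsiopds{}. The observation that a map $\phi\colon[k]\to[n]$ is cellular \IFF{} each $\phi\rho_i$ is cellular then places $\Lbrnop$ in exactly this situation. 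Your direct verification of conditions (i)--(iii) is precisely the content of that general lemma specialized to this case, so you lose nothing in rigor; the paper's version simply isolates a reusable statement (which it invokes again later, e.g.\ for the $\Dn$-analogue).
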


This is a special case of the following observation:
\begin{lemma}\label{lem:fullsubgnsiopd}
  Suppose $\pi \colon \mathcal{O} \to \Dop$ is a \gnsiopd{} and $\mathcal{C}$ a
  full subcategory of $\mathcal{O}_{[1]}$. Let $\mathcal{P}$
  be the full subcategory of $\mathcal{O}$ spanned by the objects $X$
  such that $\rho_{i,!}X$ lies in $\mathcal{C}$ for all inert maps
  $\rho_{i} \colon [1] \to \pi(X)$. Then the restricted projection
  $\mathcal{P} \to \Dop$ is also a \gnsiopd{}, and the inclusion
  $\mathcal{P} \hookrightarrow \mathcal{O}$ is a morphism of \gnsiopds{}.
\end{lemma}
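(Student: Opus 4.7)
The plan is to verify the three axioms of Definition \ref{defn:gnsiopd} for the restricted projection, which will be quite direct since $\mathcal{P} \hookrightarrow \mathcal{O}$ is a full subcategory inclusion and the defining condition for $\mathcal{P}$ is stable under the operations involved in the axioms. The key observation is that the condition ``$\rho_{i,!}X$ lies in $\mathcal{C}$'' is preserved by taking further inert pushforwards, because composing inert morphisms yields inert morphisms.

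First I would establish axiom (i). Given $X \in \mathcal{P}_{[n]}$ and an inert morphism $\phi$ in $\Dop$ with source $[n]$ and target $[m]$, I form the $\pi$-coCartesian lift $X \to \phi_{!}X$ in $\mathcal{O}$ and argue that $\phi_{!}X \in \mathcal{P}$. Indeed, for any inert $\rho_{j} \colon [1] \to [m]$ in $\simp$, the composite $\rho_{j} \circ \phi^{\op}$ is an inert map $[1] \to [n]$ (since inert morphisms are closed under composition), so $(\rho_{j}\circ\phi^{\op})_{!} X \in \mathcal{C}$ by hypothesis on $X$. Functoriality of coCartesian pushforwards identifies this with $\rho_{j,!}(\phi_{!}X)$, giving $\phi_{!}X \in \mathcal{P}$. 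This simultaneously shows that the inclusion $\mathcal{P} \hookrightarrow \mathcal{O}$ preserves inert-coCartesian morphisms, i.e.\ it is a morphism of \gnsiopds{}.

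Next I would identify $\mathcal{P}_{[n]}$ explicitly. Since the condition defining $\mathcal{P}$ is vacuous at $[0]$ (there are no inert maps $[1] \to [0]$), $\mathcal{P}_{[0]} = \mathcal{O}_{[0]}$; and at $[1]$ the only inert self-map is the identity, so $\mathcal{P}_{[1]} = \mathcal{C}$. Combining this with the Segal equivalence $\mathcal{O}_{[n]} \isoto \mathcal{O}_{[1]} \times_{\mathcal{O}_{[0]}} \cdots \times_{\mathcal{O}_{[0]}} \mathcal{O}_{[1]}$ from axiom (ii) for $\mathcal{O}$, the membership condition for $\mathcal{P}$ becomes exactly the requirement that each factor lie in $\mathcal{C}$. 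Hence the restricted functor satisfies
\[ \mathcal{P}_{[n]} \isoto \mathcal{P}_{[1]} \times_{\mathcal{P}_{[0]}} \cdots \times_{\mathcal{P}_{[0]}} \mathcal{P}_{[1]}, \]
which is axiom (ii) for $\mathcal{P}$.

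Finally, axiom (iii) transfers automatically: because $\mathcal{P}$ is a full subcategory of $\mathcal{O}$ and the coCartesian morphisms $Y \to Y_{(i-1)i}$ and $Y \to Y_{i}$ used in the axiom lie in $\mathcal{P}$ by step one (applied to the inert maps $\rho_{i}$ and $\sigma_{i}$), the mapping-space equivalence required in $\mathcal{P}$ is literally the equivalence already known in $\mathcal{O}$. No genuine obstacle arises; the only point requiring a moment's care is verifying that $\mathcal{P}_{[0]} = \mathcal{O}_{[0]}$ so that the fibre product in the Segal condition for $\mathcal{P}$ matches the one inherited from $\mathcal{O}$, and this is precisely where the vacuity of the defining condition at $[0]$ is used.
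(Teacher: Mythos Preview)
Your proof is correct and follows essentially the same approach as the paper's: verify axiom (i) by observing that inert pushforwards stay in $\mathcal{P}$ (since inerts compose), deduce axiom (ii) from the pullback/Segal description $\mathcal{P}_{[n]} \simeq \mathcal{C} \times_{\mathcal{O}_{[0]}} \cdots \times_{\mathcal{O}_{[0]}} \mathcal{C}$, and note axiom (iii) is automatic by fullness. You give a bit more detail than the paper (e.g.\ explicitly noting $\mathcal{P}_{[0]} = \mathcal{O}_{[0]}$), but the argument is the same.
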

\begin{proof}
  If $X \in \mathcal{P}_{[n]}$, $\phi \colon [m] \to [n]$ is an inert
  map, and $X \to \phi_{!}X$ is a coCartesian morphism over $\phi$ in
  $\mathcal{O}$, then $\phi_{!}X$ is also in $\mathcal{P}$. Hence
  $\mathcal{P}$ has coCartesian morphisms over inert morphisms in
  $\Dop$, which is condition (i) in Definition~\ref{defn:gnsiopd}, and
  the inclusion $\mathcal{P} \hookrightarrow \mathcal{O}$ preserves
  these. Moreover, for every $[n]$ we have a pullback diagram
  \nolabelcsquare{\mathcal{P}_{[n]}}{\mathcal{C}
    \times_{\mathcal{O}_{[0]}} \cdots \times_{\mathcal{O}_{[0]}}
    \mathcal{C}}{\mathcal{O}_{[n]}}{\mathcal{O}_{[1]}
    \times_{\mathcal{O}_{[0]}} \cdots \times_{\mathcal{O}_{[0]}}
    \mathcal{O}_{[1]}} which implies condition (ii) since the bottom
  horizontal map is an equivalence. Condition (iii) is also satisfied,
  since $\mathcal{P}$ is a full subcategory.
\end{proof}

\begin{proof}[Proof of Lemma~\ref{lem:Lbrnopgnsiopd}]
  A map $\phi \colon [m] \to [n]$ is cellular \IFF{}
  all its composites $\phi \rho_{i} \colon [1] \to [n]$ with the inert
  maps $[1] \to [m]$ is cellular. Thus $\Lbrnop$ is the full
  subcategory of $\simp_{/[n]}^{\op}$ determined by a full subcategory
  over $[1]$ and so is a \gnsiopd{} by Lemma~\ref{lem:fullsubgnsiopd}.
\end{proof}

The $\simp_{/[n]}^{\op}$-algebras that are given by tensor products in
the appropriate way will turn out to be those that are left operadic
Kan extensions along the inclusion $\tau_{n} \colon \Lbrnop
\hookrightarrow \simp_{/[n]}^{\op}$. For this to make sense, we must
first check that the map $\tau_{n}$ is extendable in the sense of Definition~\ref{defn:extendable}, so that we can apply
Proposition~\ref{propn:lokeexist}:
\begin{propn}\label{propn:tau1ext}
  The inclusion $\tau_{i}\colon \bbLambda^{\op}_{/[i]} \to \simp^{\op}_{/[i]}$ is
  extendable for all $i$.
\end{propn}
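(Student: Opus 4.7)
\emph{Proof plan.} The strategy is to unpack Definition~\ref{defn:extendable} in this concrete setting and verify it by exhibiting explicit terminal objects in the relevant slice categories. Extendability of a morphism of generalized non-symmetric $\infty$-operads is a technical condition guaranteeing the existence of operadic left Kan extensions, and it is typically formulated as a weak contractibility requirement on categories of active factorizations through the source.

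First, I would use the active--inert factorization system on $\simp$ (Definition~\ref{defn:activeinert} and the subsequent lemma) to reduce the verification to a condition about active morphisms alone. Because the target $[i]$ is preserved under both parts of the factorization, this restricts to a factorization system on $\simp_{/[i]}$ and dually on $\simp^{op}_{/[i]}$. Moreover, $\bbLambda^{op}_{/[i]}$ is cut out by an inert-local condition (as exploited in the proof of Lemma~\ref{lem:Lbrnopgnsiopd} via Lemma~\ref{lem:fullsubgnsiopd}), so all inert aspects of extendability for $\tau_i$ are automatic and only the active part remains to be checked.

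The heart of the argument is then a combinatorial analysis of the relevant slice: given $\phi : [m] \to [i]$, I would consider the category of pairs $(\psi, g)$ with $\psi \in \bbLambda_{/[i]}$ and $g : [k] \to [m]$ an active morphism of $\simp$ satisfying $\phi \circ g = \psi$. The key observation is that such a $\psi$ exists \IFF{} each consecutive jump $\phi(j+1) - \phi(j)$ is at most $1$, i.e.\ \IFF{} $\phi$ is already cellular, in which case the slice has the identity $g = \id$ as a natural candidate for an extremal object. Running this through the Segal/cell decomposition (condition (iii) of Definition~\ref{defn:gnsiopd}) reduces the general case to individual cells $(\phi(j-1), \phi(j))$ of $\phi$, where a ``maximal cellular refinement'' $\tilde\phi_{j} : [\phi(j)-\phi(j-1)] \to [i]$ inserting every integer between $\phi(j-1)$ and $\phi(j)$ serves as a terminal object in the appropriate local slice. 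Combining these cell-by-cell refinements yields weak contractibility of the full slice.

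The main obstacle I expect is matching the precise shape of slice used in Definition~\ref{defn:extendable} --- in particular, whether the relevant category uses single cellular objects or tuples of cells compatible with the Segal decomposition of $\phi$, and whether extendability is phrased as existence of a terminal object, as weak contractibility, or as cofinality into a larger diagram. In each of these formulations the combinatorics of the maximal cellular refinement should give the required property, but the cleanest route is to set up the matching carefully and then invoke terminality cell-by-cell.
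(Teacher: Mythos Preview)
Your overall plan is close to the paper's, but two concrete issues would derail it as written.

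\textbf{The direction of the active map is reversed.} In your middle paragraph you consider pairs $(\psi,g)$ with $g:[k]\to[m]$ active and $\phi\circ g=\psi$. That describes morphisms $\psi\to\phi$ in $\simp_{/[i]}$, i.e.\ the overcategory of cellular objects \emph{under} $\phi$ in $\simp^{\op}_{/[i]}$. The extendability condition concerns the other slice: one needs cellular $c:[n]\to[i]$ together with an active $f:[m]\to[n]$ satisfying $c\circ f=\phi$, i.e.\ cellular \emph{refinements} of $\phi$. With the correct direction the category is always nonempty (your maximal cellular refinement $\tilde\phi$ lives there), so your ``nonempty iff $\phi$ is cellular'' claim is an artifact of the reversed arrow. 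Your final paragraph silently switches to the correct direction, so the proposal is internally inconsistent.

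\textbf{Extendability is a cofinality condition, not weak contractibility of a single slice.} Definition~\ref{defn:extendable} asks that, for every $\xi:[j]\to[i]$, the map
\[
(\bbLambda_{/[i]})^{\txt{act}}_{\xi/}\;\longrightarrow\;\prod_{p=1}^{j}(\bbLambda_{/[i]})^{\txt{act}}_{\xi\rho_p/}
\]
be coinitial. Exhibiting a terminal object in each factor of the target (your $\tilde\phi_p$) does not by itself give this: one must show that for every tuple $X=(c_1,\dots,c_j)$ in the product, the over-slice $((\bbLambda_{/[i]})^{\txt{act}}_{\xi/})_{/X}$ is weakly contractible. The paper does exactly this via \rcite{HTT}{Theorem 4.1.3.1}: since each $f_p:[1]\to[n_p]$ is active, the endpoint values $c_p(n_p)=\xi(p)=c_{p+1}(0)$ match, so the cellular maps $c_p$ glue uniquely to a single cellular $c:[\sum n_p]\to[i]$ with an active map from $[j]$, and this glued object is terminal in the slice over $X$. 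Your ``combining these cell-by-cell refinements'' is gesturing at precisely this gluing, but the gluing must be done for an \emph{arbitrary} tuple $X$, not just for the tuple of maximal refinements; once you make that explicit, your argument becomes the paper's.
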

\begin{proof}
  We must show that for any map $\xi \colon [j]\to [i]$ in $\simp$,
  the map
  \[ (\bbLambda_{/[i]}^{\op})^{\txt{act}}_{/\xi} \to \prod_{p = 1}^{j}
  (\bbLambda_{/[i]}^{\op})^{\txt{act}}_{/\xi \rho_{p}} \]
  is cofinal, or equivalently that the map
  \[ (\bbLambda_{/[i]})^{\txt{act}}_{\xi/} \to \prod_{p = 1}^{j}
  (\bbLambda_{/[i]})^{\txt{act}}_{\xi \rho_{p}/} \]
  is coinitial, where $\rho_{p} \colon [1] \to [j]$ is the inert map
  sending $0$ to $p-1$ and $1$ to $p$. By 
  \rcite{HTT}{Theorem 4.1.3.1}, to see this it suffices to show
  that for every $X \in \prod_{p = 1}^{j}
  (\bbLambda_{/[i]})^{\txt{act}}_{\xi \rho_{p}/}$, the \icat{} 
  $((\bbLambda_{/[i]})^{\txt{act}}_{\xi/})_{/X}$ is weakly
  contractible.
 
  The object $X$ is given by diagrams 
  \opctriangle{{[1]}}{{[n_{p}]}}{{[i]}}{f_{p}}{\xi \rho_{p}}{c_{p}}
  for $p = 1,\ldots,j$, where $f_{p}$ is active and $c_{p}$ is
  cellular. But since the $f_{p}$'s are active we see that
  \[c_{p}(n_{p}) = c_{p}f_{p}(1) = \xi (p) = c_{p+1}f_{p+1}(0) =
  c_{p+1}(0), \]
  so the $c_{p}$'s glue together to a unique map $c \colon [n] \to
  [i]$ such that $c\eta_{p} = c_{p}$, where $n = \sum_{p = 1}^{j}
  n_{p}$ and $\eta_{p}\colon [n_{p}] \to [n]$ is the inert map
  $\eta_{p}(q) = n_{1}+\cdots+n_{p-1} + q$. Moreover, $c$ is clearly
  cellular. The maps $f_{p}$ then glue to an active map $f \colon [j]
  \to [n]$ given by $f(p) = n_{1}+ \ldots + n_{p}$. The
  resulting object
  \opctriangle{{[j]}}{{[n]}}{{[i]}}{f}{\xi}{c}
  is then final in $((\bbLambda_{/[i]})^{\txt{act}}_{\xi/})_{/X}$, hence
  this \icat{} is indeed weakly contractible.
\end{proof}

The following observation lets us analyze operadic Kan extensions
along $\tau_{n}$:
\begin{lemma}\label{lem:Lbrnsift}
  For all $(i,i+k) \in (\simp^{\op}_{/[n]})_{[1]}$ (with $k \geq 1$)
  the functor $\simp^{(k-1),\op} \to (\Lbrnop)^{\txt{act}}_{/(i,i+k)}$ that
  sends $([a_{1}],\ldots,[a_{k-1}])$ to $(i,
  i+1,\ldots,i+1,\ldots,i+(k-1),\ldots,i+(k-1),i+k)$, where there are
  $a_{j}+1$ copies of $i+j$, is cofinal. In particular, there is a
  cofinal map from a product of copies of
  $\simp^{\op}$ to $(\Lbrnop)^{\txt{act}}_{/(i,j)}$ for all $i,j$, and
  so a cofinal map from $\simp^{\op}$ by \rcite{HTT}{Lemma
    5.5.8.4}; the simplicial set $(\Lbrnop)^{\txt{act}}_{/(i,i+k)}$ is
  thus sifted.
\end{lemma}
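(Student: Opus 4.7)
The plan is to verify the cofinality criterion \rcite{HTT}{Theorem 4.1.3.1}: the functor $F\colon \simp^{(k-1),\op} \to (\Lbrnop)^{\txt{act}}_{/(i,i+k)}$ in the statement is cofinal \IFF{} for every object $Y$ of the target, the comma \icat{}
\[
\mathcal{A}_{Y/} := \simp^{(k-1),\op} \times_{(\Lbrnop)^{\txt{act}}_{/(i,i+k)}} ((\Lbrnop)^{\txt{act}}_{/(i,i+k)})_{Y/}
\]
is weakly contractible. I would establish this by exhibiting an initial object of $\mathcal{A}_{Y/}$.

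First I would unpack $(\Lbrnop)^{\txt{act}}_{/(i,i+k)}$ concretely. Its objects correspond to cellular maps $\sigma\colon [m]\to[n]$ with $\sigma(0)=i$ and $\sigma(m)=i+k$; each such $\sigma$ is determined by a multiplicity tuple $\vec{a}=(a_0,\ldots,a_k)$ of non-negative integers, where $a_j+1$ is the size of $\sigma^{-1}(i+j)$ and $m = k + \sum_j a_j$. A morphism from $\vec{a}$ to $\vec{b}$ in this slice decomposes into a tuple of $\simp$-maps $\phi_j\colon [b_j]\to[a_j]$; the activity constraint forces $\phi_0(0)=0$ and $\phi_k(b_k)=a_k$, while the interior $\phi_j$ are unconstrained. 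Under this dictionary $F$ sends $([b_1],\ldots,[b_{k-1}])$ to the tuple $(0,b_1,\ldots,b_{k-1},0)$.

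Now fix $Y$ corresponding to $\vec{a}$. Since the extreme regions of $F(\vec{b})$ are singletons, a morphism $Y \to F(\vec{b})$ consists precisely of interior data $(\phi_j\colon [b_j]\to[a_j])_{j=1}^{k-1}$ (the $j=0,k$ components being forced), and a morphism in $\mathcal{A}_{Y/}$ between $(\vec{b},\vec{\phi})$ and $(\vec{b}',\vec{\phi}')$ is a tuple $\alpha_j\colon [b'_j]\to[b_j]$ satisfying $\phi_j\circ\alpha_j=\phi'_j$. This identifies $\mathcal{A}_{Y/}$ with $\prod_{j=1}^{k-1}(\simp_{/[a_j]})^{\op}$. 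Each factor has initial object $([a_j],\id)$---every $([b_j],\phi_j)$ receives a unique morphism from it, namely $\phi_j$ itself---so the product has an initial object and is weakly contractible, whence $F$ is cofinal. For the ``in particular'' part, one composes $F$ with a cofinal map $\simp^{\op}\to\simp^{(k-1),\op}$ supplied by \rcite{HTT}{Lemma 5.5.8.4}; siftedness of $(\Lbrnop)^{\txt{act}}_{/(i,j)}$ then follows for $j > i$ from the existence of a cofinal map out of $\simp^{\op}$, while the degenerate case $j=i$ is immediate since the slice has an initial object directly.

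The main obstacle is essentially bookkeeping: one must correctly encode the slice in terms of multiplicity tuples and verify that the activity constraints at the two endpoints are exactly what force the $j=0$ and $j=k$ components of a morphism, so that $\mathcal{A}_{Y/}$ truly splits as a product of ordinary slice categories rather than something more complicated with interdependent constraints coupling neighbouring regions.
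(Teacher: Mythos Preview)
Your proposal is correct and follows exactly the approach the paper uses: invoke \rcite{HTT}{Theorem 4.1.3.1} and show that each comma category $(\simp^{(k-1),\op})_{Y/}$ has an initial object. The paper states this in one line without further detail, whereas you have supplied the explicit combinatorial identification $\mathcal{A}_{Y/}\simeq\prod_{j=1}^{k-1}(\simp_{/[a_j]})^{\op}$ and named the initial object; your bookkeeping with multiplicity tuples and the endpoint activity constraints is accurate.
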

\begin{proof}
  This follows from \rcite{HTT}{Theorem 4.1.3.1}, since the category
  $(\simp^{(k-1),\op})_{X/}$ has an initial object for all $X
  \in (\Lbrnop)^{\txt{act}}_{/(i,i+k)}$.
\end{proof}

\begin{defn}\label{defn:mongrtp}
  We say a monoidal \icat{} has \emph{good relative tensor products}
  if it is $\tau_{n}$-compatible (in the sense of
  Definition~\ref{defn:icompatible}) for all $n$. Similarly, we say a
  monoidal functor is \emph{compatible with relative tensor products}
  if it is $\tau_{n}$-compatible (in the sense of
  Definition~\ref{defn:ftricomp}) for all $n$.
\end{defn}

\begin{lemma}\label{lem:mongrtpcond}
  Let $\mathcal{C}$ be a monoidal \icat{}. Then $\mathcal{C}$ has good
  relative tensor products \IFF{} for every algebra $A \colon
  \bbLambda_{/[2]}^{\op} \to \mathcal{C}^{\otimes}$, the diagram
  $\simp \to (\bbLambda_{/[2]}^{\op})^{\txt{act}}_{/(0,2)} \to
  \mathcal{C}$, obtained from $A$ by coCartesian pushforward to the
  fibre over $[1]$, has a colimit, and this colimit is preserved by
  tensoring (on either side) with any object of
  $\mathcal{C}$. Moreover, a monoidal functor is compatible with
  relative tensor products \IFF{} it preserves these colimits.
\end{lemma}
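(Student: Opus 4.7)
The plan is to unpack $\tau_n$-compatibility and, via Lemma \ref{lem:Lbrnsift} together with Fubini for colimits, reduce it to the single case $n = 2$ stated in the lemma. By Proposition \ref{propn:lokeexist}, $\mathcal{C}^{\otimes}$ is $\tau_n$-compatible precisely when, for every algebra $A \colon \Lbrnop \to \mathcal{C}^{\otimes}$ and every $(a,b) \in \Dopn$ with $b > a$, the coCartesian pushforward to the fibre $\mathcal{C}$ over $[1]$ of the induced diagram $(\Lbrnop)^{\act}_{/(a,b)} \to \mathcal{C}^{\otimes}$ admits a colimit that is preserved by tensoring on either side with any object of $\mathcal{C}$. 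By Lemma \ref{lem:Lbrnsift}, the cofinal functor $\simp^{k-1,\op} \to (\Lbrnop)^{\act}_{/(a,a+k)}$ (with $k = b - a$) lets us replace the indexing category by $\simp^{k-1,\op}$. For $k = 2$ the indexing is simply $\Dop$, and unwinding shows that the composite diagram is precisely the two-sided bar construction associated to $M_1 = A(a, a+1)$, $B = A(a+1, a+1)$, $M_2 = A(a+1, a+2)$. This is exactly the diagram appearing in the statement, so the $\tau_2$-condition is literally the condition stated; this handles the ``only if'' direction (including the functor case, since $\tau_n$-compatibility of $F$ entails $\tau_2$-compatibility).

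For the converse, assume the $k = 2$ condition holds. Writing $B_p = A(a+p, a+p)$ and $M_p = A(a+p-1, a+p)$, the composite $\simp^{k-1,\op}$-diagram sends $([c_1], \ldots, [c_{k-1}])$ to $M_1 \otimes B_1^{\otimes c_1} \otimes M_2 \otimes \cdots \otimes B_{k-1}^{\otimes c_{k-1}} \otimes M_k$. By Fubini for colimits we compute the total colimit by collapsing one $\Dop$-coordinate at a time; for each choice of coordinate the instance of the diagram being collapsed is a bar construction of the shape covered by the $k = 2$ hypothesis, since the surrounding tensor factors act either entirely on the left or entirely on the right of the chosen $B_j$. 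Induction on $k$, together with the preservation-under-tensoring clause applied repeatedly to multiply the intermediate colimit by the remaining algebra factors, shows that the full iterated colimit exists and is itself preserved by tensoring on either side with any object of $\mathcal{C}$. The ``moreover'' clause about functors follows from the same reduction applied to the compatibility condition of Definition \ref{defn:ftricomp}: once cofinality and Fubini reduce $\tau_n$-compatibility of $F$ to preservation of the basic bar-construction colimits, the two assertions coincide.

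The main obstacle is making the inductive Fubini argument water-tight. What makes it work is that the preservation-under-tensoring hypothesis for the $k = 2$ bar construction is precisely strong enough to permit the intermediate $\Dop$-colimit at each step to be tensored with the surrounding algebra and module factors without losing the colimit property. Concretely, collapsing the coordinate $[c_j]$ in $([c_1], \ldots, [c_{k-1}])$ replaces a $c_j$-indexed family of objects of the form $X \otimes B_j^{\otimes c_j} \otimes Y$ by $X \otimes_{B_j} Y$, and the $k = 2$ hypothesis guarantees this colimit commutes with any further left- or right-tensoring, so the next inductive step proceeds unhindered and the preservation property propagates to the final colimit.
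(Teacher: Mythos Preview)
Your proposal is correct and takes essentially the same approach as the paper: the paper's one-line proof cites Lemma~\ref{lem:Lbrnsift} and Corollary~\ref{cor:moncolimprod}, and your explicit Fubini-plus-induction argument is precisely what the latter corollary amounts to in this situation. (One small correction: the unpacking of $\tau_n$-compatibility at the start is Definition~\ref{defn:icompatible} directly, not Proposition~\ref{propn:lokeexist}.)
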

\begin{proof}
  Follows from Lemma~\ref{lem:Lbrnsift} and
  Corollary~\ref{cor:moncolimprod}.
\end{proof}

Applying Corollary~\ref{cor:opdkanext}, we get:
\begin{cor}
  Supppose $\mathcal{C}$ is a monoidal \icat{} with good relative
  tensor products. Then the restriction $\tau_{n}^{*} \colon
  \Alg^{1}_{\simp^{\op}_{/[n]}}(\mathcal{C}) \to
  \Alg^{1}_{\Lbrnop}(\mathcal{C})$ has a fully faithful left adjoint
  $\tau_{n,!}$. A $\simp^{\op}_{/[n]}$-algebra $M$ is in the image of
  $\tau_{n,!}$ \IFF{} $M$ exhibits $M(i,j)$ as the tensor product
  \[M(i,i+1) \otimes_{M(i+1,i+1)} M(i+1,i+2) \otimes_{M(i+2,i+2)}
  \cdots \otimes_{M(j-1,j-1)} M(j-1,j).\qed\]
\end{cor}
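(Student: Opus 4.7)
The plan is to deduce this corollary as a direct application of Corollary~\ref{cor:opdkanext} to the inclusion $\tau_{n} \colon \Lbrnop \hookrightarrow \simp^{\op}_{/[n]}$. To invoke that corollary we must verify two things: (a) $\tau_{n}$ is extendable, and (b) $\mathcal{C}$ admits the operadic left Kan extension along $\tau_{n}$. Item (a) is exactly Proposition~\ref{propn:tau1ext}. Item (b) reduces, via Lemma~\ref{lem:mongrtpcond}, to the existence of the relevant sifted colimits in $\mathcal{C}$ together with their preservation by tensor product on either side, which is precisely the hypothesis that $\mathcal{C}$ has good relative tensor products (i.e.\ is $\tau_{n}$-compatible for every $n$ in the sense of Definition~\ref{defn:mongrtp}). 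Thus Corollary~\ref{cor:opdkanext} produces the fully faithful left adjoint $\tau_{n,!}$ and characterises its essential image as those $\simp^{\op}_{/[n]}$-algebras whose underlying diagram is an operadic left Kan extension from $\Lbrnop$.

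The remaining content of the statement is the explicit identification of the value of $\tau_{n,!}M$ at an object $(i,j)$ with $k := j - i \geq 1$. By construction, $(\tau_{n,!}M)(i,j)$ is the operadic colimit of the diagram $(\Lbrnop)^{\txt{act}}_{/(i,j)} \to \mathcal{C}^{\otimes}$ obtained from $M$ by coCartesian pushforward to the fibre $\mathcal{C}$. We compute this colimit using the cofinal functor $\simp^{(k-1),\op} \to (\Lbrnop)^{\txt{act}}_{/(i,j)}$ of Lemma~\ref{lem:Lbrnsift}, which sends $([a_{1}],\ldots,[a_{k-1}])$ to the chain with $a_{\ell}+1$ copies of $i+\ell$. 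The pulled-back diagram is visibly the $(k-1)$-fold iterated two-sided bar construction $B(M(i,i+1), M(i+1,i+1), M(i+1,i+2), M(i+2,i+2),\ldots,M(j-1,j-1),M(j-1,j))$, whose geometric realisation is by definition the iterated relative tensor product
\[ M(i,i+1) \otimes_{M(i+1,i+1)} M(i+1,i+2) \otimes_{M(i+2,i+2)} \cdots \otimes_{M(j-1,j-1)} M(j-1,j). \]
Since $\mathcal{C}$ has good relative tensor products, each such bar construction admits a colimit preserved by tensoring, which iteratively justifies the identification.

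The main obstacle is the cofinality argument packaged in Lemma~\ref{lem:Lbrnsift}: without it, the category $(\Lbrnop)^{\txt{act}}_{/(i,j)}$ does not manifestly look like a product of simplicial categories, and the operadic colimit would be hard to recognise as an iterated bar construction. With that lemma in hand, the remaining verifications — fully faithfulness of $\tau_{n,!}$, preservation of colimits by $\otimes$, and the gluing of successive bar constructions via the associativity of the relative tensor product — follow formally from standard facts about operadic Kan extensions and the general criterion recorded in Lemma~\ref{lem:mongrtpcond}.
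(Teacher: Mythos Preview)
Your proposal is correct and matches the paper's approach exactly: the paper simply writes ``Applying Corollary~\ref{cor:opdkanext}, we get:'' and marks the corollary with a \qed, leaving the verification of extendability (Proposition~\ref{propn:tau1ext}), $\tau_{n}$-compatibility (Definition~\ref{defn:mongrtp}), and the bar-construction identification (via Lemma~\ref{lem:Lbrnsift}) implicit. Your write-up spells out precisely these omitted details, and the full faithfulness of $\tau_{n,!}$ indeed follows from $\tau_{n}$ being a full-subcategory inclusion via Corollary~\ref{cor:lokegivesadj}, which underlies Corollary~\ref{cor:opdkanext}.
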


Thus, the following is a good definition of the \icats{}
$\fALG_{1}(\mathcal{C})_{n}$ for all $n$:
\begin{defn}
  Let $\mathcal{C}$ be a monoidal \icat{} with good relative tensor
  products. We say that a $\simp_{/[n]}^{\op}$-algebra $M$ in
  $\mathcal{C}$ is \emph{composite} if the counit map
  $\tau_{n,!}\tau_{n}^{*}M \to M$ is an equivalence, or equivalently
  if $M$ is in the essential image of the functor $\tau_{n,!}$. We
  write $\fALG_{1}(\mathcal{C})_{n}$ for the full subcategory of
  $\Alg^{1}_{\simp^{\op}_{/[n]}}(\mathcal{C})$ spanned by the
  composite $\simp^{\op}_{/[n]}$-algebras.
\end{defn}

\subsection{The Segal Condition}\label{subsec:alg1segcond}
Our goal in this subsection is to prove that the \icats{}
$\fALG_{1}(\mathcal{C})_{i}$ satisfy the Segal condition, i.e. that
the natural map
\[ \fALG_{1}(\mathcal{C})_{i} \to \fALG_{1}(\mathcal{C})_{1}
\times_{\fALG_{1}(\mathcal{C})_{0}} \cdots
\times_{\fALG_{1}(\mathcal{C})_{0}} \fALG_{1}(\mathcal{C})_{1} \] is
an equivalence of \icats{}. We will prove this by showing that for
every $i$ the \gnsiopd{} $\Lbriop$ is equivalent to the colimit
$\simp^{\op}_{/[1]} \amalg_{\simp^{\op}_{/[0]}} \cdots
\amalg_{\simp^{\op}_{/[0]}} \simp^{\op}_{/[1]}$ in $\OpdInsg$. To do
this we use the model category
$(\sSet^{+})_{\mathfrak{O}_{1}^{\txt{gen}}}$ defined in
\S\ref{subsec:dniopdcat} and check that $\Lbriop$ is a homotopy
colimit; this boils down to checking that a certain
map is a trivial cofibration.

We write $\simp_{/[i]}^{\amalg,\op}$ for the ordinary colimit
$\simp^{\op}_{/[1]} \amalg_{\simp^{\op}_{/[0]}} \cdots
\amalg_{\simp^{\op}_{/[0]}} \simp^{\op}_{/[1]}$ in (marked) simplicial
sets (over $\simp^{\op}$). Since this colimit can be written as an
iterated pushout along injective maps of simplicial sets, this
colimit in simplicial sets is a homotopy colimit corresponding to the
\icatl{} colimit we're interested in. Moreover, there is an obvious
inclusion $\simp_{/[i]}^{\amalg,\op} \hookrightarrow \Lbriop$. Our aim
in this subsection is then to prove the following:

\begin{propn}\label{propn:cellmodel1}
  The inclusion $\simp_{/[i]}^{\amalg,\op} \hookrightarrow \Lbriop$ is
  a trivial cofibration in the model category
  $(\sSet^{+})_{\mathfrak{O}_{1}^{\txt{gen}}}$.
\end{propn}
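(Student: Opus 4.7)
The map is manifestly a cofibration (a monomorphism of simplicial sets over $\simp^{\op}$, with markings pulled back on both sides), so the whole content is the weak-equivalence claim in $(\sSet^{+})_{\mathfrak{O}_{1}^{\txt{gen}}}$.

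The plan is to filter $\Lbriop$ by \emph{spread}. For a cellular map $\phi \colon [m] \to [i]$ set $s(\phi) := \phi(m) - \phi(0) \in \{0,1,\ldots,i\}$, and let $F_{k} \subseteq \Lbriop$ be the full subcategory on objects with $s(\phi) \leq k$. Direct inspection shows $F_{1}$ coincides with $\simp_{/[i]}^{\amalg,\op}$: both consist of cellular sequences landing in a single edge $\{j,j+1\} \subseteq [i]$, with constant sequences identified at the gluing vertices. Since $F_{i} = \Lbriop$ and trivial cofibrations are closed under composition, it suffices to show that each inclusion $F_{k-1} \hookrightarrow F_{k}$ is a trivial cofibration for $2 \leq k \leq i$.

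For each such $k$, the objects of $F_{k}\setminus F_{k-1}$ are the cellular maps whose image is exactly an interval $\{j,\ldots,j+k\}$ for some $j \in \{0,\ldots,i-k\}$. Sorting by $j$ partitions these into pieces that meet only through their common boundary in $F_{k-1}$, each piece being isomorphic via translation to the top-spread part of $\bbLambda_{/[k]}^{\op}$. I would then exhibit $F_{k-1} \hookrightarrow F_{k}$ as an iterated pushout along these pieces, which reduces the problem to the universal case $i = k$: the inclusion of the spread-$<k$ part of $\bbLambda_{/[k]}^{\op}$ into all of $\bbLambda_{/[k]}^{\op}$. A further reduction using inner-horn fillers (to absorb degeneracies) boils this down to attaching the single maximal cellular object $(0,1,\ldots,k) \colon [k] \to [k]$, which operadically represents the iterated composite of the $k$ basic edges of $[k]$.

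The main obstacle is matching this final attachment to (the saturation of) a generating trivial cofibration of $(\sSet^{+})_{\mathfrak{O}_{1}^{\txt{gen}}}$. The natural candidate is the operadic Segal-type anodyne extension that formally composes $k$ composable edges into a single $k$-ary operation, which I would expect to lie in the saturation of the appendix's generating trivial cofibrations. The bulk of the technical work thus lies in unpacking the appendix's explicit generators and identifying the above attachment with a (pushout of a) member of that saturation. The markings take care of themselves: all newly added morphisms are active rather than inert, and therefore carry the default unmarked marking on both sides.
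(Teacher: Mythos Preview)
Your spread filtration is a reasonable organizing principle, and the identification $F_{1}=\simp_{/[i]}^{\amalg,\op}$ together with the pushout reduction of $F_{k-1}\hookrightarrow F_{k}$ to the universal case $i=k$ are both correct (the key point being that every simplex of $\Lbriop$ touching a spread-$k$ object lies entirely in a single length-$k$ window, since morphisms in $\Lbriop$ only shrink images). But the argument collapses at the last step. For $i=k$ the objects you must add are \emph{all} surjective cellular maps $[m]\twoheadrightarrow[k]$, i.e.\ the sequences $(0^{a_{0}},1^{a_{1}},\ldots,k^{a_{k}})$ for every $(a_{0},\ldots,a_{k})$ with $a_{j}\geq 1$ --- infinitely many objects living over distinct $[m]\in\Dop$, connected to one another and to $F_{k-1}$ by a web of active and inert maps. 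These are not equivalent to $(0,1,\ldots,k)$ and cannot be ``absorbed by inner-horn fillers''; attaching them with all their higher simplices is essentially the whole problem, so your outline does not reduce the difficulty. Relatedly, your claim that all new morphisms are active is false: for instance $(0,1,2)\to(0,1)$ over $\rho_{1}\colon[1]\to[2]$ is an inert edge first appearing in $F_{2}\setminus F_{1}$, so the marking bookkeeping is not automatic either.

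The paper proceeds quite differently. Rather than filtering by spread of objects, it filters $\Lbriop$ by the \emph{dimension} of simplices in the nerve, and classifies each new $n$-simplex according to whether it is wide (ending over some $[r]$ with $r>1$) or narrow, and whether its maps are active, inert, or neutral. Two species of $\mathfrak{O}_{1}^{\txt{gen}}$-anodyne maps are then used: the inclusions $\partial\Delta^{n}\star\mathrm{N}(\Cell^{1}_{/[r]})^{\op}\hookrightarrow\Delta^{n}\star\mathrm{N}(\Cell^{1}_{/[r]})^{\op}$ to attach wide simplices with their decomposition simplices, and inner horns $\Lambda^{n+1}_{k}\hookrightarrow\Delta^{n+1}$ to attach narrow simplices via their inert--active factorizations. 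This is packaged as a general criterion (Corollary~\ref{cor:Otrivcofib}) for a simplicial subset $\mathrm{N}\mathbf{O}_{0}\subseteq\mathrm{N}\mathbf{O}$ to be anodyne, and the present proposition follows by observing that membership in $\simp_{/[i]}^{\amalg,\op}$ depends only on the \emph{source} of a simplex having spread $\leq 1$, which immediately verifies the hypotheses. The point is that the filtration has to be organized simplex-by-simplex to match the available generators; your object-level filtration does not expose those generators.
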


Before we turn to the proof, let us first see that this does indeed
imply the Segal condition for $\fALG_{n}(\mathcal{C})$:
\begin{cor}\label{cor:algLbrnseg}
  Let $\mathcal{M}$ be a \gnsiopd{}. The restriction map
  \[\Algns_{\Lbrnop}(\mathcal{M}) \to
  \Algns_{\simp^{\op}_{/[1]}}(\mathcal{M})
  \times_{\Algns_{\simp^{\op}}(\mathcal{M})} \cdots
  \times_{\Algns_{\simp^{\op}}(\mathcal{M})}
  \Algns_{\simp^{\op}_{/[1]}}(\mathcal{M})\] is an equivalence of \icats{}.
\end{cor}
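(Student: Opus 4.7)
The plan is to derive this corollary directly from Proposition~\ref{propn:cellmodel1} in two steps: first, handle the ordinary pushout $\simp_{/[i]}^{\amalg,\op}$ by noting that its defining pushout in simplicial sets is sent by $\Algns_{-}(\mathcal{M})$ to the desired pullback; second, use the trivial-cofibration statement of the proposition to replace $\simp_{/[i]}^{\amalg,\op}$ by $\Lbriop$.

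For step one, observe that $\simp_{/[i]}^{\amalg,\op}$ is constructed as an iterated pushout of simplicial sets (over $\Dop$) along the monomorphisms induced by the inclusions $[0] \hookrightarrow [1]$. Since $\Fun_{/\Dop}(-, \mathcal{M})$ sends strict pushouts of simplicial sets to strict pullbacks of $\infty$-categories, we get a canonical identification of $\Fun_{/\Dop}(\simp_{/[i]}^{\amalg,\op}, \mathcal{M})$ with the corresponding iterated pullback. The algebra condition is compatible with this pullback because every inert morphism of $\simp_{/[i]}^{\amalg,\op}$ already lies in one of the constituent pieces $\simp^{\op}_{/[1]}$, so preserving coCartesian morphisms over inert morphisms globally is equivalent to doing so in each summand. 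Restricting to full subcategories of algebras therefore yields a natural equivalence
\[\Algns_{\simp_{/[i]}^{\amalg,\op}}(\mathcal{M}) \simeq \Algns_{\simp^{\op}_{/[1]}}(\mathcal{M}) \times_{\Algns_{\simp^{\op}}(\mathcal{M})} \cdots \times_{\Algns_{\simp^{\op}}(\mathcal{M})} \Algns_{\simp^{\op}_{/[1]}}(\mathcal{M}).\]

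For step two, I would show that restriction along the inclusion $\iota \colon \simp_{/[i]}^{\amalg,\op} \hookrightarrow \Lbriop$ induces an equivalence $\iota^{*} \colon \Algns_{\Lbriop}(\mathcal{M}) \isoto \Algns_{\simp_{/[i]}^{\amalg,\op}}(\mathcal{M})$. By Proposition~\ref{propn:cellmodel1}, $\iota$ is a trivial cofibration in the Cartesian model category $(\sSet^{+})_{\mathfrak{O}_{1}^{\txt{gen}}}$ from the appendix, and $\mathcal{M}$ is fibrant there (being a \gnsiopd{}). The pushout-product axiom then forces the map on internal hom objects $\Fun_{/\Dop}(\Lbriop, \mathcal{M}) \to \Fun_{/\Dop}(\simp_{/[i]}^{\amalg,\op}, \mathcal{M})$ to be a trivial fibration, and restricting to the full subcategories of algebras gives an equivalence of $\infty$-categories. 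Composing with the identification from step one yields the corollary.

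The main obstacle is step two: we must know not merely that $\iota$ presents an equivalence in the underlying $\infty$-category of \gnsiopds{}, but that the pullback $\iota^{*}$ is an equivalence of $\infty$-categories of algebras (rather than only on the underlying spaces). This relies on the full Cartesian/enriched structure of $(\sSet^{+})_{\mathfrak{O}_{1}^{\txt{gen}}}$ from the appendix; granted that framework, the argument is formal. Without it, one would need a direct $\infty$-categorical argument showing that $\Lbriop$ represents the $\infty$-categorical pushout in $\OpdInsg$ and that the pairing $\Algns_{-}(\mathcal{M})$ sends such colimits to limits of $\infty$-categories.
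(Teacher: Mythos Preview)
Your proposal is correct and follows essentially the same approach as the paper: the paper's proof also factors the map through $\Algns_{\simp_{/[n]}^{\amalg,\op}}(\mathcal{M})$, uses the enrichment of $(\sSet^{+})_{\mathfrak{O}_{1}^{\txt{gen}}}$ in marked simplicial sets together with Proposition~\ref{propn:cellmodel1} to get a trivial Kan fibration on restriction (your step two), and then identifies $\Algns_{\simp_{/[n]}^{\amalg,\op}}(\mathcal{M})$ with the iterated pullback because $\simp_{/[n]}^{\amalg,\op}$ is a homotopy colimit (your step one). Your explicit remark that every inert morphism of $\simp_{/[i]}^{\amalg,\op}$ lies in one of the constituent $\simp^{\op}_{/[1]}$'s is a nice clarification of why the algebra subcategories are compatible with the pullback, but otherwise the arguments coincide.
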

\begin{proof}
  Since the model category
  $(\sSet^{+})_{\mathfrak{O}_{1}^{\txt{gen}}}$ is enriched in
  marked simplicial sets and the inclusion $\simp_{/[n]}^{\amalg,\op}
  \hookrightarrow \Lbrnop$ is a trivial cofibration by
  Proposition~\ref{propn:cellmodel1}, for any \gnsiopd{} $\mathcal{M}$ the
  restriction map $\Algns_{\Lbrnop}(\mathcal{M}) \to
  \Algns_{\simp_{/[n]}^{\amalg,\op}}(\mathcal{M})$ is a trivial Kan
  fibration, and the map
  \[\Algns_{\simp_{/[n]}^{\amalg,\op}}(\mathcal{M}) \to   \Algns_{\simp^{\op}_{/[1]}}(\mathcal{M})
  \times_{\Algns_{\simp^{\op}}(\mathcal{M})} \cdots
  \times_{\Algns_{\simp^{\op}}(\mathcal{M})}
  \Algns_{\simp^{\op}_{/[1]}}(\mathcal{M})\] is an equivalence of
  \icats{} since
  $\simp_{/[n]}^{\amalg,\op}$ is a homotopy colimit.
\end{proof}

\begin{cor}\label{cor:ALG1seg}
  Let $\mathcal{C}$ be a monoidal \icat{} with good relative tensor
  products. Then the natural restriction map
  \[ \fALG_{1}(\mathcal{C})_{n} \to \fALG_{1}(\mathcal{C})_{1}
  \times_{\fALG_{1}(\mathcal{C})_{0}} \cdots
  \times_{\fALG_{1}(\mathcal{C})_{0}} \fALG_{1}(\mathcal{C})_{1}\]
  is an equivalence.
\end{cor}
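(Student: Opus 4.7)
The plan is to reduce Corollary~\ref{cor:ALG1seg} to Corollary~\ref{cor:algLbrnseg} via the adjunction $\tau_{n,!} \dashv \tau_n^{*}$. First, I would observe that since $\tau_{n,!}$ is fully faithful with essential image precisely the composite $\simp^{\op}_{/[n]}$-algebras, the restriction functor $\tau_n^{*}$ descends to an equivalence $\fALG_{1}(\mathcal{C})_{n} \isoto \Algns_{\Lbrnop}(\mathcal{C})$, with inverse $\tau_{n,!}$. Moreover, when $n = 0$ or $n = 1$, every map $[k] \to [n]$ in $\simp$ satisfies the cellularity condition $\phi(i+1) \leq \phi(i)+1$ trivially (since $\phi(k) \leq n \leq 1 \leq \phi(i)+1$ in both cases), so $\Lbrnop = \simp^{\op}_{/[n]}$ and the equivalence becomes an identification $\fALG_{1}(\mathcal{C})_{n} = \Algns_{\simp^{\op}_{/[n]}}(\mathcal{C})$ for $n \leq 1$.

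Next I would verify that the Segal restriction map in the statement is compatible with these equivalences. The restriction map $\fALG_{1}(\mathcal{C})_{n} \to \fALG_{1}(\mathcal{C})_{1} \times_{\fALG_{1}(\mathcal{C})_{0}} \cdots \times_{\fALG_{1}(\mathcal{C})_{0}} \fALG_{1}(\mathcal{C})_{1}$ is induced by the inclusions $\simp^{\op}_{/[1]} \hookrightarrow \simp^{\op}_{/[n]}$ corresponding to the edges of $[n]$, together with the inclusions $\simp^{\op} \hookrightarrow \simp^{\op}_{/[n]}$ corresponding to the vertices. Since the image of each such inclusion consists of maps landing in a sub-interval of $[n]$ (or a vertex of $[n]$), every such map is cellular, so these inclusions all factor through $\Lbrnop \hookrightarrow \simp^{\op}_{/[n]}$. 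This gives a commutative square identifying the restriction map of the statement, after applying the equivalences $\fALG_{1}(\mathcal{C})_{k} \simeq \Algns_{\Lambda_{/[k]}^{\op}}(\mathcal{C})$, with the restriction map appearing in Corollary~\ref{cor:algLbrnseg} (applied to the generalized non-symmetric \iopd{} $\mathcal{M} = \mathcal{C}$).

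Applying Corollary~\ref{cor:algLbrnseg} then shows the top map in this square is an equivalence, so the bottom map, which is the map in the statement, is also an equivalence. The only point that requires any care is checking that the equivalence $\fALG_{1}(\mathcal{C})_{n} \simeq \Algns_{\Lbrnop}(\mathcal{C})$ is really the one induced by restriction (rather than involving some further operadic Kan extension on the factors) — this is immediate because for $k \leq 1$ the functor $\tau_{k}$ is the identity, so restriction along the factored inclusion $\simp^{\op}_{/[k]} \hookrightarrow \Lbrnop \hookrightarrow \simp^{\op}_{/[n]}$ agrees with $\tau_{n}^{*}$ followed by restriction to $\simp^{\op}_{/[k]} \subseteq \Lbrnop$. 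This compatibility is the main thing to verify, but it is essentially formal and requires no further input beyond the fact that the relevant inclusions land in $\Lbrnop$.
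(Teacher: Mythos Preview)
Your proposal is correct and follows essentially the same approach as the paper: factor the restriction map through $\Algns_{\Lbrnop}(\mathcal{C})$, use that $\tau_{n}^{*}$ restricts to an equivalence $\fALG_{1}(\mathcal{C})_{n} \isoto \Algns_{\Lbrnop}(\mathcal{C})$ (since $\tau_{n,!}$ is fully faithful with this essential image), and then invoke Corollary~\ref{cor:algLbrnseg}. Your explicit verification that $\Lambda_{/[k]} = \simp_{/[k]}$ for $k \leq 1$ and that the edge and vertex inclusions factor through $\Lbrnop$ fills in details the paper leaves implicit in the phrase ``by definition,'' but the argument is the same.
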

\begin{proof}
  This map factors as a composite of the maps
  \[ \fALG_{1}(\mathcal{C})_{n} \to \Algns_{\Lbrnop}(\mathcal{C}) \to
  \fALG_{1}(\mathcal{C})_{1} \times_{\fALG_{1}(\mathcal{C})_{0}}
  \cdots \times_{\fALG_{1}(\mathcal{C})_{0}}
  \fALG_{1}(\mathcal{C})_{1},\] where the first is an equivalence by
  definition and the second by Corollary~\ref{cor:algLbrnseg}.
\end{proof}

We will deduce Proposition~\ref{propn:cellmodel1} from a rather
technical result about trivial cofibrations in
$(\sSet^{+})_{\mathfrak{O}_{1}^{\txt{gen}}}$. To state this, we first
need to introduce some terminology for simplices in the nerve of
$\Dop$.

\begin{warning}
  Throughout the remainder of this section we are really working with
  \emph{marked} simplicial sets. However, to simplify the notation we
  will not indicate the marking in any way --- thus if
  e.g. $\mathcal{O}$ is a \gnsiopd{} we are really thinking of it as
  the marked simplicial set $(\mathcal{O}, I)$ where $I$ is the
  collection of inert morphisms. Similarly, all simplicial subsets of
  \gnsiopds{} are really marked by the inert morphisms that they
  contain.
\end{warning}

\begin{defn}
  Let $\sigma$ be an $n$-simplex in $\mathrm{N}\Dop$, i.e. a diagram
  \[\sigma = [r_{0}] \xto{f_{1}} [r_{1}] \xto{f_{2}} \cdots
  \xto{f_{n}} [r_{n}]\] in $\Dop$ (where, in terms of the category
  $\simp$, each $f_{i}$ is a map of ordered sets from $[r_{i}]$ to
  $[r_{i-1}]$); for convenience, we will let the symbols $[r_{i}]$ and
  $f_{i}$
  denote
  the objects and morphisms in any such $n$-simplex we encounter from
  now on. We say that $\sigma$ is \emph{narrow} if $r_{n} = 1$ and
  \emph{wide} if $r_{n} > 1$. If $\sigma$ is wide, we have an induced
  diagram $\pi_{\sigma} \colon \Delta^{n} \star
  \mathrm{N}(\Cell^{1}_{/[r_{n}]})^{\op} \to \mathrm{N}\Dop$ by adding the inert
  morphisms from $[r_{n}]$ to $[1]$ and $[0]$. The \emph{decomposition
    simplices} of $\sigma$ are the simplices in the image of this
  diagram.
\end{defn}

\begin{defn}
  We say a morphism $\phi$ in $\simp^{\op}$ is \emph{neutral} if it is
  neither active nor inert. If $\sigma$ is an $n$-simplex of
  $\mathrm{N}\Dop$ such that $f_{k}$ is neutral, we say that $\sigma$
  is \emph{$k$-factorizable}. The \emph{$k$-factored $(n+1)$-simplex}
  of $\sigma$ is then that obtained by taking the inert-active
  factorization of $f_{k}$.
\end{defn}

From the definition of the model structure for a categorical pattern
$\mathfrak{P}$ in  \rcite{HA}{\S B.2} it follows that the
$\mathfrak{P}$-anodyne morphisms defined in \rcite{HA}{Definition
  B.1.1} are trivial cofibrations. In the case $\mathfrak{P} =
\mathfrak{O}_{1}^{\txt{gen}}$, we have in particular that:
\begin{itemize}
\item If $\sigma$ is a wide $n$-simplex in $\mathrm{N}\Dop$ and
  $\pi_{\sigma} \colon \Delta^{n} \star \mathrm{N}(\Cell^{1}_{/[r_{n}]})^{\op}
  \to \mathrm{N}\Dop$ is the diagram as above, then the inclusion
  \opctriangle{\partial \Delta^{n} \star  \mathrm{N}(\Cell^{1}_{/[r_{n}]})^{\op}}{\Delta^{n} \star  \mathrm{N}(\Cell^{1}_{/[r_{n}]})^{\op}}{\mathrm{N}\Dop}{}{}{\pi_{\sigma}} is a trival cofibration.
\item If $\sigma$ is a $k$-factorizable $n$-simplex and $\sigma'$ is
  its $k$-factored $(n+1)$-simplex, then the inclusion
  \opctriangle{\Lambda^{n+1}_{k}}{\Delta^{n+1}}{\mathrm{N}\Dop}{}{}{\sigma'}
  is a trivial cofibration.
\end{itemize}
We will prove Proposition~\ref{propn:cellmodel1} by constructing a
rather intricate filtration where each inclusion is a pushout of
a trivial cofibration of one of these two types. To define this we need some
more notation:
\begin{notation}
  We define the following sets of simplices in $\mathrm{N}\Dop$:
  \begin{itemize}
  \item For $1 \leq r < k \leq n$, let $A_{n}(k,r)$ be the set of
    non-degenerate narrow $n$-simplices $\sigma$ such that $f_{r}$ is
    inert, $f_{k}$ is neutral, and $f_{p}$ is active for $r < p < k$
    and $p > k$.
  \item For $1 \leq r < k \leq n$, let $A'_{n}(k,r)$ be
    the set of non-degenerate $(n+1)$-simplices $\sigma$ such that $r_{n} = 1$,
    $r_{n+1} = 0$, $f_{r}$ is inert, $f_{k}$ is neutral, and $f_{p}$
    is active for $r < p < k$ and $p > k$.
  \item For $1 \leq k \leq n$, let $B_{n}(k)$ be the set of
    non-degenerate narrow $n$-simplices $\sigma$ such that $f_{k}$ is
    neutral, $f_{p}$ is active for $p > k$, and $\sigma$ is not
    contained in $A_{n}(k,r)$ for any $r$.
  \item For $1 \leq k \leq n$, let $B'_{n}(k)$ be the set of
    non-degenerate $(n+1)$-simplices $\sigma$ such that $r_{n} = 1$,
    $r_{n+1} = 0$, $f_{k}$ is neutral, $f_{p}$ is active for $k < p <
    n+1$, and $\sigma$ is not contained in $A'_{n}(k,r)$ for
    any $r$.
  \end{itemize}
  Now define $\mathfrak{F}_{n} \subseteq \mathrm{N}\Dop$ to be the
  simplicial subset containing all the non-degenerate $i$-simplices
  for $i \leq n$ together with
  \begin{itemize}
  \item for every wide $i$-simplex, $i \leq n$, its decomposition
    simplices
  \item the $k$-factored $(i+1)$-simplices of the simplices in
    $A_{i}(k,r)$ and $B_{i}(k)$ for all $k,r$ and $i \leq n$.
  \item the $k$-factored $(i+2)$-simplices of the simplices in
    $A'_{i}(k,r)$ and $B'_{i}(k)$ for all $k,r$ and $i \leq n$.
  \end{itemize}
  Then let $\mathfrak{F}_{n}^{+}$ denote the simplicial subset
  containing the simplices in $\mathfrak{F}_{n}$ together with the
  narrow \emph{active} $(n+1)$-simplices, meaning those such that all
  the morphisms $f_{i}$ are active.
\end{notation}

A ``prototype'' version of our technical result is then: For every
$n$, the inclusion $\mathfrak{F}_{n-1}^{+} \hookrightarrow
\mathfrak{F}_{n}$ is a trivial cofibration in the \gnsiopd{} model
structure. We actually need a slightly more general ``relative''
version of this, which we are ready to state and prove after
introducing a little more notation:

\begin{notation}
  Let $\mathbf{O}$ be an ordinary category whose objects have no
  non-trivial automorphisms, equipped with a map $\mathbf{O} \to \Dop$
  that exhibits $\mathbf{O}$ as a \gnsiopd{}. We say a simplex in
  $\mathrm{N}\mathbf{O}$ is \emph{narrow}, \emph{wide} or
  \emph{$k$-factorizable} if this is true of its image in
  $\mathrm{N}\Dop$. For such $\mathbf{O}$ the inert-active
  factorizations in $\mathbf{O}$ are strictly unique (rather than just
  unique up to isomorphism), and we can define the decomposition
  simplices of a wide simplex and the $k$-factored $(n+1)$-simplex of
  a $k$-factorizable $n$-simplex just as before. If
  $\mathrm{N}\mathbf{O}_{0}$ is a simplicial subset of
  $\mathrm{N}\mathbf{O}$ we (slightly abusively) write
  $\mathfrak{F}_{n}\mathbf{O}$ for the simplicial subset of
  $\mathrm{N}\mathbf{O}$ containing the simplices in
  $\mathrm{N}\mathbf{O}_{0}$ together with those lying over the
  simplices in $\mathfrak{F}_{n}$; we also define
  $\mathfrak{F}_{n}^{+}\mathbf{O}$ similarly.
\end{notation}

\begin{propn}\label{propn:Otrivcofib}
  Let $\mathbf{O}$ be as above. Suppose $\mathrm{N}\mathbf{O}_{0}$ is
  a simplicial subset of $\mathrm{N}\mathbf{O}$ such that
  \begin{itemize}
  \item for every wide simplex contained in
    $\mathrm{N}\mathbf{O}_{0}$, its decomposition simplices are also
    contained in $\mathrm{N}\mathbf{O}_{0}$,
  \item for every $n$-simplex in $\mathrm{N}\mathbf{O}_{0}$ whose
    image in $\mathrm{N}\Dop$ is in $A_{n}(k,r)$ and $B_{n}(k)$ for
    some $k,r$, its $k$-factored $(n+1)$-simplex is also in
    $\mathrm{N}\mathbf{O}_{0}$,
  \item for every $(n+1)$-simplex in $\mathrm{N}\mathbf{O}_{0}$ whose image in
    $\mathrm{N}\Dop$ is in $A'_{n}(k,r)$ and $B'_{n}(k)$ for some $k,r$,
    its $k$-factored $(n+2)$-simplex is also in $\mathrm{N}\mathbf{O}_{0}$.
  \end{itemize}
  Then the inclusion
  \[\mathfrak{F}_{n-1}^{+}\mathbf{O}
  \hookrightarrow
  \mathfrak{F}_{n}\mathbf{O}\] is a trivial
  cofibration in the \gnsiopd{} model structure.
\end{propn}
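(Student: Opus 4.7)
The plan is to build an explicit finite (or countable) filtration
\[
\mathfrak{F}_{n-1}^{+}\mathbf{O} = X_{0} \subseteq X_{1} \subseteq \cdots \subseteq \mathfrak{F}_{n}\mathbf{O},
\]
in which each inclusion $X_{i}\hookrightarrow X_{i+1}$ is a pushout of one of the two ``elementary'' trivial cofibrations in $(\sSet^{+})_{\mathfrak{O}_{1}^{\txt{gen}}}$ that were listed just before the statement: the join inclusion $\partial\Delta^{m}\star \mathrm{N}(\Cell^{1}_{/[r]})^{\op}\hookrightarrow \Delta^{m}\star \mathrm{N}(\Cell^{1}_{/[r]})^{\op}$ associated to a wide $m$-simplex, and the horn inclusion $\Lambda^{m+1}_{k}\hookrightarrow \Delta^{m+1}$ associated to a $k$-factorizable $m$-simplex via its $k$-factored $(m+1)$-simplex. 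Since $\mathbf{O}\to \Dop$ has no non-trivial automorphisms in the fibres, every non-degenerate simplex of $\mathrm{N}\mathbf{O}$ has a uniquely determined image and a strictly unique inert-active factorization, so each attaching data below is literally well-defined.

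First I would attach the non-degenerate wide $n$-simplices $\sigma$ of $\mathrm{N}\mathbf{O}$ which are not in $\mathrm{N}\mathbf{O}_{0}$, one at a time in any order. For each such $\sigma$ one forms the pushout along $\partial\Delta^{n}\star \mathrm{N}(\Cell^{1}_{/[r_{n}]})^{\op}\hookrightarrow \Delta^{n}\star \mathrm{N}(\Cell^{1}_{/[r_{n}]})^{\op}$, which introduces $\sigma$ together with all its decomposition simplices. The verification is that the boundary piece already lies in the preceding stage: the faces of $\sigma$ have dimension $n-1$ and so are in $\mathfrak{F}_{n-1}\mathbf{O}\subseteq \mathfrak{F}_{n-1}^{+}\mathbf{O}$, while the decomposition simplices of each face of $\sigma$ also lie in $\mathfrak{F}_{n-1}\mathbf{O}$ by definition of $\mathfrak{F}_{n-1}$, using the hypothesis that $\mathrm{N}\mathbf{O}_{0}$ is closed under decomposition.

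Second, I would attach the non-degenerate narrow non-active $n$-simplices $\sigma$ of $\mathrm{N}\mathbf{O}\setminus \mathrm{N}\mathbf{O}_{0}$. Any such $\sigma$ has at least one neutral $f_{i}$; let $k_{\sigma}$ be the largest such index. Then $\sigma\in A_{n}(k_{\sigma},r)$ for some $r<k_{\sigma}$ (when some $f_{r}$ with $r<k_{\sigma}$ is inert) or else $\sigma\in B_{n}(k_{\sigma})$. I attach $\sigma$ together with its $k_{\sigma}$-factored $(n+1)$-simplex $\sigma'$ via the horn $\Lambda^{n+1}_{k_{\sigma}}\hookrightarrow \Delta^{n+1}$. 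Ordering the $\sigma$'s first by $k_{\sigma}$, then by the smallest such $r$ (with $B_{n}(k_{\sigma})$ coming last), and then lexicographically by the tuple of $[r_{i}]$'s, one checks that each face $d_{i}\sigma'$ for $i\neq k_{\sigma}$ is either a degenerate simplex, a simplex of dimension $\le n-1$ lying in $\mathfrak{F}_{n-1}^{+}\mathbf{O}$, an active narrow $n$-simplex already in $\mathfrak{F}_{n-1}^{+}\mathbf{O}$, or a narrow $n$-simplex attached earlier in the ordering. After this, I repeat the analogous process for the $(n+1)$-simplices over $A'_{n}(k,r)\cup B'_{n}(k)$, which arise by post-composing the just-added narrow $n$-simplices with an inert map $[1]\to [0]$, attaching each via the horn inclusion for its $k$-factored $(n+2)$-simplex.

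The main obstacle is the bookkeeping in the second stage: one must set up an ordering of the narrow non-active $n$-simplices so that at every attaching step all faces of the $k_{\sigma}$-factored simplex other than the $k_{\sigma}$-th face are genuinely available. This is a purely combinatorial check case-by-case on whether each omitted face is itself active, $k$-factorizable, or narrow; the closure conditions imposed on $\mathrm{N}\mathbf{O}_{0}$ are precisely what ensures the compatibility of the filtration with simplices coming from $\mathbf{O}_{0}$. Once this combinatorial verification is in hand, each step is literally a pushout of a $\mathfrak{O}_{1}^{\txt{gen}}$-anodyne inclusion, hence a trivial cofibration, and the composite $\mathfrak{F}_{n-1}^{+}\mathbf{O}\hookrightarrow \mathfrak{F}_{n}\mathbf{O}$ is the transfinite composition of these, hence itself a trivial cofibration.
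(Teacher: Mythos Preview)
Your overall strategy---build a filtration in which every step is a pushout of one of the two listed $\mathfrak{O}_{1}^{\txt{gen}}$-anodyne maps---is exactly the paper's strategy. The gap is in your first stage. You claim that for a wide $n$-simplex $\sigma$ the map $\pi_{\sigma}^{\partial}$ already lands in $\mathfrak{F}_{n-1}^{+}\mathbf{O}$, because the faces $d_{i}\sigma$ are $(n-1)$-simplices and ``the decomposition simplices of each face'' are in $\mathfrak{F}_{n-1}\mathbf{O}$. This is correct for $i<n$, since $d_{i}\sigma$ still ends at $[r_{n}]$ and the restriction of $\pi_{\sigma}$ to $d_{i}\Delta^{n}\star\mathrm{N}(\Cell^{1}_{/[r_{n}]})^{\op}$ \emph{is} the decomposition diagram of $d_{i}\sigma$. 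But for $i=n$ it fails: the face $d_{n}\sigma$ ends at $[r_{n-1}]$, and the restriction of $\pi_{\sigma}$ to $d_{n}\Delta^{n}\star\mathrm{N}(\Cell^{1}_{/[r_{n}]})^{\op}$ is \emph{not} the decomposition diagram of $d_{n}\sigma$ (that would use $\Cell^{1}_{/[r_{n-1}]}$). Instead it produces $n$-simplices of the form $[r_{0}]\to\cdots\to[r_{n-1}]\to[1]$ whose final map is the composite of $f_{n}$ with an inert $\rho\colon[r_{n}]\to[1]$. When $f_{n}$ is not inert this composite can be neutral, giving a narrow $n$-simplex in $A_{n}(n,r)$ or $B_{n}(n)$ that is \emph{not} in $\mathfrak{F}_{n-1}^{+}\mathbf{O}$. (The analogous $(n+1)$-simplices ending $[1]\to[0]$ give the $A'_{n}$, $B'_{n}$ problem.)

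This is precisely why the paper cannot attach all wide $n$-simplices first but must \emph{interleave} them with the horn-filling of narrow simplices. The paper's filtration $\mathcal{F}_{0}\subseteq\cdots\subseteq\mathcal{F}_{4}$ does this: it first attaches only those wide $\sigma$ with $f_{n}$ inert (the set $S_{1}$), for which the composites above are automatically inert and hence the $d_{n}$-boundary piece lands in decomposition simplices of lower-dimensional wide simplices; then it fills horns for $A_{n}(k,r)$ (and $A'_{n}(k,r)$) in a carefully nested order over $r$ and $k$, interleaved with attaching the wide simplices in $S_{2}(r)$; then fills horns for $B_{n}(k)$ (and $B'_{n}(k)$); and only then attaches the remaining wide simplices $S_{4}$. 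Your instinct that ``the main obstacle is the bookkeeping in the second stage'' is right in spirit, but the bookkeeping is more entangled than you allow: the availability of the boundary for a wide-simplex attachment depends on having already performed certain narrow horn fills, so the two stages cannot be separated.
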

\begin{remark}
  It is not really necessary to assume that the objects of
  $\mathbf{O}$ have no automorphisms for the proof to go through: it
  suffices, as in the proof of \rcite{HA}{Theorem 3.1.2.3}, to assume
  that the inert-active factorization system can be refined to a
  \emph{strict} factorization system, i.e. one where the
  factorizations are defined uniquely, not just up to
  isomorphism. This slight generalization is not needed for any of our
  applications, however.
\end{remark}

\begin{proof}
  The basic idea of the proof is to define a filtration of
  $\mathfrak{F}_{n}\mathbf{O}$, starting with
  $\mathfrak{F}_{n-1}^{+}\mathbf{O}$, such that each step in the
  filtration is a pushout of a trivial cofibration of one of the two
  types we discussed above.

  Let us say that a simplex in $\mathfrak{F}_{n}\mathbf{O}$ is
  \emph{old} if it is contained in $\mathfrak{F}_{n-1}^{+}\mathbf{O}$,
  and \emph{new} if it is not. We also write $\overline{A}_{n}(k,r)$
  for the set of new $n$-simplices whose image in $\mathrm{N}\Dop$
  lies in $A_{n}(k,r)$, and define $\overline{A}'_{n}(k,r)$,
  $\overline{B}_{n}(k)$ and $\overline{B}'_{n}(k)$
  similarly.
  The filtration is then defined as follows:
  \begin{itemize}
  \item Set $\mathcal{F}_{0} := \mathfrak{F}_{n-1}^{+}\mathbf{O}$.
  \item Let $S_{1}$ be the set of non-degenerate wide new
    $n$-simplices such that $f_{n}$ is inert. We let $\mathcal{F}_{1}$
    be the simplicial subset of $\mathfrak{F}_{n}\mathbf{O}$
    containing $\mathcal{F}_{0}$ together with the $n$-simplices in
    $S_{1}$ as well as their decomposition $(n+1)$- and $(n+2)$-simplices.
  \item Let $S_{2}(r)$ be the set of non-degenerate wide new
    $n$-simplices such that $f_{r}$ is inert and $f_{p}$ is active for
    $p > r$. We set $\mathcal{F}_{2}$ to be the simplicial subset of
    $\mathfrak{F}_{n}\mathbf{O}$ containing $\mathcal{F}_{1}$ together
    with:
    \begin{itemize}[--]
    \item the $n$-simplices in $S_{2}(r)$ for all $r$ and their
      decomposition $(n+1)$- and $(n+2)$-simplices,
    \item the $n$-simplices in $\overline{A}_{n}(k,r)$ for all $k,r$
      and their $k$-factored $(n+1)$-simplices,
    \item the $(n+1)$-simplices in $\overline{A}'_{n}(k,r)$ for all $k,r$
      and their $k$-factored $(n+2)$-simplices.
    \end{itemize}
  \item Let $\mathcal{F}_{3}$ be the simplicial subset of
    $\mathfrak{F}_{n}\mathbf{O}$ containing $\mathcal{F}_{2}$ together
    with the $n$-simplices in $\overline{B}_{n}(k)$ for all $k$ and
    their $k$-factored $(n+1)$-simplices, as well as the
    $(n+1)$-simplices in $\overline{B}'_{n}(k)$ for all $k$ and their
    $k$-factored $(n+2)$-simplices.
  \item Let $S_{4}$ be the set of non-degenerate wide new
    $n$-simplices that are not contained in $\mathcal{F}_{3}$. Then
    $\mathcal{F}_{4} := \mathfrak{F}_{n}\mathbf{O}$ consists of the
    simplices in $\mathcal{F}_{3}$ together with the $n$-simplices in
    $S_{4}$ and their decomposition $(n+1)$- and $(n+2)$-simplices.
  \end{itemize}
  We then need to prove that the four inclusions $\mathcal{F}_{m-1}
  \hookrightarrow \mathcal{F}_{m}$ are all trivial cofibrations.
  
  $m = 1$: If $\sigma$ is an $n$-simplex in $\mathrm{N}\mathbf{O}$, we
  write $\pi_{\sigma}$ for the induced diagram $\Delta^{n} \star
  \mathrm{N}(\Cell^{1}_{/[r_{n}]})^{\op} \to \mathrm{N}\mathbf{O}$ and
  $\pi^{\partial}_{\sigma}$ for the restriction of this map to
  $\partial\Delta^{n} \star \mathrm{N}(\Cell^{1}_{/[r_{n}]})^{\op}$. For $\sigma$
  in $S_{1}$, observe that since any narrow new $n$-simplex whose
  final map is inert is contained in $\mathcal{F}_{0}$, as is any new
  $(n+1)$-simplex whose final map is $[1] \to [0]$ and whose
  penultimate map is inert, the map $\pi^{\partial}_{\sigma}$ factors
  through $\mathcal{F}_{0}$. Thus we have a pushout diagram
  \nolabelcsquare{ \coprod_{\sigma \in S_{1}} \partial \Delta^{n}
    \star \mathrm{N}(\Cell^{1}_{/[r_{n}]})^{\op}}{ \coprod_{\sigma \in
      S_{1}} \Delta^{n} \star
    \mathrm{N}(\Cell^{1}_{/[r_{n}]})^{\op}}{\mathcal{F}_{0}}{\mathcal{F}_{1}.}
  Since the upper horizontal map is
  $\mathfrak{O}_{1}^{\txt{gen}}$-anodyne, so is the lower horizontal
  map.

  $m = 2$: This is the most convoluted step, as we must consider
  several subsidiary filtrations for the inclusion $\mathcal{F}_{1}
  \hookrightarrow \mathcal{F}_{2}$. We will inductively define a
  filtration
  \[ \mathcal{F}_{1} = \mathcal{G}_{n} \subseteq \mathcal{G}'_{n-1}
  \subseteq \mathcal{G}_{n-1} \subseteq \cdots \subseteq
  \mathcal{G}'_{1} \subseteq \mathcal{G}_{1} = \mathcal{F}_{2},\]
  where $\mathcal{G}'_{r}$ is itself defined via a filtration
  \[ \mathcal{G}_{r+1} = \mathcal{I}_{r,r} \subseteq
  \mathcal{I}'_{r,r+1} \subseteq \mathcal{I}_{r,r+1} \subseteq \cdots
  \subseteq \mathcal{I}'_{r,n} \subseteq \mathcal{I}_{r,n} =
  \mathcal{G}'_{r}.\]
  This goes as follows:
  \begin{itemize}
  \item We define $\mathcal{I}'_{r,k}$ to be the simplicial subset of
    $\mathcal{F}_{2}$ containing the simplices in $\mathcal{I}_{r,k-1}$
    together with the $n$-simplices in $\overline{A}_{n}(k,r)$ as well
    as their $k$-factored $(n+1)$-simplices.
  \item We define $\mathcal{I}_{r,k}$ to be the simplicial subset of
    $\mathcal{F}_{2}$ containing the simplices in $\mathcal{I}'_{r,k}$
    together with the $(n+1)$-simplices in $\overline{A}'_{n}(k,r)$ as well
    as their $k$-factored $(n+2)$-simplices.
  \item We define $\mathcal{G}_{r}$ to be the simplicial subset of
    $\mathcal{F}_{2}$ containing $\mathcal{G}'_{r}$ together with the
    $n$-simplices in $S_{2}(r)$ as well as their decomposition
    $(n+1)$- and $(n+2)$-simplices.
  \end{itemize}
  Then it suffices to show that the inclusions $f_{r,k} \colon
  \mathcal{I}_{r,k-1} \hookrightarrow \mathcal{I}'_{r,k}$, $g_{r,k}
  \colon \mathcal{I}'_{r,k} \hookrightarrow \mathcal{I}_{r,k}$, and
  $h_{r} \colon \mathcal{G}'_{r} \hookrightarrow \mathcal{G}_{r}$ are
  all trivial cofibrations.

  Observe that for $\sigma$ in $\overline{A}_{n}(k,r)$ with
  $k$-factored $(n+1)$-simplex $\tau$, the faces $d_{j}\tau$ with $j
  \neq k$ are contained in $\mathcal{I}_{r,k-1}$.
  Thus we get a pushout diagram \nolabelcsquare{ \coprod_{\sigma \in
      \overline{A}_{n}(k,r)} \Lambda^{n+1}_{k}}{ \coprod_{\sigma \in
      \overline{A}_{n}(k,r)}
    \Delta^{n+1}}{\mathcal{I}_{r,k-1}}{\mathcal{I}'_{r,k},} and so
  $f_{r,k}$ is a trivial cofibration.  Similarly, for $\sigma$ in
  $\overline{A}'_{n}(k,r)$ with $k$-factored $(n+2)$-simplex $\tau$
  the faces $d_{j}\tau$ with $j \neq k$ are contained in
  $\mathcal{I}'_{r,k}$. We therefore have another pushout diagram
  \nolabelcsquare{ \coprod_{\sigma \in \overline{A}'_{n}(k,r)}
    \Lambda^{n+2}_{k}}{ \coprod_{\sigma \in \overline{A}'_{n}(k,r)}
    \Delta^{n+2}}{\mathcal{I}'_{k,r}}{\mathcal{I}_{k,r},}  hence
  $g_{r,k}$ is also a trivial cofibration.

  Now for $\sigma \in S_{2}(r)$ the map $\pi_{\sigma}^{\partial}$
  factors through $\mathcal{G}'_{r}$, so we have a pushout square
  \nolabelcsquare{ \coprod_{\sigma \in S_{2}(k)} \partial \Delta^{n}
    \star \mathrm{N}(\Cell^{1}_{/[r_{n}]})^{\op}}{ \coprod_{\sigma \in S_{2}(k)}
    \Delta^{n} \star
    \mathrm{N}(\Cell^{1}_{/[r_{n}]})^{\op}}{\mathcal{G}'_{r}}{\mathcal{G}_{r},}
  which implies that $h_{r}$ is a trivial cofibration.

  $m = 3$: We again need to define a subsidiary filtration
  \[ \mathcal{F}_{2} = \mathcal{H}_{0} \subseteq \mathcal{H}'_{1}
  \subseteq \mathcal{H}_{1}
  \subseteq \cdots \subseteq \mathcal{H}'_{n} \subseteq \mathcal{H}_{n} = \mathcal{F}_{3}.\]
  Here we inductively define $\mathcal{H}'_{k}$ to be the subset of
  $\mathcal{F}_{3}$ containing $\mathcal{H}_{k-1}$ together with 
  the $n$-simplices in $\overline{B}_{n}(k)$ as well as their
  $k$-factored $(n+1)$-simplices, and then define $\mathcal{H}_{k}$ to
  be that containing $\mathcal{H}'_{k}$ together with the
  $(n+1)$-simplices in $\overline{B}'_{n}(k)$ as well as their
  $k$-factored $(n+1)$-simplices. It then suffices to prove that the
  inclusions $\mathcal{H}_{k-1} \hookrightarrow \mathcal{H}'_{k}$ and
  $\mathcal{H}'_{k} \hookrightarrow \mathcal{H}_{k}$ are trivial
  cofibrations. If $\sigma \in \overline{B}_{n}(k)$ and $\tau$ is its
  $k$-factored simplex, then $d_{j}\tau$ lies in $\mathcal{H}_{k-1}$
  for $j \neq k$, so we have a pushout square
 \nolabelcsquare{ \coprod_{\sigma \in \overline{B}_{n}(k)} \Lambda^{n+1}_{k}}{\coprod_{\sigma \in \overline{B}_{n}(k)} \Delta^{n+1}}
  {\mathcal{H}_{k-1}}{\mathcal{H}'_{k},}
  and hence the inclusion $\mathcal{H}_{k-1} \hookrightarrow
  \mathcal{H}'_{k}$ is a trivial cofibration. Similarly, we have a
  pushout square
  \nolabelcsquare{ \coprod_{\sigma \in \overline{B}'_{n}(k)}
    \Lambda^{n+2}_{k}}{\coprod_{\sigma \in \overline{B}'_{n}(k)}
    \Delta^{n+2}} {\mathcal{H}'_{k}}{\mathcal{H}_{k},} 
  so the inclusion $\mathcal{H}'_{k} \hookrightarrow \mathcal{H}_{k}$
  is also a trivial cofibration.

  $m = 4$: Observe that for $\sigma$ in $S_{4}$ the diagram
  $\pi^{\partial}_{\sigma}$ factors through $\mathcal{F}_{3}$, and so
  we have a pushout diagram \nolabelcsquare{ \coprod_{\sigma \in
      S_{4}} \partial \Delta^{n} \star \mathrm{N}(\Cell^{1}_{/[r_{n}]})^{\op}}{
    \coprod_{\sigma \in S_{4}} \Delta^{n} \star
    \mathrm{N}(\Cell^{1}_{/[r_{n}]})^{\op}}{\mathcal{F}_{3}}{\mathcal{F}_{4}.}
  The inclusion $\mathcal{F}_{3} \to \mathcal{F}_{4}$ is therefore also
  a trivial cofibration, which completes the proof.
\end{proof}

\begin{cor}\label{cor:Otrivcofib}
  Let $\mathbf{O}$ be an ordinary category whose objects have no
  non-trivial automorphisms, equipped with a map $\mathbf{O} \to
  \Dop$ that exhibits $\mathbf{O}$ as a \gnsiopd{}. Suppose
  $\mathrm{N}\mathbf{O}_{0}$ is a simplicial subset of
  $\mathrm{N}\mathbf{O}$ such that
  \begin{itemize}  
  \item every narrow active simplex in $\mathrm{N}\mathbf{O}$ is
    contained in $\mathrm{N}\mathbf{O}_{0}$,
  \item for every wide simplex contained in
    $\mathrm{N}\mathbf{O}_{0}$, its decomposition simplices are also
    contained in $\mathrm{N}\mathbf{O}_{0}$,
  \item for every $n$-simplex in $\mathrm{N}\mathbf{O}_{0}$ whose image in
    $\mathrm{N}\Dop$ is in $A_{n}(k,r)$ and $B_{n}(k)$ for some $k,r$,
    its $k$-factored $(n+1)$-simplex is also in
    $\mathrm{N}\mathbf{O}_{0}$,
  \item for every $(n+1)$-simplex in $\mathrm{N}\mathbf{O}_{0}$ whose image in
    $\mathrm{N}\Dop$ is in $A'_{n}(k,r)$ and $B'_{n}(k)$ for some $k,r$,
    its $k$-factored $(n+2)$-simplex is also in $\mathrm{N}\mathbf{O}_{0}$.
  \end{itemize}
  Then the inclusion $\mathrm{N}\mathbf{O}_{0} \hookrightarrow
  \mathrm{N}\mathbf{O}$ is a trivial cofibration of
  \gnsiopds{}.
\end{cor}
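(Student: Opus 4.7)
The plan is to realize $\mathrm{N}\mathbf{O}_{0} \hookrightarrow \mathrm{N}\mathbf{O}$ as a countable composite of trivial cofibrations, using the filtration $\mathfrak{F}_{\bullet}\mathbf{O}$ already set up for Proposition~\ref{propn:Otrivcofib}, and then invoke the closure of trivial cofibrations under transfinite composition in the combinatorial model category $(\sSet^{+})_{\mathfrak{O}_{1}^{\txt{gen}}}$.

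The first observation is that the extra hypothesis distinguishing the corollary from the proposition --- that every narrow active simplex of $\mathrm{N}\mathbf{O}$ lies in $\mathrm{N}\mathbf{O}_{0}$ --- causes the intermediate stages of the filtration to collapse. Indeed, the simplices added on passing from $\mathfrak{F}_{n}\mathbf{O}$ to $\mathfrak{F}_{n}^{+}\mathbf{O}$ are by definition precisely the narrow active $(n+1)$-simplices of $\mathrm{N}\mathbf{O}$, and these already lie in $\mathrm{N}\mathbf{O}_{0} \subseteq \mathfrak{F}_{n}\mathbf{O}$. Hence $\mathfrak{F}_{n}^{+}\mathbf{O} = \mathfrak{F}_{n}\mathbf{O}$ for every $n \geq 0$, and Proposition~\ref{propn:Otrivcofib} applied at each $n \geq 1$ directly yields that $\mathfrak{F}_{n-1}\mathbf{O} \hookrightarrow \mathfrak{F}_{n}\mathbf{O}$ is a trivial cofibration.

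The remaining base step $\mathrm{N}\mathbf{O}_{0} \hookrightarrow \mathfrak{F}_{0}\mathbf{O}$ I plan to handle by a degree-zero specialization of the same machinery used in the proof of Proposition~\ref{propn:Otrivcofib}: the simplices to attach are the wide $0$-simplices of $\mathbf{O}$ not in $\mathrm{N}\mathbf{O}_{0}$, together with their decomposition $1$- and $2$-simplices. Each such $0$-simplex $x$ over $[r]$ with $r > 1$ is attached via a pushout along the $\mathfrak{O}_{1}^{\txt{gen}}$-anodyne inclusion
\[ \mathrm{N}(\Cell^{1}_{/[r]})^{\op} \;=\; \emptyset \star \mathrm{N}(\Cell^{1}_{/[r]})^{\op} \hookrightarrow \Delta^{0} \star \mathrm{N}(\Cell^{1}_{/[r]})^{\op}, \]
where the attaching map is the restriction of the decomposition diagram $\pi_{x}$ to $\mathrm{N}(\Cell^{1}_{/[r]})^{\op}$. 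The key point to verify is that this attaching map lands in the preceding stage of the filtration: its image picks out narrow $0$-simplices over $[1]$ (which are in $\mathrm{N}\mathbf{O}_{0}$ by the narrow-active hypothesis) and $0$-simplices over $[0]$ (which have to be accommodated either by a preliminary anodyne adjunction of missing objects over $[0]$, or by ordering the attachments so that every wide $0$-simplex over $[r]$ is attached after all of its codimension-$r$ decompositions have already appeared).

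Assembling these pieces, the filtration $\mathrm{N}\mathbf{O}_{0} \subseteq \mathfrak{F}_{0}\mathbf{O} \subseteq \mathfrak{F}_{1}\mathbf{O} \subseteq \cdots$ exhausts $\mathrm{N}\mathbf{O}$, since $\mathfrak{F}_{n}\mathbf{O}$ contains all simplices of dimension $\leq n$, and exhibits the inclusion as a countable composite of trivial cofibrations, hence itself a trivial cofibration. The expected main obstacle is the base case $n = 0$, where the attaching diagrams for the wide $0$-simplices must be shown to genuinely factor through $\mathrm{N}\mathbf{O}_{0}$; everything above degree zero is a formal consequence of Proposition~\ref{propn:Otrivcofib} combined with the collapse $\mathfrak{F}_{n}^{+}\mathbf{O} = \mathfrak{F}_{n}\mathbf{O}$.
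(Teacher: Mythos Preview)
Your approach is essentially identical to the paper's. The paper's proof is two sentences: the narrow-active hypothesis forces $\mathfrak{F}_{n}^{+}\mathbf{O} = \mathfrak{F}_{n}\mathbf{O}$ for all $n$, so the inclusion $\mathrm{N}\mathbf{O}_{0} \hookrightarrow \mathrm{N}\mathbf{O}$ is exhibited as the composite of the trivial cofibrations $\mathfrak{F}_{n-1}\mathbf{O} = \mathfrak{F}_{n-1}^{+}\mathbf{O} \hookrightarrow \mathfrak{F}_{n}\mathbf{O}$ supplied by Proposition~\ref{propn:Otrivcofib}. The paper does not isolate a separate base step; it simply reads the Proposition as applying uniformly (with $\mathrm{N}\mathbf{O}_{0}$ as the bottom of the filtration), so your explicit degree-zero analysis and the flagged concern about objects over $[0]$ amount to extra care rather than a genuinely different argument.
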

\begin{proof}
  Since $\mathrm{N}\mathbf{O}_{0}$ contains all narrow active
  simplices in $\mathrm{N}\mathbf{O}$, the simplicial subsets
  $\mathfrak{F}_{n}\mathbf{O}$ and $\mathfrak{F}_{n}^{+}\mathbf{O}$ of
  $\mathrm{N}\mathbf{O}$ coincide. The inclusion
  $\mathrm{N}\mathbf{O}_{0} \hookrightarrow \mathrm{N}\mathbf{O}$ is
  therefore the composite of the inclusions
  $\mathfrak{F}_{n-1}\mathbf{O} = \mathfrak{F}_{n-1}^{+}\mathbf{O}
  \hookrightarrow \mathfrak{F}_{n}\mathbf{O}$, which are all trivial
  cofibrations by Proposition~\ref{propn:Otrivcofib}.
\end{proof}

\begin{proof}[Proof of Proposition~\ref{propn:cellmodel1}]
  We apply Corollary~\ref{cor:Otrivcofib} to the inclusion
  $\simp_{/[i]}^{\amalg,\op} \hookrightarrow \Lbriop$. The required
  hypotheses hold since a simplex of $\Lbriop$ lies in
  $\simp_{/[i]}^{\amalg,\op}$ \IFF{} its source is of the form
  $(i_{0},\ldots,i_{n})$ with $i_{n} - i_{0} \leq 1$.
\end{proof}

\subsection{The Double $\infty$-Category of Algebras}\label{subsec:functcomp}
Our goal in this subsection is to prove that the \icats{}
$\fALG_{1}(\mathcal{C})_{n}$ fit together into a simplicial
\icat{}. We will do this by checking that composite
$\simp_{/[n]}^{\op}$-algebras map to composite
$\simp^{\op}_{/[m]}$-algebras under composition with the map
\[\phi_{*} \colon \simp^{\op}_{/[m]} \to \simp^{\op}_{/[n]}\] induced by
a map $\phi \colon [m] \to [n]$ in $\simp$.

\begin{defn}
  Suppose $\mathcal{C}$ is a monoidal \icat{}. Let
 $\ofALG_{1}(\mathcal{C}) \to
  \simp^{\op}$ denote a coCartesian fibration associated to the
  functor $\simp^{\op} \to \CatI$ that sends $[n]$ to
  $\Algns_{\simp^{\op}_{/[n]}}(\mathcal{C})$.  Write
  $\fALG_{1}(\mathcal{C})$ for the full subcategory of
  $\ofALG_{1}(\mathcal{C})$ spanned by the objects of
  $\fALG_{1}(\mathcal{C})_{n}$ for all $n$, i.e. the composite
  $\simp^{\op}_{/[n]}$-algebras for all $n$.
\end{defn}

We wish to prove that the restricted projection
$\fALG_{1}(\mathcal{C}) \to \simp^{\op}$ is a coCartesian fibration,
with the coCartesian morphisms inherited from
$\ofALG_{1}(\mathcal{C})$. The key step in the proof is showing that a certain
functor is cofinal; to state the
required result we first need the following technical generalization
of cellular maps:
\begin{defn}
  Suppose $\phi \colon [m] \to [n]$ is an injective morphism in
  $\simp$. We say a morphism $\alpha \colon [k] \to [n]$ is
  \emph{$\phi$-cellular} if
  \begin{enumerate}[(i)]
  \item for $\alpha(i) < \phi(0)$ we have $\alpha(i+1) \leq \alpha(i)+1$,
  \item for $\phi(j) \leq \alpha(i) < \phi(j+1)$ we have $\alpha(i+1)
    \leq \phi(j+1)$,
  \item for $\alpha(i) \geq \phi(m)$ we have $\alpha(i+1) \leq \alpha(i)+1$.
  \end{enumerate}
\end{defn}
\begin{remark}
  We recover the previous notion of \emph{cellular} maps to $[n]$ as the
  $\phi$-cellular maps with $\phi = \id_{[n]}$.
\end{remark}

\begin{defn}
  For $[n] \in \simp$ and $\phi \colon [m] \to [n]$ any injective
  morphism in $\simp$, we write $\LnP$ for the full subcategory of
  $\simp_{/[n]}$ spanned by the $\phi$-cellular maps to $[n]$.
\end{defn}

\begin{propn}\label{propn:cell1coinitial}\ 
  \begin{enumerate}[(i)]
  \item If $\phi \colon [m] \to [n]$ is an injective morphism in
    $\simp$, then for any $\gamma \colon [k] \to [m]$ the map \[
    \phi_{*} \colon (\bbLambda_{/[m]})^{\txt{act}}_{\gamma/} \to
    (\LnP)^{\txt{act}}_{\phi\gamma/} \] given by composition with
    $\phi$ is coinitial.
  \item If $\phi \colon [m] \to [n]$ is a surjective morphism in
    $\simp$, then for any $\gamma \colon [k] \to [m]$ the map \[
    \phi_{*} \colon (\bbLambda_{/[m]})^{\txt{act}}_{\gamma/} \to
    (\bbLambda_{/[n]})^{\txt{act}}_{\phi\gamma/} \] given by
    composition with $\phi$ is coinitial.
  \end{enumerate}
\end{propn}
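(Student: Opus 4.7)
The plan is to apply Joyal's cofinality criterion in its dualized form (\cite[Theorem 4.1.3.1]{HTT}): the functor $\phi_*$ is coinitial if and only if for every object $Y$ of the target, the slice $\mathcal{A} \times_{\mathcal{B}} \mathcal{B}_{/Y}$ is weakly contractible. I will exhibit a final object of this slice in each case.

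For part (i), given $Y = (c \colon [l] \to [n], f \colon [k] \to [l])$ with $c$ being $\phi$-cellular and $cf = \phi\gamma$, let $[l]_{\phi} := c^{-1}(\phi([m]))$ viewed as a sub-poset of $[l]$, and let $\iota \colon [\tilde l] \hookrightarrow [l]$ be its inclusion. The key observation, verified by a case analysis on the three clauses of the $\phi$-cellular condition combined with the injectivity of $\phi$, is that between any two consecutive elements $t < t'$ of $[l]_\phi$, if $c(t) = \phi(a)$ and $c(t') = \phi(b)$ then $b \leq a + 1$; consequently $c'' := \phi^{-1} c \iota \colon [\tilde l] \to [m]$ (well-defined since $c \iota$ lands in $\phi([m])$ and $\phi$ is injective) is cellular. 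Since the image of $f$ is contained in $[l]_\phi$ (because $cf = \phi\gamma$ has image in $\phi([m])$), the map $f$ factors uniquely as $f = \iota f''$ with $f''$ active. Setting $L(Y) := (c'', f'')$ and $\epsilon := \iota \colon \phi_* L(Y) \to Y$, I claim $(L(Y), \epsilon)$ is final in the slice. Given any other object $(Y' = (c', f'), g \colon [l'] \to [l])$ of the slice, the equation $\phi c' = c g$ forces $g$ to have image in $[l]_\phi$, hence $g = \iota \tilde g$ for a unique $\tilde g$; this $\tilde g$ defines the unique morphism $(Y', g) \to (L(Y), \epsilon)$ in the slice, uniqueness following from the injectivity of $\iota$.

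For part (ii), the strategy is similar but requires a ``fattening'' of $[l]$: since the surjection $\phi$ admits no canonical section and lifts of cellular maps along it need not remain cellular, the direct analogue of $\iota$ does not exist. Given $Y = (c, f)$, I construct $[\tilde l]$ by subdividing $[l]$, inserting between each pair $i, i+1$ with $c(i+1) = c(i) + 1$ the intermediate vertices of $[m]$ required to traverse the $\phi$-fibers cellularly. The projection $c'' \colon [\tilde l] \to [m]$ is then cellular by construction, and there is a canonical monotone map $\iota \colon [\tilde l] \to [l]$ satisfying $\phi c'' = c \iota$. Taking $f'' \colon [k] \to [\tilde l]$ as the evident lift of $f$ compatible with the subdivision, I set $L(Y) := (c'', f'')$ with $\epsilon := \iota$ and claim $(L(Y), \epsilon)$ is final, arguing as in part (i).

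The main obstacle is part (ii): the subdivision of $[l]$ is not as canonical as the restriction used in part (i), since surjections in $\simp$ have multiple sections (the left and right adjoints of $\phi$). Verifying that every other slice object factors uniquely through $L(Y)$ requires carefully identifying a reference lift and showing all competing lifts refine to it. A clean approach would be to reduce to the case of elementary degeneracies $\sigma_j \colon [r+1] \to [r]$, since every surjection in $\simp$ is a composition of such and coinitial functors compose; for $\sigma_j$ the fiber structure is simple enough that the final object can be described and its universal property verified directly by hand.
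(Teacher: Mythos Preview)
Your argument for part (i) is correct and is exactly the paper's proof: take the preimage of the image of $\phi$, observe that the restricted map is cellular, and check this gives a final object of the slice.

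Your approach to part (ii), however, does not work, and the obstacle you flag is fatal rather than merely technical. The slice category $\bigl((\bbLambda_{/[m]})^{\txt{act}}_{\gamma/}\bigr)_{/Y}$ need \emph{not} have a final object, even for an elementary degeneracy. Take $\phi = s_{0} \colon [1] \to [0]$, $\gamma = \id_{[1]}$, and $Y$ the object of $(\bbLambda_{/[0]})^{\txt{act}}_{\phi\gamma/}$ given by $[1]$ with the unique active map $[1] \to [1]$. In the slice over $Y$ consider the two objects
\[
A = \bigl([2],\; c'_{A} = (0,0,1),\; g_{A} = (0,1,1)\bigr), \qquad
B = \bigl([2],\; c'_{B} = (0,1,1),\; g_{B} = (0,0,1)\bigr),
\]
each with the unique active $f' \colon [1] \to [2]$. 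If both mapped to a common $(Z, g_{Z})$ via $h_{A}, h_{B}$, then setting $x = h_{A}(1)$ and $y = h_{B}(1)$ we would need $c'_{Z}(x) = 0$, $g_{Z}(x) = 1$, $c'_{Z}(y) = 1$, $g_{Z}(y) = 0$; monotonicity of $c'_{Z}$ and $g_{Z}$ then rules out each of $x < y$, $x > y$, and $x = y$. So no final object exists, and no ``fattening'' construction can produce one.

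The paper does reduce to elementary degeneracies $s_{t}$, as you propose, but then argues differently. For each $X$ it introduces the poset $\Lambda_{X}$ of pairs $(a,b)$ with $a \le b$ in the $\xi$-fibre over $t$, subject to compatibility constraints with $\alpha$ and $\gamma$, ordered by $(a,b) \le (a',b')$ iff $a \le a' \le b' \le b$. There is an explicit functor $G_{X} \colon \Lambda_{X} \to \bigl((\bbLambda_{/[m]})^{\txt{act}}_{\gamma/}\bigr)_{/X}$ (essentially your ``fattening'', parametrized by the choice of where to split the fibre) which admits a \emph{left adjoint} $F_{X}$; hence the two categories are weakly homotopy equivalent. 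The poset $\Lambda_{X}$ has an initial object (the pair with $a$ minimal and $b$ maximal), so it is weakly contractible, and therefore so is the slice. The point is that the non-canonicity you noticed is genuine: the different fattenings are parametrized by $\Lambda_{X}$, and one must show that this space of choices is itself contractible.
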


\begin{proof}
  We first prove (i), i.e. we consider an injective map $\phi$. To show that $\phi_{*}$ is coinitial, recall that by
  \rcite{HTT}{Theorem 4.1.3.1} it suffices
  to prove that for each $X \in
  (\LIP)^{\txt{act}}_{\phi\gamma/}$, the category
  $((\bbLambda_{/[l]})^{\txt{act}}_{\gamma/})_{/X}$ is weakly
  contractible. The object $X$ is a diagram \csquare{ \lbrack k
    \rbrack }{ \lbrack p \rbrack }{ \lbrack l \rbrack }{ \lbrack n
    \rbrack }{\alpha}{\gamma}{\xi}{\phi} where $\xi$ is a
  $\phi$-cellular map and $\alpha$ is active, and an object $\bar{X}
  \in ((\bbLambda_{/[l]})^{\txt{act}}_{\gamma/})_{/X}$ is a
  diagram
  \[ %
\begin{tikzpicture} %
\matrix (m) [matrix of math nodes,row sep=3em,column sep=2.5em,text height=1.5ex,text depth=0.25ex] %
{  \lbrack k \rbrack & \lbrack q \rbrack & \lbrack p \rbrack \\
   \lbrack l \rbrack &   & \lbrack n \rbrack \\};
\path[->,font=\footnotesize] %
(m-1-1) edge node[below] {$\pi$} (m-1-2)
(m-1-2) edge node[below]{$\lambda$} (m-1-3)
(m-1-1) edge node[left] {$\gamma$} (m-2-1)
(m-1-2) edge node[below right] {$\theta$} (m-2-1)
(m-1-3) edge node[right] {$\xi$} (m-2-3)
(m-2-1) edge node[below] {$\phi$} (m-2-3)
(m-1-1) edge[bend left=30] node[above] {$\alpha$} (m-1-3);
\end{tikzpicture}%
\]%
where $\theta$ is a cellular map and $\pi$ and $\lambda$ are
active.

Since $\phi$ is injective, this category has a final object, given as
follows: Let $[q] = \xi^{-1}([l])$, let $\lambda$ be the inclusion
$[q] \to [p]$, and let $\theta$ be the induced projection $[q] \to
[l]$ --- since $\xi$ is $\phi$-cellular, $\theta$ hits everything in
$[l]$ and so is cellular. Moreover, $\alpha$ factors through a map
$\pi \colon [k] \to [q]$, since $\xi \alpha = \phi \gamma$ and so the
image of $\alpha$ in $[p]$ maps to the image of $\phi$ in $[n]$. But
then, since $\alpha$ is active, the maps $\pi$ and $\lambda$ must also
be active, so we have defined an object of
$((\bbLambda_{/[l]})^{\txt{act}}_{\gamma/})_{/X}$. Any other object of
the category has a unique map to this, i.e. this is a final
object. This implies that the category
$((\bbLambda_{/[l]})^{\txt{act}}_{\gamma/})_{/X}$ is weakly
contractible.

We now consider (ii), the surjective case. We can write $\phi$ as a composite of
elementary degeneracies, and so it suffices to consider the case where
$\phi$ is an elementary degeneracy $s_{t} \colon [l+1] \to [l]$. We
again wish to apply \rcite{HTT}{Theorem 4.1.3.1} and show that for
each $X \in (\bbLambda_{/[l]})^{\txt{act}}_{s_{t}\gamma/}$ the
category $((\bbLambda_{/[l+1]})^{\txt{act}}_{\gamma/})_{/X}$ is
weakly contractible. Let $X$ be as above, and let $\Lambda_{X}$ denote
the partially ordered set of pairs $(a, b)$ where
\begin{itemize}
\item  $a, b \in [p]$,
\item $\xi(a) = \xi(b) = t$,
\item $a \leq b$,
\item if $i \in [k]$ satisfies $\gamma(i) = t$, then $\alpha(i) \leq a$,
\item if $i \in [k]$ satisfies $\gamma(i) = t+1$, then $\alpha(i) \geq b$,
\end{itemize}
where $(a, b) \leq (a',b')$ if $a \leq a' \leq b' \leq b$. Define a
functor $G_{X} \colon \Lambda_{X} \to
((\bbLambda_{/[l+1]})^{\txt{act}}_{\gamma/})_{/X}$ by sending $(a, b)$
to the diagram
  \[ %
\begin{tikzpicture} %
\matrix (m) [matrix of math nodes,row sep=3em,column sep=2.5em,text height=1.5ex,text depth=0.25ex] %
{  \lbrack k \rbrack & \lbrack p+(1+a-b) \rbrack & \lbrack p \rbrack \\
   \lbrack l+1 \rbrack &   & \lbrack l \rbrack, \\};
\path[->,font=\footnotesize] %
(m-1-1) edge node[below] {$\pi_{(a,b)}$} (m-1-2)
(m-1-2) edge node[below]{$\lambda_{(a,b)}$} (m-1-3)
(m-1-1) edge node[left] {$\gamma$} (m-2-1)
(m-1-2) edge node[below right] {$\theta_{(a,b)}$} (m-2-1)
(m-1-3) edge node[right] {$\xi$} (m-2-3)
(m-2-1) edge node[below] {$s_{t}$} (m-2-3)
(m-1-1) edge[bend left=30] node[above] {$\alpha$} (m-1-3);
\end{tikzpicture}%
\]%
where:
\[\theta_{(a,b)}(i) =
\begin{cases}
\xi(i), & i \leq a \\
\xi(i)+1, & i > a,  
\end{cases}\]
\[ \lambda_{(a,b)}(i) = \begin{cases}
i, & i \leq a \\
i - (1+a-b), & i > a,
\end{cases}\]
\[ \pi_{(a,b)}(i) =
\begin{cases}
\alpha(i), & i \leq a, \\
\alpha(i)+(1+a-b) , & i > a.
\end{cases}
\]
Here $\theta_{(a,b)}$ is cellular, the maps $\lambda_{(a,b)}$ and
$\pi_{(a,b)}$ are active, and the diagram commutes. The maps from
$(a,b)$ to
$(a,b-1)$ and $(a+1,b)$ are sent by $G_{X}$ to the obvious
transformations of diagrams including the face maps $d_{b},d_{a}\colon [p+(1+a-b)]
\to [p+(2+a-b)]$, respectively.

Now observe that $G_{X}$ has a left adjoint $F_{X} \colon 
((\bbLambda^{\txt{act}}_{/[l]})_{\gamma/})_{/X} \to
\Lambda_{X}$. This sends a diagram as above to $(a, b)$ where $a$ is
maximal such that there exists $i \in [q]$ with $\theta(i) = t$ and
$\lambda(i) = a$, and $b$ is minimal such that there exists $i$ with
$\theta(i) = t+1$ and $\lambda(i) = b$. We have
$F_{X}G_{X} = \id$, and the unit map $\id \to G_{X}F_{X}$ is given by
the natural diagram containing the map $\bar{\lambda} \colon [q] \to
[p+(1+a-b)]$ defined by 
\[\bar{\lambda}(i) =
\begin{cases}
  \lambda(i) & i \leq a, \\
  \lambda(i) + (1+a-b) & i > a.
\end{cases}\]

Since adjunctions of \icats{} are in particular weak homotopy equivalences of
simplicial sets, it follows that $((\simp^{\txt{cell,act}}_{/[l]})_{\gamma/})_{/X}$ is
weakly contractible \IFF{} $\Lambda_{X}$ is. But $\Lambda_{X}$ has an
initial object, namely $(A,B)$ where $A$ is minimal such that $\xi(A)
= t$ and $A \geq \alpha(i)$ for any $i \in [k]$ such that $\gamma(i) =
t$, and $B$ is maximal such that $\xi(B) = t+1$ and $B \leq \alpha(i)$
for any $i \in [k]$ such that $\gamma(i) = t+1$. This implies that
$\Lambda_{X}$ is indeed weakly contractible, which completes the proof.
\end{proof}

\begin{cor}\label{cor:ALG1coCart}
  Suppose $\mathcal{C}$ is a monoidal \icat{} with good relative
  tensor products. Then the projection $\fALG_{1}(\mathcal{C}) \to
  \simp^{\op}$ is a coCartesian fibration.
\end{cor}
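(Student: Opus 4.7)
The plan is to leverage the fact that $\ofALG_{1}(\mathcal{C}) \to \Dop$ is already a coCartesian fibration by construction (being associated to the straightening functor $[n] \mapsto \Alg^{1}_{\simp^{\op}_{/[n]}}(\mathcal{C})$). A morphism in $\Dop$ corresponding to $\phi \colon [m] \to [n]$ in $\simp$ induces a pushforward $A \mapsto \phi^{*}A := A \circ (\phi\circ\blank)_{*}$, where $(\phi\circ\blank)_{*} \colon \simp^{\op}_{/[m]} \to \simp^{\op}_{/[n]}$ is the opposite of post-composition with $\phi$. It therefore suffices to show that if $A$ is a composite $\simp^{\op}_{/[n]}$-algebra, then $B := \phi^{*}A$ is a composite $\simp^{\op}_{/[m]}$-algebra, and then the coCartesian morphisms of $\ofALG_{1}(\mathcal{C})$ automatically restrict to $\fALG_{1}(\mathcal{C})$. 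Writing $\phi = \iota \sigma$ as an epi--mono factorization in $\simp$ reduces us to the cases where $\phi$ is surjective or injective.

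\emph{Surjective case.} When $\phi$ is surjective, every $\phi(j+1)-\phi(j) \in \{0,1\}$, so post-composition with $\phi$ sends cellular maps to cellular maps. This yields a natural functor
\[
\phi_{*} \colon (\bbLambda^{\op}_{/[m]})^{\act}_{/\gamma} \longrightarrow (\Lbrnop)^{\act}_{/\phi\gamma}
\]
for each $\gamma \colon [k] \to [m]$, and by Proposition~\ref{propn:cell1coinitial}(ii) its opposite is coinitial, so $\phi_{*}$ is cofinal. Since $A$ is composite, $A(\phi\gamma)$ is the operadic colimit of $A|_{\Lbrnop}$ over $(\Lbrnop)^{\act}_{/\phi\gamma}$, and cofinality identifies this with the operadic colimit of $B|_{\bbLambda^{\op}_{/[m]}}$ over $(\bbLambda^{\op}_{/[m]})^{\act}_{/\gamma}$. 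This exhibits $B$ as the left operadic Kan extension of its restriction to $\bbLambda^{\op}_{/[m]}$, hence composite.

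\emph{Injective case.} When $\phi$ is injective, post-composition with $\phi$ sends a cellular map to a $\phi$-cellular (but not in general cellular) map, so the natural functor becomes
\[
\phi_{*} \colon (\bbLambda^{\op}_{/[m]})^{\act}_{/\gamma} \longrightarrow (\LnPop)^{\act}_{/\phi\gamma},
\]
which is cofinal by Proposition~\ref{propn:cell1coinitial}(i). The key auxiliary step is to rewrite $A(\phi\gamma)$ as an operadic colimit over $(\LnPop)^{\act}_{/\phi\gamma}$ rather than over $(\Lbrnop)^{\act}_{/\phi\gamma}$. Composite-ness of $A$ supplies this by transitivity of left operadic Kan extensions along
\[
\Lbrnop \;\hookrightarrow\; \LnPop \;\hookrightarrow\; \simp^{\op}_{/[n]}:
\]
the same colimit formula that realises $A$ as the left Kan extension of $A|_{\Lbrnop}$ simultaneously realises $A|_{\LnPop}$ as the left Kan extension of $A|_{\Lbrnop}$ along the inner inclusion, and then $A$ is the left Kan extension of $A|_{\LnPop}$ along the outer inclusion. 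Hence $A(\phi\gamma)$ equals the operadic colimit of $A|_{\LnPop}$ over $(\LnPop)^{\act}_{/\phi\gamma}$, which by the cofinality above equals the operadic colimit of $B|_{\bbLambda^{\op}_{/[m]}}$ over $(\bbLambda^{\op}_{/[m]})^{\act}_{/\gamma}$. So $B$ is composite.

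The main obstacle is the injective case: the fact that post-composition with an injection does not preserve cellularity forces one to interpolate through the intermediate category $\LnPop$ of $\phi$-cellular maps, and it is precisely the $\phi$-cellular refinement in part (i) of Proposition~\ref{propn:cell1coinitial} that furnishes the requisite cofinality. Once both cases are handled, factoring an arbitrary $\phi$ as a surjection followed by an injection and applying the two cases in succession completes the argument.
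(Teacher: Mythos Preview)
Your proposal is correct and follows essentially the same route as the paper: reduce to showing compositeness is preserved under $\phi^{*}$, split into the surjective and injective cases via the epi--mono factorization, invoke the two parts of Proposition~\ref{propn:cell1coinitial} for cofinality, and in the injective case pass through the intermediate category $\LnPop$ of $\phi$-cellular maps using transitivity of (operadic) left Kan extensions along the fully faithful chain $\Lbrnop \hookrightarrow \LnPop \hookrightarrow \simp^{\op}_{/[n]}$. The paper phrases the last step via Lemma~\ref{lem:lokerecognize} (translating to an ordinary left Kan extension of the coCartesian pushforward $\Xi$), but the content is the same.
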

\begin{proof}
  Since $\ofALG_{1}(\mathcal{C}) \to \simp^{\op}$ is a coCartesian
  fibration, it suffices to show that if $X$ is an object of
  $\fALG_{1}(\mathcal{C})$ over $[n] \in \simp^{\op}$, and $X \to
  \bar{X}$ is a coCartesian morphism in $\ofALG_{1}(\mathcal{C})$ over
  $\phi \colon [m] \to [n]$ in $\simp$, then $\bar{X}$ is also in
  $\fALG_{1}(\mathcal{C})$. In other words, we must show that if $X$
  is a composite $\simp^{\op}_{/[n]}$-module, then $(\phi_{*})^{*}X$
  is a composite $\simp^{\op}_{/[m]}$-module for any map $\phi \colon
  [m] \to [n]$, i.e. the counit map
  $\tau_{m,!}\tau_{m}^{*}(\phi_{*})^{*}X \to (\phi_{*})^{*}X$ is an
  equivalence, where $\tau_{m}$ is the inclusion
  $\bbLambda_{/[m]}^{\op} \to \simp^{\op}_{/[m]}$. Using the
  definition of $\tau_{m,!}$ as an operadic left Kan extension and the
  criterion of Lemma~\ref{lem:lokerecognize}, it suffices
  to show that for each $\gamma \in \bbLambda_{/[m]}^{\op}$, the
  natural map
  \[ \colim_{\eta\colon \gamma \to \gamma' \in
    ((\bbLambda_{/[m]})^{\txt{act}}_{\gamma/})^{\op}} \Xi(\phi\gamma')
  \to \Xi(\phi\gamma) \]
  is an equivalence, where $\Xi \colon \simp_{/[n]}^{\op,\txt{act}}
  \to \mathcal{C} \simeq \mathcal{C}^{\otimes}_{[1]}$ denotes the
  coCartesian pushforward along the unique active maps to $[1]$ of the
  restriction of $X$ to $\simp_{/[n]}^{\op,\txt{act}}$.

  It suffices to consider separately the cases where $\phi$ is either
  surjective or injective. If $\phi$ is surjective, then the map 
  $\phi_{*} \colon (\bbLambda_{/[m]})^{\txt{act}}_{\gamma/} \to
  (\bbLambda_{/[n]})^{\txt{act}}_{\phi\gamma/}$ gives a factorization
  of this map as
  \[ \colim_{\eta\colon \gamma \to \gamma' \in
    ((\bbLambda_{/[m]})^{\txt{act}}_{\gamma/})^{\op}} \Xi(\phi\gamma')
  \to \colim_{\eta \colon \phi\gamma \to \gamma'' \in
    (\bbLambda_{/[n]})^{\txt{act}}_{\phi\gamma/})^{\op}} \Xi(\gamma'') \to
  \Xi(\phi\gamma).\]
  Here the first map is an equivalence by
  Proposition~\ref{propn:cell1coinitial}(ii) and the second map is an
  equivalence since $X$ is composite.

  Now suppose $\phi$ is injective. Then the functor
  $(\bbLambda_{/[m]})^{\txt{act}}_{\gamma/} \to (\LnP)^{\txt{act}}_{\phi\gamma/}$ gives a
  factorization of the map above as
  \[ \colim_{\eta\colon \gamma \to \gamma' \in
    ((\bbLambda_{/[m]})^{\txt{act}}_{\gamma/})^{\op}} X(\phi\gamma')
  \to \colim_{\eta \colon \phi\gamma \to \gamma'' \in
    (\LnP)^{\txt{act}}_{\phi\gamma/})^{\op}} X(\gamma'') \to
  X(\phi\gamma).\] Here the first map is an equivalence by
  Proposition~\ref{propn:cell1coinitial}(i). Moreover, since $X$ is a
  composite $\simp^{\op}_{/[n]}$-algebra and the inclusions
  \[((\Lbrn)^{\txt{act}}_{\phi\gamma/})^{\op} \to
  (\LnP)^{\txt{act}}_{\phi\gamma/})^{\op} \to
  ((\simp_{/[n]})^{\txt{act}}_{\phi\gamma/})^{\op}\] are fully
  faithful, the map \[\colim_{\eta \colon \phi\gamma \to \gamma'' \in
    ((\LnP)^{\txt{act}}_{\phi\gamma/})^{\op}} \Xi(\gamma'') \to
  \Xi(\phi\gamma)\] is also an equivalence, since $\Xi$ is a left Kan
  extension of its restriction to $(\Lbrn)^{\txt{act},\op}$ by
  Lemma~\ref{lem:lokerecognize}.
\end{proof}

Combining Corollary~\ref{cor:ALG1coCart} with
Corollary~\ref{cor:ALG1seg}, we have proved:
\begin{thm}\label{thm:ALG1double}
  Suppose $\mathcal{C}$ is a monoidal \icat{} with good relative
  tensor products. Then the projection $\fALG_{1}(\mathcal{C}) \to
  \simp^{\op}$ is a double \icat{}.\qed
\end{thm}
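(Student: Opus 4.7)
The plan is to observe that Theorem~\ref{thm:ALG1double} is essentially a direct assembly of the two preceding corollaries, so no substantially new work is required. Recall from \S\ref{subsec:nsop} that a double \icat{} is by definition a coCartesian fibration $\mathcal{M} \to \simp^{\op}$ whose fibres $\mathcal{M}_{[n]}$ satisfy the Segal condition
\[ \mathcal{M}_{[n]} \to \mathcal{M}_{[1]} \times_{\mathcal{M}_{[0]}} \cdots \times_{\mathcal{M}_{[0]}} \mathcal{M}_{[1]}. \]
Thus my task reduces to verifying these two properties for the projection $\fALG_{1}(\mathcal{C}) \to \simp^{\op}$.

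First, I would invoke Corollary~\ref{cor:ALG1coCart}, which already establishes that $\fALG_{1}(\mathcal{C}) \to \simp^{\op}$ is a coCartesian fibration. This is the content-rich step, depending on the cofinality result Proposition~\ref{propn:cell1coinitial} to show that coCartesian pushforward along an arbitrary $\phi \colon [m] \to [n]$ preserves the composite-algebra condition (so that the coCartesian morphisms of the ambient fibration $\ofALG_{1}(\mathcal{C}) \to \simp^{\op}$ restrict to $\fALG_{1}(\mathcal{C})$). Second, I would identify the fibres: by construction the fibre of $\fALG_{1}(\mathcal{C}) \to \simp^{\op}$ over $[n]$ is exactly $\fALG_{1}(\mathcal{C})_{n}$, the \icat{} of composite $\simp^{\op}_{/[n]}$-algebras. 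Then Corollary~\ref{cor:ALG1seg} supplies the Segal condition: the restriction map
\[ \fALG_{1}(\mathcal{C})_{n} \to \fALG_{1}(\mathcal{C})_{1} \times_{\fALG_{1}(\mathcal{C})_{0}} \cdots \times_{\fALG_{1}(\mathcal{C})_{0}} \fALG_{1}(\mathcal{C})_{1} \]
is an equivalence. Together these two facts verify the defining conditions of a double \icat{}.

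There is no real obstacle at this stage, since the hard technical work has already been absorbed into the two corollaries: Corollary~\ref{cor:ALG1seg} rested on Proposition~\ref{propn:cellmodel1} (the filtration argument showing $\simp_{/[n]}^{\amalg,\op} \hookrightarrow \Lbrnop$ is a trivial cofibration in the generalized \nsiopd{} model structure), and Corollary~\ref{cor:ALG1coCart} rested on the cofinality analysis of Proposition~\ref{propn:cell1coinitial}. The only point where I would take a moment of care is to make sure the Segal maps induced at the level of fibres agree with those coming from the coCartesian pushforwards along the inert maps $\rho_{i}$ and the maps $[n] \to [0]$ in $\simp^{\op}$, as required by the definition of a double \icat{}; this is automatic because the coCartesian morphisms in $\fALG_{1}(\mathcal{C})$ are inherited from $\ofALG_{1}(\mathcal{C})$, where they are given by restriction along the induced functors $\phi_{*} \colon \simp^{\op}_{/[m]} \to \simp^{\op}_{/[n]}$.
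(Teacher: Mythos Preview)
Your proposal is correct and matches the paper's approach exactly: the paper simply states that combining Corollary~\ref{cor:ALG1coCart} with Corollary~\ref{cor:ALG1seg} yields the theorem, which is precisely what you do. Your additional remark about the Segal maps agreeing with the coCartesian pushforwards along inert maps is a reasonable sanity check, but the paper does not bother to spell it out.
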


\begin{defn}
  Let $\mathcal{C}$ be a monoidal \icat{} with good
  relative tensor products. Then we define $\fAlg_{1}(\mathcal{C})$ to be the
  $(\infty,2)$-category underlying the double \icat{}
  $\fALG_{1}(\mathcal{C})$, i.e. the completion
  $L_{2}U_{\Seg}i\fALG_{1}(\mathcal{C})$ of the underlying 2-fold Segal
  space of $\fALG_{1}(\mathcal{C})$ (cf. Remark~\ref{rmk:complsegnmon}).
\end{defn}

\subsection{The Bimodule Fibration}\label{subsec:bimodfib}
Let $\mathcal{C}$ be a monoidal \icat{}. We will write
$\Bimod(\mathcal{C})$ for the \icat{} $\Algns_{\simp^{\op}_{/[1]}}(\mathcal{C})$
and $\Ass(\mathcal{C})$ for the \icat{}
$\Algns_{\simp^{\op}}(\mathcal{C})$. There is a projection 
\[\pi \colon \Bimod(\mathcal{C}) \to \Ass(\mathcal{C})^{\times 2}\] that sends an
$A$-$B$-bimodule $M$ in $\mathcal{C}$ to the pair $(A, B)$. Our goal in
this subsection is to analyze this functor, as well as the
projection
\[ U \colon \Bimod(\mathcal{C}) \to \Ass(\mathcal{C}) \times
\mathcal{C} \times \Ass(\mathcal{C}) \] that sends an $A$-$B$-bimodule
$M$ in $\mathcal{C}$ to $(A, X, B)$, where $X$ is the object of
$\mathcal{C}$ underlying $M$; we will make use of this work below in
\S\ref{subsec:Algnmaps}. Our first task is to prove that the map $U$
is given by restriction along an extendable map of \gnsiopds{}, from
which it will follow that $U$ has a left adjoint.

\begin{defn}
  We can identify objects of $\simp_{/[1]}$ with lists
  $(i_{0},\ldots,i_{n})$ where $0 \leq i_{j} \leq i_{j+1} \leq 1$, and
  for every $\phi \colon [m] \to [n]$ in $\simp$ there is a unique
  morphism $(i_{\phi(0)},\ldots, i_{\phi(m)}) \to (i_{0}, \ldots,
  i_{n})$ over $\phi$ in $\simp^{\op}$. Let $\mathcal{U}$ denote the
  subcategory of $\simp_{/[1]}$ containing all the objects and the
  morphisms $(i_{\phi(0)},\ldots, i_{\phi(m)}) \to (i_{0}, \ldots,
  i_{n})$ as before where, if $t$ is the largest index $j$ such that
  $i_{j} = 0$ and $s$ is the largest index $j$ such that $i_{\phi(j)}
  = 0$, then either $t = -1$, $t = m$, or the image of $\phi$ contains
  both $s$ and $s+1$.
\end{defn}
\begin{remark}
  It is easy to see that the projection $\mathcal{U}^{\op} \to
  \simp^{\op}$ is a \gnsiopd{}. A $\mathcal{U}$-algebra $A$ in
  $\mathcal{C}$ contains the information of two associative algebras
  in $\mathcal{C}$, since we have retained the full subcategories of
  $\Dop_{/[1]}$ on the objects of the form $(0,\ldots,0)$ and
  $(1,\ldots,1)$. The algebra $A$ also determines an object $A(0,1)
  \in \mathcal{C}$, but we have omitted the maps in $\Dop_{/[1]}$ that
  describe the action of the two algebras on this objects. Indeed, as
  we will see in Proposition~\ref{propn:Utrivcofib} below, a
  $\mathcal{U}$-algebra consists precisely of this information --- two
  associative algebras, and an additional object.
\end{remark}

\begin{lemma}\label{lem:bimodgr}
  Let $i$ denote the inclusion $\mathcal{U} \hookrightarrow
  \Dop_{/[1]}$. 
  \begin{enumerate}[(i)]
  \item $i$ is an extendable morphism of \gnsiopds{}.
  \item For every monoidal \icat{} $\mathcal{C}$, the functor $i^{*}
    \colon \Bimod(\mathcal{C})\to \Algns_{\mathcal{U}}(\mathcal{C})$
    has a left adjoint $i_{!}$.
  \end{enumerate}
\end{lemma}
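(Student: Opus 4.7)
The plan for part (i) is to verify the extendability condition (Definition~\ref{defn:extendable}) by following the strategy of Proposition~\ref{propn:tau1ext}. I would reduce via \rcite{HTT}{Theorem 4.1.3.1} to checking that for every $\xi \colon [j] \to [n]$ (suitably lifted to $\mathcal{U}$) and every object $X$ in the product $\prod_{p=1}^{j} \mathcal{U}^{\txt{act}}_{\xi\rho_{p}/}$, the slice category of ways to assemble $X$ into an active morphism out of $\xi$ in $\mathcal{U}$ is weakly contractible. As in Proposition~\ref{propn:tau1ext}, I expect to exhibit a final object by canonical gluing: the pieces $(f_{p}\colon [1] \to [n_{p}])$ glue uniquely to an active map $f \colon [j] \to [n_{1}+\cdots+n_{j}]$ because active morphisms preserve endpoints. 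The new task is to check that the glued morphism lies in $\mathcal{U}$, which amounts to verifying that the transition index between $0$'s and $1$'s in the target is covered, since it is covered by at least one of the pieces.

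For part (ii), once extendability is in hand, I would invoke Proposition~\ref{propn:lokeexist} to build $i_{!}$ as an operadic left Kan extension. The key observation that allows this to go through for arbitrary monoidal $\mathcal{C}$, without any assumption on colimits, is that the relevant Kan extension involves no non-trivial colimit. Concretely, for each $Y \in \Dop_{/[1]}$, I expect to identify a cofinal subcategory of $\mathcal{U}^{\txt{act}}_{/Y}$ with a final object, corresponding to the ``normal decomposition'' of $Y = (0,\ldots,0,1,\ldots,1)$ (with $p$ zeros and $q$ ones) into $p$ copies of $(0)$, one copy of $(0,1)$, and $q$ copies of $(1)$. The coCartesian pushforward of the restriction of $A$ along this final object is then just the single tensor product $A(0)^{\otimes p} \otimes A(0,1) \otimes A(1)^{\otimes q}$ in $\mathcal{C}$, giving the expected ``free bimodule'' formula for $i_{!}A$, which makes sense in any monoidal $\infty$-category.

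The main obstacle is the combinatorial analysis of active morphisms in $\mathcal{U}$. Unlike the symmetric situation for $\bbLambda^{\op}_{/[n]}$ handled in Proposition~\ref{propn:tau1ext}, the definition of $\mathcal{U}$ singles out the transition index in $[1]$, so both the gluing step in (i) and the identification of a cofinal subcategory with a final object in (ii) require careful bookkeeping around which morphisms preserve this transition. In particular, showing that the candidate final object in (ii) really is final (rather than merely initial in some subcategory) is where the $\mathcal{U}$-condition does the most work, since it eliminates the numerous active maps that would otherwise obstruct uniqueness of factorizations.
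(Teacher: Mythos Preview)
Your approach is correct and lands on the same key fact the paper uses, but you have split into two arguments what the paper handles with a single observation. The paper's proof simply notes that $\mathcal{U}^{\txt{act}}_{/X}$ has a \emph{final object} for every $X \in \Dop_{/[1]}$ --- for instance $(0,0,1,1)$ is final over $X = (0,1)$ --- and this one sentence yields both (i) and (ii) at once. Extendability is immediate because a functor out of a category with a final object is cofinal as soon as the image of that final object is final, and here the final object over a general $X$ is visibly the concatenation of the final objects over the $X_{i}$. Part (ii) then follows from Proposition~\ref{propn:lokeexist} with no hypothesis on $\mathcal{C}$, exactly as you anticipated: the operadic colimit is evaluation at a single object.

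Your plan to rerun the gluing argument of Proposition~\ref{propn:tau1ext} for (i) would work, but it is more than is needed: the gluing you describe is precisely the construction of the final object in $\mathcal{U}^{\txt{act}}_{/X}$ for general $X$, so you may as well state that directly. Two small corrections: the structural active map from a $\mathcal{U}$-object to $X$ lives in $\Dop_{/[1]}$ and need not itself satisfy the $\mathcal{U}$-condition --- it is the \emph{morphisms within the slice} that are constrained to lie in $\mathcal{U}$, and this is what forces uniqueness of the map to $(0,0,1,1)$ (the image must hit both the last $0$ and the first $1$). Also, for (ii) you only need the final object for $X$ over $[1]$, i.e.\ $X \in \{(0,0),(0,1),(1,1)\}$; there is no need to treat general $Y = (0^{p},1^{q})$.
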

\begin{proof}
  The \icat{} $\mathcal{U}^{\op,\txt{act}}_{/X}$ has a final object
  for every $X \in \Dop_{/[1]}$ (e.g. $(0,0,1,1)$ is final in
  $\mathcal{U}^{\op,\txt{act}}_{/(0,1)}$). This implies that $i$ is
  extendable and, by Proposition~\ref{propn:lokeexist}, that operadic
  left Kan extensions along $i$ always exist. The left adjoint $i_{!}$
  therefore always exists by Corollary~\ref{cor:opdkanext}.
\end{proof}

Next, we want to prove that the adjunction $i_{!} \dashv i^{*}$ is
monadic. For this, we need a criterion for the existence of colimits
for sifted diagrams of algebras:
\begin{propn}\label{propn:siftedcolimalg}
  Suppose $\mathcal{M}$ is a \gnsiopd{} and $\mathcal{C}$ is a
  monoidal \icat{}, and let $K$ be a sifted simplicial set. Then a
  diagram $p \colon K \to \Algns_{\mathcal{M}}(\mathcal{C})$ has a
  colimit if for every $x \in \mathcal{M}_{[1]}$ the diagram
  $\txt{ev}_{x} \circ p \colon K \to \mathcal{C}$ has a monoidal
  colimit in $\mathcal{C}$ (i.e. it has a colimit that is preserved by
  tensoring with objects of $\mathcal{C}$). Moreover, if this holds
  then this colimit is preserved by the forgetful functors
  $\txt{ev}_{x}$.
\end{propn}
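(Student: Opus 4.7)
The plan is to realize $\Algns_{\mathcal{M}}(\mathcal{C})$ as a full subcategory of $\Fun_{\Dop}(\mathcal{M}, \mathcal{C}^{\otimes})$ (the sections preserving coCartesian morphisms over inert morphisms in $\Dop$), and to construct the colimit of $p$ as a relative colimit over $\Dop$ computed fibrewise in $\mathcal{C}^{\otimes}$. This is the non-symmetric, generalized analogue of the argument for symmetric \iopds{} in \rcite{HA}{Proposition 3.2.3.1}.

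More precisely, first I would observe that for each object $m \in \mathcal{M}$ lying over $[n] \in \Dop$, the diagram $\txt{ev}_{m} \circ p \colon K \to \mathcal{C}^{\otimes}$ factors through the fibre $\mathcal{C}^{\otimes}_{[n]} \simeq \mathcal{C}^{\times n}$. Writing $m \simeq (x_{1}, \dots, x_{n})$ via the Segal equivalence (with $x_{i} := \rho_{i,!}m \in \mathcal{M}_{[1]}$), the diagram decomposes coordinatewise as $(\txt{ev}_{x_{1}}\circ p, \dots, \txt{ev}_{x_{n}}\circ p)$. Since $K$ is sifted, the diagonal $K \to K^{\times n}$ is cofinal, so the colimit of this product diagram in $\mathcal{C}^{\times n}$ is computed componentwise and exists by hypothesis; call the resulting object $q(m) := (\colim_{k} p(k)(x_{1}), \dots, \colim_{k} p(k)(x_{n}))$. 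This specifies a candidate section $q \colon \mathcal{M} \to \mathcal{C}^{\otimes}$ over $\Dop$ together with a cone $p \Rightarrow \mathrm{const}_{q}$.

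Next I would invoke the criterion \rcite{HTT}{Proposition 4.3.1.9--10}: a fibrewise colimit in a coCartesian fibration is a relative colimit \IFF{} it is preserved by every pushforward functor between fibres. For $\mathcal{C}^{\otimes} \to \Dop$, the pushforward along $\phi \colon [n]\to [k]$ is obtained by factoring $\phi$ into inert and active parts; the inert part is a projection of products, which trivially preserves colimits, while the active part is an iterated tensor product functor, which preserves the colimits by the monoidal colimit hypothesis (using siftedness of $K$ once more to split the multi-variable tensor product into iterated singletons). Hence $q(m)$ is a relative colimit at each $m \in \mathcal{M}$, so by the pointwise criterion for colimits in functor categories (\rcite{HTT}{\S 5.1.2}, relative version), $q$ is a colimit of $p$ in $\Fun_{\Dop}(\mathcal{M}, \mathcal{C}^{\otimes})$. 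The final point is that $q$ in fact lies in $\Algns_{\mathcal{M}}(\mathcal{C})$: if $\alpha \colon m \to m'$ is coCartesian over an inert morphism in $\Dop$, each $p(k)(\alpha)$ is coCartesian, and a colimit of coCartesian morphisms (over a constant diagram in $\Dop$) is again coCartesian — this follows from the characterization of coCartesian morphisms via the fibrewise component being an equivalence, together with the fact that the pushforward along the inert map commutes with the colimit by the argument above. Since $\Algns_{\mathcal{M}}(\mathcal{C}) \hookrightarrow \Fun_{\Dop}(\mathcal{M}, \mathcal{C}^{\otimes})$ is a full subcategory, $q$ is then also the colimit of $p$ in $\Algns_{\mathcal{M}}(\mathcal{C})$. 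The fact that $\txt{ev}_{x}$ preserves this colimit is built into the construction.

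The main obstacle is the bookkeeping involved in applying the relative colimit criterion \rcite{HTT}{4.3.1.9--10} correctly: one must verify that the preservation-under-pushforward condition really does reduce, via the Segal equivalence $\mathcal{C}^{\otimes}_{[n]} \simeq \mathcal{C}^{\times n}$ together with siftedness of $K$, to the single condition that the colimits $\colim_{k} p(k)(x)$ in $\mathcal{C}$ are preserved by tensoring with arbitrary objects — which is exactly what ``monoidal colimit'' means. Once this reduction is in place, the rest is formal.
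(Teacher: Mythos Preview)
Your proposal is correct and is precisely the standard argument; the paper itself gives no details, simply citing \rcite{enr}{Theorem A.5.3}, whose proof is exactly the relative-colimit computation you outline (pointwise colimits in $\mathcal{C}^{\otimes}$ via \rcite{HTT}{Propositions 4.3.1.9--10}, with siftedness handling the Segal product and the monoidal-colimit hypothesis handling the active pushforwards). So your approach matches the intended one.
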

\begin{proof}
  This follows from the same argument as in the proof of 
 \rcite{enr}{Theorem A.5.3}.\end{proof}
\begin{cor}\label{cor:bimodcolim}
  Let $\mathcal{C}$ be a monoidal \icat{} and suppose $K$ is a sifted
  simplicial set. A diagram $p \colon K \to \Bimod(\mathcal{C})$ has a
  colimit if the functors $\txt{ev}_{(i,j)}\circ p \colon K \to
  \mathcal{C}$ for $(i,j) = (0,0), (0,1), (1,1)$ all have monoidal colimits in
  $\mathcal{C}$. Moreover, such colimits are preserved by $i^{*}
  \colon \Bimod(\mathcal{C}) \to \Algns_{\mathcal{U}}(\mathcal{C})$.
\end{cor}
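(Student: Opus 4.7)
The plan is to deduce this directly from Proposition~\ref{propn:siftedcolimalg}, applied twice. First I would identify the fibre $(\simp^{\op}_{/[1]})_{[1]}$: by definition its objects are the order-preserving maps $[1] \to [1]$ in $\simp$, of which there are exactly three, corresponding to the sequences $(0,0)$, $(0,1)$ and $(1,1)$. Taking $\mathcal{M} = \simp^{\op}_{/[1]}$ in Proposition~\ref{propn:siftedcolimalg} therefore yields precisely the stated sufficient condition for the existence of a colimit of $p \colon K \to \Bimod(\mathcal{C})$, together with the stronger assertion that such a colimit is preserved by the three evaluation functors $\txt{ev}_{(i,j)}$ at these objects.

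For the preservation claim, I would apply Proposition~\ref{propn:siftedcolimalg} a second time, now with $\mathcal{M} = \mathcal{U}^{\op}$. Since $\mathcal{U}$ contains all the objects of $\simp_{/[1]}$, the fibre $\mathcal{U}_{[1]}$ coincides with $(\simp^{\op}_{/[1]})_{[1]}$, so a sifted diagram in $\Algns_{\mathcal{U}}(\mathcal{C})$ satisfying the same monoidal hypothesis again has a colimit that is detected pointwise at $(0,0)$, $(0,1)$ and $(1,1)$. Since $i^{*}$ is defined by restriction along the inclusion $i \colon \mathcal{U} \hookrightarrow \simp_{/[1]}$, the composites $\txt{ev}_{(i,j)} \circ i^{*}$ and $\txt{ev}_{(i,j)}$ agree on $\Bimod(\mathcal{C})$ for each $(i,j)$ in the common fibre, and comparing the two constructions shows that $i^{*}$ carries the colimit of $p$ to the colimit of $i^{*} \circ p$. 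I do not expect any serious obstacle here: the whole argument amounts to the bookkeeping of verifying that the fibres over $[1]$ in $\simp^{\op}_{/[1]}$ and in $\mathcal{U}^{\op}$ coincide, which is immediate from the definition of $\mathcal{U}$.
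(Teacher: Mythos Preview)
Your proposal is correct and follows exactly the paper's approach: the paper's proof is the one-line ``This follows by applying Proposition~\ref{propn:siftedcolimalg} to $\Bimod(\mathcal{C})$ and $\Algns_{\mathcal{U}}(\mathcal{C})$,'' and you have simply unpacked this by identifying the fibres over $[1]$ and spelling out why the preservation by $i^{*}$ follows.
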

\begin{proof}
  This follows by applying Proposition~\ref{propn:siftedcolimalg} to
  $\Bimod(\mathcal{C})$ and $\Algns_{\mathcal{U}}(\mathcal{C})$.
\end{proof}

\begin{cor}
  For any monoidal \icat{} $\mathcal{C}$, the adjunction 
  \[ i_{!} : \Algns_{\mathcal{U}}(\mathcal{C}) \rightleftarrows
  \Bimod(\mathcal{C}) : i^{*}\]
  is monadic.
\end{cor}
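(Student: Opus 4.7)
The plan is to verify the hypotheses of the Barr--Beck--Lurie monadicity theorem \cite[Theorem 4.7.3.5]{HA}: we must show that $i^*$ is conservative and that $\Bimod(\mathcal{C})$ admits colimits of $i^*$-split simplicial objects, with $i^*$ preserving these colimits. The left adjoint $i_!$ exists by Lemma~\ref{lem:bimodgr}, so this is all that is needed.

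For conservativity, I would use that a natural transformation between functors out of a (generalized) \nsiopd{} valued in $\mathcal{C}^{\otimes}$ is an equivalence precisely when each of its components is an equivalence, and that the generators of $\Dop_{/[1]}$ over $[1]\in\Dop$, namely the three objects $(0,0)$, $(0,1)$, $(1,1)$, all lie in $\mathcal{U}$. Hence the three evaluation functors $\txt{ev}_{(0,0)},\txt{ev}_{(0,1)},\txt{ev}_{(1,1)}\colon \Bimod(\mathcal{C})\to \mathcal{C}$ that jointly detect equivalences factor through $i^*$, so if $i^*(f)$ is an equivalence then so is $f$.

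For the second hypothesis, suppose $X_\bullet\colon \simp^{\op}\to \Bimod(\mathcal{C})$ is a simplicial object whose image under $i^*$ admits a splitting. Then for each $(i,j)\in\{(0,0),(0,1),(1,1)\}$ the simplicial object $\txt{ev}_{(i,j)}\circ i^*X_\bullet = \txt{ev}_{(i,j)}\circ X_\bullet$ in $\mathcal{C}$ is split, and split simplicial objects admit absolute colimits (cf. \cite[Lemma 6.1.3.16]{HA}) that are in particular preserved by tensoring with any object of $\mathcal{C}$ --- i.e.\ they are monoidal colimits. Since $\Dop$ is sifted, Corollary~\ref{cor:bimodcolim} applies and guarantees that $X_\bullet$ has a colimit in $\Bimod(\mathcal{C})$, and moreover that this colimit is preserved by $i^*$. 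The main technical point to keep in mind is that we do need the colimits of the $\txt{ev}_{(i,j)}\circ X_\bullet$ to be \emph{monoidal}, not merely to exist; this is automatic for split simplicial objects because absolute colimits are preserved by all functors, including the tensor product functors.

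Combining these two verifications, the Barr--Beck--Lurie theorem yields that $i^*$ is monadic, which is the desired conclusion.
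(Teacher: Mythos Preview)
Your proof is correct and follows essentially the same approach as the paper's: both verify the hypotheses of the Barr--Beck--Lurie theorem by using that the evaluation functors at $(0,0)$, $(0,1)$, $(1,1)$ detect equivalences (hence $i^*$ is conservative), and that $i^*$-split simplicial diagrams have componentwise split, hence monoidal, colimits so that Corollary~\ref{cor:bimodcolim} applies. The only cosmetic differences are in phrasing (you invoke absolute colimits, the paper notes that tensoring preserves the splitting) and in the theorem number cited for Barr--Beck (the paper uses 4.7.4.5, likely a different edition of \cite{HA}).
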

\begin{proof}  
  Suppose given a diagram $F \colon \Dop \to \Bimod(\mathcal{C})$ that
  is $i^{*}$-split in the sense of \rcite{HA}{Definition 4.3.7.2},
  i.e. the diagram $i^{*}F$ extends to a diagram $F' \colon
  \Dop_{-\infty} \to \Algns_{\mathcal{U}}(\mathcal{C})$. A split
  simplicial object is always a colimit diagram by \rcite{HTT}{Lemma
    6.1.3.16}, so $i^{*}F$ has a colimit in
  $\Algns_{\mathcal{U}}(\mathcal{C})$. Moreover, for the same reason
  the underlying diagrams in $\mathcal{C}$ are monoidal colimit
  diagrams, since tensoring with a fixed object of $\mathcal{C}$ again
  gives a split simplicial diagram. It then follows from
  Corollary~\ref{cor:bimodcolim} that $F$ has a colimit in
  $\Bimod(\mathcal{C})$ and this colimit is preserved by $i^{*}$.
  The forgetful functors from $\Bimod(\mathcal{C})$ and
  $\Algns_{\mathcal{U}}(\mathcal{C})$  to $\Fun(\{(0,0), (0,1), (1,1)\},
   \mathcal{C})$ are conservative
  by \rcite{enr}{Lemma A.5.5}; since the diagram
  \opctriangle{\Bimod(\mathcal{C})}{\Algns_{\mathcal{U}}(\mathcal{C})}{\Fun(\{(0,0),
    (0,1), (1,1)\}, \mathcal{C})}{i^{*}}{}{}
  commutes, it follows that $i^{*}$ also conservative. The Barr-Beck Theorem for \icats{},
  i.e. \rcite{HA}{Theorem 4.7.4.5}, now implies that the adjunction
  $i_{!} \dashv i^{*}$ is monadic.
\end{proof}

We now wish to identify the functor $i^{*} \colon
\Bimod(\mathcal{C})\to \Algns_{\mathcal{U}}(\mathcal{C})$ with the
projection $U$. To do this, we define $\mathcal{X}$ to be the full
subcategory of $\simp_{/[1]}$ spanned by the objects $(0)$, $(1)$ and
$(0,1)$. The projection $\mathcal{X}^{\op} \to \simp^{\op}$ is a
\gnsiopd{}, and the functor $\Alg^{1}_{\mathcal{X}^{\op}}(\mathcal{C})
\to \mathcal{C}$ given by evaluation at $(0,1)$ is an equivalence for
any monoidal \icat{} $\mathcal{C}$. We can thus identify the
projection $U$ with the map induced by composition with the inclusion
$\simp^{\op} \amalg_{\{(0)\}} \mathcal{X}^{\op} \amalg_{\{(1)\}}
\simp^{\op} \hookrightarrow \Dop_{/[1]}$.

\begin{propn}\label{propn:Utrivcofib}
  The inclusion $\simp^{\op} \amalg_{\{(0)\}} \mathcal{X}
  \amalg_{\{(1)\}} \simp^{\op} \to \mathcal{U}$ is a trivial cofibration
  in $(\sSet^{+})_{\mathfrak{O}_{\txt{ns}}^{\txt{gen}}}$.
\end{propn}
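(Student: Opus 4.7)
The plan is to apply Corollary~\ref{cor:Otrivcofib} with $\mathbf{O} := \mathcal{U}$ (regarded as a \gnsiopd{} over $\Dop$) and $\mathrm{N}\mathbf{O}_{0}$ the image of the pushout $\simp^{\op} \amalg_{\{(0)\}} \mathcal{X} \amalg_{\{(1)\}} \simp^{\op}$ inside $\mathrm{N}\mathcal{U}$. The two copies of $\simp^{\op}$ embed as the full subcategories of $\mathcal{U}$ spanned respectively by the objects $(0,\ldots,0)$ and $(1,\ldots,1)$: every morphism between two $(0,\ldots,0)$'s lies in $\mathcal{U}$ because in the defining condition one has $s = m$, and every morphism between two $(1,\ldots,1)$'s lies there because one has $s = -1$. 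Similarly $\mathcal{X}$ embeds as the full subcategory on $\{(0),(1),(0,1)\}$, whose only non-identity morphisms are the two maps involving $(0,1)$, both readily seen to satisfy the defining condition for $\mathcal{U}$. It then remains to verify the four hypotheses of the corollary.

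For the first hypothesis, that every narrow active simplex of $\mathrm{N}\mathcal{U}$ lies in $\mathbf{O}_{0}$: the terminal object $X_{n}$ of such a simplex lies over $[1]$, hence is one of $(0,0), (0,1), (1,1)$. When $X_{n} = (0,0)$, any active morphism in $\mathcal{U}$ into $(0,0)$ has source of the form $(0,\ldots,0)$, since pulling back labels along an active $\phi\colon [1]\to[r]$ forces $i_{0} = i_{r} = 0$ and hence all $i_{j} = 0$; iterating, the whole chain lies in the first $\simp^{\op}$ copy, and the $(1,1)$ case is symmetric. The key case is $X_{n} = (0,1)$: I claim that the only active morphism in $\mathcal{U}$ out of $(0,1)$ is the identity. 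Indeed, for an active $(0,1)\to (i_{0},\ldots,i_{r})$ over $\phi\colon [1]\to[r]$ the relevant index is $s = 0$, the image of $\phi$ is $\{0,r\}$, and the condition ``image contains $s$ and $s+1$'' forces $r = 1$. Hence the chain is constant at $(0,1)$ and lies in $\mathcal{X}$.

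The remaining three hypotheses are essentially formal. For closure under decomposition of wide simplices, the only objects of $\mathbf{O}_{0}$ lying over $[r]$ with $r \geq 2$ are the $(0,\ldots,0)$'s and $(1,\ldots,1)$'s, so any wide simplex of $\mathbf{O}_{0}$ is contained in one of the two $\simp^{\op}$ copies, and its decomposition simplices add only further $(0,\ldots,0)$ or $(1,\ldots,1)$ objects, remaining in the same piece. For the two $k$-factorization conditions, every non-identity morphism of $\mathrm{N}\mathcal{X}$ projects to an inert map $[1]\to[0]$ in $\Dop$, so no simplex of $\mathcal{X}$ contains a neutral morphism and those conditions hold vacuously there. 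For a simplex in one of the $\simp^{\op}$ copies, the inert-active factorization of a neutral morphism in $\Dop$ is strictly unique (as objects of $\simp$ have no non-trivial automorphisms), and the corresponding lift to $\mathcal{U}$ is forced to insert a $(0,\ldots,0)$ (resp.\ $(1,\ldots,1)$) of the appropriate length, so the factored simplex remains in the same piece.

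The main obstacle is the delicate combinatorial verification in the first hypothesis that $(0,1)$ admits no non-identity active morphism in $\mathcal{U}$; this is the precise incarnation of the intuitive statement that a $\mathcal{U}$-algebra sees the ``bimodule component'' $(0,1)$ merely as a bare object, retaining no action morphisms from the two flanking algebras.
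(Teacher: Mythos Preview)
Your proof is correct and follows the same approach as the paper: both apply Corollary~\ref{cor:Otrivcofib} to the inclusion in question. The paper's proof simply states that the hypotheses are clear from the definition of $\mathcal{U}$, whereas you have carefully spelled out the verification of each hypothesis; your key observation---that $(0,1)$ admits no non-identity active morphism in $\mathcal{U}$---is exactly what makes the first hypothesis go through, and your treatment of the remaining hypotheses (using that wide simplices and factored simplices in the $\simp^{\op}$ pieces stay in those pieces, while $\mathcal{X}^{\op}$ contains no neutral edges) is accurate.
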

\begin{proof}
  We apply Corollary~\ref{cor:Otrivcofib} --- it is clear from the
  definition of $\mathcal{U}$ as a subset of $\Dop_{/[1]}$ that the
  required hypotheses hold.
\end{proof}

\begin{cor}\label{cor:BimodLeftAdj}
  Let $\mathcal{C}$ be a monoidal \icat{}. The projection $U \colon
  \Bimod(\mathcal{C}) \to \Ass(\mathcal{C}) \times \mathcal{C} \times
  \Ass(\mathcal{C})$ has a left adjoint $F$ such that $UF(A, M, B)
  \simeq (A, A \otimes M \otimes B, B)$. Moreover, the adjunction $F
  \dashv U$ is monadic.\qed
\end{cor}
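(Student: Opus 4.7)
The plan is to deduce everything from the monadic adjunction $i_{!} \dashv i^{*}$ already established, using Proposition~\ref{propn:Utrivcofib} to identify $U$ with $i^{*}$, and then extracting the tensor product formula from the pointwise description of operadic left Kan extension. Proposition~\ref{propn:Utrivcofib} asserts that the inclusion $j\colon \simp^{\op} \amalg_{\{(0)\}} \mathcal{X}^{\op} \amalg_{\{(1)\}} \simp^{\op} \hookrightarrow \mathcal{U}^{\op}$ is a trivial cofibration in $(\sSet^{+})_{\mathfrak{O}_{\txt{ns}}^{\txt{gen}}}$, so restriction along $j$ induces an equivalence of \icats{}
\[ j^{*}\colon \Algns_{\mathcal{U}}(\mathcal{C}) \isoto \Ass(\mathcal{C}) \times \Algns_{\mathcal{X}^{\op}}(\mathcal{C}) \times \Ass(\mathcal{C}) \simeq \Ass(\mathcal{C}) \times \mathcal{C} \times \Ass(\mathcal{C}), \]
using the identification $\Algns_{\mathcal{X}^{\op}}(\mathcal{C}) \isoto \mathcal{C}$ via evaluation at $(0,1)$ noted in the text. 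Since both $U$ and $j^{*} \circ i^{*}$ are restriction along the same composite inclusion into $\simp^{\op}_{/[1]}$, they coincide, so $U$ is identified with $i^{*}$ under this equivalence.

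Then Lemma~\ref{lem:bimodgr} supplies the left adjoint $i_{!}$ to $i^{*}$, and the corollary immediately preceding the statement establishes that $i_{!} \dashv i^{*}$ is monadic. Transporting along the equivalence $j^{*}$ yields the desired left adjoint $F$ to $U$ together with monadicity of $F \dashv U$, since both properties are preserved by equivalences.

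To identify $UF(A,M,B)$, set $\tilde A := (j^{*})^{-1}(A,M,B) \in \Algns_{\mathcal{U}}(\mathcal{C})$ and compute $i_{!}\tilde A$ pointwise using Proposition~\ref{propn:lokeexist}. The restrictions of $i_{!}\tilde A$ along the two inclusions $\simp^{\op} \hookrightarrow \simp^{\op}_{/[1]}$ sending $[n]$ to $(0,\ldots,0)$ and $(1,\ldots,1)$ recover the algebras $A$ and $B$ respectively, essentially because these values are already fully encoded in $\tilde A$ and preserved by the Kan extension. At $(0,1)$, the proof of Lemma~\ref{lem:bimodgr} observed that $(0,0,1,1)$ is the final object of $\mathcal{U}^{\op,\txt{act}}_{/(0,1)}$, so the colimit computing $(i_{!}\tilde A)(0,1) \in \mathcal{C}$ reduces to the value at this final object. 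Under the Segal equivalence $\tilde A(0,0,1,1) \simeq (A, M, B) \in \mathcal{C}^{\otimes}_{[3]}$, the coCartesian pushforward along the corresponding morphism $[3] \to [1]$ in $\simp^{\op}$ takes $(A, M, B)$ to $A \otimes M \otimes B \in \mathcal{C}$, yielding the required formula. The only technical point is the finality assertion at $(0,1)$, which is the explicit input from Lemma~\ref{lem:bimodgr}; the rest is bookkeeping with the monadic adjunction.
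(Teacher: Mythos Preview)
Your proposal is correct and follows exactly the approach the paper intends: the corollary is marked \qed{} precisely because it is immediate from Proposition~\ref{propn:Utrivcofib} (identifying $U$ with $i^{*}$), Lemma~\ref{lem:bimodgr} (existence of $i_{!}$ and the final object $(0,0,1,1)$ giving the tensor formula), and the preceding corollary (monadicity of $i_{!} \dashv i^{*}$). Your write-up simply makes explicit the bookkeeping the paper leaves to the reader.
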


\begin{cor}\label{cor:BimodABLeftAdj}
  For any $A, B \in \Ass(\mathcal{C})$, let
  $\Bimod_{A,B}(\mathcal{C})$ denote the fibre of \[\pi \colon
  \Bimod(\mathcal{C}) \to \Ass(\mathcal{C})^{\times 2}\] at
  $(A,B)$. Then:
  \begin{enumerate}[(i)]
  \item The pullback $U_{A,B} \colon \Bimod_{A,B}(\mathcal{C}) \to
    \mathcal{C}$ of $U$ has a left adjoint $F_{A,B}$ such that the
    unit map $M \to U_{A,B}F_{A,B}(M)$ is the map $M \to A \otimes M
    \otimes B$ given by tensoring with the unit maps of $A$ and $B$.
  \item If $K$ is a sifted simplicial set, then a diagram $p \colon K
    \to \Bimod_{A,B}(\mathcal{C})$ has a colimit if the underlying
    diagram $U_{A,B}\circ p \colon K \to \mathcal{C}$ has a monoidal
    colimit. Moreover, the forgetful functor $U_{A,B}$ detects such
    colimits.
  \item The adjunction $F_{A,B} \dashv U_{A,B}$ is monadic.
  \end{enumerate}
\end{cor}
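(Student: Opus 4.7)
The plan is to obtain all three parts by pulling back Corollary~\ref{cor:BimodLeftAdj} along the inclusion $\{(A,B)\} \hookrightarrow \Ass(\mathcal{C})^{\times 2}$. First I would record the key observation that both $U$ and its left adjoint $F$ commute with the projection $\pi \colon \Bimod(\mathcal{C}) \to \Ass(\mathcal{C})^{\times 2}$: for $U$ because $\pi$ factors through $U$ via the projection to the outer factors, and for $F$ because the formula $UF(A,M,B) \simeq (A, A \otimes M \otimes B, B)$ shows $\pi F(A,M,B) = (A,B)$. In particular, $U$ restricts to $U_{A,B}$ and $F$ restricts to a functor $F_{A,B} \colon \mathcal{C} \to \Bimod_{A,B}(\mathcal{C})$.

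For (i), with $F_{A,B}$ in hand, I would verify $F_{A,B} \dashv U_{A,B}$ by noting that the unit and counit of $F \dashv U$ lie over identity morphisms in $\Ass(\mathcal{C})^{\times 2}$ (since both functors preserve $\pi$), and therefore restrict to natural transformations exhibiting the fibrewise adjunction. The description of the unit is read off from the middle component of the unit of $F \dashv U$ at $(A,M,B)$; tracing through the construction of $F$ via the operadic left Kan extension $i_{!}$ of Lemma~\ref{lem:bimodgr} identifies this with the map $M \to A \otimes M \otimes B$ induced by the unit maps of $A$ and $B$.

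For (ii), given a sifted diagram $p \colon K \to \Bimod_{A,B}(\mathcal{C})$ such that $U_{A,B} \circ p$ admits a monoidal colimit, I would pass to the composite $\tilde{p} \colon K \to \Bimod(\mathcal{C})$ and note that its evaluations at $(0,0), (0,1), (1,1) \in \Dop_{/[1]}$ are the constant diagrams at $A$ and $B$ together with $U_{A,B} \circ p$. Since $K$ is sifted and hence weakly contractible, the constant diagrams have as monoidal colimits the constants themselves (tensoring with any object again gives a constant diagram with the same colimit). Corollary~\ref{cor:bimodcolim} then produces a colimit of $\tilde{p}$ in $\Bimod(\mathcal{C})$ preserved by $U$; its image under $U$ is $(A, \colim U_{A,B}p, B)$, so this colimit lies in $\Bimod_{A,B}(\mathcal{C})$ and is preserved by $U_{A,B}$.

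For (iii), I would verify the hypotheses of the Barr--Beck theorem. Conservativity of $U_{A,B}$ is inherited from that of $U$ established in the proof of Corollary~\ref{cor:BimodLeftAdj}, since a morphism in $\Bimod_{A,B}(\mathcal{C})$ becomes an equivalence under $U$ iff it does under $U_{A,B}$. For the colimit hypothesis, a $U_{A,B}$-split simplicial object $X_{\bullet}$ in $\Bimod_{A,B}(\mathcal{C})$ has $U_{A,B}X_{\bullet}$ a split simplicial object in $\mathcal{C}$; splittings are preserved by tensoring on either side, so $U_{A,B}X_{\bullet}$ is a monoidal colimit, and (ii) applied with $K = \Dop$ yields a colimit of $X_{\bullet}$ preserved by $U_{A,B}$. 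The main potential obstacle lies in (i), in ensuring that the ambient adjunction really does restrict cleanly to a fibre adjunction; but once the compatibility of both $F$ and $U$ with $\pi$ has been recorded, this reduces to a straightforward fibrewise unpacking of the universal property of an adjunction, so I do not expect serious difficulty.
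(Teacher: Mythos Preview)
Your proposal is correct and follows essentially the same route as the paper. The only notable difference is in (i): where you sketch by hand why the ambient adjunction $F \dashv U$ restricts to the fibre over $(A,B)$, the paper simply invokes \cite[Proposition 7.3.2.5]{HA}, which packages exactly this ``relative adjunctions restrict to fibres'' argument; your parts (ii) and (iii) match the paper's reasoning almost step for step.
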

\begin{proof}
  The existence of the adjunction $F_{A,B} \dashv U_{A,B}$
  follows from Corollary~\ref{cor:BimodLeftAdj} and
  \rcite{HA}{Proposition 7.3.2.5}.

  Suppose $p \colon K \to \mathcal{C}$ is as in (ii). Since $K$ is
  weakly contractible, a constant diagram in $\mathcal{C}$ indexed by
  $K^{\triangleright}$ is a colimit diagram, and for the same reason it is also
  a monoidal colimit diagram. The composite diagram $p \colon K \to
  \Bimod(\mathcal{C})$ therefore has a colimit $K^{\triangleright} \to
  \Bimod(\mathcal{C})$ by
  Corollary~\ref{cor:bimodcolim}, and this factors through
  $\Bimod_{A,B}(\mathcal{C})$. Since $\Bimod_{A,B}(\mathcal{C})$ is a
  pullback, and the projections of the diagram to $\mathcal{C}$ and
  $\Ass(\mathcal{C}) \times \mathcal{C} \times \Ass(\mathcal{C})$ are
  colimits, it follows that this diagram is also a colimit
  diagram in $A$-$B$-bimodules. This proves (ii).

  Since a $U_{A,B}$-split diagram in $\Bimod_{A,B}(\mathcal{C})$ gives
  a $U$-split diagram in $\mathcal{C}$, it now follows from
  Corollary~\ref{cor:BimodLeftAdj} that $\Bimod_{A,B}(\mathcal{C})$
  has colimits of $U_{A,B}$-split simplicial diagrams and these are
  preserved by $U_{A,B}$. Since the inclusions $\{A\} \times
  \mathcal{C} \times \{B\} \hookrightarrow \Ass(\mathcal{C}) \times
  \mathcal{C} \times \Ass(\mathcal{C})$ and $\Bimod_{A,B}(\mathcal{C})
  \hookrightarrow \Bimod(\mathcal{C})$ also detect
  equivalences, it follows that the adjunction $F_{A,B} \dashv
  U_{A,B}$ is monadic by \rcite{HA}{Theorem 4.7.4.5}.
\end{proof}

\begin{cor}\label{cor:BimodIIeqC}
  Let $\mathcal{C}$ be a monoidal \icat{}, and let $I$ be the unit of
  $\mathcal{C}$ regarded as an associative algebra. Then the
  projection $U_{I,I} \colon \Bimod_{I,I}(\mathcal{C}) \to
  \mathcal{C}$ is an equivalence.
\end{cor}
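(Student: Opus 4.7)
The plan is to deduce the equivalence directly from the monadic adjunction supplied by Corollary~\ref{cor:BimodABLeftAdj}, by showing that the monad $T := U_{I,I}F_{I,I}$ on $\mathcal{C}$ is equivalent to the identity.

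First, by Corollary~\ref{cor:BimodABLeftAdj}(i) and (iii) we have a monadic adjunction $F_{I,I} \dashv U_{I,I}$ with unit
\[ \eta_{M} \colon M \longrightarrow U_{I,I}F_{I,I}(M) \simeq I \otimes M \otimes I \]
given by tensoring with the unit maps of $I$ on both sides. But $I$ is the monoidal unit, so the maps $I \otimes M \to M$ and $M \otimes I \to M$ are equivalences; hence $\eta_{M}$ is an equivalence for every $M \in \mathcal{C}$, i.e.\ the unit of the adjunction is a natural equivalence.

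Once the unit is a natural equivalence, the endofunctor $U_{I,I}F_{I,I}$ is equivalent to the identity, and so the associated monad $T$ on $\mathcal{C}$ is (equivalent to) the trivial monad $\id_{\mathcal{C}}$. The $\infty$-category of $T$-algebras for the trivial monad is $\mathcal{C}$ itself, and under this identification the forgetful functor becomes $\id_{\mathcal{C}}$. Since $F_{I,I} \dashv U_{I,I}$ is monadic, the canonical comparison $\Bimod_{I,I}(\mathcal{C}) \to \mathcal{C}^{T}$ is an equivalence, and composing with the equivalence $\mathcal{C}^{T} \simeq \mathcal{C}$ we conclude that $U_{I,I}$ is an equivalence.

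The only potential obstacle is verifying that the description of the unit given in Corollary~\ref{cor:BimodABLeftAdj}(i) really does amount to tensoring on both sides with the unit maps of $I$ in the sense that makes $\eta_{M}$ a composite of equivalences; this, however, is immediate from the description in Corollary~\ref{cor:BimodLeftAdj} of the left adjoint $F$ to $U$ (before taking fibres), since specializing $A = B = I$ turns $A \otimes M \otimes B$ into a composite of unit equivalences. Everything else is formal monadicity.
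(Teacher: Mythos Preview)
Your proof is correct and follows essentially the same approach as the paper: both use the monadic adjunction from Corollary~\ref{cor:BimodABLeftAdj} and the fact that the unit $M \to I \otimes M \otimes I$ is an equivalence. The paper packages the final step by invoking \cite[Corollary 4.7.4.16]{HA} applied to the triangle with $U_{I,I}$ and $\id_{\mathcal{C}}$ over $\mathcal{C}$, whereas you spell out the same conclusion by identifying the monad with the identity; these are the same argument.
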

\begin{proof}
  By Corollary~\ref{cor:BimodABLeftAdj} the functor $U_{I,I}$ has a
  left adjoint $F_{I,I}$ and the adjunction $F_{I,I} \dashv U_{I,I}$
  is monadic. Moreover, the unit map $M \to U_{I,I}F_{I,I}M$ is the
  canonical equivalence $M \isoto I \otimes M \otimes I$. It follows
  from \rcite{HA}{Corollary 4.7.4.16} applied to the diagram
  \opctriangle{\Bimod_{I,I}(\mathcal{C})}{\mathcal{C}}{\mathcal{C}.}{U_{I,I}}{U_{I,I}}{\id}
  that $U_{I,I}$ is an equivalence of \icats{}.
\end{proof}

Our next goal is to show that the projection $\pi \colon \Bimod(\mathcal{C}) \to
\Ass(\mathcal{C})^{\times 2}$ is a coCartesian fibration if
$\mathcal{C}$ has good relative tensor products. This
requires some technical preliminary observations:

\begin{propn}\label{propn:rightadjcocartmor}
  Suppose $p \colon \mathcal{E} \to \mathcal{C}$ is an inner
  fibration, and that $p$ has a left adjoint $F \colon \mathcal{C} \to
  \mathcal{E}$. Then a morphism $\phi \colon e \to e'$ in
  $\mathcal{E}$ is $p$-coCartesian \IFF{} the commutative square
  \csquare{Fp(e)}{Fp(e')}{e}{e'}{Fp(\phi)}{c_{e}}{c_{e'}}{\phi} is a
  pushout square.
\end{propn}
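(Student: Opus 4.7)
The plan is to reduce the statement to a computation with mapping spaces via the standard characterization of $p$-coCartesian morphisms and the Yoneda lemma. By definition, $\phi \colon e \to e'$ is $p$-coCartesian if and only if for every $x \in \mathcal{E}$ the commutative square
\[ \begin{tikzcd} \Map_{\mathcal{E}}(e',x) \ar[r,"\phi^{*}"] \ar[d,"p_{*}"'] & \Map_{\mathcal{E}}(e,x) \ar[d,"p_{*}"] \\ \Map_{\mathcal{C}}(p(e'),p(x)) \ar[r,"p(\phi)^{*}"] & \Map_{\mathcal{C}}(p(e),p(x)) \end{tikzcd} \]
is Cartesian. Dually, the square in the statement is a pushout if and only if for every $x$ the square obtained by mapping into $x$ is a pullback, i.e.
\[ \begin{tikzcd} \Map_{\mathcal{E}}(e',x) \ar[r,"\phi^{*}"] \ar[d,"c_{e'}^{*}"'] & \Map_{\mathcal{E}}(e,x) \ar[d,"c_{e}^{*}"] \\ \Map_{\mathcal{E}}(Fp(e'),x) \ar[r,"Fp(\phi)^{*}"] & \Map_{\mathcal{E}}(Fp(e),x) \end{tikzcd} \]
is Cartesian. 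So it suffices to identify the two squares.

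The adjunction $F \dashv p$ supplies a natural equivalence $\Map_{\mathcal{E}}(Fp(e),x) \simeq \Map_{\mathcal{C}}(p(e),p(x))$, and the key compatibility to check is that under this equivalence the map $p_{*}$ corresponds to precomposition with the counit $c_{e}$. This is the standard triangle identity for an adjunction: given $\psi \colon e \to x$, the image of $p(\psi)$ under the adjunction equivalence is the composite $Fp(e) \xrightarrow{Fp(\psi)} Fp(x) \xrightarrow{c_{x}} x$, which by naturality of the counit coincides with $\psi \circ c_{e}$; unwinding, this is exactly the claim. Taking $\psi = \phi$ also shows that the horizontal arrows match, so the two squares are naturally equivalent and the equivalence of Cartesian conditions follows.

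The main technical point is that we are working $\infty$-categorically, so to make the above rigorous one should either invoke an existing formulation (the unit-counit characterization of $\infty$-categorical adjunctions from \cite[\S 5.2.2]{HTT}) or work with an explicit model: since $p$ is assumed to be an inner fibration and $F$ is a left adjoint, we may pick a counit transformation $c \colon F \circ p \to \id_{\mathcal{E}}$ as a functor $\mathcal{E} \times \Delta^{1} \to \mathcal{E}$, and the identification of $p_{*}$ with $c^{*}$ then follows from the standard construction of the adjunction equivalence via this counit. The only potential obstacle is bookkeeping to ensure naturality of the identifications in $x$, but no substantive difficulty arises, and the conclusion follows by Yoneda.
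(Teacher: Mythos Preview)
Your proposal is correct and follows essentially the same approach as the paper: both arguments reduce to mapping spaces via Yoneda and the definition of $p$-coCartesian, then use the adjunction equivalence $\Map_{\mathcal{E}}(Fp(e),x) \simeq \Map_{\mathcal{C}}(p(e),p(x))$ together with the triangle identities to match up the two Cartesian conditions. The paper phrases this as stacking a three-row diagram whose bottom square has vertical equivalences (hence is always Cartesian), so the top square is Cartesian \IFF{} the composite is; you instead directly identify the two squares --- but the content is the same.
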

\begin{proof}
  For any $x \in \mathcal{E}$ we have a commutative diagram
  \[
  \ltikzcd{\Map_{\mathcal{E}}(e', x)  \arrow{r} \arrow{d}\pgfmatrixnextcell 
    \Map_{\mathcal{E}}(e, x) \arrow{d} \\
    \Map_{\mathcal{E}}(Fpe', x) \arrow{r} \arrow{d}{\sim}  \pgfmatrixnextcell
    \Map_{\mathcal{E}}(Fpe, x) \arrow{d}{\sim} \\
    \Map_{\mathcal{C}}(pe', px) \arrow{r} \pgfmatrixnextcell \Map_{\mathcal{C}}(pe, px),
    }
  \]
  where the vertical composites are equivalent to the maps coming from
  the functor $p$ by the adjunction identitites. The map $\phi$ is
  thus $p$-coCartesian \IFF{} the composite square is Cartesian for
  all $x$, and the commutative square
  \csquare{Fp(e)}{Fp(e')}{e}{e'}{Fp(\phi)}{c_{e}}{c_{e'}}{\phi} is a
  pushout \IFF{} the top square is Cartesian for all $x$. But since
  the lower vertical maps are equivalences the bottom square is always
  Cartesian, hence the top square is Cartesian \IFF{} the composite
  square is.
\end{proof}

\begin{cor}\label{cor:rightadjcocart}
  Suppose $p \colon \mathcal{E} \to \mathcal{C}$ is a categorical
  fibration between \icats{}, and that $p$ has a left adjoint $F
  \colon \mathcal{C} \to \mathcal{E}$. Then the following are
  equivalent:
  \begin{enumerate}[(1)]
  \item $p$ is a coCartesian fibration.
  \item For every $e \in \mathcal{E}$ and every morphism $\phi \colon
    p(e) \to x$ in $\mathcal{C}$, there is a pushout square
    \csquare{Fp(e)}{F(x)}{e}{\bar{x}}{F(\phi)}{c_{e}}{v}{\bar{\phi}}
    in $\mathcal{E}$, where $c$ is the counit for the adjunction, such
    that the composite 
    \[ x \xto{u_{x}} pF(x) \xto{p(v)} p(\bar{x})\]
    is an equivalence, where $u$ is the unit of the adjunction.
  \end{enumerate}
\end{cor}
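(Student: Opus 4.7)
The plan is to use Proposition~\ref{propn:rightadjcocartmor} to reduce both implications to comparisons of two pushout squares: the one in statement~(2) (with $F(x)$ in the upper right corner) and the one provided by the proposition (with $Fp(\bar x)$ in the upper right corner). The equivalence $w \colon x \isoto p(\bar x)$ given by the composite $x \xrightarrow{u_x} pF(x) \xrightarrow{p(v)} p(\bar x)$ induces an equivalence $F(w) \colon F(x) \isoto Fp(\bar x)$, and I will show that the two squares differ precisely by this equivalence. Once this is established the corollary follows formally from Proposition~\ref{propn:rightadjcocartmor}.

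For (1)$\Rightarrow$(2): given $e \in \mathcal{E}$ and $\phi \colon p(e) \to x$, I would choose a $p$-coCartesian lift $\bar\phi \colon e \to \bar x$ over $\phi$ together with an equivalence $w \colon x \isoto p(\bar x)$ witnessing $p(\bar\phi) \simeq w \circ \phi$, and set $v := c_{\bar x} \circ F(w)$. By Proposition~\ref{propn:rightadjcocartmor} the square with corners $Fp(e), Fp(\bar x), e, \bar x$ is a pushout; identifying $Fp(\bar x)$ with $F(x)$ via $F(w)$ (and using $Fp(\bar\phi) = F(w) \circ F(\phi)$) transports it to the square in (2). The required equivalence $x \to pF(x) \to p(\bar x)$ then reduces, via naturality of $u$ and a triangle identity, to $p(c_{\bar x}) \circ u_{p(\bar x)} \circ w = w$, which is an equivalence by construction.

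For (2)$\Rightarrow$(1): starting from $e$ and $\phi$, I would form the pushout square in~(2), obtaining $\bar\phi \colon e \to \bar x$ together with the equivalence $w \colon x \isoto p(\bar x)$. Applying $p$ and precomposing with $u_{p(e)}$, naturality of $u$ and a triangle identity yield $p(\bar\phi) = w \circ \phi$, so $\bar\phi$ lies over $\phi$ modulo $w$. To see that $\bar\phi$ is $p$-coCartesian I invoke Proposition~\ref{propn:rightadjcocartmor} in the reverse direction: both $v$ and $c_{\bar x} \circ F(w)$ correspond to $w$ under the adjunction bijection $\Map_{\mathcal{E}}(F(x), \bar x) \simeq \Map_{\mathcal{C}}(x, p(\bar x))$, so they are equivalent, and since $F(w)$ is an equivalence the pushout square of (2) transfers to the square required by the proposition. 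The main bookkeeping obstacle is verifying the two compatibilities $c_{\bar x} \circ F(w) \simeq v$ and $F(w) \circ F(\phi) \simeq Fp(\bar\phi)$; once these are in place the equivalence of (1) and (2) is immediate.
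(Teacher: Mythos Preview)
Your proposal is correct and follows essentially the same approach as the paper: both directions are reduced to Proposition~\ref{propn:rightadjcocartmor} by transporting along the equivalence $F(w)\colon F(x)\isoto Fp(\bar x)$, and the key identifications $v \simeq c_{\bar x}\circ F(w)$ and $Fp(\bar\phi)\simeq F(w)\circ F(\phi)$ you single out are exactly the ``adjunction identities'' the paper invokes. The one point you leave implicit is that in (2)$\Rightarrow$(1) your $\bar\phi$ lies over $w\circ\phi$ rather than over $\phi$ itself; the paper uses the categorical fibration hypothesis (via \rcite{HTT}{Corollary 2.4.6.5}) to lift the equivalence $w^{-1}$ to $\bar x \to \bar x'$ and thereby obtain a $p$-coCartesian morphism strictly over $\phi$.
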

\begin{proof}
  Suppose (2) holds. Given $e \in \mathcal{E}$ and $\phi \colon p(e)
  \to x$, we must show that there exists a $p$-coCartesian morphism $e
  \to \phi_{!}e$ over $\phi$. By assumption, there exists a pushout
  square
  \csquare{Fp(e)}{F(x)}{e}{\bar{x}}{F(\phi)}{c_{e}}{v}{\bar{\phi}} in
  $\mathcal{E}$ such that the composite
  \[ x \xto{u_{x}} pF(x) \xto{p(v)} p(\bar{x})\] is an
  equivalence. The adjunction identities imply that the map $v$
  factors as 
  \[ Fx \xto{F(u_{x} \circ p(v))} Fp\bar{x} \xto{c_{\bar{x}}}
  \bar{x} \]
  where the first map is an equivalence, and that the composite 
  \[ Fp(e) \xto{F\phi} F(x) \xto{F(u_{x} \circ p(v))} Fp(\bar{x}) \]
  is $Fp(\bar{\phi})$. Thus we have a pushout square
  \csquare{Fp(e)}{Fp(\bar{x})}{e}{\bar{x},}{Fp(\bar{\phi})}{c_{e}}{c_{\bar{x}}}{\bar{\phi}}
  which implies that $\bar{\phi}$ is $p$-coCartesian by
  Proposition~\ref{propn:rightadjcocartmor}. Since $p$ is a
  categorical fibration, by \rcite{HTT}{Corollary
    2.4.6.5} there exists an equivalence $\bar{x} \to \bar{x}'$ lying
  over the equivalence $(u_{x} \circ p(v))^{-1}$ in $\mathcal{C}$,
  and the composite $e \to \bar{x}'$ is a $p$-coCartesian morphism
  over $\phi$, which proves (1).
  
  Conversely, if (1) holds, then for any $e \in \mathcal{E}$ and $\phi
  \colon p(e) \to x$ in $\mathcal{C}$ there exists a $p$-coCartesian
  morphism $\bar{\phi} \colon e \to \bar{x}$ in $\mathcal{E}$ over
  $\phi$. By Proposition~\ref{propn:rightadjcocartmor} this means we
  have a pushout square
  \csquare{Fp(e)}{Fp(\bar{x})}{e}{\bar{x}.}{Fp(\bar{\phi})}{c_{e}}{c_{\bar{x}}}{\bar{\phi}}
  But as $\bar{\phi}$ lies over $\phi$, this gives (2).  
\end{proof}

\begin{propn}\label{propn:BimodFibcoCart}
  Suppose $\mathcal{C}$ is a monoidal \icat{} with good relative
  tensor products. Then the restriction
  $\pi \colon \Bimod(\mathcal{C}) \to \Ass(\mathcal{C})^{\times 2}$ is
  a coCartesian fibration. Moreover, if $M$ is an $A$-$B$-bimodule and
  $f \colon A \to A'$ and $g \colon B \to B'$ are morphisms of
  algebras in $\mathcal{C}$, then the coCartesian pushforward
  $(f,g)_{!}M$ is the tensor product $A' \otimes_{A} M \otimes_{B}
  B'$.
\end{propn}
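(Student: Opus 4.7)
The approach is to produce explicit coCartesian lifts via iterated relative tensor products, then verify the universal property. Since the proposition already specifies the target of each lift as $(f,g)_! M := A'\otimes_A M\otimes_B B'$, it suffices to (i) construct $(f,g)_! M$ as an $A'$-$B'$-bimodule together with a canonical morphism $\eta\colon M\to (f,g)_! M$ in $\Bimod(\mathcal{C})$ lying over $(f,g)$, and (ii) verify that $\eta$ is $\pi$-coCartesian.

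For (i), I would assemble the data $(A',A,B,B',f,g,M)$ as a $\bbLambda^{\op}_{/[3]}$-algebra in $\mathcal{C}$ whose four associative algebras at the vertices $0,1,2,3$ are $A',A,B,B'$ respectively and whose three bimodules at the edges $(0,1),(1,2),(2,3)$ are $A$ (regarded as an $A'$-$A$-bimodule via $f$), $M$ itself, and $B$ (regarded as a $B$-$B'$-bimodule via $g$). Because $\mathcal{C}$ has good relative tensor products, the operadic left Kan extension $\tau_{3,!}$ produces a composite $\simp^{\op}_{/[3]}$-algebra, and its $(0,3)$-component is precisely the iterated relative tensor product $A'\otimes_A M\otimes_B B'$ with its canonical $A'$-$B'$-bimodule structure. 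Restricting this $\simp^{\op}_{/[3]}$-algebra along the two edges $(1,2)$ and $(0,3)$ of $[3]$ and comparing yields the morphism $\eta\colon M\to (f,g)_! M$ in $\Bimod(\mathcal{C})$ over $(f,g)$.

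For (ii), I would invoke the mapping-space criterion: $\eta$ is $\pi$-coCartesian if and only if, for every $(h,k)\colon (A',B')\to(A'',B'')$ in $\Ass(\mathcal{C})^{\times 2}$ and every $A''$-$B''$-bimodule $P$, the induced comparison on fibres
\[
\Map_{\Bimod_{A',B'}(\mathcal{C})}\bigl((f,g)_! M,\,(h,k)^{*}P\bigr)\ \longrightarrow\ \Map_{\Bimod_{A,B}(\mathcal{C})}\bigl(M,\,(hf,kg)^{*}P\bigr)
\]
is an equivalence. This is precisely the universal property identifying $(f,g)_!=A'\otimes_A(-)\otimes_B B'$ as left adjoint to the restriction-of-scalars functor $(f,g)^{*}\colon \Bimod_{A',B'}(\mathcal{C})\to\Bimod_{A,B}(\mathcal{C})$. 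It would follow formally from the adjunction $\tau_{3,!}\dashv \tau_3^{*}$ of \S\ref{subsec:bimodopd} combined with the Segal condition of \S\ref{subsec:alg1segcond}, by re-expressing both mapping spaces as appropriate spaces of $\simp^{\op}_{/[3]}$-algebra maps and matching them.

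The principal obstacle is this last re-expression: one has to package mapping spaces between bimodules with distinct base algebras as spaces of $\simp^{\op}_{/[3]}$-algebra maps into a fixed target assembled from the cospans $A'\to A''\leftarrow A'$ and $B'\to B''\leftarrow B'$. Once this repackaging is in place, the coCartesian condition becomes a formal consequence of the operadic Kan extension adjunction, and the pushforward formula $(f,g)_! M\simeq A'\otimes_A M\otimes_B B'$ is built into the construction.
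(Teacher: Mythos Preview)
Your approach differs substantially from the paper's. Rather than constructing lifts directly, the paper uses an abstract criterion (Corollary~\ref{cor:rightadjcocart}): if a functor $\pi$ has a left adjoint $F'$, then $\pi$ is coCartesian provided the pushouts $M\amalg_{F'\pi M}F'(A',B')$ exist, and these pushouts are then the coCartesian lifts. To obtain such an $F'$, the paper first adjoins a monoidally compatible initial object (replacing $\mathcal{C}$ by $\mathcal{C}^{\triangleleft}$), so that $F'(A,B)$ is the free bimodule on $(A,\emptyset,B)$ via Corollary~\ref{cor:BimodLeftAdj}; the required pushouts are then computed by bar resolutions and identified with $A'\otimes_A M\otimes_B B'$. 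This route produces $\eta$ and verifies its universal property simultaneously, which is exactly where your proposal runs into trouble.

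There are two concrete gaps. First, a small one in step~(i): placing $A$ at edge $(0,1)$ as an $A'$-$A$-bimodule via $f$ gives ${}_fA$, and ${}_fA\otimes_A M\otimes_B B_g\simeq M$ with the \emph{restricted} $(A',B')$-action, not $A'\otimes_A M\otimes_B B'$. You need $A'$ at $(0,1)$ (right $A$-action via $f$) and $B'$ at $(2,3)$. Second, and more seriously, you have not actually produced $\eta$: the two inclusions $[1]\hookrightarrow[3]$ picking out edges $(1,2)$ and $(0,3)$ are not related by any morphism in $\simp_{/[3]}$, so restricting the composite $\simp^{\op}_{/[3]}$-algebra along them yields two $\simp^{\op}_{/[1]}$-algebras with no comparison map between them. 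The morphism $\eta$ lying over $(f,g)$ genuinely requires input beyond the $\simp^{\op}_{/[3]}$-algebra structure, and without it step~(ii) cannot even be formulated. You correctly flag the repackaging in step~(ii) as the principal obstacle, but it remains unresolved; the paper's indirect argument via pushouts along the unit of an adjunction is designed precisely to avoid doing this repackaging by hand.
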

\begin{proof}
  Let us first assume that $\mathcal{C}$ has an initial object
  $\emptyset$ and the monoidal structure is compatible with this
  (i.e. $c \otimes \emptyset \simeq \emptyset \otimes c \simeq \emptyset$ for all $c \in
  \mathcal{C}$). Then the projection $\Ass(\mathcal{C}) \times
  \mathcal{C} \times \Ass(\mathcal{C}) \to \Ass(\mathcal{C})^{\times
    2}$ has a left adjoint, which sends $(A, B)$ to $(A, \emptyset,
  B)$. By Corollary~\ref{cor:BimodLeftAdj} it follows that $\pi$ has a
  left adjoint $F'$, which sends $(A,B)$ to $F(A, \emptyset, B)$.

  Moreover, for any $M \in \Bimod(\mathcal{C})$ and any morphism
  $(f,g) \colon (A, B) \simeq \pi(M) \to (A',B')$, the pushout
  \nolabelcsquare{(UF)^{n}(A, \emptyset, B)}{(UF)^{n}(A', \emptyset,
    B')}{(UF)^{n}U(M)}{X_{n}} exists in $\Ass(\mathcal{C}) \times
  \mathcal{C} \times \Ass(\mathcal{C})$: since $\mathcal{C}$ is
  compatible with initial objects, the top horizontal morphism can be
  identified with $(A, \emptyset, B) \to (A', \emptyset, B')$ and the
  left vertical morphism with $(A, \emptyset, B) \to (A, A^{\otimes n}
  \otimes M \otimes B^{\otimes n}, B)$, so that $X_{n}$ is simply
  $(A', A^{\otimes n} \otimes M \otimes B^{\otimes n}, B')$. We then
  get a simplicial object $F(X_{\bullet})$ in
  $\Bimod(\mathcal{C})$. Evaluated at $(0,0)$ and $(1,1)$ this is
  constant at $A'$ and $B'$, respectively, and at $(0,1)$ we get $A'
  \otimes A^{\otimes \bullet} \otimes M \otimes B^{\otimes \bullet}
  \otimes B'$. Since $\mathcal{C}$ has good relative tensor products,
  the colimit of this simplicial diagram exists, is monoidal, and can
  be identified with the relative tensor product $A' \otimes_{A} M
  \otimes_{B} B'$. It follows from Corollary~\ref{cor:bimodcolim} that
  the diagram $F(X_{\bullet})$ has a colimit in
  $\Bimod(\mathcal{C})$. Moreover, since $F$ is a left adjoint and
  colimits commute we can identify this colimit as
  \[
  \begin{split}
|F(X_{\bullet})| & \simeq |F(UF)^{\bullet}U(M)
  \amalg_{F(UF)^{\bullet}(A,\emptyset,B)}
  F(UF)^{\bullet}(A',\emptyset,B')| \\ & \simeq |F(UF)^{\bullet}U(M)|
  \amalg_{|F(UF)^{\bullet}(A,\emptyset,B)|}
  |F(UF)^{\bullet}(A',\emptyset,B')| \\ & \simeq M
  \amalg_{F(A,\emptyset,B)} F((A',\emptyset,B').    
  \end{split}
\] Thus the pushout
  $M \amalg_{F'(A,B)} F'(A',B')$ exists in $\Bimod(\mathcal{C})$. It
  then follows from Corollary~\ref{cor:rightadjcocart} that $\pi$ is a
  coCartesian fibration, and that the object of $\mathcal{C}$
  underlying $(f,g)_{!}M$ is $A' \otimes_{A} M \otimes_{B} B'$.

  Now consider a general monoidal \icat{} $\mathcal{C}$. By
  \rcite{HA}{Proposition 4.8.1.10} (or by a direct construction) the
  \icat{} $\mathcal{C}^{\triangleleft}$ has a monoidal structure that
  is compatible with the initial object $-\infty$ and such that the
  inclusion $\mathcal{C} \hookrightarrow \mathcal{C}^{\triangleleft}$
  is monoidal. Moreover, this inclusion  preserves geometric
  realizations (and in general colimits other than the initial
  object), and thus $\mathcal{C}^{\triangleleft}$
  also has good relative tensor products. By our previous argument we
  then have a coCartesian fibration
  $\Bimod(\mathcal{C}^{\triangleleft}) \to
  \Ass(\mathcal{C}^{\triangleleft})^{\times 2}$. The initial object in
  $\mathcal{C}^{\triangleleft}$ does not admit an associative algebra
  structure (since it has no map from the unit), so the inclusion
  $\Ass(\mathcal{C}) \to \Ass(\mathcal{C}^{\triangleleft})$ is an
  equivalence. We thus wish to show that the restriction of the
  projection $\Bimod(\mathcal{C}^{\triangleleft}) \to
  \Ass(\mathcal{C})^{\times 2}$ to $\Bimod(\mathcal{C})$ is still a
  coCartesian fibration. For this it suffices to show that if $M$ is
  an $A$-$B$-bimodule in $\mathcal{C}$ and $f \colon A \to A'$ and $g
  \colon B \to B'$ are maps of associative algebras, then $(f,g)_{!}M$
  (computed in $\Bimod(\mathcal{C}^{\triangleleft})$ is also in
  $\Bimod(\mathcal{C})$. But this is true since the underlying object
  of $(f,g)_{!}M$ is given by a relative tensor product that cannot be
  the initial object $-\infty$.
\end{proof}

\section{$\mathbb{E}_{n}$-Algebras and Iterated
  Bimodules}\label{sec:enalg}

In this section we extend the results of \S\ref{sec:algbimod} to the
case $n > 1$: if $\mathcal{C}$ is a nice $\En$-monoidal \icat{} we
will construct an $(n+1)$-fold \icat{} $\fALG_{n}(\mathcal{C})$ of
$\En$-algebras; we can then define the $(\infty,n+1)$-category
$\fAlg_{n}(\mathcal{C})$ of $\mathbb{E}_{n}$-algebras in $\mathcal{C}$
as the completion of the underlying $(n+1)$-fold Segal space of
$\fALG_{n}(\mathcal{C})$. 

In order to iterate our results in the case $n = 1$ it is convenient
to work with a theory of \emph{\iopds{} over $\Dnop$} (or
\emph{\Dniopds{}}); we will introduce these objects in
\S\ref{subsec:dniopdintr} (with the more technical results we need
delegated to the appendix). Then in \S\ref{subsec:Enbimod} we observe
that the definitions of \S\ref{subsec:bimodopd} can be iterated and
use this to define the \icats{} $\fALG_{n}(\mathcal{C})_{I}$ for $I
\in \Dnop$, and in \S\ref{subsec:np1foldcat} we prove that these
\icats{} satisfy the Segal condition and give a functor $\Dnop \to
\CatI$. In \S\ref{subsec:ALGnmon} we then show that
$\fALG_{n}(\mathcal{C})$ is a lax monoidal functor in $\mathcal{C}$
and conclude from this that if $\mathcal{C}$ is an
$\mathbb{E}_{n+m}$-monoidal \icat{} then $\fALG_{n}(\mathcal{C})$
inherits an $\mathbb{E}_{m}$-monoidal structure. Finally, in
\S\ref{subsec:Algnmaps} we identify the $(\infty,n)$-category of maps
from $A$ to $B$ in $\fAlg_{n}(\mathcal{C})$ with
$\fAlg_{n-1}(\Bimod_{A,B}(\mathcal{C}))$.

\subsection{$\infty$-Operads over $\Dnop$}\label{subsec:dniopdintr}
In this subsection we will introduce the notion of \emph{\iopds{} over
  $\Dnop$} or \emph{\dniopds{}}, which is the setting in which we will
iterate the constructions of \S\ref{sec:algbimod}.

In \S\ref{subsec:carten} we introduced \emph{$\Dn$-monoids} in an
\icat{} $\mathcal{C}$ with finite products, by iterating the
definition of an associative monoid. Applying this to the \icat{}
$\CatI$ of \icats{}, we get a notion of \emph{$\Dn$-monoidal
  \icat{}}. Using the straightening equivalence, we can reinterpret
these as certain coCartesian fibrations over $\Dnop$:
\begin{defn}
  A \emph{$\Dn$-monoidal \icat{}} is a coCartesian fibration
  $\mathcal{C}^{\otimes} \to \Dnop$ such that for any object $I \in
  \Dnop$, the functor \[\mathcal{C}^{\otimes}_{I} \to
  (\mathcal{C}^{\otimes}_{C_{n}})^{\times |I|},\] induced by the
  coCartesian morphisms over the maps in $|I|$, is an equivalence.
\end{defn}

\begin{remark}
  $\Dn$-monoidal \icats{} can be interpreted as \icats{} equipped with
  $n$ compatible associative monoid structures, i.e. as
  \emph{$n$-tuply monoidal \icats{}}. We will see below in
  Corollary~\ref{cor:DnmonisEnmon} that they are also equivalent to
  \emph{$\En$-monoidal} \icats{} as defined in \cite{HA}, i.e. to
  algebras for the $\En$-$\infty$-operad in $\CatI$.
\end{remark}

In \cite{HA} Lurie defines symmetric \iopds{} by weakening the
definition of a symmetric monoidal \icat{} as a coCartesian fibration
over $\bbGamma^{\op}$, and above in Definition~\ref{defn:nsiopd} we
defined non-symmetric \iopds{} by analogously weakening the definition
of a monoidal \icat{} as a coCartesian fibration over $\Dop$. Applying
the same idea to $\Dn$-monoidal \icats{} gives a definition of
\emph{$\Dn$-\iopds{}}:
\begin{defn}\label{defn:Dniopd}
  A \defterm{\Dniopd} is a functor of \icats{} $\pi \colon \mathcal{O}
  \to \Dnop$ such that:
  \begin{enumerate}[(i)]
  \item For each inert map  $\phi \colon I \to J$ in $\Dnop$ and
    every $X \in \mathcal{O}$ such that $\pi(X) = I$, there exists
    a $\pi$-coCartesian morphism $X \to \phi_{!}X$ over $\phi$.
  \item For every $I$ in $\Dnop$, the functor
    \[ \mathcal{O}_{I} \to \mathcal{O}_{C_{n}}^{\times |I|} \]
    induced by the coCartesian morphisms over the inert maps $C_{n}
    \to I$ in $\Dnop$ is an equivalence.
  \item For every morphism $\phi \colon I \to J$ in $\Dnop$, $X \in
    \mathcal{O}_{I}$ and $Y \in \mathcal{O}_{J}$, composition with the
    coCartesian morphisms $Y \to Y_{i}$ over the inert morphisms $i
    \colon I \to C_{n}$ gives an equivalence
    \[ \Map_{\mathcal{O}}^{\phi}(X, Y) \isoto \prod_{i}
    \Map_{\mathcal{O}}^{i \circ \phi}(X, Y_{i}),\]
    where $\Map_{\mathcal{O}}^{\phi}(X, Y)$ denotes the subspace of
    $\Map_{\mathcal{O}}(X, Y)$ of morphisms that map to $\phi$ in
    $\Dnop$. (Equivalently, $Y$ is a $\pi$-limit of the $Y_{i}$'s.)
  \end{enumerate}
\end{defn}

\begin{remark}
  We will see in \S\ref{subsec:symDn} that there is an adjunction
  between \dniopds{} and symmetric \iopds{} over $\En$. In the case $n
  = 1$ this adjunction is an equivalence by \rcite{HA}{Proposition
    4.7.1.1}. We expect that this is true also for $n > 1$. Thus,
  \dniopds{} should be thought of as a more combinatorial or explicit
  model for symmetric \iopds{} over $\En$, where we do not need to
  deal with configuration spaces of points in $\mathbb{R}^{n}$.
\end{remark}

\begin{remark}
  $\Dn$-\iopds{} are a special case of Barwick's notion of
  \emph{\iopds{} over an operator category} as defined in
  \cite{BarwickOpCat}. Specifically, they are \iopds{} over the
  Cartesian product $\mathbb{O}^{\times n}$ where $\mathbb{O}$ is the
  operator category of finite ordered sets.
\end{remark}

\begin{remark}
  A $\Dn$-monoidal \icat{} as we defined it above is the same thing as
  a \Dniopd{} that is also a coCartesian fibration.
\end{remark}

To extend the definitions of iterated bimodules from
\S\ref{sec:cartenalg} to the non-Cartesian setting, we will need to
consider a more general notion than that of \dniopds{}. To introduce
this, recall that by iterating the definition of category object in
$\CatI$ we can define \emph{$\Dn$-uple \icats{}} (which model
$(n+1)$-uple \icats{}) as certain functors from $\Dnop$ to
$\CatI$. Rephrasing this in terms of coCartesian fibrations, we get
the following definition:
\begin{defn}
  A \emph{$\Dn$-uple \icat{}} is a coCartesian fibration $\mathcal{M}
  \to \Dnop$ such that for any $I \in \Dnop$, the functor
  \[\mathcal{M}_{I} \to \lim_{C \to I \in \CellnIop}
  \mathcal{M}_{C},\] induced by the coCartesian morphisms over the inert
  morphisms $C \to I$ in $\Dn$, is an equivalence.
\end{defn}
We can now weaken this definition in the same way as that which gave us the definition of \dniopds{} from that of $\Dn$-monoidal
\icats{}:
\begin{defn}\label{defn:gDniopd}
  A \emph{\gDniopd{}} is a functor of \icats{} $\pi \colon \mathcal{M}
  \to \Dnop$ such that:
  \begin{enumerate}[(i)]
  \item For every inert morphism  $\phi \colon I \to J$ in $\Dnop$ and
    every $X \in \mathcal{O}_{I}$, there exists
    a $\pi$-coCartesian edge $X \to \phi_{!}X$ over $\phi$.
  \item For every $I$ in $\Dnop$, the functor
    \[ \mathcal{M}_{I} \to \lim_{C \to I} \mathcal{M}_{C}, \]
    induced by the coCartesian arrows over the inert maps $C \to I$ in
    $\CellnIop$, is an equivalence.
  \item Given $Y$ in $\mathcal{O}_{J}$, choose a coCartesian lift
    $\overline{\eta} \colon (\CellnIop)^{\triangleleft} \to
    \mathcal{O}$ of the diagram of inert morphisms $J \to C$ with
    $\overline{\eta}(-\infty) \simeq Y$. Then for any map $\phi \colon
    I \to J$ in $\Dnop$ and $X \in \mathcal{O}_{I}$, the diagram
    $\overline{\eta}$ induces an equivalence
    \[ \Map_{\mathcal{O}}^{\phi}(X, Y) \simeq \lim_{i \colon C \to I
      \in \CellnIop} \Map_{\mathcal{O}}^{i \circ \phi}(X,
    \overline{\eta}(i)).\]
    (Equivalently, any coCartesian lift of the diagram
    $(\CellnIop)^{\triangleleft} \to \Dnop$ is a $\pi$-limit diagram
    in $\mathcal{O}$.)
  \end{enumerate}
\end{defn}

\begin{remark}
  A \defterm{$\Dn$-uple \icat{}} as we defined it above is the same
  thing as a generalized \Dniopd{} that is also a
  coCartesian fibration.
\end{remark}

\begin{defn}
  Let $\pi \colon \mathcal{M} \to \simp^{\op}$ be a
  (generalized) \Dniopd{}. We say that
  a morphism $f$ in $\mathcal{M}$ is \defterm{inert} if it
  is coCartesian and $\pi(f)$ is an inert morphism in
  $\simp^{\op}$. We say that $f$ is \defterm{active} if
  $\pi(f)$ is an active morphism in $\simp^{\op}$.
\end{defn}

\begin{lemma}
  The active and inert morphisms form a factorization system on any
  \gDniopd{}.
\end{lemma}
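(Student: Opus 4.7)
The plan is to transfer the (inert, active) factorization system on $\Dnop$---obtained from the preceding lemma by reversing the (active, inert) factorization system on $\Dn$---to $\mathcal{M}$, using the coCartesian lifts of inert morphisms guaranteed by condition (i) of Definition~\ref{defn:gDniopd}. This parallels the proof of \rcite{HA}{Proposition 2.1.2.5} for symmetric $\infty$-operads.

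Closure under composition is straightforward: coCartesian edges compose by \rcite{HTT}{Proposition 2.4.1.7} and inert morphisms in $\Dnop$ are closed under composition, so inert morphisms in $\mathcal{M}$ compose; active morphisms compose because active morphisms in $\Dnop$ do; identities are both since identities in $\Dnop$ are. For the existence of factorizations, given $f \colon X \to Y$ in $\mathcal{M}$, first factor $\pi(f) = a \circ i$ in $\Dnop$ with $i$ inert and $a$ active, and apply condition (i) to obtain a coCartesian lift $f_i \colon X \to Z$ of $i$. The universal property of the coCartesian edge $f_i$, applied to $f$ viewed as lying over $a \circ i$, then uniquely produces a morphism $f_a \colon Z \to Y$ over $a$ with $f_a \circ f_i \simeq f$; this is the desired (inert, active)-factorization of $f$.

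For orthogonality, given an inert $f_1 \colon X_1 \to Y_1$ and an active $f_2 \colon X_2 \to Y_2$ together with a commutative square whose horizontals are $u \colon X_1 \to X_2$ and $v \colon Y_1 \to Y_2$, I would show the space of lifts $Y_1 \to X_2$ is contractible as follows. Projecting to $\Dnop$, the base factorization system supplies a unique lift $\ell \colon \pi(Y_1) \to \pi(X_2)$. The coCartesian property of $f_1$ with target $X_2$ then yields a unique morphism $k \colon Y_1 \to X_2$ over $\ell$ with $k \circ f_1 = u$. The remaining identity $f_2 \circ k = v$ is the main obstacle: both $f_2 \circ k$ and $v$ lie in $\Map^{\pi(v)}_{\mathcal{M}}(Y_1, Y_2)$ and have the same image $v \circ f_1 = f_2 \circ u$ in $\Map^{\pi(v) \circ \pi(f_1)}_{\mathcal{M}}(X_1, Y_2)$, so a second application of the coCartesian property of $f_1$ (now with target $Y_2$) shows that the restriction map $\Map^{\pi(v)}(Y_1, Y_2) \to \Map^{\pi(v) \circ \pi(f_1)}(X_1, Y_2)$ is an equivalence, forcing $f_2 \circ k = v$. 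This double use of the coCartesian property---first to produce $k$, then to rigidify its compatibility with $f_2$---together with the uniqueness coming from the factorization system on the base, is the technical heart of the argument; the rest is bookkeeping about compositions and projections.
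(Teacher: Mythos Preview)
Your proposal is correct and aligns with the paper's approach: the paper simply notes that this is a special case of \rcite{HA}{Proposition 2.1.2.5}, and what you have written is precisely an unpacking of that proposition's proof in the present setting. Your sketch is accurate, though you could shorten it to the single citation as the paper does.
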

\begin{proof}
  This is a special case of \cite[Proposition 2.1.2.5]{HA}.
\end{proof}

\begin{defn}
  A morphism of (generalized) \Dniopds{} is a commutative diagram
  \opctriangle{\mathcal{M}}{\mathcal{N}}{\Dnop,}{\phi}{}{} where
  $\mathcal{M}$ and $\mathcal{N}$ are (generalized) \Dniopds{}, such
  that $\phi$ carries inert morphisms in $\mathcal{M}$ to inert
  morphisms in $\mathcal{N}$. We will also refer to a morphism of
  (generalized) \Dniopds{} $\mathcal{M} \to \mathcal{N}$ as an
  \emph{$\mathcal{M}$-algebra} in $\mathcal{N}$; we write
  $\Alg^{n}_{\mathcal{M}}(\mathcal{N})$ for the \icat{} of
  $\mathcal{M}$-algebras in $\mathcal{N}$, defined as a full
  subcategory of the \icat{} of functors $\mathcal{M} \to \mathcal{N}$
  over $\Dnop$.
\end{defn}

\begin{defn}
  If $\mathcal{M}$ and $\mathcal{N}$ are $\Dn$-uple \icats{}, a
  \emph{$\Dn$-uple functor} from $\mathcal{M}$ to $\mathcal{N}$ is a
  commutative diagram
  \opctriangle{\mathcal{M}}{\mathcal{N}}{\Dnop,}{\phi}{}{} where
  $\phi$ preserves all coCartesian morphisms; if $\mathcal{M}$ and
  $\mathcal{N}$ are in fact $\Dn$-monoidal \icats{} we will also refer
  to $\Dn$-uple functors as \emph{$\Dn$-monoidal functors}. We write
  $\Fun^{\otimes,n}(\mathcal{M}, \mathcal{N})$ for the \icat{} of
  $\Dn$-uple functors, defined as a full subcategory of the \icat{} of
  functors $\mathcal{M} \to \mathcal{N}$ over $\Dnop$.
\end{defn}

\subsection{Iterated Bimodules for $\mathbb{E}_{n}$-Algebras and their
  Tensor Products}\label{subsec:Enbimod}
In \S\ref{sec:cartenalg} we considered iterated bimodules for
$\En$-algebras as monoids for the overcategories $\DnIop$. Using
\gdniopds{} we now have a natural way to extend this definition to the
non-Cartesian setting, because of the following observation:
\begin{lemma}
  Let $I$ be any object of $\Dn$. Then the forgetful functor $\DnIop \to \Dnop$ is a $\Dn$-uple \icat{}.
\end{lemma}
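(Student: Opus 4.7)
The plan is to recognise $\DnIop \to \Dnop$ as a discrete Grothendieck opfibration, and then to verify the Segal condition by reducing to the one--dimensional case already handled in the paper (for the projection $\simp_{/[n]}^{\op} \to \simp^{\op}$).

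First, observe that $\Dn_{/I} \to \Dn$ is the classical Grothendieck construction applied to the representable presheaf $\Hom_{\Dn}(-, I) \colon \Dnop \to \Set$; taking opposites produces a Grothendieck opfibration with discrete fibres, hence in particular a coCartesian fibration of \icats{}. Concretely, for an object $g \colon J \to I$ of $\DnIop$ and a morphism $\phi \colon J \to K$ in $\Dnop$ (i.e.\ $\phi^{\op} \colon K \to J$ in $\Dn$), the unique coCartesian lift ends at $g \circ \phi^{\op} \colon K \to I$. This establishes the coCartesian fibration condition in the definition of a $\Dn$-uple \icat{}.

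For the Segal condition, write $J = ([j_{1}],\ldots,[j_{n}])$ and $I = ([i_{1}],\ldots,[i_{n}])$. Using the identification $\Dn = \simp^{\times n}$, the fibre over $J$ is
\[
\Hom_{\Dn}(J, I) \cong \prod_{k=1}^{n} \Hom_{\simp}([j_{k}], [i_{k}]),
\]
and by the remark in \S\ref{subsec:uple} we have $\CellnI_{/J} \simeq \prod_{k=1}^{n} \Cell^{1}_{/[j_{k}]}$. Since a limit over a product category of a functor valued in products factors coordinatewise as a product of limits, the comparison map identifies with the product over $k$ of the maps
\[
\Hom_{\simp}([j_{k}], [i_{k}]) \to \lim_{C_{k} \to [j_{k}] \in (\Cell^{1}_{/[j_{k}]})^{\op}} \Hom_{\simp}(C_{k}, [i_{k}]).
\]
Each of these is an isomorphism because $[j_{k}]$ is the iterated pushout $[1] \amalg_{[0]} \cdots \amalg_{[0]} [1]$ in $\simp$, i.e.\ $[j_{k}]$ is the colimit of its cell diagram---this is precisely the fact invoked in the $n = 1$ case earlier in the paper---so that $\Hom_{\simp}(-, [i_{k}])$ converts it into the asserted limit.

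There is no real obstacle here: the argument is a formal iteration of the one-dimensional case, and no conditions beyond (i) and (ii) of Definition~\ref{defn:gDniopd} need to be checked (since a $\Dn$-uple \icat{} is just a generalized \Dniopd{} that is a coCartesian fibration). The only point deserving care is matching the combinatorial definition of $\CellnI$ with the product decomposition of $\Dn$, which is already recorded in the paper.
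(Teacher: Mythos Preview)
Your argument is correct and follows essentially the same route as the paper: identify the projection as the discrete coCartesian fibration classified by the representable $\Hom_{\Dn}(-,I)$, then verify the Segal condition by factoring it as a product of the one-dimensional Segal conditions for $\Hom_{\simp}(-,[i_k])$. The only slip is notational: where you write $\CellnI_{/J}$ you mean $\Cell^{n}_{/J}$ (the cells over the variable object $J$, not the fixed $I$), but the mathematics is unaffected.
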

\begin{proof}
  The forgetful functor $\DnIop \to \Dnop$ is the coCartesian
  fibration associated to the functor \[\Hom_{\Dn}(\blank, I) \colon
  \Dnop \to \Set.\] This fibration is a $\Dn$-uple \icat{} \IFF{} the
  associated functor satisfies the Segal condition, which it 
  does (for instance since, if $I = ([i_{1}],\ldots,[i_{n}])$, it is
  the product of the functors $\Hom_{\simp}(\blank, [i_{k}])$ which
  satisfy the Segal condition for $\Dop$).
\end{proof}

By Corollary~\ref{cor:unDnisEn} $\Dnop$-algebras in a $\Dn$-monoidal
\icat{} $\mathcal{C}$ are equivalent to $\En$-algebras. To define the
$n$-fold category object $\fALG_{n}(\mathcal{C})$ in $\CatI$ of
$\En$-algebras, a natural choice for the \icat{} of objects is thus
$\Alg^{n}_{\Dnop}(\mathcal{C})$. Similarly, the $n$ different \icats{} of
1-morphisms are given by \[\fALG_{n}(\mathcal{C})_{(1,0,\ldots,0)} :=
\Alg^{n}_{\Dop_{/[1]} \times \simp^{(n-1),\op}}(\mathcal{C}),\]
\[\fALG_{n}(\mathcal{C})_{(0,1,0,\ldots,0)}:= \Alg^{n}_{\Dop \times
  \Dop_{/[1]} \times \simp^{(n-2),\op}}(\mathcal{C}),\] \[\ldots,\]
\[\fALG_{n}(\mathcal{C})_{(0,\ldots,0,1) :=
}\Alg^{n}_{\simp^{(n-1),\op} \times \Dop_{/[1]}}(\mathcal{C}),\] and
more generally the \icats{} of commutative $k$-cubes are given by
\[\fALG_{n}(\mathcal{C})_{I} := \Alg^{n}_{\DnIop}(\mathcal{C})\] where
$I = ([i_{1}],\ldots,[i_{n}])$ with each $i_{j}$ either $0$ are $1$
and exactly $k$ $1$'s. To define the remaining \icats{}
$\fALG_{n}(\mathcal{C})_{I}$ we must define an appropriate notion of
\emph{composite} $\DnIop$-algebras; luckily, there is a natural
generalization of our definition in the case $n = 1$:
\begin{defn}
  We say a morphism $(\phi_{1},\ldots,\phi_{n})$ in $\Dn$ is
  \emph{cellular} if $\phi_{i}$ is cellular for all $i$. For $I \in
  \Dn$, we write $\LnI$ for the full subcategory of $\DnI$ spanned by
  the cellular maps.
\end{defn}

\begin{lemma}
  The projection $\LnIop \to \Dnop$ is a \gDniopd{}, and the inclusion
  $\tau_{I} \colon \LnIop \hookrightarrow \DnIop$ is a morphism of
  \gDniopds{}.
\end{lemma}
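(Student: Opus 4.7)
The plan is to generalize Lemma~\ref{lem:fullsubgnsiopd} from the non-symmetric setting to \gdniopds{}, and then apply the generalization directly. Concretely, I will prove the following analogue: if $\pi \colon \mathcal{M} \to \Dnop$ is a \gdniopd{} and $\mathcal{C} \subseteq \mathcal{M}_{C_{n}}$ is a full subcategory, then the full subcategory $\mathcal{N} \subseteq \mathcal{M}$ spanned by objects $X$ such that $\iota_{!}X \in \mathcal{C}$ for every inert morphism $\iota \colon \pi(X) \to C_{n}$ in $\Dnop$ is itself a \gdniopd{}, and the inclusion $\mathcal{N} \hookrightarrow \mathcal{M}$ is a morphism of \gdniopds{}. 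The three axioms of Definition~\ref{defn:gDniopd} are checked exactly as in the proof of Lemma~\ref{lem:fullsubgnsiopd}: axiom (i) holds because composites of inert morphisms in $\Dnop$ are inert, so coCartesian pushforwards along inert morphisms preserve $\mathcal{N}$; axiom (iii) follows from fullness of $\mathcal{N}$ in $\mathcal{M}$, since mapping spaces between objects of $\mathcal{N}$ coincide with those in $\mathcal{M}$.

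The Segal axiom (ii) is the step requiring the most care. Setting $\mathcal{N}_{C_{n}} := \mathcal{C}$ and $\mathcal{N}_{C_{S}} := \mathcal{M}_{C_{S}}$ for all $S \subsetneq \{1,\ldots,n\}$ is consistent, since the only cells of $C_{S}$ are $C_{S'}$ with $S' \subseteq S$, and none of these equals $C_{n}$ when $|S| < n$. For $J \in \Dnop$, the definition of $\mathcal{N}_{J}$ yields a pullback square
\[
\nodispnolabelcsquare{\mathcal{N}_{J}}{\lim_{C \to J \in \CellnIop} \mathcal{N}_{C}}{\mathcal{M}_{J}}{\lim_{C \to J \in \CellnIop} \mathcal{M}_{C},}
\]
since the condition $\iota_{!}X \in \mathcal{C}$ for every inert $\iota \colon J \to C_{n}$ is precisely the condition that the image of $X$ in $\lim \mathcal{M}_{C}$ factor through $\lim \mathcal{N}_{C}$. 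As the bottom map is an equivalence by the Segal condition for $\mathcal{M}$, so is the top, giving the Segal condition for $\mathcal{N}$.

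To conclude, I apply the claim with $\mathcal{M} = \DnIop$ and $\mathcal{C} \subseteq (\DnIop)_{C_{n}}$ the full subcategory consisting of cellular maps $C_{n} \to I$ (i.e.\ those $(f_{1},\ldots,f_{n}) \colon C_{n} \to I$ with $f_{k} \colon [1] \to [i_{k}]$ satisfying $f_{k}(1) \leq f_{k}(0)+1$). Unwinding the resulting definition, an object $f = (f_{1},\ldots,f_{n}) \colon J \to I$ of $\DnIop$ lies in $\mathcal{N}$ iff for every inert $(\rho_{j_{1}},\ldots,\rho_{j_{n}}) \colon C_{n} \to J$ the composite $(f_{k}\rho_{j_{k}})_{k}$ is cellular. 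Since the $\rho_{j_{k}}$ in different coordinates may be chosen independently, this is equivalent to $f_{k}\rho \colon [1] \to [i_{k}]$ being cellular for every inert $\rho \colon [1] \to J^{k}$ and every $k$, which by the coordinatewise characterization of cellular morphisms in $\Dn$ is equivalent to each $f_{k}$ being cellular, i.e.\ to $f$ being cellular. Thus $\mathcal{N} = \LnIop$, proving both assertions simultaneously.

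The main obstacle is bookkeeping in verifying the Segal pullback square: one must be careful that the definition of $\mathcal{N}_{C_{S}}$ for $|S| < n$ is consistent with the iterated Segal condition, and that the relevant limits over $\CellnIop$ behave well under pullback. This is a mild nuisance but not a genuine difficulty, as the cell poset structure ensures that the condition defining $\mathcal{N}$ is imposed only at the top cell $C_{n}$.
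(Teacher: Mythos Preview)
Your proposal is correct and follows exactly the approach the paper indicates: the paper's proof is the single line ``As Lemma~\ref{lem:Lbrnopgnsiopd}, using the $\Dn$-analogue of Lemma~\ref{lem:fullsubgnsiopd},'' and you have spelled out precisely that $\Dn$-analogue and its application. The only quibble is notational: in your pullback square you write $\CellnIop$ where you mean the cell category over your variable object $J$, not the fixed $I$; this is harmless.
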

\begin{proof}
  As Lemma~\ref{lem:Lbrnopgnsiopd}, using the $\Dn$-analogue of
  Lemma~\ref{lem:fullsubgnsiopd}.
\end{proof}

\begin{propn}
  For every $I \in \Dn$, the inclusion $\tau_{I}\colon \LnIop \to
  \DnIop$ is extendable.
\end{propn}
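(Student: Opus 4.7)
The plan is to adapt the argument of Proposition~\ref{propn:tau1ext} to the multidimensional setting. By \rcite{HTT}{Theorem 4.1.3.1}, extendability of $\tau_I$ reduces to the statement that for every $\xi \colon J \to I$ in $\Dn$ and every object $X = (X_\alpha)_{\alpha \in |J|}$ in $\prod_{\alpha \in |J|} (\LnI)^{\txt{act}}_{\xi\alpha/}$, the slice category $((\LnI)^{\txt{act}}_{\xi/})_{/X}$ is weakly contractible. I will exhibit a final object.

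Write $I = ([i_1], \ldots, [i_n])$, $J = ([j_1], \ldots, [j_n])$, and $\xi = (\xi_1, \ldots, \xi_n)$. The product structure $\Dn = \simp^{\times n}$ yields $\LnI \simeq \prod_{k=1}^{n} \bbLambda_{/[i_k]}$, with active morphisms and cellularity taken componentwise. Under this identification, $|J| = \prod_{k=1}^n \{1, \ldots, j_k\}$, and each $X_\alpha$ decomposes as $(X_\alpha^{(k)})_{k=1}^n$, with $X_\alpha^{(k)} = ([n_\alpha^{(k)}], f_\alpha^{(k)}, c_\alpha^{(k)}) \in (\bbLambda_{/[i_k]})^{\txt{act}}_{\xi_k \alpha_k/}$. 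The final object $Y = (Y^{(1)}, \ldots, Y^{(n)})$ will be constructed coordinatewise.

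For each $k$, I produce $Y^{(k)}$ in two stages. First, for each fixed $\alpha_k \in \{1, \ldots, j_k\}$, the collection $\{X_\alpha^{(k)} : \alpha_l \in \{1, \ldots, j_l\} \text{ for all } l \neq k\}$ is a finite family of objects of the \emph{single} slice $(\bbLambda_{/[i_k]})^{\txt{act}}_{\xi_k \alpha_k/}$. A canonical common refinement $Z^{(k)}_{\alpha_k}$ exists in this slice because every $c_\alpha^{(k)}$ shares the boundary values $\xi_k(\alpha_k{-}1)$ and $\xi_k(\alpha_k)$ with every other, so they can be glued into a single cellular map whose domain is a suitable ``interleaving'' (formally, an iterated pushout of the sources along the common active maps from $[1]$). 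Second, given the family $\{Z^{(k)}_{\alpha_k}\}_{\alpha_k=1}^{j_k}$, I apply the $n = 1$ concatenation from Proposition~\ref{propn:tau1ext} verbatim to obtain $Y^{(k)} \in (\bbLambda_{/[i_k]})^{\txt{act}}_{\xi_k/}$ equipped with active morphisms to each $Z^{(k)}_{\alpha_k}$, and hence by composition to every $X_\alpha^{(k)}$.

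Finality of $Y$ in $((\LnI)^{\txt{act}}_{\xi/})_{/X}$ is then checked componentwise: finality in the second stage is immediate from Proposition~\ref{propn:tau1ext}, while finality in the first stage reduces to the universal property of the common refinement. Coherence of the structure maps across coordinates follows from the strict uniqueness of active-inert factorizations in $\Dn$ (see the remark after Definition~\ref{defn:activeinert}), which forces all glued morphisms to agree. The main obstacle, which has no analogue in the $n = 1$ case, is the first stage: one must canonically refine several objects sharing one slice and verify that refinement commutes with the concatenation across $\alpha_k$. This step is unavoidable because the data $X_\alpha^{(k)}$ in the $k$-th coordinate depends on the full index $\alpha$ and not merely on $\alpha_k$, so naive coordinatewise application of Proposition~\ref{propn:tau1ext} is not possible.
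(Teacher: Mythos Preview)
Your diagnosis of the obstacle is correct: for $n \geq 2$ the $k$-th component $X_\alpha^{(k)}$ depends on the whole multi-index $\alpha$, so after splitting the slice $((\LnI)^{\txt{act}}_{\xi/})_{/X}$ as a product $\prod_k S_k$, the factor $S_k$ is the slice of $(\bbLambda_{/[i_k]})^{\txt{act}}_{\xi_k/}$ over the full family $(X_\alpha^{(k)})_{\alpha\in|J|}$, not over a single tuple indexed by $\{1,\dots,j_k\}$. Your proposed remedy, however, does not work: $S_k$ need not have a final object, so no ``common refinement with universal property'' exists. (The iterated pushouts you invoke also do not exist in $\simp$: gluing two intervals along both endpoints yields a poset that is not totally ordered.)

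Here is a minimal obstruction. Take $n = 2$, $J = ([1],[2])$, $I = C_2$, $\xi_1 = \id_{[1]}$, and $\xi_2 \colon [2]\to[1]$ arbitrary. Then $j_1 = 1$ and $N_1 = 2$, so for $k=1$ the category $S_1$ is the slice of $C := (\bbLambda_{/[1]})^{\txt{act}}_{(0,1)/}$ over a pair $(Y,Y')$ via the diagonal $C \to C \times C$; choose $Y = Y' = (0,0,1)$. Suppose $(Z^*,\mu^*,\mu'^*)$ were final. On the $0$-level set the maps $\mu^*,\mu'^*$ restrict to order-preserving $f,f' \colon [a-1]\to[1]$ with $f(0)=f'(0)=0$, where $a = |(Z^*)^{-1}(0)|$; write $f^{-1}(0)=[0,c]$ and $(f')^{-1}(0)=[0,c']$. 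If $c \leq c'$ then $f^{-1}(0)\cap (f')^{-1}(1)=\emptyset$, so the object with underlying $W=(0,0,0,1)$ and structure maps $\nu = (0,0,0,2)$, $\nu' = (0,1,1,2)$ (as maps $[3]\to[2]$) admits no morphism to $(Z^*,\mu^*,\mu'^*)$: at $j=1$ one would need $\eta(1)\in f^{-1}(0)\cap(f')^{-1}(1)$. If $c > c'$, swap $\nu$ and $\nu'$. Either way no final object exists. (Note that $S_1$ is nonetheless weakly contractible: $(0,1)$ is initial in $C$, hence $((0,1),!,!)$ is initial in $S_1$.)

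The paper's proof is terse but points in a different direction. After the product decomposition over $k$, each factor is the $n=1$ map $g_k$ of Proposition~\ref{propn:tau1ext} followed by a product of diagonals $C_{k,p}\to C_{k,p}^{N_k}$, where $C_{k,p}=(\bbLambda_{/[i_k]})^{\txt{act}}_{\xi_k\rho_p/}$. These diagonals are coinitial because each $(C_{k,p})^{\op}$ is sifted --- this is exactly the content of Lemma~\ref{lem:Lbrnsift}. Since composites and finite products of coinitial functors are coinitial, each factor is coinitial and the result follows. In short, your ``first stage'' should be handled by siftedness rather than by a terminal-object construction.
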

\begin{proof}
  We must show that for any $I \in \Dn$ and any map $\xi \colon J \to
  I$ in $\Dn$, the map
  \[ (\LnIop)^{\txt{act}}_{/\xi} \to \prod_{\phi \colon C_{n}\to J}
  (\LnIop)^{\txt{act}}_{/\xi \phi} \]
  is cofinal, or equivalently that the map
  \[ (\LnI)^{\txt{act}}_{\xi/} \to \prod_{\phi \colon C_{n}\to J}
  (\LnI)^{\txt{act}}_{\xi \phi /} \]
  is coinitial. This map decomposes as a product, hence since a
  product of coinitial maps is coinitial this follows from the proof of
  Proposition~\ref{propn:tau1ext}.
\end{proof}

\begin{defn}
  We say a $\Dn$-monoidal \icat{} \emph{has good relative tensor
    products} if it is $\tau_{I}$-compatible for all $I \in
  \Dnop$. Similarly, we say a $\Dn$-monoidal functor is
  \emph{compatible with relative tensor products} if it is
  $\tau_{I}$-compatible for all $I$.
\end{defn}

Applying Corollary~\ref{cor:opdkanext}, we get:
\begin{propn}
  Supppose $\mathcal{C}$ is a $\Dn$-monoidal \icat{} with good relative
  tensor products. Then the restriction $\tau_{I}^{*} \colon
  \Algn_{\DnIop}(\mathcal{C}) \to
  \Algn_{\LnIop}(\mathcal{C})$ has a fully faithful left adjoint
  $\tau_{I,!}$.\qed
\end{propn}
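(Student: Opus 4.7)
The plan is essentially to invoke Corollary~\ref{cor:opdkanext} from the appendix, since all of its hypotheses have just been established. That corollary (the operadic Kan extension machinery for generalized \Dniopds{}) requires two inputs: an extendable morphism of \gdniopds{} and a target that is compatible with this morphism. The previous proposition shows that $\tau_I \colon \LnIop \hookrightarrow \DnIop$ is extendable, and the assumption that $\mathcal{C}$ has good relative tensor products is by definition the statement that $\mathcal{C}$ is $\tau_I$-compatible for every $I$. So the only work is to check that these two ingredients really do feed into the corollary as claimed.

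First, I would recall from Proposition~\ref{propn:lokeexist} (which underlies Corollary~\ref{cor:opdkanext}) that extendability plus compatibility guarantees that the operadic left Kan extension of any $\LnIop$-algebra $A \colon \LnIop \to \mathcal{C}^{\otimes}$ along $\tau_I$ exists and is itself a $\DnIop$-algebra. This operadic left Kan extension is computed, as in Lemma~\ref{lem:lokerecognize}, by taking, for each $\xi \colon J \to I$ in $\Dn$, the colimit in $\mathcal{C}$ of the restriction of $A$ along the projection $(\LnI)^{\mathrm{act}}_{\xi/} \to \LnIop$ (after coCartesian pushforward to a fibre $\mathcal{C}$); that this gives a well-defined $\DnIop$-algebra is exactly what compatibility buys us via Lemma~\ref{lem:mongrtpcond} and its $\Dn$-analogue.

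Next, the operadic Kan extension packages into a functor $\tau_{I,!} \colon \Algn_{\LnIop}(\mathcal{C}) \to \Algn_{\DnIop}(\mathcal{C})$ which is left adjoint to $\tau_I^{*}$; this is the content of Corollary~\ref{cor:opdkanext}. Fully faithfulness of $\tau_{I,!}$ then follows formally from the fact that $\tau_I$ is a fully faithful inclusion of generalized \Dniopds{}: the unit map $A \to \tau_I^{*}\tau_{I,!}A$ is computed at $\xi \in \LnIop$ as the colimit of $A$ over $(\LnI)^{\mathrm{act}}_{\xi/}$, and since $\xi$ is itself a cellular map, the \icat{} $(\LnI)^{\mathrm{act}}_{\xi/}$ has the identity on $\xi$ as an initial object, so this colimit recovers $A(\xi)$.

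I do not expect a real obstacle here, since the genuinely technical content (extendability, existence of relative tensor products controlling the relevant colimits, and the abstract operadic Kan extension formalism) has already been carried out earlier in the paper in the case $n=1$ and extended to $\Dn$ in the preceding proposition. The only thing to be mildly careful about is that the relevant colimits, which in the case $n=1$ reduced via Lemma~\ref{lem:Lbrnsift} to sifted (in fact simplicial) colimits, now live over products of slice categories of the form $(\bbLambda_{/[i_k]})^{\mathrm{act}}_{\xi_k/}$; one must note that the compatibility condition in the definition of good relative tensor products for a $\Dn$-monoidal \icat{} is precisely designed to handle these product-of-sifted diagrams levelwise in each of the $n$ tensor product directions, which is exactly the $\Dn$-analogue of Lemma~\ref{lem:mongrtpcond} that Corollary~\ref{cor:opdkanext} requires.
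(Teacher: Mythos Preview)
Your proposal is correct and follows exactly the paper's approach: the proposition is stated immediately after ``Applying Corollary~\ref{cor:opdkanext}, we get:'' and carries a \qed, so the paper's proof is precisely the invocation of that corollary using the extendability of $\tau_I$ (the preceding proposition) and the definition of good relative tensor products as $\tau_I$-compatibility. Your additional remarks on full faithfulness (the unit being an equivalence because $\tau_I$ is a full inclusion) spell out what the paper records in Corollary~\ref{cor:lokegivesadj}.
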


Next, we observe that the notion of having good relative tensor
products has a simple equivalent reformulation:
\begin{lemma}\label{lem:Dnmongrtp}
  Let $\mathcal{C}$ be a $\Dn$-monoidal \icat{}. Then the following
  are equivalent:
  \begin{enumerate}[(1)]
  \item $\mathcal{C}$ has good relative tensor products.
  \item Any one of underlying monoidal \icats{} of $\mathcal{C}$
    (obtained by pulling back along the inclusions $\{[1]\} \times
    \cdots \times \Dop \times \cdots \times \{[1]\} \hookrightarrow
    \Dnop$) has good relative tensor products in the sense of
    Definition~\ref{defn:mongrtp}.
  \item Any one of the underlying monoidal \icats{} of $\mathcal{C}$
    satisfies the criterion of Lemma~\ref{lem:mongrtpcond}.
  \end{enumerate}
  Moreover, a $\Dn$-monoidal functor is compatible with relative
  tensor products \IFF{} any one of its underlying monoidal
  functors is compatible with relative tensor products.
\end{lemma}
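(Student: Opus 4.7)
The equivalence of (2) and (3) is immediate from Lemma~\ref{lem:mongrtpcond} applied to each underlying monoidal \icat{} separately; the substantive point is $(1) \Leftrightarrow (2)$. My plan rests on the observation that for $I = ([i_1],\ldots,[i_n])$ there is a canonical product decomposition
\[ \LnI \simeq \bbLambda_{/[i_1]} \times \cdots \times \bbLambda_{/[i_n]} \]
under which $\tau_I$ corresponds to the product of the one-variable inclusions $\tau_{[i_j]} \colon \bbLambda^{\op}_{/[i_j]} \hookrightarrow \simp^{\op}_{/[i_j]}$. Consequently, for $\xi \colon J \to I$ with coordinates $\xi_j$, the active slice splits as $(\LnI)^{\txt{act}}_{/\xi} \simeq \prod_j (\bbLambda_{/[i_j]})^{\txt{act}}_{/\xi_j}$; each factor is sifted by Lemma~\ref{lem:Lbrnsift}, so by \rcite{HTT}{Lemma 5.5.8.4} the product also admits a cofinal map from $\simp^{\op}$, and $\tau_I$-compatibility can be checked on the bar-type iterated colimit furnished by Corollary~\ref{cor:moncolimprod}.

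The direction $(1) \Rightarrow (2)$ is the easier one: specialise $I$ to have $[1]$ in every coordinate except the $j$-th (where it takes the value $[k]$). The product decomposition above then collapses to a single nontrivial factor, and the $\tau_I$-compatibility of $\mathcal{C}$ restricts verbatim to the $\tau_{[k]}$-compatibility of the underlying monoidal \icat{} pulled back to the $j$-th coordinate.

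For $(2) \Rightarrow (1)$ I would apply a Fubini argument: the iterated colimit indexed by $\prod_j (\bbLambda_{/[i_j]})^{\txt{act}}_{/\xi_j}$ can be computed one coordinate at a time. Condition (2) provides, in each coordinate $j$, the existence of the one-variable bar colimit together with its preservation by $\otimes_j$. To upgrade this into preservation by $\otimes_k$ for $k \neq j$, I would exploit the fact that the $\Dn$-monoidal structure on $\mathcal{C}$ makes each $\otimes_k$ a $\Dn$-monoidal functor in the remaining $n-1$ directions (a direct consequence of the Segal condition for $\mathcal{C} \to \Dnop$); in particular $\otimes_k$ commutes with the bar constructions that define relative tensor products in direction $j$, so it preserves the required one-variable colimit. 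Iterating this across all coordinates delivers the full $\tau_I$-compatibility of $\mathcal{C}$.

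The assertion about $\Dn$-monoidal functors is then a corollary of the same product decomposition: compatibility with each $\tau_I$ is preservation of an iterated colimit which, by Fubini, factorises into preservation of each one-variable bar colimit, i.e., compatibility of each underlying monoidal functor with relative tensor products. I expect the main obstacle to be making the Fubini/interchange step in $(2) \Rightarrow (1)$ fully homotopy-coherent, since it requires tracking how the $n$ tensor products of $\mathcal{C}$ interact rather than treating them as $n$ disjoint structures --- this is precisely where genuine $\Dn$-monoidality (as opposed to $n$ independent monoidal structures) enters the argument.
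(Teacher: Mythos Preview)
Your overall structure --- the product decomposition $(\LnIop)^{\txt{act}}_{/X} \simeq \prod_{j} (\bbLambda^{\op}_{/[i_j]})^{\txt{act}}_{/X_j}$ and the Fubini reduction to one-variable bar colimits --- is exactly what the paper does. The gap is precisely the step you flag yourself: in the direction $(2)\Rightarrow(1)$ you need the one-variable bar colimit to be preserved not only by $\otimes_j$ (which (2) gives) but by every $\otimes_k$. Your justification, that ``$\otimes_k$ is a $\Dn$-monoidal functor in the remaining $n-1$ directions, so it commutes with the bar constructions in direction $j$'', does not go through as stated: being monoidal for $\otimes_j$ tells you $\otimes_k$ preserves finite $\otimes_j$-products, i.e.\ the individual terms of the bar diagram, but it says nothing about preserving the geometric realization of that diagram. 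Interchange alone does not buy you preservation of sifted colimits.

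The paper resolves this with a single stroke you are missing: by Corollary~\ref{cor:DnmonisEnmon} a $\Dn$-monoidal \icat{} arises from an $\En$-monoidal one, so the $n$ tensor product functors $\otimes_1,\ldots,\otimes_n$ are all \emph{equivalent}. Hence preservation of the bar colimit by $\otimes_j$ is automatically preservation by every $\otimes_k$, and the obstacle you anticipated simply evaporates. Once you insert this observation, your Fubini argument becomes the paper's argument verbatim: restricting the product diagram to one coordinate with the others fixed gives a tensor of one-variable bar diagrams with fixed objects, whose colimit exists and is monoidal by (2), and the iterated colimit then exists by siftedness. The functor statement follows the same way.
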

\begin{proof}
  By definition, we must show that for any $\LnIop$-algebra $A$ in
  $\mathcal{C}$ and any $X \in \DnIop$, the colimit of the induced
  diagram $(\LnIop)^{\txt{act}}_{/X} \to \mathcal{C}$ exists and is
  preserved tensoring with any object of $\mathcal{C}$ using each of
  the $n$ tensor products. But since the $\Dn$-monoidal \icat{}
  $\mathcal{C}$ arises from an $\En$-monoidal \icat{} by
  Corollary~\ref{cor:DnmonisEnmon}, these $n$ tensor product functors
  are all equivalent. It therefore suffices to show that if one of the
  underlying monoidal \icats{} of $\mathcal{C}$ has good relative
  tensor products, then the colimits above exist in $\mathcal{C}$ and
  are preserved by tensoring (on either side) with any object of
  $\mathcal{C}$.

  But the category $(\LnIop)^{\txt{act}}_{/X}$ decomposes as a product
  $\prod_{k=1}^{n} (\bbLambda_{/[i_{k}]}^{\op})^{\txt{act}}_{/X_{k}}$ (where $I
  = ([i_{1}],\ldots,[i_{n}])$ and $X = (X_{1},\ldots,X_{n})$), and for
  each $Y_{k} \in (\bbLambda_{/[i_{k}]}^{\op})^{\txt{act}}_{/X_{k}}$
  with $k \neq j$, the restriction of the diagram to
  \[ \{Y_{1}\} \times \cdots \times
  (\bbLambda_{/[i_{j}]}^{\op})^{\txt{act}}_{/X_{j}} \times \cdots
  \times \{Y_{n}\} \to \mathcal{C} \]
  is obtained by tensoring a number of diagrams associated to
  $\bbLambda_{/[i_{j}]}^{\op}$-algebras in $\mathcal{C}$ with some
  fixed objects. By siftedness, the colimits of these diagrams
  therefore exist in $\mathcal{C}$, and our desired colimit can be
  obtained by an iterated colimit of such diagrams. It follows that 
  the colimit over $(\LnIop)^{\txt{act}}_{/X}$ does indeed exist, and
  is preserved under tensoring, as required. Similarly, a
  $\Dn$-monoidal functor is compatible with relative tensor products
  \IFF{} one of its underlying monoidal functors is.
\end{proof}

We can now define the \icats{} $\fALG_{n}(\mathcal{C})_{I}$ for all
$I$:
\begin{defn}
  Let $\mathcal{C}$ be a $\Dn$-monoidal \icat{} with good relative
  tensor products. We say that a $\DnIop$-algebra $M$ in $\mathcal{C}$
  is \emph{composite} if the counit map $\tau_{I,!}\tau_{I}^{*}M \to
  M$ is an equivalence, or equivalently if $M$ is in the essential
  image of the functor $\tau_{I,!}$. We write
  $\fALG_{n}(\mathcal{C})_{I}$ for the full subcategory of
  $\Algn_{\DnIop}(\mathcal{C})$ spanned by the composite
  $\DnIop$-algebras.
\end{defn}

\subsection{The $(n+1)$-Fold $\infty$-Category of $\En$-Algebras}\label{subsec:np1foldcat}
Our goal in this subsection is to extend the results of
\S\ref{subsec:alg1segcond} and \S\ref{subsec:functcomp} to the case of
$\En$-algebras, i.e. to prove that the \icats{}
$\fALGn(\mathcal{C})_{I}$ satisfy the Segal condition and are
functorial in $I$. Luckily, it turns out that these results both
follow from those in the case $n = 1$ by simple inductions.

We first prove that $\fALG_{n}(\mathcal{C})_{I}$ satisfies the Segal
condition.  Let $(\DnI)^{\amalg,\op}$ denote the ordinary
colimit \[\colim_{I \to C \in (\txt{Cell}^{n}_{I/})^{\op}} \DnCop\] in
(marked) simplicial sets (over $\Dnop$). From the structure of
$\txt{Cell}^{n}$ it is easy to see that this colimit can be written as
an iterated pushout along injective maps of simplicial sets, so this
is a homotopy colimit in the generalized $\Dn$-\iopd{} model structure
of \S\ref{subsec:dniopdcat}. We wish to prove that the inclusion
$\DnIAop \hookrightarrow \LnIop$ is a trivial cofibration in this
model structure:
\begin{lemma}\label{lem:DnIAisproduct}
  Suppose $I = ([i_{1}],\ldots,[i_{n}])$ is an object of $\Dn$. Then
  the natural map \[\DnIAop \to \prod_{p = 1}^{n}
  (\simp_{/[i_{p}]})^{\amalg,\op}\] is an isomorphism.
\end{lemma}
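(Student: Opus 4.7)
The plan is to observe that both sides decompose as products over the $n$ coordinates, and then invoke the fact that finite Cartesian products commute with colimits in (marked) simplicial sets.

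First, I would note that under the equivalence $\Celln \cong (\Cell^1)^{\times n}$ recorded earlier, the indexing category of the colimit defining $\DnIAop$ decomposes canonically as a product:
\[ \txt{Cell}^n_{I/} \;\cong\; \prod_{p=1}^n \txt{Cell}^1_{[i_p]/}. \]
Indeed, an inert map $I \to C$ in $\Dnop$ with $C$ a cell is precisely a tuple of inert maps, one into each factor $[i_p]$, and morphisms between such decompose coordinatewise by inspection of the inert maps in $\Celln$. Moreover, for each cell $C = C_S$ in the image of a tuple $(C_1,\ldots,C_n) \in \prod_p \txt{Cell}^1_{[i_p]/}$, the slice decomposes as
\[ \DnCop \;\cong\; \prod_{p=1}^n (\simp_{/C_p})^{\op}, \]
naturally in $(C_1,\ldots,C_n)$.

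Next, since (marked) simplicial sets form a Cartesian closed category, the Cartesian product functor is a left adjoint in each variable and so commutes with colimits; combining this with the standard identification of a colimit over a product indexing category as an iterated colimit yields a chain of natural isomorphisms
\begin{align*}
  \prod_{p=1}^n (\simp_{/[i_p]})^{\amalg,\op}
  &= \prod_{p=1}^n \Bigl( \colim_{C_p \in (\txt{Cell}^1_{[i_p]/})^{\op}} (\simp_{/C_p})^{\op} \Bigr) \\
  &\cong \colim_{(C_1,\ldots,C_n) \in \prod_p (\txt{Cell}^1_{[i_p]/})^{\op}} \prod_{p=1}^n (\simp_{/C_p})^{\op} \\
  &\cong \colim_{(I \to C) \in (\txt{Cell}^n_{I/})^{\op}} \DnCop \;=\; \DnIAop.
\end{align*}
Tracing through the construction shows that this composite is inverse to the natural map of the statement, the latter being induced on the colimit by the coordinate projections $\DnCop \to \prod_p (\simp_{/C_p})^{\op} \to \prod_p (\simp_{/[i_p]})^{\amalg,\op}$.

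Compatibility with the markings is automatic since inert morphisms in $\Dn$ are by definition tuples of inert morphisms in $\simp$, so that the markings on both sides decompose in the same product fashion. There is essentially no real obstacle to the argument: it is pure bookkeeping of product decompositions combined with the distributivity of products over colimits in a Cartesian closed category, and the role of the lemma is simply to record this identification for use in the subsequent verification of the Segal condition for $\fALG_n(\mathcal{C})$.
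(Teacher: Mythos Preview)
Your proof is correct and follows essentially the same approach as the paper's own proof: both decompose the indexing category $(\txt{Cell}^{n}_{I/})^{\op}$ and the diagram $(I \to C) \mapsto \DnCop$ as $n$-fold products, and then invoke the fact that the Cartesian product of (marked) simplicial sets preserves colimits in each variable. Your version simply spells out the chain of isomorphisms and the compatibility with markings in more detail than the paper does.
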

\begin{proof}
  The category $(\txt{Cell}^{n}_{I/})^{\op}$ is isomorphic to the
  product $\prod_{k} (\txt{Cell}^{1}_{[i_{k}]/})^{\op}$, and the
  functor $(I \to C) \mapsto \DnCop$ is isomorphic to the product of
  the functors $([i_{k}] \to [j]) \mapsto \simp^{\op}_{/[j]}$ (where
  $j = 0$ or $1$). Since the Cartesian product of (marked) simplicial
  sets preserves colimits in each variable, the result follows.
\end{proof}

\begin{propn}\label{propn:cellmodel}
  Let $I$ be an object of $\Dn$. The inclusion $\DnIAop
  \to \LnIop$ is a trivial cofibration in the model category
  $(\sSet^{+})_{\mathfrak{O}_{n}}$.
\end{propn}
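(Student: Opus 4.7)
The plan is to reduce the claim to the case $n=1$, which is Proposition \ref{propn:cellmodel1}, by exploiting the fact that both sides of the inclusion decompose as $n$-fold products of their $n=1$ counterparts.

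First I would note the product decomposition on both sides. By Lemma \ref{lem:DnIAisproduct} we already have
\[ \DnIAop \cong \prod_{p=1}^{n} (\simp_{/[i_{p}]})^{\amalg,\op}, \]
and inspection of the definition of cellular maps in $\Dn$ gives the analogous (and easier) identification
\[ \LnIop \cong \prod_{p=1}^{n} \Lbriop, \]
where $I = ([i_{1}],\ldots,[i_{n}])$. Under these identifications, the inclusion $\DnIAop \hookrightarrow \LnIop$ is simply the product of the $n$ inclusions $\iota_{p} \colon (\simp_{/[i_{p}]})^{\amalg,\op} \hookrightarrow \Lbriop$, each of which is a trivial cofibration in $(\sSet^{+})_{\mathfrak{O}_{1}^{\txt{gen}}}$ by Proposition \ref{propn:cellmodel1}.

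Next I would propagate these to the model category $(\sSet^{+})_{\mathfrak{O}_{n}}$ for generalized $\Dn$-\iopds{}. The categorical pattern $\mathfrak{O}_{n}$ developed in the appendix is constructed precisely as the $n$-fold product of the pattern $\mathfrak{O}_{1}^{\txt{gen}}$, so the model structure on $(\sSet^{+})_{\mathfrak{O}_{n}}$ is compatible with the external product of marked simplicial sets over $\Dnop \cong (\Dop)^{\times n}$. Concretely, for each $p$, the inclusion $\iota_{p}$ pulled back along the projection $\Dnop \to \Dop$ onto the $p$-th factor (and crossed with the identity on the remaining factors) is a trivial cofibration in $(\sSet^{+})_{\mathfrak{O}_{n}}$; this is the $\Dn$-analogue of the standard statement that a trivial cofibration in a monoidal model category remains so after taking external product with an arbitrary object. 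The inclusion in question is then a composite of $n$ such ``one-variable'' trivial cofibrations, and hence itself a trivial cofibration.

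The main obstacle is verifying cleanly that the $\mathfrak{O}_{n}$-model structure behaves well with respect to external products in each variable, i.e.\ that a trivial cofibration $A \to B$ in $(\sSet^{+})_{\mathfrak{O}_{1}^{\txt{gen}}}$ remains a trivial cofibration after crossing with an arbitrary marked simplicial set over $\simp^{(n-1),\op}$ (regarded as a generalized $\simp^{n-1}$-\iopd{}). This should follow from the description of $\mathfrak{O}_{n}$-anodyne maps as being generated, up to the product structure, by $\mathfrak{O}_{1}^{\txt{gen}}$-anodyne maps in each coordinate, together with the standard pushout-product type arguments used in \cite[\S B.2]{HA} for categorical patterns; alternatively one can check directly that the two classes of maps $\iota_{p}\times\id$ are $\mathfrak{O}_{n}$-anodyne. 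Once this compatibility is in place, the argument reduces to the $n=1$ case and the proposition follows.
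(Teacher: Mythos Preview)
Your approach is essentially the same as the paper's: decompose the inclusion as a product of the $n=1$ inclusions via Lemma~\ref{lem:DnIAisproduct}, and then reduce to Proposition~\ref{propn:cellmodel1}. The ``main obstacle'' you identify is exactly what the paper handles by invoking Proposition~\ref{propn:catpattprod} together with Corollary~\ref{cor:Ontimes}: the Cartesian product is a left Quillen bifunctor $(\sSet^{+})_{\mathfrak{O}_{1}^{\txt{gen}}} \times (\sSet^{+})_{\mathfrak{O}_{n-1}^{\txt{gen}}} \to (\sSet^{+})_{\mathfrak{O}_{n}^{\txt{gen}}}$, so (since every object is cofibrant) crossing a trivial cofibration in one factor with any object in the other yields a trivial cofibration, and an induction finishes the argument. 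One small correction: $\mathfrak{O}_{n}$ is not the product of copies of $\mathfrak{O}_{1}^{\txt{gen}}$; rather $\mathfrak{O}_{n}^{\txt{gen}} = \mathfrak{O}_{1}^{\txt{gen}} \boxtimes \cdots \boxtimes \mathfrak{O}_{1}^{\txt{gen}}$, and it is the generalized pattern that is relevant here (the statement's use of $\mathfrak{O}_{n}$ appears to be a typo in the paper, as the subsequent Corollary~\ref{cor:algLIseg} uses $\mathfrak{O}_{n}^{\txt{gen}}$).
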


\begin{proof}
  Suppose $I = ([i_{1}],\ldots,[i_{n}])$. By
  Lemma~\ref{lem:DnIAisproduct} we may identify the inclusion $\DnIAop
  \to \LnIop$ with the product over $p =1,\ldots,n$ of the inclusions
  $(\simp_{/[i_{p}]})^{\amalg,\op} \hookrightarrow
  \bbLambda_{/[i_{p}]}^{\op}$. By Proposition~\ref{propn:catpattprod} and
  Corollary~\ref{cor:Ontimes}, the Cartesian product is a left
  Quillen bifunctor $(\sSet^{+})_{\mathfrak{O}_{1}} \times
  (\sSet^{+})_{\mathfrak{O}_{n-1}} \to
  (\sSet^{+})_{\mathfrak{O}_{n}}$, so by induction it suffices to
  prove the result in the case $n = 1$, which is
  Proposition~\ref{propn:cellmodel1}.
\end{proof}

\begin{cor}\label{cor:algLIseg}
  Let $\mathcal{M}$ be a \gDniopd{}. The restriction map
  \[\Alg^{n}_{\LnIop}(\mathcal{M}) \to
  \lim_{I \to C \in \CellnIop}\Alg^{n}_{\DnCop}(\mathcal{M})\] is an equivalence of \icats{}.
\end{cor}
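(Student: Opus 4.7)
The plan is to mimic the proof of Corollary~\ref{cor:algLbrnseg}, replacing the ingredients in the case $n = 1$ by their $\Dn$-analogues, which have already been set up in this section. Specifically, the two inputs I would use are Proposition~\ref{propn:cellmodel} (that the inclusion $\DnIAop \hookrightarrow \LnIop$ is a trivial cofibration in the generalized \dniopd{} model structure $(\sSet^{+})_{\mathfrak{O}_{n}}$) and the fact that $\DnIAop$ is, by construction, a homotopy colimit in this model structure of the diagram $C \mapsto \DnCop$ over $(\txt{Cell}^{n}_{I/})^{\op}$.

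First I would note that the model category $(\sSet^{+})_{\mathfrak{O}_{n}}$ is enriched over marked simplicial sets, so given any \gDniopd{} $\mathcal{M}$, the functor $\Alg^{n}_{(\blank)}(\mathcal{M})$ carries trivial cofibrations in the source to trivial Kan fibrations. Combined with Proposition~\ref{propn:cellmodel}, this immediately gives that the restriction
\[ \Alg^{n}_{\LnIop}(\mathcal{M}) \to \Alg^{n}_{\DnIAop}(\mathcal{M}) \]
is a trivial fibration, hence an equivalence of \icats{}.

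Next I would argue that the natural map
\[ \Alg^{n}_{\DnIAop}(\mathcal{M}) \to \lim_{I \to C \in \CellnIop} \Alg^{n}_{\DnCop}(\mathcal{M}) \]
is an equivalence. This is because $\DnIAop$ is built as an iterated pushout of \gdniopds{} along maps that are cofibrations (injective on underlying simplicial sets), so the pushouts are also homotopy pushouts; mapping out of a homotopy colimit of \gdniopds{} into a fixed \gdniopd{} produces the corresponding homotopy limit of mapping \icats{}, which is precisely the limit over $\CellnIop$ appearing in the statement. The composite of these two equivalences gives the desired result.

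Essentially no step is hard, but the one that requires the most care is verifying that $\DnIAop$ really is a (homotopy) colimit of the appropriate shape in the generalized \dniopd{} model structure. Here I would invoke Lemma~\ref{lem:DnIAisproduct}, which identifies $\DnIAop$ with a product of the $1$-dimensional analogues $(\simp_{/[i_{p}]})^{\amalg,\op}$, reducing the bookkeeping to the observation already used in the proof of Proposition~\ref{propn:cellmodel} that cofibrations and trivial cofibrations behave well under the Cartesian product (via Proposition~\ref{propn:catpattprod} and Corollary~\ref{cor:Ontimes}). With that in hand, the two-step chain above constitutes the full proof.
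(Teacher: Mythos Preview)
Your proposal is correct and follows essentially the same approach as the paper: first use Proposition~\ref{propn:cellmodel} together with the marked-simplicial enrichment of the model structure to see that restriction along $\DnIAop \hookrightarrow \LnIop$ is a trivial Kan fibration, then use that $\DnIAop$ is a homotopy colimit to identify $\Alg^{n}_{\DnIAop}(\mathcal{M})$ with the limit over $\CellnIop$. The extra appeal to Lemma~\ref{lem:DnIAisproduct} at the end is not needed here---the fact that $\DnIAop$ is a homotopy colimit was already observed before Proposition~\ref{propn:cellmodel} (it is an iterated pushout along monomorphisms), and that is all you use.
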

\begin{proof}
  Since the model category
  $(\sSet^{+})_{\mathfrak{O}_{n}^{\txt{gen}}}$ is enriched in marked
  simplicial sets and $\DnIAop \hookrightarrow \LnIop$ is a trivial
  cofibration by Proposition~\ref{propn:cellmodel}, for any \gDniopd{}
  $\mathcal{M}$ the restriction map $\Alg^{n}_{\LnIop}(\mathcal{M})
  \to \Alg^{n}_{\DnIAop}(\mathcal{M})$ is a trivial Kan
  fibration. Moreover, we have an equivalence of
  \icats{} \[\Alg^{n}_{\DnIAop}(\mathcal{M}) \simeq \lim_{I \to C \in
    \CellnIop}\Alg^{n}_{\DnCop}(\mathcal{M})\] since the colimit
  $\DnIAop$ is a homotopy colimit.
\end{proof}

\begin{cor}\label{cor:ALGnseg}
  Let $\mathcal{C}$ be a $\Dn$-monoidal \icat{} with good relative
  tensor products. Then the natural restriction map
  \[ \fALG_{n}(\mathcal{C})_{I} \to \lim_{I \to C \in \CellnIop} \fALG_{n}(\mathcal{C})_{C}
  \]
  is an equivalence.
\end{cor}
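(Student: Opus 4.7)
The plan is to mimic the proof of Corollary~\ref{cor:ALG1seg} and factor the restriction map through the intermediate \icat{} $\Alg^{n}_{\LnIop}(\mathcal{C})$ of algebras over the cellular sub-\gdniopd{}. Specifically, the map of interest factors as
\[ \fALG_{n}(\mathcal{C})_{I} \xrightarrow{\tau_{I}^{*}} \Alg^{n}_{\LnIop}(\mathcal{C}) \to \lim_{I \to C \in \CellnIop} \Alg^{n}_{\LnCop}(\mathcal{C}), \]
and I would argue that both maps are equivalences.

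For the first map, the key observation is that $\tau_{I,!}$ is fully faithful (by the proposition applying Corollary~\ref{cor:opdkanext}) and $\fALG_{n}(\mathcal{C})_{I}$ is defined as its essential image. Hence the restriction $\tau_{I}^{*}$ and the operadic left Kan extension $\tau_{I,!}$ restrict to inverse equivalences between $\fALG_{n}(\mathcal{C})_{I}$ and $\Alg^{n}_{\LnIop}(\mathcal{C})$. This is the direct $n$-dimensional analogue of the first step of the proof in the case $n = 1$.

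For the second map, I would invoke Corollary~\ref{cor:algLIseg}, which provides precisely the required Segal condition for algebras over $\LnIop$ applied to the \gdniopd{} $\mathcal{C}^{\otimes}$. To identify the target of this map with $\lim_{I \to C \in \CellnIop} \fALG_{n}(\mathcal{C})_{C}$, I would observe that whenever $C = C_{S}$ is a cell, every map in $\Dn$ with codomain $C$ is automatically cellular (since each coordinate of $C$ is either $[0]$ or $[1]$, and any map in $\simp$ into $[0]$ or $[1]$ is cellular). Thus $\LnCop = \DnCop$ for $C$ a cell, so $\Alg^{n}_{\LnCop}(\mathcal{C}) = \Alg^{n}_{\DnCop}(\mathcal{C})$; moreover, every $\DnCop$-algebra is trivially composite (since $\tau_{C}$ is the identity, $\tau_{C,!}$ is too), so $\Alg^{n}_{\DnCop}(\mathcal{C}) = \fALG_{n}(\mathcal{C})_{C}$.

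No step here looks genuinely hard once the analogous $n = 1$ result is in place, because the two main inputs — fully faithfulness of $\tau_{I,!}$ and the Segal condition for $\Alg^{n}_{\LnIop}$ — have already been established (the latter via the trivial cofibration $\DnIAop \hookrightarrow \LnIop$ in $(\sSet^{+})_{\mathfrak{O}_{n}}$, reduced by Lemma~\ref{lem:DnIAisproduct} to the $n = 1$ statement Proposition~\ref{propn:cellmodel1}). The only mild subtlety is verifying that the factorization above commutes with the evident comparison map to the cellular limit, which is straightforward because the inclusions $\tau_{C}$ for $C \in \CellnI$ are compatible with $\tau_{I}$ under the restrictions along $\DnCop \hookrightarrow \DnIop$.
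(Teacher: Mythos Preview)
Your proof is correct and follows essentially the same approach as the paper: factor through $\Alg^{n}_{\LnIop}(\mathcal{C})$, with the first map an equivalence by definition of $\fALG_{n}(\mathcal{C})_{I}$ as the essential image of the fully faithful $\tau_{I,!}$, and the second by Corollary~\ref{cor:algLIseg}. Your additional remarks that $\LnCop = \DnCop$ and $\fALG_{n}(\mathcal{C})_{C} = \Alg^{n}_{\DnCop}(\mathcal{C})$ for cells $C$ are details the paper leaves implicit, but they are correct and helpful to spell out.
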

\begin{proof}
  This map factors as a composite of the maps
  \[ \fALG_{n}(\mathcal{C})_{I} \to \Alg^{n}_{\LnIop}(\mathcal{C}) \to
  \lim_{I \to C \in \CellnIop} \fALG_{n}(\mathcal{C})_{C},\] where the
  first is an equivalence by definition and the second by
  Corollary~\ref{cor:algLIseg}.
\end{proof}

Next we prove that the \icats{} $\fALGn(\mathcal{C})_{I}$ for $I \in
\Dnop$ give a multisimplicial object.
\begin{defn}
  Suppose $\mathcal{C}$ is a $\Dn$-monoidal \icat{}. Let
  $\ofALG_{n}(\mathcal{C}) \to \Dnop$ denote a coCartesian fibration
  associated to the functor $\Dnop \to \CatI$ that sends $I$ to 
  $\Alg^{n}_{\DnIop}(\mathcal{C})$. We write
  $\fALGn(\mathcal{C})$ for the full subcategory of
  $\ofALGn(\mathcal{C})$ spanned by the objects of
  $\fALG_{n}(\mathcal{C})_{I}$ for all $I$, i.e. by the composite
  $\DnIop$-algebras for all $I \in \Dnop$.
\end{defn}

We wish to show that the projection $\fALG_{n}(\mathcal{C}) \to \Dnop$
is a coCartesian fibration. To prove this, we extend the definitions
of \S\ref{subsec:functcomp} in the obvious way:
\begin{defn}
  Suppose $\Phi = (\phi_{1},\ldots,\phi_{n}) \colon I \to J$ is an
  injective morphism in $\Dn$. We say that a morphism
  $(\alpha_{1},\ldots,\alpha_{n}) \colon K \to J$ in $\Dn$ is
  \emph{$\Phi$-cellular} if $\alpha_{i}$ is $\phi_{i}$-cellular for
  all $i = 1,\ldots,n$.
\end{defn}

\begin{defn}
  For $I \in \Dn$ and $\Phi \colon J \to I$ an injective morphism in
  $\Dn$, we write $\LnIP$ for the full subcategory of $\DnI$ spanned
  by the $\Phi$-cellular maps to $I$.
\end{defn}

\begin{propn}\label{propn:cellcoinitial}\ 
  \begin{enumerate}
  \item If $\Phi \colon J \to I$ is an injective morphism in $\Dn$,
    then for any $\Gamma \colon K \to J$ the map \[ \Phi_{*} \colon
    (\Ln_{/J})^{\txt{act}}_{\Gamma/} \to
    (\LnIP)^{\txt{act}}_{\Phi\Gamma/} \] given by composition with
    $\Phi$ is coinitial.
  \item If $\Phi \colon J \to I$ is a surjective morphism in $\Dn$,
    then for any $\Gamma \colon K \to J$ the map
    \[ (\Ln_{/J})^{\txt{act}}_{\Gamma/} \to
    (\Ln_{/I})^{\txt{act}}_{\Phi\Gamma} \]
    given by composition with $\Phi$ is coinitial.
  \end{enumerate}
\end{propn}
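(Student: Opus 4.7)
The plan is to reduce both statements to the case $n=1$ already established in Proposition~\ref{propn:cell1coinitial}, by exploiting the product decomposition of all the relevant categories.

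First I would unpack the product structure. Writing $I = ([i_{1}],\ldots,[i_{n}])$, $J = ([j_{1}],\ldots,[j_{n}])$, $K = ([k_{1}],\ldots,[k_{n}])$, $\Phi = (\phi_{1},\ldots,\phi_{n})$ and $\Gamma = (\gamma_{1},\ldots,\gamma_{n})$, the definition of cellular (resp.\ $\Phi$-cellular) as a componentwise condition yields natural isomorphisms
\[ \Ln_{/J} \cong \prod_{k=1}^{n} \bbLambda_{/[j_{k}]}, \qquad \LnIP \cong \prod_{k=1}^{n}\bbLambda_{/[i_{k}]}[\phi_{k}]. \]
Since the active/inert factorization system on $\Dn$ is itself componentwise and slices commute with products, these isomorphisms restrict to isomorphisms on the active slice categories appearing in the statement. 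In particular, the map $\Phi_{*}$ in (i) is identified with the product of the maps
\[ (\phi_{k})_{*} \colon (\bbLambda_{/[j_{k}]})^{\txt{act}}_{\gamma_{k}/} \to (\bbLambda_{/[i_{k}]}[\phi_{k}])^{\txt{act}}_{\phi_{k}\gamma_{k}/} \]
for $k=1,\ldots,n$, and analogously in (ii) for the surjective case with target $(\bbLambda_{/[i_{k}]})^{\txt{act}}_{\phi_{k}\gamma_{k}/}$.

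Next I would invoke the $n=1$ case: each factor $(\phi_{k})_{*}$ is coinitial by Proposition~\ref{propn:cell1coinitial}(i) when $\phi_{k}$ is injective (observing that for surjective $\Phi$ each $\phi_{k}$ is surjective, and dually for injective $\Phi$), and by (ii) when $\phi_{k}$ is surjective. It then remains to verify that a finite product of coinitial functors is coinitial, which is the same tool already invoked in the proof of Proposition~\ref{propn:tau1ext}: by the dual of \cite[Theorem 4.1.3.1]{HTT} this reduces to checking that a product of weakly contractible simplicial sets is weakly contractible, which is standard since geometric realization commutes with finite products up to weak equivalence.

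The statements follow by combining these two steps. The main subtlety is just the bookkeeping that ensures the factorization systems, slice constructions, and cellularity conditions all commute with the product decomposition of $\Dn$; once that is in hand the argument is a direct reduction and no genuinely new combinatorics beyond Proposition~\ref{propn:cell1coinitial} is required.
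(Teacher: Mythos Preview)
Your proposal is correct and follows exactly the same approach as the paper: the paper's proof is literally the single sentence ``Since products of coinitial functors are coinitial, this is immediate from Proposition~\ref{propn:cell1coinitial},'' and your proposal is a careful unpacking of precisely this reduction. One small inaccuracy: the fact that products of coinitial functors are coinitial is not invoked in the proof of Proposition~\ref{propn:tau1ext} itself (that argument works directly with final objects), though it is used elsewhere in the paper, so your citation there is slightly off but harmless.
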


\begin{proof}
  Since products of coinitial functors are coinitial, this is
  immediate from Proposition~\ref{propn:cell1coinitial}.
\end{proof}

\begin{cor}\label{cor:ALGncoCart}
  Suppose $\mathcal{C}$ is a $\Dn$-monoidal \icat{} compatible with
  small colimits. Then the projection $\fALGn(\mathcal{C}) \to \Dnop$
  is a coCartesian fibration.
\end{cor}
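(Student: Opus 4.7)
The plan is to follow the proof of Corollary~\ref{cor:ALG1coCart} essentially verbatim, using the higher-dimensional cofinality result of Proposition~\ref{propn:cellcoinitial} in place of its $n=1$ specialization. Since $\ofALGn(\mathcal{C}) \to \Dnop$ is already a coCartesian fibration by construction, it suffices to check that if $X \in \fALG_{n}(\mathcal{C})_{I}$ is a composite $\DnIop$-algebra and $\Phi \colon J \to I$ is any morphism in $\Dn$, then the pullback $(\Phi_{*})^{*}X$ is again composite, i.e.\ the counit $\tau_{J,!}\tau_{J}^{*}(\Phi_{*})^{*}X \to (\Phi_{*})^{*}X$ is an equivalence.

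Using the description of $\tau_{J,!}$ as an operadic left Kan extension together with Lemma~\ref{lem:lokerecognize}, this reduces to showing that for each $\Gamma \in \LnJop$ the natural comparison map
\[ \colim_{\eta\colon \Gamma \to \Gamma' \in ((\LnJ)^{\txt{act}}_{\Gamma/})^{\op}} \Xi(\Phi\Gamma') \longrightarrow \Xi(\Phi\Gamma) \]
is an equivalence, where $\Xi \colon ((\DnI)^{\txt{act}})^{\op} \to \mathcal{C} \simeq \mathcal{C}^{\otimes}_{C_{n}}$ denotes the coCartesian pushforward along the unique active maps to $C_{n}$ of the restriction of $X$ to $((\DnI)^{\txt{act}})^{\op}$.

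I would then split the analysis into the cases when $\Phi$ is surjective and when it is injective, just as in the proof of Corollary~\ref{cor:ALG1coCart}. When $\Phi$ is surjective, Proposition~\ref{propn:cellcoinitial}(2) produces a coinitial functor $(\LnJ)^{\txt{act}}_{\Gamma/} \to (\LnI)^{\txt{act}}_{\Phi\Gamma/}$, which factors the comparison map as
\[ \colim_{((\LnJ)^{\txt{act}}_{\Gamma/})^{\op}} \Xi(\Phi\Gamma') \to \colim_{((\LnI)^{\txt{act}}_{\Phi\Gamma/})^{\op}} \Xi(\Gamma'') \to \Xi(\Phi\Gamma); \]
the first arrow is an equivalence by cofinality and the second by the hypothesis that $X$ is composite. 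When $\Phi$ is injective, Proposition~\ref{propn:cellcoinitial}(1) yields a coinitial functor $(\LnJ)^{\txt{act}}_{\Gamma/} \to (\LnIP)^{\txt{act}}_{\Phi\Gamma/}$, so the first arrow in the corresponding factorization is again an equivalence; the second, from the colimit over $((\LnIP)^{\txt{act}}_{\Phi\Gamma/})^{\op}$ down to $\Xi(\Phi\Gamma)$, is an equivalence because $X$ being composite forces $\Xi$ to be a left Kan extension of its restriction to $((\Ln_{/I})^{\txt{act}})^{\op}$ (again by Lemma~\ref{lem:lokerecognize}), and the fully faithful inclusions $((\Ln_{/I})^{\txt{act}}_{\Phi\Gamma/})^{\op} \hookrightarrow ((\LnIP)^{\txt{act}}_{\Phi\Gamma/})^{\op} \hookrightarrow (((\DnI)^{\txt{act}})_{\Phi\Gamma/})^{\op}$ make this colimit insensitive to the enlargement.

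For general $\Phi$, I would factor it coordinatewise into its surjective and injective parts, using the product decomposition of $\Dn$ and the fact that products of coinitial functors are coinitial. The main obstacle is essentially only bookkeeping: nothing new is required beyond the ingredients already assembled in the proof of Corollary~\ref{cor:ALG1coCart}, since Proposition~\ref{propn:cellcoinitial} has isolated the only technical content that depends on the dimension $n$. The one thing genuinely worth verifying is that the chain of full subcategories of $((\DnI)^{\txt{act}})_{\Phi\Gamma/}$ invoked in the injective case is correctly set up to apply the left Kan extension property, which is immediate from the definitions of cellular and $\Phi$-cellular maps.
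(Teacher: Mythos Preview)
Your proposal is correct and follows exactly the approach the paper takes: the paper's proof simply says ``same argument as in the proof of Corollary~\ref{cor:ALG1coCart}, using Proposition~\ref{propn:cellcoinitial},'' and you have faithfully spelled out what that entails. The reduction via Lemma~\ref{lem:lokerecognize}, the surjective/injective case split, and the use of the fully faithful inclusions in the injective case all match the $n=1$ argument verbatim with the appropriate $\Dn$-substitutions.
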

\begin{proof}
  Since $\ofALGn(\mathcal{C}) \to \Dnop$ is a coCartesian fibration,
  it suffices to show that if $X$ is an object of
  $\fALGn(\mathcal{C})$ over $I \in \Dnop$, and $X \to \bar{X}$ is a
  coCartesian morphism in $\ofALGn(\mathcal{C})$ over $\Phi
  \colon J \to I$ in $\Dn$, then $\bar{X}$ is also in
  $\fALGn(\mathcal{C})$. This follows from the same argument as in the
  proof of Corollary~\ref{cor:ALG1coCart}, using
  Proposition~\ref{propn:cellcoinitial}. 
\end{proof}

Combining  Corollary~\ref{cor:ALGncoCart} with
Corollary~\ref{cor:ALGnseg}, we have proved:
\begin{thm}\label{thm:ALGnuple}
  Let $\mathcal{C}$ be a $\Dn$-monoidal \icat{} with good relative
  tensor products. Then the projection $\fALG_{n}(\mathcal{C})
  \to \Dnop$ is a $\Dn$-uple \icat{}.\qed
\end{thm}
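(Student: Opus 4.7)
The plan is to verify directly that the projection $\fALG_{n}(\mathcal{C}) \to \Dnop$ satisfies the two conditions in the definition of a $\Dn$-uple \icat{}: it is a coCartesian fibration, and for every $I \in \Dnop$ the functor $\fALG_{n}(\mathcal{C})_{I} \to \lim_{C \to I \in \CellnIop} \fALG_{n}(\mathcal{C})_{C}$ induced by the coCartesian morphisms over the inert maps out of $I$ is an equivalence.

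First I would observe that both ingredients have already been established in the preceding corollaries. The coCartesian fibration property is exactly the content of Corollary~\ref{cor:ALGncoCart}: the larger projection $\ofALG_{n}(\mathcal{C}) \to \Dnop$ is coCartesian by construction (since $\ofALG_n(\mathcal{C})$ was defined as a coCartesian fibration classified by $I \mapsto \Alg^{n}_{\DnIop}(\mathcal{C})$), and composite algebras are closed under the coCartesian pushforwards along maps in $\Dn$. Consequently the inherited morphisms in $\fALG_n(\mathcal{C})$ are still coCartesian for the restricted projection.

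Next I would note that the Segal condition is precisely Corollary~\ref{cor:ALGnseg}, which asserts the equivalence
\[ \fALG_{n}(\mathcal{C})_{I} \isoto \lim_{I \to C \in \CellnIop} \fALG_{n}(\mathcal{C})_{C}.\]
The only thing to check is that the map appearing in the definition of a $\Dn$-uple \icat{}, which is induced by the coCartesian lifts over the inert morphisms $I \to C$, agrees with the restriction map appearing in Corollary~\ref{cor:ALGnseg} under the straightening equivalence --- this is immediate from the construction of $\ofALG_n(\mathcal{C})$.

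There is no real obstacle here, since all the substantive work has already been done: the Segal condition was reduced (via Proposition~\ref{propn:cellmodel} and Corollary~\ref{cor:algLIseg}) to showing that $\DnIAop \hookrightarrow \LnIop$ is a trivial cofibration, and coCartesianness was reduced (via Proposition~\ref{propn:cellcoinitial}) to a coinitiality statement that follows by taking products from the case $n=1$. Combining Corollaries~\ref{cor:ALGncoCart} and~\ref{cor:ALGnseg} therefore yields the theorem.
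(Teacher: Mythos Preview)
Your proposal is correct and matches the paper's approach exactly: the theorem is stated with a \qed{} immediately after the sentence ``Combining Corollary~\ref{cor:ALGncoCart} with Corollary~\ref{cor:ALGnseg}, we have proved,'' so the entire argument is precisely the combination of those two corollaries that you describe.
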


\begin{remark}\label{rmk:ALGnfunct}
  Suppose $\mathcal{C}^{\otimes}$ and $\mathcal{D}^{\otimes}$ are
  $\Dn$-monoidal \icats{} with good relative tensor products, and
  $f^{\otimes}\colon \mathcal{C}^{\otimes} \to \mathcal{D}^{\otimes}$
  is a $\Dn$-monoidal functor compatible with relative tensor
  products. Composition with $f^{\otimes}$ induces a functor
  $f_{*} \colon \ofALG_{n}(\mathcal{C}) \to
  \ofALG_{n}(\mathcal{D})$. It follows from Lemma~\ref{lem:icompftr}
  that this functor takes the full subcategory
  $\fALG_{n}(\mathcal{C})$ into $\fALG_{n}(\mathcal{D})$, and so
  induces a map $f_{*} \colon \fALG_{n}(\mathcal{C}) \to
  \fALG_{n}(\mathcal{D})$ of $(n+1)$-fold \icats{}.
\end{remark}

\begin{defn}
  Let $\mathcal{C}$ be a $\Dn$-monoidal \icat{} with good relative
  tensor products. We write $\fAlg_{n}(\mathcal{C})$ for the
  completion $L_{n}U_{\Seg}i\fALG_{n}(\mathcal{C})$ of the underlying
  $(n+1)$-fold Segal space $U_{\Seg}i\fALG_{n}(\mathcal{C})$ of the
  image of $\fALG_{n}(\mathcal{C})$ under the forgetful functor $i
  \colon \txt{Upl}_{\infty}^{\simp^{n}} \simeq \Cat^{n}(\CatI) \to
  \Cat^{n+1}(\mathcal{S})$. Thus $\fAlg_{n}(\mathcal{C})$ is a
  complete $(n+1)$-fold Segal space, i.e. an $(\infty,n+1)$-category.
\end{defn}

\subsection{Functoriality and Monoidal Structures}\label{subsec:ALGnmon}
Our goal in this subsection is to show that the $(n+1)$-fold \icats{}
$\fALGn(\mathcal{C})$ we constructed above are functorial in
$\mathcal{C}$, and moreover that this functor is lax monoidal. From
this it will follow immediately that if $\mathcal{C}$ is an
$\mathbb{E}_{n+m}$-monoidal \icat{} with good relative tensor
products, then the $(\infty,n+1)$-category $\fAlg_{n}(\mathcal{C})$
inherits a canonical $\mathbb{E}_{m}$-monoidal structure. We begin by
introducing some notation for the source of our functor:
\begin{defn}
  Let $\LMonIDn$ denote the \icat{} of $\Dn$-monoidal \icats{} and
  $\Dn$-monoidal functors. We write $\LMonIDnGR$ for the subcategory
  of $\LMonIDn$ determined by the $\Dn$-monoidal \icats{} with good
  relative tensor products and the $\Dn$-monoidal functors compatible
  with these. If $n = 1$ we also denote this by $\LMonIGR$.
\end{defn}

\begin{defn}
  Let $\Algn \to (\OpdIDng)^{\op} \times \LOpdIDng$ be defined in the
  same way as the coCartesian fibration in \S\ref{subsec:algfun}, but
  allowing the target \gdniopds{} to be large. Then we define
  $\ofALGn$ by the pullback square
  \nolabelcsquare{\ofALGn}{\AlgDn}{\Dnop \times
    \LMonIDnGR}{(\OpdIDng)^{\op} \times \LOpdIDng,} where the bottom
  horizontal map is the product of $\Dnop_{/(\blank)} \colon \Dnop \to
  (\OpdIDng)^{\op}$ and the forgetful functor from large
  $\Dn$-monoidal \icats{} with good relative tensor products to large
  \gDniopds{}. Write $\fALGn$ for the full subcategory of $\ofALGn$
  spanned by the objects in $\fALGn(\mathcal{C})$ for all
  $\Dn$-monoidal \icats{} $\mathcal{C}$ in $\LMonIDnGR$.
\end{defn}

\begin{propn}\label{propn:ALGnfun}
  The restricted projection $\fALGn \to \Dnop \times \LMonIDnGR$ is a
  coCartesian fibration.
\end{propn}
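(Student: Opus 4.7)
The plan is to observe that $\ofALGn \to \Dnop \times \LMonIDnGR$ is already a coCartesian fibration, since it is the pullback of the coCartesian fibration $\AlgDn \to (\OpdIDng)^{\op} \times \LOpdIDng$ along the functor $\Dnop \times \LMonIDnGR \to (\OpdIDng)^{\op} \times \LOpdIDng$. Because $\fALGn$ is a \emph{full} subcategory of $\ofALGn$, to conclude that the restricted projection is again a coCartesian fibration it suffices to verify that the class of composite algebras is closed under coCartesian pushforward along arbitrary morphisms in $\Dnop \times \LMonIDnGR$; the coCartesian morphisms of $\fALGn$ will then automatically be those inherited from $\ofALGn$.

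A morphism in a coCartesian fibration over a product of bases is coCartesian iff each of its projections is coCartesian in the corresponding factor, so every coCartesian morphism in $\ofALGn$ factors as the composite of one lying over $\Dnop$ (with constant projection to $\LMonIDnGR$) followed by one lying over $\LMonIDnGR$ (with constant projection to $\Dnop$). Unwinding the definition of $\AlgDn$ together with the pullback defining $\ofALGn$, the coCartesian pushforward of an algebra $A \colon \DnIop \to \mathcal{C}^{\otimes}$ along $(\phi, f) \colon (I, \mathcal{C}) \to (J, \mathcal{D})$ is equivalent to the composite $\DnJop \to \DnIop \xto{A} \mathcal{C}^{\otimes} \xto{f^{\otimes}} \mathcal{D}^{\otimes}$, which factors as the two intermediate pushforwards described above.

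It therefore remains to check the two separate closure statements. First, for fixed $\mathcal{C} \in \LMonIDnGR$, if $A$ is composite then so is its restriction along $\DnJop \to \DnIop$: this is precisely Corollary~\ref{cor:ALGncoCart}, which established that $\fALGn(\mathcal{C}) \to \Dnop$ is a coCartesian fibration. Second, for fixed $I \in \Dnop$, if $A$ is composite then so is $f^{\otimes} \circ A$: this is the substance of Remark~\ref{rmk:ALGnfunct}, which in turn follows from Lemma~\ref{lem:icompftr} since $f^{\otimes}$ is $\Dn$-monoidal and compatible with relative tensor products. The proposition is the conjunction of these two closure properties, so no substantially new work is required. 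The only mild subtlety is making the identification of the pushforward in $\ofALGn$ with the explicit composite above, which is a routine consequence of how coCartesian morphisms in $\AlgDn$ are defined; this is the point where one must be slightly careful but no real obstruction arises.
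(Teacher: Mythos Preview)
Your proof is correct and follows essentially the same approach as the paper: reduce to showing that composite algebras are preserved under coCartesian pushforward in $\ofALGn$, factor the morphism through the two coordinates, and then invoke Corollary~\ref{cor:ALGncoCart} for the $\Dnop$-direction and Remark~\ref{rmk:ALGnfunct} for the $\LMonIDnGR$-direction. The paper's proof is terser but logically identical.
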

\begin{proof}
  Suppose $X$ is an object of $\fALGn$ over $(I, \mathcal{C})$ and
  $(\Phi, F) \colon (I, \mathcal{C}) \to (J, \mathcal{D})$ is a
  morphism in $\Dnop \times \LMonIDnGR$. Then it suffices to prove
  that if $X \to (\Phi,F)_{!}X$ is a coCartesian morphism in $\ofALGn$,
  then $(\Phi,F)_{!}X$ lies in $\fALGn$.

  It is enough to consider the morphisms $(\Phi, \id_{\mathcal{C}})$
  and $(\id_{I}, F)$ separately. We know that
  $(\Phi,\id_{\mathcal{C}})_{!}X$ is in $\fALGn$ by
  Corollary~\ref{cor:ALGncoCart}, and the object $(\id,F)_{!}X$ lies
  in $\fALGn$ by Remark~\ref{rmk:ALGnfunct}.
\end{proof}

\begin{cor}
  There is a functor $\fALGn(\blank) \colon \LMonIDnGR \to
  \Cat^{n}(\CatI)$ that sends $\mathcal{C}$ to $\fALGn(\mathcal{C})$.
\end{cor}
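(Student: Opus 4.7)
The plan is to deduce this corollary from Proposition~\ref{propn:ALGnfun} via Lurie's straightening equivalence. Since $\fALGn \to \Dnop \times \LMonIDnGR$ is a coCartesian fibration, straightening \rcite{HTT}{\S 3.2} produces a functor
\[ \Dnop \times \LMonIDnGR \to \LCatI, \]
and currying this along $\Dnop$ yields a functor
\[ F \colon \LMonIDnGR \to \Fun(\Dnop, \LCatI). \]
By construction, for $\mathcal{C} \in \LMonIDnGR$ the functor $F(\mathcal{C}) \colon \Dnop \to \LCatI$ classifies the fibre coCartesian fibration $\fALGn(\mathcal{C}) \to \Dnop$, and so sends $I$ to $\fALGn(\mathcal{C})_{I}$.

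Next I would verify that $F$ factors through $\Cat^{n}(\LCatI) \hookrightarrow \Fun(\Dnop, \LCatI)$. This amounts to checking that for each $\mathcal{C}$ the functor $F(\mathcal{C})$ is an $n$-uple category object in $\LCatI$, i.e. that it satisfies the Segal condition. But under the straightening equivalence, $n$-uple category objects correspond precisely to $\Dn$-uple \icats{} in the fibrational sense of Definition~\ref{defn:gDniopd} together with the coCartesian condition. By Theorem~\ref{thm:ALGnuple} the projection $\fALGn(\mathcal{C}) \to \Dnop$ is a $\Dn$-uple \icat{}, so $F(\mathcal{C})$ indeed lies in $\Cat^{n}(\LCatI)$.

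There is essentially no obstacle here beyond bookkeeping: the coCartesian fibration has already been produced in Proposition~\ref{propn:ALGnfun}, the identification of fibres with $\fALGn(\mathcal{C})$ is by construction, and the Segal condition on fibres was established in Corollary~\ref{cor:ALGnseg} (and packaged into Theorem~\ref{thm:ALGnuple}). The only mild subtlety is keeping track of universes: strictly speaking the target is $\Cat^{n}(\LCatI)$ rather than $\Cat^{n}(\CatI)$, but this is immaterial for the construction and we suppress it as in the conventions of the paper.
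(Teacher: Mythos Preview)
Your proposal is correct and follows essentially the same approach as the paper: straighten the coCartesian fibration of Proposition~\ref{propn:ALGnfun} to a functor $\Dnop \times \LMonIDnGR \to \CatI$, curry to obtain $\LMonIDnGR \to \Fun(\Dnop, \CatI)$, and then invoke the Segal condition (Corollary~\ref{cor:ALGnseg}) to see that this lands in $\Cat^{n}(\CatI)$. The paper cites Corollary~\ref{cor:ALGnseg} directly rather than routing through Theorem~\ref{thm:ALGnuple}, but this is an immaterial difference.
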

\begin{proof}
  By Proposition~\ref{propn:ALGnfun} there is a functor
  $\LMonIDnGR \times \Dnop \to \CatI$, or equivalently \[\LMonIDnGR \to
  \Fun(\Dnop, \CatI),\] associated to the coCartesian fibration
  $\fALG_{n} \to \Dnop \times \LMonIDnGR$. By
  Corollary~\ref{cor:ALGnseg} this functor
  lands in the full subcategory $\Cat^{n}(\CatI)$ of $n$-uple
  category objects.
\end{proof}

\begin{lemma}\ 
  \begin{enumerate}[(i)]
  \item The \icat{} $\LMonIGR$ has products, and the
    forgetful functor $\LMonIGR \to \LMonI$ preserves these.
  \item The \icat{} $\LMonIDnGR$ is equivalent to
    $\Alg^{n-1}_{\simp^{n-1,\op}}(\LMonIGR)$.
  \item The \icat{} $\LMonIDnGR$ has products for all $n$, and the
    forgetful functor $\LMonIDnGR \to \LMonIDn$ preserves these.
  \end{enumerate}
\end{lemma}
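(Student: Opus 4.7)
The plan is to prove the three parts in order, with (i) being the base case, (ii) being the main structural identification that requires Lemma~\ref{lem:Dnmongrtp}, and (iii) following formally from (i) and (ii).

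For (i), I would use the concrete criterion of Lemma~\ref{lem:mongrtpcond}: a monoidal \icat{} has good relative tensor products \IFF{} certain bar-construction diagrams $\simp^{\op} \to \mathcal{C}$ indexed by $\simp^{\op}$-algebras have colimits and these are preserved by tensoring on either side. A product of monoidal \icats{} $\prod_{\alpha} \mathcal{C}_\alpha$ is realized as the product of the coCartesian fibrations $\mathcal{C}^\otimes_\alpha \to \Dop$ over $\Dop$; sifted (in particular simplicial) colimits and the tensor product are both computed componentwise, so the conditions of Lemma~\ref{lem:mongrtpcond} are closed under products. The same componentwise argument shows that a monoidal functor $F = (F_\alpha) \colon \prod \mathcal{C}_\alpha \to \prod \mathcal{D}_\alpha$ preserves the relevant colimits \IFF{} each $F_\alpha$ does, giving that the forgetful functor $\LMonIGR \to \LMonI$ creates products.

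For (ii), the idea is to iterate the equivalence between monoidal \icats{} and associative algebras in $\CatI$. More precisely, $\LMonIDn$ can be identified with $\Alg^{n-1}_{\simp^{(n-1),\op}}(\LMonI)$ where the right-hand side uses the Cartesian $\simp^{(n-1),\op}$-monoidal structure on $\LMonI$ (which exists because $\LMonI$ has products); this is a form of Dunn additivity for $\Dn$-monoidal structures, proved by induction on $n$ by regarding the outer simplicial direction as an associative monoid valued in $\simp^{(n-2),\op}$-monoidal \icats{}. To restrict this equivalence to $\LMonIDnGR \simeq \Alg^{n-1}_{\simp^{(n-1),\op}}(\LMonIGR)$, I use Lemma~\ref{lem:Dnmongrtp}: a $\Dn$-monoidal \icat{} has good relative tensor products \IFF{} any one of its underlying (e.g. innermost) monoidal \icats{} does, and similarly a $\Dn$-monoidal functor is compatible with relative tensor products \IFF{} any underlying monoidal functor is. Consequently, under the Dunn-additivity equivalence, the $\Dn$-monoidal \icats{} in $\LMonIDnGR$ correspond exactly to $\simp^{(n-1),\op}$-algebras whose underlying monoidal \icats{} lie in $\LMonIGR$, and morphisms correspond accordingly. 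This is the step I expect to be the main obstacle: we must verify that the "compatibility with relative tensor products" condition, phrased separately for each direction, matches pointwise-in-$\simp^{(n-1),\op}$ membership in $\LMonIGR$ after one chooses a distinguished direction; the content is precisely the equivalence of the conditions in Lemma~\ref{lem:Dnmongrtp}.

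For (iii), I proceed by induction on $n$, the case $n=1$ being (i). By (ii), $\LMonIDnGR \simeq \Alg^{n-1}_{\simp^{(n-1),\op}}(\LMonIGR)$. For any \icat{} $\mathcal{D}$ with products, $\simp^{(n-1),\op}$-algebras in $\mathcal{D}$ (with its Cartesian $\simp^{(n-1),\op}$-monoidal structure) have products computed pointwise in $\mathcal{D}$, since algebra maps into a product factor through maps into each component. Applying this to $\mathcal{D} = \LMonIGR$, which has products by (i), gives that $\LMonIDnGR$ has products, and the forgetful functor to $\LMonIDn \simeq \Alg^{n-1}_{\simp^{(n-1),\op)}}(\LMonI)$ is induced by the product-preserving functor $\LMonIGR \to \LMonI$ of (i), hence also preserves products.
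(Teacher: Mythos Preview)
Your proposal is correct and follows essentially the same approach as the paper. For (i) both you and the paper invoke the criterion of Lemma~\ref{lem:mongrtpcond} and use siftedness of the relevant indexing category to see that the bar-type colimits decompose componentwise in a product; for (ii) both identify $\LMonIDnGR$ and $\Alg^{n-1}_{\simp^{n-1,\op}}(\LMonIGR)$ as the same subcategory of $\LMonIDn$ via Lemma~\ref{lem:Dnmongrtp}; and for (iii) the paper offers two routes---repeating the argument of (i), or using that limits in algebra \icats{} are computed in the underlying \icat{} (\rcite{HA}{Corollary 3.2.2.4})---and your argument via (ii) is exactly the second of these.
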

\begin{proof}
  Suppose $\mathcal{C}^{\otimes}$ and $\mathcal{D}^{\otimes}$ are
  monoidal \icats{} with good relative tensor products. We will show
  that the product $(\mathcal{C} \times \mathcal{D})^{\otimes} :=
  \mathcal{C}^{\otimes} \times_{\Dop} \mathcal{D}^{\otimes}$ in
  $\LMonI$ is also a product in the subcategory $\LMonIGR$. Thus, we
  need to prove
  \begin{enumerate}[(1)]
  \item The product $(\mathcal{C} \times \mathcal{D})^{\otimes}$ has
    good relative tensor products.
  \item If $\mathcal{E} \in \LMonIGR$, then a monoidal functor $F
    \colon \mathcal{E} \to \mathcal{C} \times \mathcal{D}$ is
    compatible with relative tensor products \IFF{} the monoidal
    functors $F_{1} \colon \mathcal{E} \to \mathcal{C}$ and $F_{2}
    \colon \mathcal{E} \to \mathcal{D}$ obtained by composing with the
    projections are both compatible with relative tensor products.
  \end{enumerate}
  To prove (1), we use Lemma~\ref{lem:mongrtpcond}. A
  $\bbLambda_{/[2]}^{\op}$-algebra in $\mathcal{C} \times \mathcal{D}$
  is given by a $\bbLambda_{/[2]}^{\op}$-algebra $A$ in $\mathcal{C}$
  and an algebra $B$ in $\mathcal{D}$. Moreover, the induced diagram
  $(\bbLambda_{/[2]}^{\op})^{\txt{act}}_{/(0,2)} \to \mathcal{C}
  \times \mathcal{D}$ is the composite
  \[ (\bbLambda_{/[2]}^{\op})^{\txt{act}}_{/(0,2)} \to
  (\bbLambda_{/[2]}^{\op})^{\txt{act}}_{/(0,2)} \times
  (\bbLambda_{/[2]}^{\op})^{\txt{act}}_{/(0,2)} \to \mathcal{C} \times
  \mathcal{D}.\]
  Since $(\bbLambda_{/[2]}^{\op})^{\txt{act}}_{/(0,2)}$ is sifted by
  Lemma~\ref{lem:Lbrnsift}, this colimit is therefore given by the
  pair of colimits in $\mathcal{C}$ and $\mathcal{D}$. It follows that
  $\mathcal{C} \times \mathcal{D}$ has good relative tensor products.
  (2) follows from a similar argument, again using siftedness. This
  proves (i).

  To prove (ii), observe that $\LMonIDnGR$ and
  $\Alg^{n-1}_{\simp^{n-1,\op}}(\LMonIGR)$ can both be identified with
  subcategories of $\LMonIDn$, and it follows from
  Lemma~\ref{lem:Dnmongrtp} that they are the same subcategory. (iii)
  now follows by the same argument as for (i), or using the
  description of limits in \icats{} of algebras from
  \rcite{HA}{Corollary 3.2.2.4}.  
\end{proof}

\begin{defn}
  Let $\Alg^{n,\otimes} \to (\OpdIDng)^{\op} \times
  (\LOpdIDng)^{\times}$ be the obvious variant of the coCartesian
  fibration of \gsiopds{} defined in \S\ref{subsec:algfun}. Then we
  define $\ofALGn^{\otimes}$ by the pullback square
  \nolabelcsquare{\ofALGn^{\otimes}}{\Alg^{n,\otimes}}{\Dnop \times
    \LMonIDnGRtens}{(\OpdIDng)^{\op} \times (\LOpdIDng)^{\times},}
  where the bottom horizontal map is the product of
  $\Dnop_{/(\blank)}$ and the symmetric monoidal structure on the
  forgetful functor that arises since this preserves products. Then
  $\ofALGn^{\otimes} \to \Dnop \times \LMonIDnGRtens$ is a coCartesian
  fibration of \gsiopds{}. Write $\fALGn^{\otimes}$ for the full
  subcategory of $\ofALGn^{\otimes}$ spanned by the objects
  corresponding to lists of objects of $\fALGn$.
\end{defn}

\begin{propn}
  The restricted projection $\fALGn^{\otimes} \to \Dnop \times
  \LMonIDnGRtens$ is a coCartesian fibration of \gsiopds{}.
\end{propn}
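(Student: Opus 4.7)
The plan is to reduce this to Proposition~\ref{propn:ALGnfun} together with the observation that the symmetric monoidal structure on $\LMonIDnGRtens$ is the Cartesian one, inherited from the fact that $\LMonIDnGR$ has products. Since $\ofALGn^{\otimes} \to \Dnop \times \LMonIDnGRtens$ is already known to be a coCartesian fibration of \gsiopds{}, and $\fALGn^{\otimes} \subset \ofALGn^{\otimes}$ is a full subcategory, the entire content of the proposition is that coCartesian pushforward in $\ofALGn^{\otimes}$ starting from an object of $\fALGn^{\otimes}$ lands again in $\fALGn^{\otimes}$ --- once this is verified, the \gsiopd{} structure transfers automatically from $\ofALGn^{\otimes}$.

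First I would unravel the objects: a point of $\fALGn^{\otimes}$ over $(I, \angled{m}, (\mathcal{C}_{1},\ldots,\mathcal{C}_{m}))$ corresponds to a tuple $(A_{1},\ldots,A_{m})$ where each $A_{i}$ is a composite $\DnIop$-algebra in $\mathcal{C}_{i}$. A morphism in $\Dnop \times \LMonIDnGRtens$ decomposes as (a) a morphism $\Phi \colon I \to J$ in $\Dnop$ paired with the identity on the symmetric monoidal side, and (b) a morphism over $\id_{I}$ consisting of a map $f \colon \angled{m} \to \angled{k}$ in $\Gop$ together with, for each $j \in \angled{k}^{\circ}$, a $\Dn$-monoidal functor $F_{j} \colon \prod_{i \in f^{-1}(j)} \mathcal{C}_{i} \to \mathcal{D}_{j}$ compatible with relative tensor products. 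For morphisms of type (a), preservation of the composite property is exactly Proposition~\ref{propn:ALGnfun}.

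For morphisms of type (b), the key sub-step is the Cartesian case: if $A_{i}$ is composite in $\mathcal{C}_{i}$ for each $i$ in a finite set $S$, then the corresponding $\DnIop$-algebra in $\prod_{i \in S} \mathcal{C}_{i}$ is composite. This holds because, by Lemma~\ref{lem:Dnmongrtp}, the composite condition is detected by certain sifted colimits (the relative tensor product colimits of Lemma~\ref{lem:mongrtpcond}), and both colimits and the operadic left Kan extension $\tau_{I,!}$ decompose componentwise under the equivalence $\Alg^{n}_{\DnIop}(\prod_{i} \mathcal{C}_{i}) \simeq \prod_{i} \Alg^{n}_{\DnIop}(\mathcal{C}_{i})$. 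Combined with Remark~\ref{rmk:ALGnfunct}, which says that $\Dn$-monoidal functors compatible with relative tensor products send composite algebras to composite algebras, this takes care of the pushforward along each $F_{j}$, and handles the inert case ($f^{-1}(j)$ a singleton or empty) as a special case by the same mechanism.

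The main point requiring care is the componentwise decomposition of $\tau_{I,!}$ and of the composite condition under Cartesian products of $\Dn$-monoidal \icats{}; I expect this to be a short direct argument from Lemma~\ref{lem:Dnmongrtp}, using that colimits in a product $\infty$-category are computed componentwise. Once this is in hand, everything else is a formal assembly of the non-symmetric coCartesian fibration result with the Cartesian nature of the symmetric monoidal structure on $\LMonIDnGRtens$.
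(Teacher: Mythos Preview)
Your proposal is correct and matches the paper's approach. The paper's proof is a two-line citation of Proposition~\ref{propn:ALGnfun} together with Remark~\ref{rmk:opdkanextprod}; the latter is precisely the statement that $\tau_{I,!}$ commutes with the external/half-internalized tensor product (via siftedness of the indexing categories), which is the content you spell out directly in your ``Cartesian case'' sub-step, while your use of Remark~\ref{rmk:ALGnfunct} for the monoidal functor part is already absorbed into the paper's invocation of Proposition~\ref{propn:ALGnfun}.
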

\begin{proof}
  Since $\fALGn^{\otimes}$ is the full subcategory of
  $\ofALGn^{\otimes}$ determined by the full subcategory $\fALGn$ of
  $\ofALGn \simeq (\ofALGn^{\otimes})_{\angled{1}}$, it is a
  \gsiopd{}. Moreover, it is easy to see from
  Proposition~\ref{propn:ALGnfun} and Remark~\ref{rmk:opdkanextprod}
  that $\fALGn^{\otimes}$ inherits coCartesian morphisms from
  $\ofALGn^{\otimes}$.
\end{proof}

\begin{cor}
  $\fALG_{n}$ defines a lax symmetric monoidal functor $\LMonIDnGR \to
  \Cat^{n}(\CatI)$. In particular, if $\mathcal{C}$ is an
  $\mathbb{E}_{n+m}$-monoidal \icat{} then $\fALG_{n}(\mathcal{C})$
  inherits a canonical $\mathbb{E}_{m}$-monoidal structure.
\end{cor}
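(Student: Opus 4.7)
The plan is to derive the lax symmetric monoidal structure on $\fALG_n$ from the coCartesian fibration of generalized symmetric $\infty$-operads $\fALGn^{\otimes} \to \Dnop \times \LMonIDnGRtens$ established in the preceding proposition. The $\infty$-category $\LMonIDnGR$ carries its Cartesian symmetric monoidal structure (encoded by $\LMonIDnGRtens$), and similarly $\Cat^{n}(\CatI)$ has the Cartesian symmetric monoidal structure coming from its finite products, so a lax symmetric monoidal functor between them is exactly a morphism of the associated symmetric $\infty$-operads.

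Concretely, I would first compose the projection $\fALGn^{\otimes} \to \Dnop \times \LMonIDnGRtens$ with the projection $\Dnop \times \LMonIDnGRtens \to \LMonIDnGRtens$ to obtain a coCartesian fibration $\fALGn^{\otimes} \to \LMonIDnGRtens$ of symmetric $\infty$-operads. Straightening in the $\LMonIDnGRtens$-direction corresponds to a morphism of symmetric $\infty$-operads $\LMonIDnGRtens \to \Fun(\Dnop, \CatI)^{\times}$, where the target carries the pointwise Cartesian symmetric monoidal structure. By Corollary~\ref{cor:ALGnseg} this map factors through the full subcategory $\Cat^{n}(\CatI) \subset \Fun(\Dnop, \CatI)$; since the Segal condition is preserved under products, this subcategory is closed under finite products, so the resulting map is still a morphism of symmetric $\infty$-operads, which is precisely a lax symmetric monoidal functor $\fALG_n \colon \LMonIDnGR \to \Cat^{n}(\CatI)$ refining the functor already constructed.

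For the ``in particular'' statement, I would invoke the Dunn-Lurie additivity theorem \cite[Theorem~5.1.2.2]{HA}, which identifies an $\mathbb{E}_{n+m}$-monoidal $\infty$-category with an $\mathbb{E}_{m}$-algebra in the $\infty$-category of $\mathbb{E}_{n}$-monoidal $\infty$-categories. If $\mathcal{C}$ is a nice $\mathbb{E}_{n+m}$-monoidal $\infty$-category, then Lemma~\ref{lem:Dnmongrtp} shows that its underlying $\mathbb{E}_{n}$-monoidal structure has good relative tensor products and that all $\mathbb{E}_{m}$-structure maps are $\mathbb{E}_{n}$-monoidal functors compatible with those tensor products; thus $\mathcal{C}$ defines an $\mathbb{E}_{m}$-algebra in $\LMonIDnGR$. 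Applying the lax symmetric monoidal functor $\fALG_n$ produces an $\mathbb{E}_{m}$-algebra in $\Cat^{n}(\CatI)$, i.e., an $\mathbb{E}_{m}$-monoidal $n$-uple $\infty$-category, which endows $\fALG_n(\mathcal{C})$ with the claimed structure. The main technical subtlety lies in the straightening step: one must distinguish the two roles of the base $\Dnop \times \LMonIDnGRtens$ — the $\Dnop$-factor encoding the $n$-uple category structure of $\fALG_n(\mathcal{C})$, and the $\LMonIDnGRtens$-factor encoding the symmetric monoidal dependence on $\mathcal{C}$ — and only after projecting away the $\Dnop$-direction does the fibration become a coCartesian fibration of $\infty$-operads whose straightening produces a lax symmetric monoidal functor in the literal sense; matching conventions between the generalized and ordinary notions of symmetric $\infty$-operad here is the only nontrivial bookkeeping.
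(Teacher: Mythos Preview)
Your proposal is correct and follows essentially the same route as the paper: straighten the coCartesian fibration $\fALGn^{\otimes} \to \Dnop \times \LMonIDnGRtens$ to a functor $\Dnop \times \LMonIDnGRtens \to \CatI$, curry to land in $\Fun(\Dnop,\CatI)$, observe the Segal condition cuts this down to $\Cat^{n}(\CatI)$, and read off the lax symmetric monoidal structure. The paper phrases the middle step as ``the associated functor is a monoid object'' and then invokes \cite[Proposition 2.4.2.5]{HA} to translate to a lax monoidal functor, whereas you describe it directly as a morphism of Cartesian symmetric $\infty$-operads; these are the same thing, and your treatment of the ``in particular'' clause via additivity is exactly what the paper leaves implicit.
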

\begin{proof}
  Since $\fALG_{n}^{\otimes} \to \Dnop \times \LMonIDnGRtens$ is a
  coCartesian fibration of \gsiopds{}, the associated functor $\Dnop
  \times \LMonIDnGRtens \to \CatI$ is a monoid object. The
  corresponding functor $\LMonIDnGRtens \to \Fun(\simp^{\op}, \CatI)$
  is then also a monoid object, and lands in the full subcategory
  $\Cat^{n}(\CatI)$ of $n$-fold category objects. This therefore
  corresponds to a lax monoidal functor $\LMonIDnGR \to
  \Cat^{n}(\CatI)$ by \rcite{HA}{Proposition 2.4.2.5}.
\end{proof}

\begin{cor}
  $\fAlg_{n}$ defines a lax symmetric monoidal functor $\LMonIDnGR \to
  \Cat_{(\infty,n)}$. In particular, if $\mathcal{C}$ is an
  $\mathbb{E}_{n+m}$-monoidal \icat{}, then $\fAlg_{n}(\mathcal{C})$
  inherits a canonical $\mathbb{E}_{m}$-monoidal structure.
\end{cor}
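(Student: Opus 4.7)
The plan is to deduce this corollary as an immediate formal consequence of the preceding corollary about $\fALG_n$ together with the product-preservation properties of the completion functor recorded in Remark~\ref{rmk:complsegnmon}. By definition, $\fAlg_n(\mathcal{C})$ is obtained from $\fALG_n(\mathcal{C}) \in \Cat^n(\CatI)$ by applying the composite functor $L_{n+1} U_{\Seg} i \colon \Cat^{n}(\CatI) \to \Cat_{(\infty,n+1)}$, so my strategy is to show that this composite is symmetric monoidal with respect to the Cartesian product, and then compose with the lax symmetric monoidal functor $\fALG_n$ already constructed.

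The key point is that each of the three functors $i$, $U_{\Seg}$, $L_{n+1}$ preserves finite products: the inclusion $i \colon \Cat^n(\CatI) \hookrightarrow \Cat^{n+1}(\mathcal{S})$ is a right adjoint (between Cartesian \icats{}) and so preserves products; the underlying Segal object functor $U_{\Seg}$ is the right adjoint supplied by \rcite{spans}{Proposition 2.13} and hence preserves products; and the completion functor $L_{n+1}$ preserves products by \rcite{spans}{Lemma 2.21}. Since the Cartesian symmetric monoidal structure is characterized by product preservation (see e.g. \rcite{HA}{\S 2.4.1}), these three functors are canonically symmetric monoidal, and hence so is the composite $L_{n+1} U_{\Seg} i$. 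Composing a lax symmetric monoidal functor with a symmetric monoidal functor yields a lax symmetric monoidal functor, so the previous corollary immediately gives that $\fAlg_n \colon \LMonIDnGR \to \Cat_{(\infty,n+1)}$ is lax symmetric monoidal.

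For the second statement, recall that an $\mathbb{E}_{n+m}$-monoidal \icat{} $\mathcal{C}$ with good relative tensor products corresponds, under the $\mathbb{E}_{n+m} \simeq \mathbb{E}_n \otimes \mathbb{E}_m$ version of Dunn--Lurie additivity, to an $\mathbb{E}_m$-algebra in $\LMonIDnGR$ (using that good relative tensor products is a condition preserved under the relevant tensoring-with-a-fixed-object operations, as recorded in Lemma~\ref{lem:Dnmongrtp}). Applying the lax symmetric monoidal functor $\fAlg_n$ sends this $\mathbb{E}_m$-algebra to an $\mathbb{E}_m$-algebra in $\Cat_{(\infty,n+1)}$, i.e. to an $\mathbb{E}_m$-monoidal $(\infty,n+1)$-category structure on $\fAlg_n(\mathcal{C})$, as claimed. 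There is no substantive obstacle here — the entire technical content sits in the preceding corollary; the present statement is the formal consequence obtained by postcomposing with a Cartesian-symmetric-monoidal localization.
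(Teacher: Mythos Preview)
Your proof is correct and follows essentially the same approach as the paper: you express $\fAlg_n$ as the composite of the lax symmetric monoidal functor $\fALG_n$ with $L_{n+1}U_{\Seg}i$, and invoke the product-preservation of the latter (exactly the content of Remark~\ref{rmk:complsegnmon}) to conclude. The paper's proof is slightly terser, simply citing Remark~\ref{rmk:complsegnmon} rather than re-explaining why each of $i$, $U_{\Seg}$, and $L_{n+1}$ preserves products, but the argument is the same.
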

\begin{proof}
  By definition, $\fAlg_{n}$ is the composite of the lax monoidal
  functor \[\fALG_{n} \colon \LMonIDnGR \to \Cat^{n}(\CatI)\] with the
  inclusion $i \colon \Cat^{n}(\CatI) \to \Cat^{n+1}(\mathcal{S})$,
  the functor $U_{\Seg} \colon \Cat^{n+1}(\mathcal{S}) \to
  \Seg_{n}(\mathcal{S})$ that takes an $n$-uple Segal space to its
  underlying $n$-fold Segal space, and $L_{n} \colon
  \Seg_{n}(\mathcal{S}) \to \txt{CSS}_{n}(\mathcal{S}) \simeq
  \Cat_{(\infty,n)}$, the completion functor. The functor
  $L_{n}U_{\Seg}i$ is symmetric monoidal by
  Remark~\ref{rmk:complsegnmon}, and so the composite $\LMonIDnGR \to
  \Cat_{(\infty,n)}$ is also lax symmetric monoidal.
\end{proof}

\subsection{The Mapping $(\infty,n)$-Categories of
  $\fAlg_{n}(\mathcal{C})$}\label{subsec:Algnmaps}

Our goal in this subsection is to prove that if $A$ and $B$ are
$\En$-algebras in an $\En$-monoidal \icat{} $\mathcal{C}$, then the
$(\infty,n)$-category $\fAlg_{n}(\mathcal{C})(A,B)$ of maps from $A$
to $B$ in $\fAlg_{n}(\mathcal{C})$ can be identified with the
$(\infty,n)$-category  $\fAlg_{n-1}(\Bimod_{A,B}(\mathcal{C}))$ of
$\mathbb{E}_{n-1}$-algebras in the \icat{} $\Bimod_{A,B}(\mathcal{C})$
of $A$-$B$-bimodules, equipped with a natural
$\mathbb{E}_{n-1}$-monoidal structure.

First we will show that in this situation $\Bimod_{A,B}(\mathcal{C})$
does in fact inherit an $\mathbb{E}_{n-1}$-monoidal structure:

\begin{defn}
  Let $\mathcal{C}$ be a $\simp^{n+1}$-monoidal \icat{}. We write
  $\Bimod^{\otimes}(\mathcal{C})$ for the internal hom
  $\ALG_{\Dop_{/[1]}}^{1,n+1}(\mathcal{C})$ and
  $\Ass^{\otimes}(\mathcal{C})$ for
  $\ALG_{\Dop}^{1,n+1}(\mathcal{C})$. By Lemma~\ref{lem:ALGFUNeq}
  these are both $\Dn$-monoidal \icats{}, and the natural map
  $\Bimod^{\otimes}(\mathcal{C}) \to \Ass^{\otimes}(\mathcal{C})
  \times_{\Dnop} \Ass^{\otimes}(\mathcal{C})$ induced by the map of
  \gnsiopds{} $\Dop \amalg \Dop \to \Dop_{/[1]}$ is a $\Dn$-monoidal
  functor.
\end{defn}

\begin{propn}
  Let $\mathcal{C}^{\otimes}$ be a $\simp^{n+1}$-monoidal \icat{} with
  good relative tensor products. Then the
  projection $\Pi \colon \Bimod^{\otimes}(\mathcal{C}) \to
  \Ass^{\otimes}(\mathcal{C}) \times_{\Dnop}
  \Ass^{\otimes}(\mathcal{C})$ is a coCartesian fibration of
  $\Dn$-monoidal \icats{}.
\end{propn}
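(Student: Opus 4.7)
My plan is to reduce the statement to the case $n=0$, which is Proposition~\ref{propn:BimodFibcoCart}, by viewing both sides fiberwise over $\Dnop$. There are two things to check: first, that $\Pi$ is a coCartesian fibration of $\infty$-categories, and second, that the coCartesian morphisms are compatible with the $\Dn$-monoidal structures, so that $\Pi$ is a coCartesian fibration in the category of $\Dn$-monoidal \icats{}.

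For the first point, I would verify the fibration property fiberwise over $\Dnop$. For $I \in \Dnop$, the fiber of $\Ass^{\otimes}(\mathcal{C}) \to \Dnop$ (respectively $\Bimod^{\otimes}(\mathcal{C}) \to \Dnop$) at $I$ can be identified, using the definition of the internal hom $\ALG^{1,n+1}_{(\blank)}(\mathcal{C})$ together with Lemma~\ref{lem:ALGFUNeq}, with $\Ass(\mathcal{C}_I)$ (resp.\ $\Bimod(\mathcal{C}_I)$), where $\mathcal{C}_I$ is the fiber monoidal \icat{} obtained from the first $\simp^{\op}$-factor of $\mathcal{C}^{\otimes}$. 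By Lemma~\ref{lem:Dnmongrtp} applied to $\mathcal{C}$ considered as an $\mathbb{E}_{n+1}$-monoidal \icat{}, each $\mathcal{C}_I$ has good relative tensor products. Hence Proposition~\ref{propn:BimodFibcoCart} gives that each fiber of $\Pi$ is a coCartesian fibration, with coCartesian pushforwards of a bimodule $M$ along $(f,g)$ given by $A' \otimes_A M \otimes_B B'$. To lift from a fiberwise statement to the fibration being coCartesian, I would apply the standard criterion that a map of coCartesian fibrations over a common base is itself a coCartesian fibration provided this is true on fibers and the transition functors of the base preserve the fiberwise coCartesian morphisms.

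The latter condition is the crux: for each $\phi \colon I \to J$ in $\Dnop$, the induced monoidal functor $\phi_! \colon \mathcal{C}_I \to \mathcal{C}_J$ (coming from the $\Dn$-monoidal structure on $\mathcal{C}^{\otimes}$ in the remaining $n$ directions) must send the relative tensor product $A' \otimes_A M \otimes_B B'$ in $\mathcal{C}_I$ to the corresponding relative tensor product in $\mathcal{C}_J$. But this is precisely the condition that $\phi_!$ is compatible with relative tensor products, which follows from the assumption that the $\simp^{n+1}$-monoidal \icat{} $\mathcal{C}$ has good relative tensor products, together with the equivalent characterization in Lemma~\ref{lem:Dnmongrtp} (the tensor colimit $|B(M,A,N)_\bullet|$ defining the relative tensor product is preserved by tensoring with objects, hence in particular by the structure functors $\phi_!$, which are themselves built from tensor products in a commuting direction).

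Having established that $\Pi$ is a coCartesian fibration, the final step is to check that $\Pi$ is in addition a coCartesian fibration of $\Dn$-monoidal \icats{} — i.e., that $\Pi$ is a $\Dn$-monoidal functor and that the coCartesian morphisms of $\Pi$ are closed under the tensor products in $\Bimod^{\otimes}(\mathcal{C})$ and $\Ass^{\otimes}(\mathcal{C}) \times_{\Dnop} \Ass^{\otimes}(\mathcal{C})$. The fact that $\Pi$ is $\Dn$-monoidal is part of its construction from $\Dop \amalg \Dop \to \Dop_{/[1]}$ via Lemma~\ref{lem:ALGFUNeq}. For closure under tensor products, it suffices (since a tensor product in the total \icat{} is built from coCartesian pushforwards over $\Dnop$ together with tensor products in the fibers) to observe that both operations preserve relative tensor products: the $\phi_!$'s do so by the previous paragraph, while the fiber tensor products do so because in $\mathcal{C}_I$ itself, tensoring is a monoidal operation with respect to another $\simp^{\op}$-direction, which commutes with the bar-construction defining $\otimes_A$. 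The main technical obstacle is just the careful bookkeeping needed to identify the fibers of $\ALG^{1,n+1}_{\Dop_{/[1]}}(\mathcal{C})$ over $I \in \Dnop$ with $\Bimod(\mathcal{C}_I)$ and to verify that the structure functors $\phi_!$ indeed act as the $\Dn$-monoidal operations on the algebra and bimodule \icats{}; this is essentially a formal consequence of the internal hom description, but requires some care.
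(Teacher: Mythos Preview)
Your proposal is correct and follows essentially the same route as the paper: both apply the fibrewise criterion (which the paper cites as \cite[Proposition 8.3]{freepres}) to the map $\Pi$ over $\Dnop$, identify the fibre $\Pi_I$ with $\Bimod(\mathcal{C}^{\otimes}_{I,\bullet}) \to \Ass(\mathcal{C}^{\otimes}_{I,\bullet})^{\times 2}$ via the internal hom description (the paper uses Corollary~\ref{cor:inthomeq} rather than Lemma~\ref{lem:ALGFUNeq}, but these amount to the same thing), invoke Proposition~\ref{propn:BimodFibcoCart} on each fibre, and then check that the transition functors $\phi_!$ preserve the fibrewise coCartesian morphisms because they are monoidal functors compatible with relative tensor products. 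Your third paragraph is unnecessary: once the fibrewise criterion is verified, $\Pi$ is already a coCartesian fibration whose coCartesian arrows are compatible with the $\Dn$-monoidal structure, so no separate ``closure under tensor products'' check is needed---the paper stops after establishing (a) and (b).
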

\begin{proof}
  We know that the projections $\Bimod^{\otimes}(\mathcal{C}) \to
  \Dnop$ and $\Ass^{\otimes}(\mathcal{C}) \to \Dnop$ are coCartesian
  fibrations, and that the map $\Pi$ preserves coCartesian morphisms. By
  \rcite{freepres}{Proposition 8.3} it thus suffices to check that
  \begin{enumerate}[(a)]
  \item the map on fibres
    \[ \Bimod^{\otimes}(\mathcal{C})_{I} \to
    \Ass^{\otimes}(\mathcal{C})_{I}^{\times 2} \] is a coCartesian
    fibration for all $I \in \Dnop$,
  \item for every map $\phi \colon I \to J$ in $\Dnop$ the induced
    functor $\Bimod^{\otimes}(\mathcal{C})_{I} \to
    \Bimod^{\otimes}(\mathcal{C})_{J}$ takes $\Pi_{I}$-coCartesian
    morphisms to $\Pi_{J}$-coCartesian morphisms.
  \end{enumerate}
  But by Corollary~\ref{cor:inthomeq} we may identify the map
  $\Pi_{I}$ with the map $\Bimod(\mathcal{C}^{\otimes}_{(I,\bullet)})
  \to \Ass(\mathcal{C}^{\otimes}_{(I, \bullet)})^{\times 2}$, which is
  a coCartesian fibration by Proposition~\ref{propn:BimodFibcoCart};
  this proves (a). Moreover, the map
  \[\Bimod^{\otimes}(\mathcal{C})_{I} \to
  \Bimod^{\otimes}(\mathcal{C})_{J}\] induced by $\phi$ can be
  identified with the map $\Bimod(\mathcal{C}^{\otimes}_{I,\bullet})
  \to \Bimod(\mathcal{C}^{\otimes}_{J,\bullet})$ induced by
  composition with $\phi_{!} \colon \mathcal{C}^{\otimes}_{I,\bullet}
  \to \mathcal{C}^{\otimes}_{J,\bullet}$. This is a $\Dn$-monoidal
  functor, and it is compatible with relative tensor products 
  since $\mathcal{C}^{\otimes}$ has good relative tensor products. The
  description of the $\Pi_{I}$-coCartesian morphisms in
  Proposition~\ref{propn:BimodFibcoCart} therefore implies (b).
\end{proof}

\begin{cor}
  Let $\mathcal{C}$ be a $\simp^{n+1}$-monoidal \icat{} with good
  relative tensor products, and suppose
  $A$ and $B$ are $\simp^{n+1,\op}$-algebras in $\mathcal{C}$. Then we
  can regard $A$ and $B$ as $\Dnop$-algebras in
  $\Ass^{\otimes}(\mathcal{C})$. Define an \icat{}
  $\Bimod_{A,B}^{\otimes}(\mathcal{C})$ by the pullback square
  \csquare{\Bimod_{A,B}^{\otimes}(\mathcal{C})}{\Bimod^{\otimes}(\mathcal{C})}{\Dnop}{\Ass^{\otimes}(\mathcal{C})
    \times_{\Dnop} \Ass^{\otimes}(\mathcal{C}).}{}{}{}{(A,B)}
  Then the projection $\Bimod_{A,B}^{\otimes}(\mathcal{C}) \to \Dnop$
  is a $\Dn$-monoidal \icat{} with underlying \icat{}
  $\Bimod_{A,B}(\mathcal{C})$.\qed
\end{cor}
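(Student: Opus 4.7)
The plan is to verify the two defining conditions of a $\Dn$-monoidal \icat{} for the projection $\Bimod_{A,B}^{\otimes}(\mathcal{C}) \to \Dnop$: that it is a coCartesian fibration, and that it satisfies the Segal condition identifying the fiber over $I$ with $|I|$ copies of the fiber over $C_{n}$. The identification of the underlying \icat{} will then fall out of a direct computation of this fiber.

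First, I would establish the coCartesian fibration property by base change. The preceding proposition tells us that $\Pi \colon \Bimod^{\otimes}(\mathcal{C}) \to \Ass^{\otimes}(\mathcal{C}) \times_{\Dnop} \Ass^{\otimes}(\mathcal{C})$ is a coCartesian fibration of $\Dn$-monoidal \icats{}; composing with the projection to $\Dnop$ (itself a coCartesian fibration, since $\Ass^{\otimes}(\mathcal{C}) \to \Dnop$ is a $\Dn$-monoidal \icat{} and fiber products of coCartesian fibrations over $\Dnop$ remain coCartesian) gives that $\Bimod^{\otimes}(\mathcal{C}) \to \Dnop$ is a coCartesian fibration. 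The section $(A,B) \colon \Dnop \to \Ass^{\otimes}(\mathcal{C}) \times_{\Dnop} \Ass^{\otimes}(\mathcal{C})$, coming from viewing $A$ and $B$ as $\Dnop$-algebras in $\Ass^{\otimes}(\mathcal{C})$, determines the pullback $\Bimod_{A,B}^{\otimes}(\mathcal{C})$; this pullback is coCartesian fibered over $\Dnop$ by standard base change.

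Next, I would verify the Segal condition by computing fibers. The fiber over $I \in \Dnop$ is
\[ \Bimod_{A,B}^{\otimes}(\mathcal{C})_{I} \simeq \Bimod^{\otimes}(\mathcal{C})_{I} \times_{\Ass^{\otimes}(\mathcal{C})_{I}^{\times 2}} \{(A(I), B(I))\}. \]
Since $\Bimod^{\otimes}(\mathcal{C})$ and $\Ass^{\otimes}(\mathcal{C})$ are $\Dn$-monoidal, their fibers split via the coCartesian morphisms over the inert maps $C_{n} \to I$: we have $\Bimod^{\otimes}(\mathcal{C})_{I} \simeq \Bimod^{\otimes}(\mathcal{C})_{C_{n}}^{\times |I|}$ and similarly for $\Ass^{\otimes}(\mathcal{C})_{I}$. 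Because $A$ and $B$ are $\Dnop$-algebras, they preserve coCartesian morphisms over inert maps, so under these identifications $A(I)$ corresponds to $(A(C_{n}))^{\times |I|}$ and likewise for $B(I)$. Since Cartesian products commute with fiber products componentwise, the above pullback becomes
\[ \bigl(\Bimod^{\otimes}(\mathcal{C})_{C_{n}} \times_{\Ass^{\otimes}(\mathcal{C})_{C_{n}}^{\times 2}} \{(A(C_{n}), B(C_{n}))\}\bigr)^{\times |I|} \simeq \Bimod_{A,B}^{\otimes}(\mathcal{C})_{C_{n}}^{\times |I|}, \]
which gives the Segal decomposition.

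For the underlying \icat{}, specializing to $I = C_{n}$ and applying Corollary~\ref{cor:inthomeq} identifies $\Bimod^{\otimes}(\mathcal{C})_{C_{n}} \simeq \Bimod(\mathcal{C})$ and $\Ass^{\otimes}(\mathcal{C})_{C_{n}} \simeq \Ass(\mathcal{C})$, with $(A(C_{n}), B(C_{n}))$ corresponding to $(A, B)$; so the fiber over $C_{n}$ is precisely $\Bimod_{A,B}(\mathcal{C})$. The main obstacle is pure bookkeeping: ensuring that the identifications of $A(I)$ with $A(C_{n})^{\times |I|}$ on one side and of $\Bimod^{\otimes}(\mathcal{C})_{I}$ with $\Bimod^{\otimes}(\mathcal{C})_{C_{n}}^{\times |I|}$ on the other are coherent with the pullback, so that the Segal decomposition of $\Bimod_{A,B}^{\otimes}(\mathcal{C})$ is indeed induced by the $\Pi$-coCartesian morphisms over inerts inherited from $\Bimod^{\otimes}(\mathcal{C})$.
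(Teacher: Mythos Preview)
Your proposal is correct and matches the paper's intended argument: the statement carries a \qed{} with no proof, since it is meant to follow immediately from the preceding proposition by pulling back the coCartesian fibration $\Pi$ along the section $(A,B)$ and reading off the Segal condition fibrewise. You have simply spelled out those details, including the bookkeeping that the coCartesian morphisms in the pullback over inert maps land on $\Dnop$-coCartesian morphisms of $\Bimod^{\otimes}(\mathcal{C})$ (since $(A,B)$ sends inerts to coCartesians and $\Pi$-coCartesian lifts of coCartesian morphisms are $\Dnop$-coCartesian), which is exactly what is needed.
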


\begin{lemma}\label{lem:bimodgrtp}
  Let $\mathcal{C}$ be a $\simp^{n+1}$-monoidal \icat{} with good
  relative tensor products, and suppose $A$ and
  $B$ are $\simp^{n+1,\op}$-algebras in $\mathcal{C}$. Then the
  $\Dn$-monoidal \icat{} $\Bimod_{A,B}(\mathcal{C})$ has good relative
  tensor products.
\end{lemma}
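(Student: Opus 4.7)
The plan is to reduce the claim to the already-established properties of bimodules in a single monoidal \icat{} (notably Corollary~\ref{cor:BimodABLeftAdj}) by exploiting the reformulation of ``good relative tensor products'' in Lemma~\ref{lem:Dnmongrtp}. That lemma says a $\Dn$-monoidal \icat{} has good relative tensor products \IFF{} one (equivalently, any) of its underlying monoidal \icats{} satisfies the criterion of Lemma~\ref{lem:mongrtpcond}: every $\bbLambda_{/[2]}^{\op}$-algebra admits the bar colimit over $(\bbLambda_{/[2]}^{\op})^{\txt{act}}_{/(0,2)}$, and this colimit is preserved by tensoring on either side with any fixed object.

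First, I would identify the $i$-th underlying monoidal \icat{} of $\Bimod_{A,B}^{\otimes}(\mathcal{C})$ for some $i \in \{1,\dots,n\}$. By construction, $\Bimod^{\otimes}(\mathcal{C}) = \ALG^{1,n+1}_{\Dop_{/[1]}}(\mathcal{C})$, and pulling back along the $i$-th coordinate inclusion $\Dop \hookrightarrow \Dnop$ identifies the $i$-th underlying monoidal \icat{} of $\Bimod^{\otimes}(\mathcal{C})$ with $\Bimod(\mathcal{C}^{(i)})$, where $\mathcal{C}^{(i)}$ is the monoidal \icat{} obtained from $\mathcal{C}$ by restricting to the $i$-th coordinate (this is the same reasoning as in part (b) of the proof of the coCartesian fibration result for $\Pi$, via Corollary~\ref{cor:inthomeq}). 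Under this identification, the $i$-th tensor product in $\Bimod_{A,B}(\mathcal{C})$ is computed ``on underlying objects'' by the $i$-th tensor product in $\mathcal{C}$; in particular the forgetful functor $U_{A,B} \colon \Bimod_{A,B}(\mathcal{C}) \to \mathcal{C}^{(i)}$ is compatible with the $i$-th monoidal structures.

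Next, given a $\bbLambda_{/[2]}^{\op}$-algebra $M$ in $\Bimod_{A,B}(\mathcal{C})$ with respect to this $i$-th monoidal structure, the indexing category $(\bbLambda_{/[2]}^{\op})^{\txt{act}}_{/(0,2)}$ is sifted by Lemma~\ref{lem:Lbrnsift}. The composite diagram $(\bbLambda_{/[2]}^{\op})^{\txt{act}}_{/(0,2)} \to \mathcal{C}^{(i)}$ obtained by applying $U_{A,B}$ has a monoidal colimit, because $\mathcal{C}^{(i)}$ has good relative tensor products by hypothesis (together with Lemma~\ref{lem:Dnmongrtp} applied to $\mathcal{C}$). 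By Corollary~\ref{cor:BimodABLeftAdj}(ii), the original diagram in $\Bimod_{A,B}(\mathcal{C})$ therefore has a colimit, and this colimit is detected (hence preserved) by $U_{A,B}$.

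Finally, for the monoidal preservation condition, let $X \in \Bimod_{A,B}(\mathcal{C})$; we need to show that tensoring the above bar colimit on either side with $X$ (using the $i$-th monoidal structure) is again a colimit. Since $U_{A,B}$ is a morphism of monoidal \icats{}, and detects the relevant sifted colimits by Corollary~\ref{cor:BimodABLeftAdj}(ii), this reduces to showing the analogous statement for the colimit of $U_{A,B} M$ tensored with $U_{A,B} X$ in $\mathcal{C}^{(i)}$, which holds because $\mathcal{C}^{(i)}$ has good relative tensor products. Applying Lemma~\ref{lem:Dnmongrtp} to $\Bimod_{A,B}^{\otimes}(\mathcal{C})$ then concludes the proof. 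The only genuinely nontrivial point is the first step: making sure that the $i$-th underlying monoidal structure on $\Bimod_{A,B}^{\otimes}(\mathcal{C})$ really is the one coming ``fibrewise'' from $\mathcal{C}^{(i)}$, so that $U_{A,B}$ is strictly monoidal and Corollary~\ref{cor:BimodABLeftAdj} applies directly — but this is essentially the same unwinding of the internal hom $\ALG^{1,n+1}_{\Dop_{/[1]}}(\mathcal{C})$ that was used to prove the coCartesian fibration statement just above.
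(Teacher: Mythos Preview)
Your argument has a genuine gap: the claim that $U_{A,B}$ is (strongly) monoidal with respect to the $i$-th tensor product is false. By construction, $\Bimod_{A,B}^{\otimes}(\mathcal{C})$ is the pullback of the coCartesian fibration $\Pi \colon \Bimod^{\otimes}(\mathcal{C}) \to \Ass^{\otimes}(\mathcal{C}) \times_{\Dnop} \Ass^{\otimes}(\mathcal{C})$ along the section $(A,B)$. The $i$-th tensor product in $\Bimod_{A,B}(\mathcal{C})$ is therefore computed by first forming $M \otimes_i N$ in $\Bimod^{\otimes}(\mathcal{C})$ --- an $(A \otimes_i A)$--$(B \otimes_i B)$-bimodule with underlying object $U_{A,B}(M) \otimes_i U_{A,B}(N)$ --- and then taking the $\Pi$-coCartesian pushforward along the multiplication maps $A \otimes_i A \to A$ and $B \otimes_i B \to B$. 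By Proposition~\ref{propn:BimodFibcoCart} this pushforward is a relative tensor product, so
\[
U_{A,B}(M \otimes^{A,B}_i N) \simeq A \otimes_{A \otimes_i A} \bigl(U_{A,B}M \otimes_i U_{A,B}N\bigr) \otimes_{B \otimes_i B} B,
\]
which is not $U_{A,B}(M) \otimes_i U_{A,B}(N)$ in general. Consequently the composite $U_{A,B} \circ F$ is \emph{not} the bar diagram of a $\bbLambda_{/[2]}^{\op}$-algebra in $\mathcal{C}$, and you cannot directly invoke good relative tensor products in $\mathcal{C}$ to conclude it has a monoidal colimit; Corollary~\ref{cor:BimodABLeftAdj}(ii) then gives you nothing to work with.

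The paper circumvents this by working not with $U_{A,B}$ but with the diagram $F'$ in $\Bimod(\mathcal{C})$ obtained from the composite algebra $U' \colon \bbLambda_{/[2]}^{\op} \to \Bimod^{\otimes}(\mathcal{C})$. In $\Bimod^{\otimes}(\mathcal{C})$ the tensor product \emph{is} computed on underlying objects, so the evaluations of $F'$ at $(0,0)$, $(0,1)$, $(1,1)$ are genuine relative tensor product diagrams in $\mathcal{C}$; Corollary~\ref{cor:bimodcolim} then supplies the colimit of $F'$ in $\Bimod(\mathcal{C})$. The desired diagram $F$ is identified with the $\Pi$-coCartesian pushforward of $F'$ to the fibre over $(A,B)$, and since the projection of $F'$ to $\Ass(\mathcal{C})^{\times 2}$ is itself a bar diagram with colimit $(A,B)$, \rcite{HTT}{Propositions 4.3.1.9 and 4.3.1.10} transfer the colimit of $F'$ to a colimit of $F$ in $\Bimod_{A,B}(\mathcal{C})$. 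Preservation under tensoring is handled by the same mechanism.
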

\begin{proof}
  By Lemma~\ref{lem:Dnmongrtp} it suffices to consider the case $n =
  1$, in which case we use the criterion of
  Lemma~\ref{lem:mongrtpcond}. Suppose given an algebra $U \colon
  \bbLambda^{\op}_{/[2]} \to \Bimod^{\otimes}_{A,B}(\mathcal{C})$. The
  induced diagram $F \colon (\bbLambda^{\op}_{/[2]})^{\txt{act}}_{(0,2)} \to
  \Bimod_{A,B}(\mathcal{C})$ can be identified with the coCartesian
  pushforward to the fibre over $(A,B)$ of the corresponding diagram
  $F' \colon (\bbLambda^{\op}_{/[2]})^{\txt{act}}_{(0,2)} \to
  \Bimod(\mathcal{C})$ for the composite algebra $U' \colon \bbLambda^{\op}_{/[2]} \to
  \Bimod^{\otimes}(\mathcal{C})$. Projecting the latter diagram to
  $\Ass(\mathcal{C}) \times \Ass(\mathcal{C})$ gives the simplicial
  diagrams $A \otimes A^{\otimes \bullet} \otimes A$ and $B \otimes
  B^{\otimes \bullet} \otimes B$ with colimits $A \otimes_{A} A \simeq
  A$ and $B \otimes_{B}B\simeq B$. To see that $F$ has a colimit, it
  then suffices by \rcite{HTT}{Propositions 4.3.1.9 and 4.3.1.10} to
  show that $F'$ has a colimit. For this we use
  Corollary~\ref{cor:bimodcolim}, since
  $(\bbLambda^{\op}_{/[2]})^{\txt{act}}_{(0,2)}$ is sifted by
  Lemma~\ref{lem:Lbrnsift}. The projections to $\mathcal{C}$ of this
  diagram are all relative tensor product diagrams, and so have
  monoidal colimits since $\mathcal{C}$ has good relative tensor
  products, so the colimit of $F'$ does exist in
  $\Bimod(\mathcal{C})$. The colimit in $\Bimod_{A,B}(\mathcal{C})$ is
  moreover preserved under tensoring with objects of
  $\Bimod_{A,B}(\mathcal{C})$ by a similar argument, since the tensor
  product in $A$-$B$-bimodules projects to a relative tensor product
  in $\mathcal{C}$.
\end{proof}

\begin{propn}\label{propn:BimodComposite}
  Let $\mathcal{C}$ be a $\simp^{n+1}$-monoidal \icat{} with good
  relative tensor products, and suppose $A$ and $B$ are
  $\simp^{n+1,\op}$-algebras in $\mathcal{C}$ and $U \colon \DnIop \to
  \Bimod_{A,B}^{\otimes}(\mathcal{C})$ is a $\DnIop$-algebra in
  $\Bimod_{A,B}(\mathcal{C})$. Then $U$ is composite \IFF{} the
  algebra $U'$ in $\Bimod(\mathcal{C})$ obtained by composing with the
  inclusion is a composite $\DnIop$-algebra in
  $\Bimod^{\otimes}(\mathcal{C})$.
\end{propn}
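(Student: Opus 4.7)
The plan is to unwind the composite condition into a colimit equation in the fibre over $C_n$, and show that the fibre inclusion $\iota \colon \Bimod_{A,B}(\mathcal{C}) \hookrightarrow \Bimod(\mathcal{C})$ preserves and reflects the sifted colimits that appear, so that the condition holds for $U$ iff it holds for $U'$.

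First I would unwind what it means to be composite as in the proof of Corollary~\ref{cor:ALGncoCart}, via Lemma~\ref{lem:lokerecognize}: a $\DnIop$-algebra $V$ in a $\Dn$-monoidal \icat{} $\mathcal{D}^{\otimes}$ with good relative tensor products is composite iff for every $X \in \LnI$ the natural map
\[ \colim_{Y \in ((\LnI)^{\txt{act}}_{X/})^{\op}} \Xi_V(Y) \to \Xi_V(X) \]
is an equivalence in $\mathcal{D}$, where $\Xi_V$ denotes the coCartesian pushforward of the restriction of $V$ to active morphisms along the unique active maps in $\Dnop$ to $C_n$, landing in $\mathcal{D}^{\otimes}_{C_n} = \mathcal{D}$. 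Since $\Bimod^{\otimes}_{A,B}(\mathcal{C}) \to \Dnop$ is defined as the pullback of $\Bimod^{\otimes}(\mathcal{C}) \to \Ass^{\otimes}(\mathcal{C})^{\times 2}$ along $(A,B)$, coCartesian lifts in the pullback are pulled back from those in $\Bimod^{\otimes}(\mathcal{C})$, and in particular the fibre inclusion $\iota$ satisfies $\Xi_{U'} = \iota \circ \Xi_U$.

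The key step is to show that $\iota$ preserves and reflects colimits indexed by $((\LnI)^{\txt{act}}_{X/})^{\op}$. This indexing category decomposes as a product $\prod_k ((\bbLambda_{/[i_k]})^{\txt{act}}_{X_k/})^{\op}$, each factor of which is sifted by Lemma~\ref{lem:Lbrnsift}; since a product of sifted simplicial sets is sifted, the indexing category itself is sifted. For such a sifted diagram $p \colon K \to \Bimod_{A,B}(\mathcal{C})$, Corollary~\ref{cor:BimodABLeftAdj}(ii) produces a colimit exactly when $U_{A,B} \circ p$ admits a monoidal colimit in $\mathcal{C}$. Meanwhile, projecting $\iota \circ p$ to $\mathcal{C}^{\times 3}$ gives two constant diagrams (at the underlying objects of $A$ and $B$, which admit monoidal colimits by weak contractibility of $K$) and $U_{A,B} \circ p$ in the middle; so by Corollary~\ref{cor:bimodcolim}, $\iota \circ p$ admits a colimit in $\Bimod(\mathcal{C})$ iff $U_{A,B} \circ p$ admits a monoidal colimit in $\mathcal{C}$, and the colimits agree under $\iota$.

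Combining these observations: the comparison map in the composite condition for $U$ at $X$ is sent by the fully faithful functor $\iota$ to the analogous map for $U'$ at $X$; since $\iota$ preserves and reflects these sifted colimits, one map is an equivalence iff the other is. The main obstacle will be the comparison of coCartesian pushforwards $\Xi_{U'} = \iota \circ \Xi_U$, which rests on a routine but fiddly fact about coCartesian fibrations under pullback: coCartesian lifts in a pullback of coCartesian fibrations are inherited from the total space, so the fibre inclusion commutes with coCartesian pushforward over morphisms in $\Dnop$.
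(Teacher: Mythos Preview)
There is a genuine gap in your argument: the identity $\Xi_{U'} = \iota \circ \Xi_U$ is false. The inclusion $\iota^{\otimes} \colon \Bimod^{\otimes}_{A,B}(\mathcal{C}) \to \Bimod^{\otimes}(\mathcal{C})$ does \emph{not} preserve coCartesian morphisms over active maps in $\Dnop$. The coCartesian structure on $\Bimod^{\otimes}_{A,B}(\mathcal{C}) \to \Dnop$ is the one pulled back from $\Pi \colon \Bimod^{\otimes}(\mathcal{C}) \to \Ass^{\otimes}(\mathcal{C})^{\times 2}$, so a coCartesian edge in the pullback over $\phi \colon J \to J'$ is $\Pi$-coCartesian over $(A,B)(\phi)$. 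This is coCartesian for $\Bimod^{\otimes}(\mathcal{C}) \to \Dnop$ only when $(A,B)(\phi)$ is itself coCartesian in $\Ass^{\otimes}(\mathcal{C})^{\times 2}$, which holds for inert $\phi$ (since $A,B$ are algebras) but not for active $\phi$. Concretely, for $Y$ over $J$ and $\mu \colon J \to C_{n}$ active, the diagram $\Xi_{U'}(Y)$ lies over $(\mu_{!}A(J), \mu_{!}B(J))$ in $\Ass(\mathcal{C})^{\times 2}$, not over $(A,B)$; so the projection of $\Xi_{U'}$ to $\Ass(\mathcal{C})^{\times 2}$ is a nontrivial bar-type diagram with colimit $(A,B)$, not the constant diagram you claim. (A smaller issue: $\iota$ is not fully faithful, only conservative; but that alone would be harmless.)

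The paper's proof handles exactly this discrepancy. It observes that $\Xi_{U}$ is obtained from $\Xi_{U'}$ by $\Pi$-coCartesian pushforward to the fibre over $(A,B)$, and then invokes \rcite{HTT}{Propositions 4.3.1.9 and 4.3.1.10} for the coCartesian fibration $\pi \colon \Bimod(\mathcal{C}) \to \Ass(\mathcal{C})^{\times 2}$: the cone $\overline{\xi}'$ is a colimit in $\Bimod(\mathcal{C})$ \IFF{} its pushforward $\overline{\xi}$ is a colimit in $\Bimod_{A,B}(\mathcal{C})$ \emph{and} this colimit is preserved by every $(f,g)_{!}$. The extra preservation clause is then shown to be automatic, since the underlying diagram in $\mathcal{C}$ is a monoidal colimit (using Corollary~\ref{cor:bimodcolim}). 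Your colimit-comparison idea is in the right spirit, but it must be run through $\pi$, not through $\iota$ directly.
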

\begin{proof}
  For $X \in \DnIop$ we can conclude, using \rcite{HTT}{Propositions
    4.3.1.9 and 4.3.1.10} as in the proof of
  Lemma~\ref{lem:bimodgrtp}, that the diagram $\overline{\xi}' \colon
  ((\LnIop)^{\txt{act}}_{/X})^{\triangleright} \to
  \Bimod(\mathcal{C})$ induced by $U'$ is a colimit diagram \IFF{} the
  corresponding diagram $\overline{\xi}$ for $U$, which is obtained as
  the coCartesian pushforward of $\overline{\xi}'$ to the fibre over $(A,B)$, is
  a colimit in $\Bimod_{A,B}(\mathcal{C})$ and this is preserved by
  the functors $(f,g)_{!} \colon \Bimod_{A,B}(\mathcal{C}) \to
  \Bimod_{A',B'}(\mathcal{C})$ induced by the coCartesian morphisms
  over any maps $f \colon A \to A'$, $g \colon B\to B'$ of associative
  algebras. On the other hand, we know that $\xi :=
  \overline{\xi}|_{(\LnIop)^{\txt{act}}_{/X}}$ does have a colimit in
  $\Bimod_{A,B}(\mathcal{C})$ whose underlying diagram in
  $\mathcal{C}$ is a monoidal colimit diagram. Using
  Corollary~\ref{cor:bimodcolim} this implies that this colimit is
  necessarily preserved by the functors $(f,g)_{!}$. The two
  conditions are therefore equivalent, as required.
\end{proof}

\begin{cor}\label{cor:ALGpullback}
  Let $\mathcal{C}$ be a $\simp^{n+1}$-monoidal \icat{} compatible
  with geometric realizations and initial objects, and suppose $A$ and
  $B$ are $\simp^{n+1,\op}$-algebras in $\mathcal{C}$. Then we have a
  pullback square
  \csquare{\fALG_{n}(\Bimod_{A,B}(\mathcal{C}))}{\fALG_{n+1}(\mathcal{C})_{[1]}}{*}{\fALG_{n+1}(\mathcal{C})_{[0]}^{\times
      2}}{}{}{}{(A,B)} of $n$-uple category objects in $\CatI$.
\end{cor}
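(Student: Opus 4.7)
The plan is to verify the proposed pullback square at each level $J \in \Dnop$ of the $n$-uple category objects. Since $n$-uple category objects are functors $\Dnop \to \CatI$ and pullbacks in such functor categories are computed levelwise, it suffices to exhibit, for every $J$, an equivalence between $\fALG_{n}(\Bimod_{A,B}(\mathcal{C}))_{J}$ and the fibre of the map $\fALG_{n+1}(\mathcal{C})_{([1], J)} \to \fALG_{n+1}(\mathcal{C})_{([0], J)}^{\times 2}$ over $(A, B)|_{J}$, natural in $J$.

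The first step is to apply the internal hom adjunction (\ref{lem:ALGFUNeq}) that defines $\Bimod^{\otimes}(\mathcal{C})$ to obtain, for each $J$, a natural equivalence
\[ \Alg^{n+1}_{\Dop_{/[1]} \times \Dnop_{/J}}(\mathcal{C}) \simeq \Alg^{n}_{\Dnop_{/J}}(\Bimod^{\otimes}(\mathcal{C})), \]
and similarly for $\Ass^{\otimes}(\mathcal{C})$ in place of $\Bimod^{\otimes}(\mathcal{C})$ (using $\Dop$ in place of $\Dop_{/[1]}$). The key observation which connects this to the composite subcategories is that \emph{any} morphism in $\simp_{/[1]}$ is automatically cellular: if $(i_{0},\ldots,i_{k})$ is an object of $\simp_{/[1]}$, then $0 \leq i_{t+1} - i_{t} \leq 1$ holds for all $t$ since each $i_{j} \in \{0, 1\}$. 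Thus $\bbLambda_{/[1]} = \simp_{/[1]}$, and consequently the ``composite'' condition on a $(\Dop_{/[1]} \times \Dnop_{/J})$-algebra in $\mathcal{C}$---that it be an operadic left Kan extension from $\bbLambda_{/[1]} \times \Ln_{/J}$---is \emph{only} a condition in the $\Dnop_{/J}$-direction. Under the internal hom identification this is precisely the condition that the corresponding $\Dnop_{/J}$-algebra in $\Bimod^{\otimes}(\mathcal{C})$ be composite in the sense of the $\Dn$-monoidal structure (which has good relative tensor products by Lemma \ref{lem:bimodgrtp}).

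Next, taking fibres of the Segal map over $(A, B)|_{J}$ replaces $\Bimod^{\otimes}(\mathcal{C})$ by its pullback $\Bimod^{\otimes}_{A,B}(\mathcal{C})$ over $\Ass^{\otimes}(\mathcal{C})^{\times 2}$, yielding an identification of the fibre with $\Alg^{n}_{\Dnop_{/J}}(\Bimod^{\otimes}_{A,B}(\mathcal{C}))$. Proposition \ref{propn:BimodComposite} then says exactly that a $\Dnop_{/J}$-algebra in $\Bimod^{\otimes}_{A,B}(\mathcal{C})$ is composite (for the restricted $\Dn$-monoidal structure) if and only if its image in $\Bimod^{\otimes}(\mathcal{C})$ is composite. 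Combining this with the previous step shows that the fibre of $\fALG_{n+1}(\mathcal{C})_{([1], J)} \to \fALG_{n+1}(\mathcal{C})_{([0], J)}^{\times 2}$ at $(A, B)|_{J}$ coincides with $\fALG_{n}(\Bimod_{A,B}(\mathcal{C}))_{J}$.

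All the identifications above---the internal hom adjunction, the cellular-map observation, and the content of Proposition \ref{propn:BimodComposite}---are visibly natural in $J \in \Dnop$, so they assemble into an equivalence of $n$-uple category objects which realizes the claimed pullback square. The main obstacle is the compatibility of composite conditions under the internal hom: the heart of this is already done in Proposition \ref{propn:BimodComposite}, while the coincidence $\bbLambda_{/[1]} = \simp_{/[1]}$ ensures that no further Kan-extension condition in the $\Dop_{/[1]}$-direction needs to be imposed, so the remaining work is essentially bookkeeping of natural transformations and fibres.
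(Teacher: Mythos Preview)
Your proposal is correct and follows essentially the same route as the paper's proof: both use the internal hom to identify $\ofALG_{n+1}(\mathcal{C})_{([1],J)}$ with $\Alg^{n}_{\Dnop_{/J}}(\Bimod^{\otimes}(\mathcal{C}))$, obtain the pullback square at the level of $\ofALG$, and then invoke Proposition~\ref{propn:BimodComposite} to match up the composite conditions and pass to $\fALG$. Your explicit observation that $\bbLambda_{/[1]} = \simp_{/[1]}$ is a useful clarification the paper leaves implicit---it is exactly what makes the composite condition on $\simp^{n+1,\op}_{/([1],J)}$-algebras in $\mathcal{C}$ translate, under the internal hom, into a composite condition purely in the $\Dnop_{/J}$-direction, so that Proposition~\ref{propn:BimodComposite} can finish the job.
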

\begin{proof}
  By the universal property of the internal hom
  $\ALG^{n+1,1}_{(\blank)}(\blank)$, we can identify the map
  \[\ofALG_{n+1}(\mathcal{C})_{([1],I)} \to
  \ofALG_{n+1}(\mathcal{C})_{([0],I)}^{\times 2}\] with 
  \[ \Alg^{n}_{\DnIop}(\Bimod^{\otimes}(\mathcal{C})) \to
  \Alg^{n}_{\DnIop}(\Ass^{\otimes}(\mathcal{C}) \times_{\Dnop}
  \Ass^{\otimes}(\mathcal{C})).\] Since $\Alg^{n}_{\DnIop}(\blank)$
  preserves limits, and $\Dnop$ is the final $\Dn$-monoidal \icat{},
  we get a pullback square
  \csquare{\ofALG_{n}(\Bimod_{A,B}(\mathcal{C}))_{I}}{\ofALG_{n+1}(\mathcal{C})_{([1],I)}}{*}{\ofALG_{n+1}(\mathcal{C})_{([0],I),}^{\times
      2}}{}{}{}{(A,B)} natural in
  $I$. Proposition~\ref{propn:BimodComposite} implies that this
  restricts to a pullback square
  \csquare{\fALG_{n}(\Bimod_{A,B}(\mathcal{C}))_{I}}{\fALG_{n+1}(\mathcal{C})_{([1],I)}}{*}{\fALG_{n+1}(\mathcal{C})_{([0],I),}^{\times
      2}}{}{}{}{(A,B)} and the naturality in $I$ then gives a pullback
  square of $n$-fold category objects in $\CatI$, since the inclusion
  of these into all functors $\Dnop \to \CatI$ is a right adjoint and
  so preserves limits.
\end{proof}

From this we can now prove the main result of this subsection:
\begin{thm}\label{thm:Algnmap}
  Let $\mathcal{C}$ be a $\simp^{n+1}$-monoidal \icat{} with good
  relative tensor products, and suppose
  $A$ and $B$ are $\simp^{n+1,\op}$-algebras in $\mathcal{C}$. Then
  the $(\infty,n)$-category $\fAlg_{n+1}(\mathcal{C})(A, B)$ is
  naturally equivalent to $\fAlg_{n}(\Bimod_{A,B}(\mathcal{C}))$.
\end{thm}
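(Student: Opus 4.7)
The plan is to deduce the theorem from the pullback square of $n$-uple category objects provided by Corollary~\ref{cor:ALGpullback}, by carefully tracking how the passage to underlying Segal spaces and completion interacts with taking fibers over a pair of objects.

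First, I would recall that for any $(n+1)$-fold Segal space $X$, the mapping $(\infty,n)$-category from two objects $a,b \in X_{0,\ldots,0}$ is naturally the fiber
\[
X(a,b) := \{(a,b)\} \times_{X_{0,\bullet,\ldots,\bullet} \times X_{0,\bullet,\ldots,\bullet}} X_{1,\bullet,\ldots,\bullet},
\]
which is itself an $n$-fold Segal space by the Segal condition; the same formula applies when $X$ is merely a complete $(n+1)$-fold Segal space, with the output being an $(\infty,n)$-category. For a category object $\mathcal{D} \in \Cat^{n+1}(\mathcal{S})$ obtained from an $n$-uple category object in $\CatI$ via $i$, the underlying Segal replacement $U_{\Seg}$ changes $\mathcal{D}_{0,\bullet,\ldots,\bullet}$ to a constant object but leaves $\mathcal{D}_{1,\bullet,\ldots,\bullet}$ over $\mathcal{D}_{0,\bullet,\ldots,\bullet}^{\times 2}$ unchanged up to pullback along the degeneracy; hence the fiber at a fixed pair $(A,B)$ of objects agrees with the fiber of the original object.

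Second, I would apply the functor $L_n U_{\Seg}' i$ (where $U_{\Seg}'$ denotes the underlying $n$-fold Segal space functor, and I use primes to indicate one fewer layer) to the pullback square of Corollary~\ref{cor:ALGpullback}. Since $U_{\Seg}$ and $i$ are right adjoints they preserve the pullback, and by the first paragraph the resulting pullback can be identified with
\[
U_{\Seg}' i \fALG_n(\Bimod_{A,B}(\mathcal{C})) \;\simeq\; \{(A,B)\} \times_{\mathcal{Y}_{0,\ldots,0}^{\times 2}} \mathcal{Y}_{1,\bullet,\ldots,\bullet},
\]
where $\mathcal{Y} := U_{\Seg} i \fALG_{n+1}(\mathcal{C})$ is the underlying $(n+1)$-fold Segal space. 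The key point here is that after applying $U_{\Seg}$, the object $\mathcal{Y}_{0,\bullet,\ldots,\bullet}$ becomes constant at the space of objects, so pulling back over the $n$-uple object $\fALG_{n+1}(\mathcal{C})_{[0]}^{\times 2}$ becomes the same as pulling back over the space of objects.

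Third, I would apply the completion functor $L_n$ on both sides, and use the fact that for a complete $(n+1)$-fold Segal space $\fAlg_{n+1}(\mathcal{C}) = L_{n+1}\mathcal{Y}$, the mapping $(\infty,n)$-category $\fAlg_{n+1}(\mathcal{C})(A,B)$ is equivalent to $L_n$ applied to the fiber of $\mathcal{Y}_{1,\bullet,\ldots,\bullet}$ over $(A,B)$ --- this is the standard fact that the completion process in $n+1$-fold Segal spaces acts fiberwise on mapping categories (one level down it is just the $n$-fold completion). Combining this with the pullback expression from the second step yields the desired equivalence. Naturality in $A$ and $B$ follows from the naturality of Corollary~\ref{cor:ALGpullback} and of all the functors involved.

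The main obstacle is the third step: verifying carefully that the $(\infty,n)$-categorical completion commutes with taking the fiber at a specific pair of objects, i.e.\ that the mapping $(\infty,n)$-categories of $L_{n+1}\mathcal{Y}$ agree (up to $L_n$) with the fibers of $\mathcal{Y}_{1,\bullet,\ldots,\bullet}$. This is a general statement about Barwick's model of $(\infty,n)$-categories and requires invoking the fact that the localization $L_{n+1}$ is generated by maps that become equivalences of $n$-fold Segal spaces on the level of morphism objects over fixed objects --- intuitively, completion only identifies equivalent objects without altering mapping categories. Once this is in place, combined with Corollary~\ref{cor:ALGpullback} and the general identification of mapping categories in complete Segal spaces with suitable fibers, the proof is essentially a diagram chase.
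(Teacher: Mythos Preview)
Your proposal is correct and follows essentially the same approach as the paper: apply $U_{\Seg}$ and $i$ (right adjoints, hence limit-preserving) to the pullback square of Corollary~\ref{cor:ALGpullback}, identify the result with the mapping Segal space in $U_{\Seg}i\fALG_{n+1}(\mathcal{C})$, and then complete. The ``main obstacle'' you isolate in your third step is precisely the content of the paper's Lemma~\ref{lem:mapcompl}, which shows $(L_{n+1}\mathcal{X})(x,y)\simeq L_n(\mathcal{X}(x,y))$ by factoring $L_{n+1}$ as $\Lambda\circ L_{n,*}$ and invoking that $\Lambda$ is fully faithful on mapping objects; the paper also makes explicit an intermediate factoring of the pullback through $(U^{n+2}_{\Seg}i_{n+1}\fALG_{n+1}(\mathcal{C}))_{[k]}$ to pass from the pullback over the $n$-uple object $\fALG_{n+1}(\mathcal{C})_{[0]}^{\times 2}$ to the pullback over the constant $0$th level of the Segal object, which is the step your second paragraph sketches informally.
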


This will follow from Corollary~\ref{cor:ALGpullback} together with
the following observation:
\begin{lemma}\label{lem:mapcompl}
  Suppose $\mathcal{X}$ is an $(n+1)$-fold Segal space and $x$ and $y$
  are two objects of $\mathcal{X}$. Then the mapping space
  $(L_{n+1}\mathcal{X})(x, y)$ in the completion of $\mathcal{X}$ is
  the completion $L_{n}(\mathcal{X}(x,y))$ of the $n$-fold Segal space
  $\mathcal{X}(x,y)$ of maps from $x$ to $y$ in $\mathcal{X}$.
\end{lemma}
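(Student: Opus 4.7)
The plan is to proceed by induction on $n$, with the base case $n = 0$ being the classical theorem of Rezk that mapping spaces between objects in a Segal space are unchanged by taking the Rezk completion.

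For the inductive step, I would use the following decomposition of $L_{n+1}$. View $\mathcal{X}$ as a simplicial object $\mathcal{X}_{\bullet} \colon \simp^{\op} \to \Seg_{n}(\mathcal{S})$ satisfying the usual conditions. Define $\widetilde{\mathcal{X}}_{\bullet}$ by applying $L_{n}$ at each level, giving a simplicial object in $\txt{CSS}_{n}(\mathcal{S}) \simeq \Cat_{(\infty,n)}$. First I would verify that $\widetilde{\mathcal{X}}_{\bullet}$ is a Segal object in $\Cat_{(\infty,n)}$: since the condition on $(n+1)$-fold Segal spaces forces $\mathcal{X}_{0}$ to be essentially constant and $L_{n}$ fixes it, while the Segal pullbacks $\mathcal{X}_{k} \simeq \mathcal{X}_{1} \times_{\mathcal{X}_{0}} \cdots \times_{\mathcal{X}_{0}} \mathcal{X}_{1}$ are taken over the constant object $\mathcal{X}_{0}$; using that $L_{n}$ preserves products (Remark~\ref{rmk:complsegnmon}) together with the observation that a pullback over a space splits into products, these Segal pullbacks are preserved by $L_{n}$. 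Then $L_{n+1}\mathcal{X}$ is obtained by applying the Rezk-type completion $\widehat{L}$ for Segal objects in $\Cat_{(\infty,n)}$ to $\widetilde{\mathcal{X}}_{\bullet}$.

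The next key step is an enriched analogue of Rezk's theorem: the mapping $(\infty,n)$-category between two objects $x, y$ in the completion $\widehat{L}(\widetilde{\mathcal{X}}_{\bullet})$ is equivalent to the mapping $(\infty,n)$-category in $\widetilde{\mathcal{X}}_{\bullet}$ itself, namely the fiber $\widetilde{\mathcal{X}}_{1} \times_{\widetilde{\mathcal{X}}_{0} \times \widetilde{\mathcal{X}}_{0}} \{(x,y)\}$. This follows because $\widehat{L}$ only alters the $0$-th level and the action on $1$-morphisms insofar as it identifies the image of equivalences, leaving non-invertible hom-objects fixed on pairs of objects already present. Finally, I would identify this fiber with $L_{n}(\mathcal{X}(x,y))$: the space $\mathcal{X}_{0}$ is fixed by $L_{n}$, and $L_{n}$ preserves pullbacks over spaces (which reduces, via the product-preservation of $L_{n}$, to a statement about fibers over constant objects). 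Thus
\[ (L_{n+1}\mathcal{X})(x,y) \simeq L_{n}(\mathcal{X}_{1}) \times_{\mathcal{X}_{0} \times \mathcal{X}_{0}} \{(x,y)\} \simeq L_{n}\bigl(\mathcal{X}_{1} \times_{\mathcal{X}_{0} \times \mathcal{X}_{0}} \{(x,y)\}\bigr) \simeq L_{n}(\mathcal{X}(x,y)). \]

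The main obstacle will be the enriched Rezk completion statement: verifying that $\widehat{L}$ preserves mapping objects between already-given objects. This is a generalization of Rezk's original argument to the setting of $\Cat_{(\infty,n)}$-enriched Segal objects, and while entirely analogous in spirit, it requires care in setting up the saturation of the completion and showing that the localizing maps do not affect mapping objects between objects coming from the original Segal object. A secondary technical point is the preservation of pullbacks over constant $n$-fold Segal spaces by $L_{n}$; this should reduce to product-preservation and the explicit nature of the Rezk localizing maps, whose targets are built from cellular data independent of specific fibers over spaces.
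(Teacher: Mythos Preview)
Your approach is essentially the same as the paper's: decompose $L_{n+1}$ as levelwise $L_{n}$ followed by a Rezk-type completion $\Lambda$ on $\Seg(\txt{CSS}_{n}(\mathcal{S}))$, show $\Lambda$ preserves mapping objects, and then use that $L_{n}$ preserves pullbacks over constant diagrams. The two steps you flag as obstacles are already in the literature --- the ``enriched Rezk theorem'' is \cite[Theorem 1.2.13]{LurieGoodwillie} (the unit $\mathcal{Y} \to \Lambda\mathcal{Y}$ is fully faithful and essentially surjective), and the pullback-preservation is \cite[Lemma 2.21]{spans} --- so no induction on $n$ is needed.
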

\begin{proof}
  The localization $L_{n+1} \colon \Seg_{n+1}(\mathcal{S}) \to
  \txt{CSS}_{n+1}(\mathcal{S})$ can be written as a composite
  \[ \Seg_{n+1}(\mathcal{S}) \xto{L_{n,*}}
  \Seg(\txt{CSS}_{n}(\mathcal{S})) \xto{\Lambda}
  \txt{CSS}_{n+1}(\mathcal{S}). \] By \rcite{LurieGoodwillie}{Theorem
    1.2.13}, the natural map $\mathcal{Y} \to \Lambda\mathcal{Y}$ is
  fully faithful and essentially surjective for all $\mathcal{Y} \in
  \Seg(\txt{CSS}_{n}(\mathcal{S}))$; in particular, we have a pullback
  square
  \nolabelcsquare{\mathcal{Y}_{1}}{\Lambda\mathcal{Y}_{1}}{\mathcal{Y}_{0}^{\times
      2}}{\Lambda\mathcal{Y}_{0}^{\times 2}.}  Applying this to
  $L_{n,*}\mathcal{X}$ we see that we have an equivalence
  \[(L_{n,*}\mathcal{X})(x,y) \isoto (\Lambda L_{n,*}\mathcal{X})(x,y)
  \simeq (L_{n+1}\mathcal{X})(x,y).\] The $n$-fold Segal space
  $(L_{n,*}\mathcal{X})(x,y)$ is defined by the pullback square
  \nolabelcsquare{(L_{n,*}\mathcal{X})(x,y)}{L_{n}\mathcal{X}_{1}}{*}{L_{n}\mathcal{X}_{0}^{\times
      2}.} But by \rcite{spans}{Lemma 2.21} the functor $L_{n}$
  preserves pullbacks over constant diagrams, so this fibre is
  equivalent to $L_{n}(\mathcal{X}(x,y))$, which completes the proof.
\end{proof}

\begin{proof}[Proof of Theorem~\ref{thm:Algnmap}]
  Let $U_{\Seg}^{n+1} \colon \Cat^{n+1}(\mathcal{S}) \to
  \Seg_{n+1}(\mathcal{S})$ denote the right adjoint to the inclusion,
  and let $i_{n} \colon \Cat^{n}(\CatI) \to \Cat^{n+1}(\mathcal{S})$
  denote the inclusion, which is also a right adjoint.  By
  Corollary~\ref{cor:ALGpullback} we then have a pullback square
  \nolabelcsquare{U^{n+1}_{\Seg}i_{n}\fALG_{n}(\Bimod_{A,B}(\mathcal{C}))}{U^{n+1}_{\Seg}i_{n}(\fALG_{n+1}(\mathcal{C})_{[1]})}{*}{U^{n+1}_{\Seg}i_{n}(\fALG_{n+1}(\mathcal{C})_{[0]})^{\times
      2}} of $(n+1)$-fold Segal spaces. This factors through the
  pullback square
  \nolabelcsquare{(U^{n+2}_{\Seg}i_{n+1}\fALG_{n+1}(\mathcal{C}))_{[1]}}{U^{n+1}_{\Seg}i_{n}(\fALG_{n+1}(\mathcal{C})_{[1]})}{(U^{n+2}_{\Seg}i_{n+1}\fALG_{n+1}(\mathcal{C}))_{[0]}^{\times
      2}}{U^{n+1}_{\Seg}i_{n}(\fALG_{n+1}(\mathcal{C})_{[0]})^{\times 2},} and so
  we may identify
  $U^{n+1}_{\Seg}i_{n}\fALG_{n}(\Bimod_{A,B}(\mathcal{C}))$ with the
  $(n+1)$-fold Segal space of maps from $A$ to $B$ in the $(n+2)$-fold
  Segal space $U^{n+2}_{\Seg}i_{n+1}\fALG_{n+1}(\mathcal{C})$. By
  Lemma~\ref{lem:mapcompl} it follows that the completion
  \[\fAlg_{n}(\Bimod_{A,B}(\mathcal{C})) \simeq
  L_{n+1}U^{n+1}_{\Seg}i_{n}\fALG_{n}(\Bimod_{A,B}(\mathcal{C}))\] is
  equivalent to the mapping $(\infty,n+1)$-category
  \[\fAlg_{n+1}(\mathcal{C})(A, B) \simeq
  (L_{n+1}U^{n+2}_{\Seg}i_{n+1}\fALG_{n+1}(\mathcal{C}))(A, B).\qedhere\]
\end{proof}

\begin{cor}\label{cor:AlgnCII}
  Let $\mathcal{C}$ be a $\simp^{n+1}$-monoidal \icat{} with good
  relative tensor products, and write $I$ for
  the unit of the monoidal structure, regarded as a (trivial)
  $\mathbb{E}_{n+1}$-algebra in $\mathcal{C}$. Then we have an
  equivalence
  \[ \fAlg_{n+1}(\mathcal{C})(I, I) \simeq \fAlg_{n}(\mathcal{C}).\]
\end{cor}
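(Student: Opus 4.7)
The plan is to obtain the equivalence as a direct combination of Theorem~\ref{thm:Algnmap} applied with $A = B = I$ and the identification $\Bimod_{I,I}(\mathcal{C}) \simeq \mathcal{C}$ from Corollary~\ref{cor:BimodIIeqC}, upgraded to a comparison of $\Dn$-monoidal \icats{}.

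First I would apply Theorem~\ref{thm:Algnmap} to the unit algebra $I$ on both sides; this immediately produces an equivalence
\[ \fAlg_{n+1}(\mathcal{C})(I, I) \simeq \fAlg_{n}(\Bimod_{I,I}(\mathcal{C})), \]
so the remaining task is to identify $\Bimod_{I,I}(\mathcal{C})$ with $\mathcal{C}$ compatibly with all the relevant structure. Corollary~\ref{cor:BimodIIeqC} already gives such an equivalence on underlying \icats{}. What is needed is to promote it to an equivalence of $\Dn$-monoidal \icats{} (in the sense of $\Bimod^{\otimes}_{I,I}(\mathcal{C})$ defined just before Lemma~\ref{lem:bimodgrtp}), and furthermore to check that this equivalence is compatible with relative tensor products, so that the functor $\fAlg_n$ can be applied.

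Concretely, I would argue as follows. Viewing $I$ as an $\simp^{n+1,\op}$-algebra via the terminal $\Dn$-monoidal structure on the unit, the projection $U^{\otimes}_{I,I}\colon \Bimod^{\otimes}_{I,I}(\mathcal{C}) \to \mathcal{C}^{\otimes}$ is a $\Dn$-monoidal functor: it is obtained from the $\Dn$-monoidal projection $\Bimod^{\otimes}(\mathcal{C}) \to \Ass^{\otimes}(\mathcal{C}) \times_{\Dnop} \Ass^{\otimes}(\mathcal{C})$ of the previous subsection by pulling back along $(I,I)$, and the fibre over each $J \in \Dnop$ is $\Bimod_{I_{J}, I_{J}}(\mathcal{C}^{\otimes}_{J,\bullet})$, where $I_J$ denotes the unit algebra in $\mathcal{C}^{\otimes}_{J,\bullet}$. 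By Corollary~\ref{cor:BimodIIeqC} applied fibrewise, each of these fibres is equivalent to $\mathcal{C}^{\otimes}_{J,\bullet}$ via $U_{I_J,I_J}$, and since the functors induced by morphisms of $\Dnop$ send units to units and commute with the forgetful functor, these equivalences assemble into an equivalence of $\Dn$-monoidal \icats{} $\Bimod^{\otimes}_{I,I}(\mathcal{C}) \isoto \mathcal{C}^{\otimes}$. Finally, since this equivalence is identified on underlying \icats{} with an equivalence and tensor products in $\Bimod_{I,I}(\mathcal{C})$ project to the corresponding tensor products in $\mathcal{C}$ (as in the proof of Lemma~\ref{lem:bimodgrtp}), it is compatible with relative tensor products in the sense of Definition~\ref{defn:mongrtp} / Lemma~\ref{lem:Dnmongrtp}.

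Applying the functor $\fAlg_n$ to this equivalence in $\LMonIDnGR$ then gives $\fAlg_{n}(\Bimod_{I,I}(\mathcal{C})) \simeq \fAlg_{n}(\mathcal{C})$, which combined with the first step completes the proof. The main obstacle is the second step: Corollary~\ref{cor:BimodIIeqC} supplies only an equivalence of underlying \icats{}, and one must carefully promote it to an equivalence in $\LMonIDnGR$ by working fibrewise over $\Dnop$ and checking that the unit-tensoring adjunction that witnesses the equivalence is natural in morphisms of $\Dn$; everything else is a direct assembly of results already in hand.
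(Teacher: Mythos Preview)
Your proposal is correct and follows essentially the same approach as the paper: apply Theorem~\ref{thm:Algnmap} with $A = B = I$, then identify $\Bimod_{I,I}^{\otimes}(\mathcal{C})$ with $\mathcal{C}^{\otimes}$ as $\Dn$-monoidal \icats{} using Corollary~\ref{cor:BimodIIeqC}. The paper's proof is terser on the second step --- it simply asserts that the natural map $\Bimod_{I,I}^{\otimes}(\mathcal{C}) \to \mathcal{C}^{\otimes}$ is a monoidal equivalence ``from Corollary~\ref{cor:BimodIIeqC} and the definition of $\Bimod_{I,I}^{\otimes}(\mathcal{C})$'' --- whereas you spell out the fibrewise argument and the compatibility with relative tensor products, but the content is the same.
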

\begin{proof}
  By Theorem~\ref{thm:Algnmap} there is an equivalence
  $\fAlg_{n+1}(\mathcal{C})(I, I) \simeq
  \fAlg_{n}(\Bimod_{I,I}(\mathcal{C}))$. But it follows from
  Corollary~\ref{cor:BimodIIeqC} and the definition of
  $\Bimod_{I,I}^{\otimes}(\mathcal{C})$ that the natural map
  $\Bimod_{I,I}^{\otimes}(\mathcal{C}) \to \mathcal{C}^{\otimes}$ is a
  $\simp^{n+1}$-monoidal equivalence.
\end{proof}

\begin{remark}
  Applying Corollary~\ref{cor:AlgnCII} inductively, we see that if
  $\mathcal{C}$ is an $\mathbb{E}_{n+m}$-monoidal \icat{}, then
  $\fAlg_{n}(\mathcal{C})$ is the endomorphism $(\infty,n+1)$-category
  of the identity $m$-morphism of the unit $I$ in the
  $(\infty,n+m+1)$-category $\fAlg_{n+m}(\mathcal{C})$. Thus
  $\fAlg_{n}(\mathcal{C})$ inherits an $\mathbb{E}_{m}$-monoidal
  structure (cf. \rcite{spans}{\S 5} for more details). It is
  intuitively plausible that this is the same as the
  $\mathbb{E}_{m}$-monoidal structure we constructed in
  \S\ref{subsec:ALGnmon}, but at the moment we are unable to prove
  this.
\end{remark}

\appendix

\section{Higher Algebra over $\Dn$}\label{sec:dnop}
In this section we discuss the more technical results we need about
\Dniopds{}. Many of these are slight variants of results proved for
symmetric \iopds{} in \cite{HA}, with essentially the same proofs,
and when this is the case we have not included proofs here.  Much
of the material in this section is also a special case either of
results of \cite{BarwickOpCat} or of unpublished work of Barwick and
Schommer-Pries.

\subsection{The $\infty$-Category of $\Dn$-$\infty$-Operads}\label{subsec:dniopdcat}
It is clear from the definition of morphisms of (generalized)
\Dniopds{} that the \icat{} of these objects should be regarded as a
subcategory of the slice \icat{} $(\CatI)_{/\Dnop}$. In this
subsection we will define model categories that describe the \icats{}
of \Dniopds{} and \gDniopds{}, using Lurie's theory of
\emph{categorical patterns}, which is a machine for constructing nice
model structures for certain subcategories of such slice \icats{}. We
will use these model structures to give an explicit model for a key
\icatl{} colimit of \gDniopds{} in \S\ref{subsec:alg1segcond} and
\S\ref{subsec:Enbimod}. We begin by recalling the definition of a
categorical pattern and Lurie's main results concerning them:

\begin{defn}
  A categorical pattern $\mathfrak{P} = (\mathcal{C}, S,
  \{p_{\alpha}\})$ consists of 
  \begin{itemize}
  \item an \icat{} $\mathcal{C}$,
  \item a marking of $\mathcal{C}$, i.e. a collection $S$ of
    1-simplices in $\mathcal{C}$ that includes all the degenerate
    ones,
  \item a collection of diagrams of \icats{} $p_{\alpha} \colon
    K_{\alpha}^{\triangleleft} \to \mathcal{C}$ such that $p_{\alpha}$
    takes every edge in $K_{\alpha}^{\triangleleft}$ to a marked edge
    of $\mathcal{C}$.
  \end{itemize}
\end{defn}

\begin{remark}
  Lurie's definition of a categorical pattern in \rcite{HA}{\S B} is
  more general than this: in particular, he includes the data of a
  \emph{scaling} of the simplicial set $\mathcal{C}$, i.e. a
  collection $T$ of 2-simplices in $\mathcal{C}$ that includes all the
  degenerate ones. In all the examples we consider, however, the
  scaling consists of \emph{all} 2-simplices of the simplicial set
  $\mathcal{C}$. We restrict ourselves to this special case as it
  gives a clearer description of the $\mathfrak{P}$-fibrant objects,
  and also simplifies the notation.
\end{remark}

From a categorical pattern, Lurie constructs a model category that
encodes the \icat{} of $\mathfrak{P}$-fibrant objects, in the
following sense:
\begin{defn}
  Suppose $\mathfrak{P} = (\mathcal{C}, S, \{p_{\alpha}\})$ is a
  categorical pattern. A map of simplicial sets $X \to \mathcal{C}$ is
  \emph{$\mathfrak{P}$-fibrant} if the following criteria are satisfied:
  \begin{enumerate}[(1)]
  \item The underlying map $\pi \colon Y \to \mathcal{C}$ is an inner
    fibration. (In particular, $Y$ is an \icat{}.)
  \item $Y$ has all $\pi$-coCartesian edges over the morphisms in $S$.
  \item For every $\alpha$, the coCartesian fibration $\pi_{\alpha}
    \colon Y \times_{\mathcal{C}} K_{\alpha}^{\triangleleft} \to
    K_{\alpha}^{\triangleleft}$, obtained by pulling back $\pi$ along
    $p_{\alpha}$, is classified by a limit diagram
    $K_{\alpha}^{\triangleleft} \to \CatI$.
  \item For every $\alpha$, any coCartesian lift $s \colon
    K_{\alpha}^{\triangleleft} \to Y$ of $p_{\alpha}$ is a
    $\pi$-limit diagram.
  \end{enumerate}
\end{defn}

\begin{thm}[Lurie, \rcite{HA}{Theorem B.0.20}]\label{thm:catpatternmodstr}
  Let $\mathfrak{P} = (\mathcal{C}, S, \{p_{\alpha}\})$ be a
  categorical pattern, and let $\overline{\mathcal{C}}$ denote the
  marked simplicial set $(\mathcal{C}, S)$. There is a unique left proper
  combinatorial simplicial model structure on the category
  $(\sSet^{+})_{/\overline{\mathcal{C}}}$ such that:
  \begin{enumerate}[(1)]
  \item The cofibrations are the morphisms whose underlying maps of
    simplicial sets are monomorphisms. In particular, all objects are
    cofibrant.
  \item An object $(X, T) \to \overline{\mathcal{C}}$ is fibrant
    \IFF{} $X \to \mathcal{C}$ is $\mathfrak{P}$-fibrant and $T$ is
    precisely the collection of coCartesian morphisms over the
    morphisms in $S$.
  \end{enumerate}
  We denote the category $(\sSet^{+})_{/\overline{\mathcal{C}}}$
  equipped with this model structure by $(\sSet^{+})_{\mathfrak{P}}$.\qed
\end{thm}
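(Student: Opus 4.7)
The plan is to apply Jeff Smith's recognition theorem for combinatorial model structures. I would first specify the generating cofibrations to be the boundary inclusions $\partial \Delta^n \hookrightarrow \Delta^n$ (with trivial marking) and the marking inclusion $(\Delta^1, \partial) \hookrightarrow (\Delta^1, \Delta^1)$, all taken in the category of marked simplicial sets over $\overline{\mathcal{C}}$ via any fixed structure map. This immediately gives that cofibrations are exactly the monomorphisms on underlying simplicial sets, and that the category $(\sSet^+)_{/\overline{\mathcal{C}}}$ is locally presentable.

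The heart of the argument is to introduce a class of \emph{$\mathfrak{P}$-anodyne} maps, constructed as the weakly saturated class generated by four explicit families: (a) inner horn inclusions $\Lambda^n_i \hookrightarrow \Delta^n$ with $0 < i < n$, which force $\pi$ to be an inner fibration; (b) left horn inclusions $(\Lambda^n_0, \mathcal{E}) \hookrightarrow (\Delta^n, \mathcal{E}')$ marked so that the initial edge lies over $S$, forcing existence of coCartesian edges over marked morphisms; (c) maps of the form $\partial \Delta^n \star K_\alpha^\triangleleft \amalg_{\partial \Delta^n \star K_\alpha} \Delta^n \star K_\alpha \hookrightarrow \Delta^n \star K_\alpha^\triangleleft$ over $p_\alpha$, which impose the $\pi$-limit condition for any coCartesian lift of $p_\alpha$; and (d) an auxiliary family that, via a standard trick with the cocone, enforces the condition that the coCartesian fibration classifying $\pi \times_{\mathcal{C}} K_\alpha^\triangleleft$ is a limit diagram in $\CatI$. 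I would define weak equivalences by picking a fibrant replacement functor obtained by the small object argument applied to $\mathfrak{P}$-anodyne maps, and declaring $f \colon X \to Y$ to be a weak equivalence if the induced map on fibrant replacements is a categorical equivalence of marked simplicial sets on each fibre in an appropriate sense, or equivalently if $f$ induces an equivalence on derived mapping spaces into every $\mathfrak{P}$-fibrant object.

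To apply Smith's theorem I would verify: (i) weak equivalences are closed under retracts and satisfy 2-out-of-3 (routine from the definition); (ii) trivial cofibrations are closed under pushout and transfinite composition (because these classes are characterized by a lifting property against fibrant objects, which is preserved under colimits); (iii) the class of weak equivalences is accessible, which follows from accessibility of the fibrant replacement functor together with accessibility of categorical equivalences; and (iv) every map with the right lifting property against cofibrations is a weak equivalence (this reduces to the observation that such maps are trivial Kan fibrations of underlying simplicial sets, hence anodyne). Left properness follows from the fact that cofibrations are monomorphisms and weak equivalences are detected by mapping into fibrant objects whose underlying map is a categorical fibration.

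The main obstacle, and the genuinely technical part, is verifying that the fibrant objects are \emph{exactly} those described in condition (2). One direction is by construction, since $\mathfrak{P}$-anodyne maps were designed precisely to force the four conditions in the definition of $\mathfrak{P}$-fibrancy. The converse requires showing that if $X \to \mathcal{C}$ satisfies the four $\mathfrak{P}$-fibrancy conditions and $T$ is the collection of coCartesian edges over $S$, then $(X,T)$ has the right lifting property against all $\mathfrak{P}$-anodyne maps; this is where one has to do real work, analyzing lifting against family (c) by unwinding the definition of $\pi$-limit diagram and against family (d) via \cite[Proposition 3.3.3.1]{HTT} relating $\pi$-limits to limits in the fibres. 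The uniqueness of the model structure is then automatic from the characterizations of cofibrations and fibrant objects.
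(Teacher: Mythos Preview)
The paper does not give a proof of this theorem: it is stated with attribution to Lurie \cite[Theorem B.0.20]{HA} and closed with a \qed, so there is nothing in the paper to compare your proposal against. Your sketch goes well beyond what the paper itself does.

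As for the content of your proposal relative to Lurie's actual argument in \cite[Appendix B]{HA}: the broad shape is right --- one does define an explicit class of $\mathfrak{P}$-anodyne maps (this is \cite[Definition B.1.1]{HA}) and identifies fibrant objects as exactly the $\mathfrak{P}$-fibrant ones. However, Lurie does not invoke Smith's theorem; he instead bootstraps from the coCartesian model structure on marked simplicial sets over $\overline{\mathcal{C}}$ and localizes. Your families (a)--(d) are in the right spirit but do not quite match Lurie's list, which has more members (including maps handling the scaling and the compatibility of marked edges with composition), and your family (d) is too vague to assess. The step you flag as the ``genuinely technical part'' --- showing that objects satisfying the four $\mathfrak{P}$-fibrancy conditions have the right lifting property against all $\mathfrak{P}$-anodyne maps --- is indeed where the work lies, and your sketch does not really engage with it. Since the paper treats this as a black box, none of this is needed here.
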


\begin{defn}
  We will make use of the following categorical patterns:
  \begin{enumerate}[(i)]
    \item Let $\mathfrak{O}_{n}$ be the categorical
    pattern \[(\Dnop, I_{n}, \{p_{I} \colon
    K_{I}^{\triangleleft} \to \Dnop\}),\] where $I_{n}$
    is the set of inert morphisms in $\Dnop$ and, for $I \in \Dn$, we
    write $K_{I}$ for the set of inert
    morphisms $I \to C_{n}$ in $\Dnop$ and $p_{I}$ for the functor
    $K_{I}^{\triangleleft} \to \Dnop$ associated to the inclusion
    $K_{I} \hookrightarrow (\Dnop)_{I/}$. It is immediate from
    Definition~\ref{defn:Dniopd} that a map $Y \to \Dnop$ is
    $\mathfrak{O}_{n}$-fibrant precisely if it is a \Dniopd{}.
  \item Let $\mathfrak{M}_{n}$ denote the categorical
    pattern \[(\Dnop, A_{n}, \{p_{I}
    \colon K_{I}^{\triangleleft} \to \Dnop\}),\]
    where $A_{n}$ denotes the set of all morphisms in $\Dnop$. Then a map $Y
    \to \Dnop$ is $\mathfrak{M}_{n}$-fibrant precisely if $Y \to
    \Dnop$ is a $\Dn$-monoidal \icat{}.
  \item Let $\mathfrak{O}^{\txt{gen}}_{n}$ be the categorical
    pattern \[(\Dnop, I_{n}, \{
    (\CellnIop)^{\triangleleft} \to \Dnop\}).\]
    It is immediate from Definition~\ref{defn:gDniopd} that a map $Y
    \to \Dnop$ is $\mathfrak{O}^{\txt{gen}}_{n}$-fibrant \IFF{} $Y \to
    \Dnop$ is a \gDniopd{}.
  \item Let $\mathfrak{U}_{n}$ denote the categorical
    pattern \[(\Dnop, \mathrm{N}\Dnop_{1}, \{
    (\CellnIop)^{\triangleleft} \to \Dnop\}).\]
    Then a map $Y \to \Dnop$ is $\mathfrak{U}_{n}$-fibrant \IFF{} $Y
    \to \Dnop$ is a $\Dn$-uple \icat{}.
  \end{enumerate}
\end{defn}

\begin{defn}
  The \icat{} $\OpdIDn$ of \Dniopds{} is the \icat{} associated to the
  simplicial model category $(\sSet^{+})_{\mathfrak{O}_{n}}$, i.e. the
  coherent nerve of its simplicial subcategory of fibrant objects. Thus
  the objects of $\OpdIDn$ can be identified with
  \Dniopds{}. Moreover, since the maps between these in
  $(\sSet^{+})_{\mathfrak{O}_{n}}$ are precisely the maps that
  preserve inert morphisms, it is also easy to see that the space of
  maps from $\mathcal{O}$ to $\mathcal{P}$ in $\OpdIDn$ is equivalent
  to the subspace of $\Map_{\Dnop}(\mathcal{O}, \mathcal{P})$ given by
  the components corresponding to inert-morphism-preserving maps, as
  expected. This justifies calling $\OpdIDn$ the \emph{\icat{} of
    \Dniopds{}}.
\end{defn}

\begin{remark}
  This \icat{} of \Dniopds{} is a special case of the \icats{} of
  \iopds{} over an operator category constructed by Barwick in
  \rcite{BarwickOpCat}{Theorem 8.15}.
\end{remark}

\begin{defn}
  Similarly, applying Theorem~\ref{thm:catpatternmodstr} to the
  categorical patterns $\mathfrak{M}_{n}$,
  $\mathfrak{O}_{n}^{\txt{gen}}$, and $\mathfrak{U}_{n}$ gives
  simplicial model categories $(\sSet^{+})_{\mathfrak{M}_{n}}$,
  $(\sSet^{+})_{\mathfrak{O}^{\txt{gen}}_{n}}$, and
  $(\sSet^{+})_{\mathfrak{U}_{n}}$ whose fibrant objects are,
  respectively, $\Dn$-monoidal \icats{}, \gDniopds{}, and $\Dn$-uple
  \icats{}. We write $\MonI^{\Dn}$, $\OpdIDng$, and $\txt{Upl}^{\Dn}_{\infty}$ for the \icats{} associated to these
  simplicial model categories, and refer to them as the \emph{\icats{} of
    $\Dn$-monoidal \icats{}}, \emph{\gDniopds{}}, and \emph{$\Dn$-uple \icats{}}.
\end{defn}

\begin{defn}
  The morphisms in $\MonI^{\Dn}$ are the (strong) $\Dn$-monoidal
  functors between $\Dn$-monoidal \icats{}. We write
  $\MonI^{\Dn,\txt{lax}}$ for the \icat{} of $\Dn$-monoidal \icats{}
  and lax $\Dn$-monoidal functors, i.e. the full subcategory of
  $\OpdIDn$ spanned by the $\Dn$-monoidal \icats{}.
\end{defn}

We now show that taking Cartesian products gives left Quillen
bifunctors relating \Dniopds{} for varying $n$. This will allow us to
reduce the proofs of the technical results needed in \S\ref{sec:enalg} to the case
where $n = 1$. First we introduce some notation and recall a result of Lurie:
\begin{defn}
  Suppose $\mathfrak{P} = (\mathcal{C}, S,
  \{p_{\alpha} \colon K_{\alpha}^{\triangleleft} \to \mathcal{C}\})$
  and $\mathfrak{Q} = (\mathcal{D}, T,
  \{q_{\beta} \colon L_{\beta}^{\triangleleft} \to \mathcal{D}\})$ are categorical
  patterns. The \emph{product categorical pattern} $\mathfrak{P}
  \times \mathfrak{Q}$ is given by
  \[ (\mathcal{C} \times \mathcal{D}, S \times T, \{p_{\alpha} \times
  \{d\} : d \in \mathcal{D}\} \cup \{\{c\} \times q_{\beta} : c \in \mathcal{C}\}).\]
\end{defn}

\begin{propn}[Lurie, \rcite{HA}{Remark B.2.5}]\label{propn:catpattprod}
  Suppose $\mathfrak{P}$ and $\mathfrak{Q}$ are categorical
  patterns. The Cartesian product is a left Quillen bifunctor
  \[ (\sSet^{+})_{\mathfrak{P}} \times (\sSet^{+})_{\mathfrak{Q}} \to
  (\sSet^{+})_{\mathfrak{P} \times \mathfrak{Q}}.\]\qed
\end{propn}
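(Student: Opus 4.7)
The plan is to verify the two-variable analogue of Quillen's pushout-product axiom for the Cartesian product bifunctor
\[ \times \colon (\sSet^{+})_{\mathfrak{P}} \times (\sSet^{+})_{\mathfrak{Q}} \to (\sSet^{+})_{\mathfrak{P}\times\mathfrak{Q}}. \]
First I would observe that this is genuinely a functor into the claimed model category, and has right adjoints in each variable: the Cartesian product of marked simplicial sets admits an internal Hom (the marked mapping complex), and this descends to the slice categories since for $(X, T_{X}) \to \overline{\mathcal{C}}$ and $(Y, T_{Y}) \to \overline{\mathcal{D}}$ the product sits naturally over $\overline{\mathcal{C}} \times \overline{\mathcal{D}}$ with marking $T_{X} \times T_{Y}$. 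Hence the bifunctor preserves colimits in each variable separately.

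Next I would verify the cofibration part of the pushout-product axiom. By Theorem~\ref{thm:catpatternmodstr}, cofibrations in both $(\sSet^{+})_{\mathfrak{P}}$ and $(\sSet^{+})_{\mathfrak{Q}}$ are precisely the maps whose underlying map of simplicial sets is a monomorphism, together with arbitrary extension of the marking. Since the Cartesian product of monomorphisms of simplicial sets is a monomorphism (checked non-degenerate-simplex-wise) and the marking on a product is determined by those on the factors, the pushout-product $f \square g$ of two cofibrations is again a cofibration.

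The substantive step is to show that if in addition $f$ is a weak equivalence then $f \square g$ is a trivial cofibration. The strategy is to use Lurie's characterization (\rcite{HA}{Proposition B.2.7}) of the class of trivial cofibrations in $(\sSet^{+})_{\mathfrak{P}}$ as the weakly saturated class generated by the explicit list of $\mathfrak{P}$-anodyne morphisms in \rcite{HA}{Definition B.1.1}. A standard saturation argument reduces the claim to the case where $f$ is a generating $\mathfrak{P}$-anodyne map and $g$ is a generating cofibration in $(\sSet^{+})_{\mathfrak{Q}}$, namely a boundary inclusion $\partial \Delta^{n} \hookrightarrow \Delta^{n}$ (with empty marking) or the marking extension $(\Delta^{1}, \emptyset) \hookrightarrow (\Delta^{1})^{\sharp}$, and symmetrically with the roles swapped.

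The main obstacle, and the bulk of the work, is checking this claim one class of generating $\mathfrak{P}$-anodyne morphisms at a time. The classes involving inner horn inclusions, marked-edge extensions, and the horn fillings producing coCartesian lifts over marked edges reduce to the analogous pushout-product statements for the Joyal and coCartesian model structures on marked simplicial sets, essentially by identifying the resulting pushout-products with iterated pushouts of morphisms in the same list (or showing they are inner anodyne, respectively coCartesian anodyne). The genuinely new ingredient is the class of morphisms witnessing the limit-diagram conditions (3) and (4) in the definition of $\mathfrak{P}$-fibrancy, associated to the distinguished diagrams $p_{\alpha} \colon K_{\alpha}^{\triangleleft} \to \mathcal{C}$. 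For these I would exploit the fact that the distinguished diagrams of the product pattern $\mathfrak{P} \times \mathfrak{Q}$ are, by definition, of the form $p_{\alpha} \times \{d\}$ and $\{c\} \times q_{\beta}$ and that limits in $\mathcal{C} \times \mathcal{D}$ are computed pointwise, so that a coCartesian lift of $p_{\alpha} \times \{d\}$ is a $\pi$-limit cone exactly when its underlying lift of $p_{\alpha}$ is. Under this identification, the relevant pushout-products factor as compositions of pushouts of inner-horn, marking, and limit-diagram anodyne morphisms for $\mathfrak{P} \times \mathfrak{Q}$, so belong to the same weakly saturated class, as required.
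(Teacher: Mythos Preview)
The paper does not prove this proposition: it is attributed to Lurie with a citation to \cite[Remark B.2.5]{HA} and closed immediately with \qed. There is therefore no argument in the paper to compare your proposal against.

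Your sketch follows the expected shape of such a proof --- verify the pushout-product axiom by reducing to generating cofibrations and an explicit class of anodyne maps --- and this is indeed how Lurie's Appendix B is organized. One small caution: the claim that the trivial cofibrations in $(\sSet^{+})_{\mathfrak{P}}$ coincide with the weakly saturated class generated by the $\mathfrak{P}$-anodyne morphisms is stronger than what Lurie actually proves or needs. The $\mathfrak{P}$-anodyne maps are trivial cofibrations, but the converse inclusion is not established in general. The argument instead goes through the right adjoints: one shows that pushout-products of a $\mathfrak{P}$-anodyne map with a cofibration are $(\mathfrak{P}\times\mathfrak{Q})$-anodyne, which by adjunction forces the pullback-hom functors to preserve fibrations and trivial fibrations between fibrant objects, and this suffices for the left Quillen bifunctor property (see for instance \cite[Corollary A.3.7.2]{HTT} for the form of this reduction). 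Your case analysis of the generating anodyne classes is the right content; only the logical wrapper around it needs adjusting.
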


\begin{defn}
  Let us say that a categorical pattern $\mathfrak{P} = (\mathcal{C},
  S, D)$ is \emph{objectwise} if the set of diagrams $D$ is of the
  form $\{p_{x} \colon K_{x}^{\triangleleft} \to \mathcal{C} : x \in
  \mathcal{C}\}$, where $p_{x}(-\infty) = x$. We say that
  $\mathfrak{P}$ is \emph{reduced} if moreover $K_{c}$ has an initial
  object for every $c$ in the image of $p_{x}|_{K_{x}}$ for any
  $x$. If $\mathfrak{P} = (\mathcal{C}, S, \{p_{x} \colon
  K_{x}^{\triangleleft} \to \mathcal{C})\})$ and $\mathfrak{Q} =
  (\mathcal{D}, T, \{q_{y}\colon L_{y}^{\triangleleft} \to
  \mathcal{C}\})$ are objectwise categorical patterns, we let
  $\mathfrak{P} \boxtimes \mathfrak{Q}$ be the objectwise categorical
  pattern
  \[ (\mathcal{C} \times \mathcal{D}, S \times T, \{(K_{x} \times
  L_{y})^{\triangleleft} \to K_{x}^{\triangleleft} \times
  L_{y}^{\triangleleft}  \xto{p_{x} \times q_{y}} \mathcal{C} \times \mathcal{D}
  : (x,y) \in \mathcal{C} \times \mathcal{D}\}). \]
\end{defn}

\begin{propn}\label{propn:objwisecatpatt}
  Suppose $\mathfrak{P}$ and $\mathfrak{Q}$ are objectwise reduced
  categorical patterns. Then the model category structures
  $(\sSet^{+})_{\mathfrak{P} \times \mathfrak{Q}}$ and
  $(\sSet^{+})_{\mathfrak{P} \boxtimes \mathfrak{Q}}$ on
  $(\sSet^{+})_{/(\mathcal{C} \times \mathcal{D}, S \times T)}$ are
  identical.
\end{propn}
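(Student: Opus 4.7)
\emph{Plan.} Both model structures live on the same underlying category $(\sSet^{+})_{/(\mathcal{C} \times \mathcal{D}, S \times T)}$, share the same cofibrations (monomorphisms of underlying simplicial sets), and share the same simplicial enrichment inherited from marked simplicial sets. By the uniqueness clause of Theorem~\ref{thm:catpatternmodstr}, to conclude that the two model structures agree it therefore suffices to show that they have the same fibrant objects.

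Fibrancy conditions (1) and (2) in the definition --- inner fibration and existence of coCartesian lifts of the marked edges --- depend only on the marking $S \times T$, which is shared by both patterns, and so agree automatically. The work is thus to show that the limit-type conditions (3) and (4), which concern the respective families of indexing diagrams, impose the same restriction on a map $Y \to \mathcal{C} \times \mathcal{D}$. In the forward direction (from $\mathfrak{P} \times \mathfrak{Q}$-fibrant to $\mathfrak{P} \boxtimes \mathfrak{Q}$-fibrant), the argument is a Fubini-style decomposition: for $(x,y) \in \mathcal{C} \times \mathcal{D}$, the pullback of $Y$ along $(K_{x} \times L_{y})^{\triangleleft} \to \mathcal{C} \times \mathcal{D}$ can be computed as an iterated limit. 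For each $k \in K_{x}^{\triangleleft}$, the $\{p_{x}(k)\} \times q_{y}$ condition identifies the ``inner'' limit over $L_{y}$ with $Y(p_{x}(k), y)$ as a $\pi$-limit; the $p_{x} \times \{y\}$ condition then evaluates the ``outer'' limit over $K_{x}$, giving $Y(x,y)$. Since both the $\pi$-limit property and the condition of being classified by a limit diagram in $\CatI$ compose well under iteration, the $(K_{x} \times L_{y})^{\triangleleft}$-condition follows.

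The reverse direction is where the reducedness hypothesis enters. Given a $\mathfrak{P} \boxtimes \mathfrak{Q}$-fibrant $Y$, to verify, say, the $p_{x} \times \{d\}$ condition at $(x, d) \in \mathcal{C} \times \mathcal{D}$, one exploits that reducedness provides initial objects in $K_{c}$ for every $c$ appearing as $p_{x}(k)$ with $k \in K_{x}$ (and symmetrically for $L_{c'}$). These initial objects make the diagrams $p_{c}$ (respectively $q_{c'}$) ``tautological'' at the non-cone vertices of the outer diagrams, so that each $(K_{x} \times L_{y})^{\triangleleft}$-condition, viewed as built iteratively via the product-decomposition of the non-cone part and its cone, specializes to the single-variable condition $p_{x} \times \{y\}$ (by collapsing the $L_{y}$ direction along its initial object at each $p_{x}(k)$); taking $y = d$ gives the required condition. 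The symmetric argument applied to the second factor yields the $\{c\} \times q_{y}$ condition.

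The main obstacle will be this reverse direction. The delicate point is converting the reducedness-induced initial objects into a genuine cofinality/coinitiality statement that collapses the product diagram $(K_{x} \times L_{y})^{\triangleleft}$ onto the single-variable diagram $p_{x} \times \{d\}$ while simultaneously preserving both the coCartesian-fibration classification data and the $\pi$-limit structure. In particular, one must carefully track how the induced coCartesian pushforwards along the collapsed direction are compatible with those appearing in the single-variable condition; this is exactly the content of the reduced hypothesis on $\mathfrak{P}$ and $\mathfrak{Q}$.
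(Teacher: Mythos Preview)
Your strategy matches the paper's: reduce to fibrant objects via the uniqueness clause of Theorem~\ref{thm:catpatternmodstr}, observe that the inner-fibration and coCartesian-lift conditions depend only on the shared marking, and then argue that the two families of diagram conditions impose the same constraint. You have also correctly located where reducedness enters: the direction from $\mathfrak{P}\boxtimes\mathfrak{Q}$-fibrancy to $\mathfrak{P}\times\mathfrak{Q}$-fibrancy is the one that needs it, while the other direction is a Fubini computation.

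Two points are worth sharpening. First, the paper does not argue either direction by a bare iterated-limit computation; it packages both as statements about right Kan extensions of the classifying functor $\phi\colon K_x^{\triangleleft}\times L_y^{\triangleleft}\to\CatI$ along inclusions of subdiagrams, and then applies a small gluing lemma (Lemma~\ref{lem:rkesquare}) which says that a functor on a square of fully faithful inclusions is a right Kan extension from the intersection if and only if its restrictions to the two sides are. This lemma is what makes the ``Fubini'' informal argument rigorous and, more importantly, is the mechanism that drives the delicate direction. Second, your description of the reducedness step is slightly garbled: the relevant initial objects are not in $L_y$ itself, and one does not ``collapse the $L_y$ direction'' directly. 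What actually happens is that, for $z\in L_y$, one needs $\phi|_{K_x^{\triangleleft}\times\{z\}}$ to be a limit diagram; this is obtained by applying the $\boxtimes$-condition at the point $(x,q_y(z))$ and then using that $L_{q_y(z)}$ has an initial object (reducedness of $\mathfrak{Q}$) to collapse the $L_{q_y(z)}$-factor there. The symmetric statement uses reducedness of $\mathfrak{P}$. Your final paragraph correctly names this as the obstacle; the resolution is the Kan-extension reformulation plus Lemma~\ref{lem:rkesquare}.
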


For the proof we make use of the following obvious observation:
\begin{lemma}\label{lem:rkesquare}
  Suppose given a commutative square
  \csquare{\mathcal{C}_{0}}{\mathcal{C}_{1}}{\mathcal{C}_{2}}{\mathcal{C}}{i_{1}}{i_{2}}{j_{1}}{j_{2}}
  of \icats{} where all the maps are fully faithful, such that every
  object of $\mathcal{C}$ is contained in the essential image of either $\mathcal{C}_{1}$ or
  $\mathcal{C}_{2}$. If $\pi \colon \mathcal{X} \to \mathcal{Y}$ is an
  inner fibration of \icats{} and $\phi \colon \mathcal{C} \to
  \mathcal{X}$ is a functor, then $\phi$ is a $\pi$-right Kan
  extension of $\phi|_{\mathcal{C}_{0}}$ along $j_{1}i_{1} \simeq
  j_{2}i_{2}$ \IFF{} $\phi|_{\mathcal{C}_{1}}$ is a $\pi$-right Kan
  extension of $\phi|_{\mathcal{C}_{0}}$ along $j_{1}$ and
  $\phi|_{\mathcal{C}_{2}}$ is a $\pi$-right Kan extension of
  $\phi|_{\mathcal{C}_{0}}$ along $j_{2}$.\qed
\end{lemma}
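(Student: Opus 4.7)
The plan is to deduce the lemma from the pointwise characterization of $\pi$-right Kan extensions, namely \rcite{HTT}{Definition 4.3.2.2 \& Lemma 4.3.2.13}: given an inner fibration $\pi\colon\mathcal{X}\to\mathcal{Y}$, a functor $f\colon\mathcal{D}_0\to\mathcal{D}$, a functor $F\colon\mathcal{D}_0\to\mathcal{X}$, and an extension $\overline{F}\colon\mathcal{D}\to\mathcal{X}$, the functor $\overline{F}$ is a $\pi$-right Kan extension of $F$ along $f$ if and only if for every object $d\in\mathcal{D}$ the induced diagram
\[ (\mathcal{D}_{0}\times_{\mathcal{D}}\mathcal{D}_{d/})^{\triangleleft}\to\mathcal{X}\]
(with cone point sent to $\overline{F}(d)$) is a $\pi$-limit diagram. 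Since this is an object-local condition, both sides of the claimed equivalence reduce to checking, at each $c\in\mathcal{C}$, that a certain cone is a $\pi$-limit.

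Next I would use the hypothesis that every $c\in\mathcal{C}$ is in the essential image of $j_{1}$ or $j_{2}$. Fix $k\in\{1,2\}$ and $c'\in\mathcal{C}_{k}$ with $c\simeq j_{k}(c')$. The key geometric step is to show that the natural comparison map
\[ \mathcal{C}_{0}\times_{\mathcal{C}_{k}}(\mathcal{C}_{k})_{c'/}\;\longrightarrow\;\mathcal{C}_{0}\times_{\mathcal{C}}\mathcal{C}_{c/}\]
(induced by $j_{k}$) is an equivalence of \icats{}. This follows from the full faithfulness of $j_{k}$: for $c_{0}\in\mathcal{C}_{0}$, the comparison on mapping spaces is $\Map_{\mathcal{C}_{k}}(c',i_{k}(c_{0}))\to\Map_{\mathcal{C}}(j_{k}(c'),j_{k}i_{k}(c_{0}))$, which is an equivalence since $j_{k}$ is fully faithful; a parallel argument on higher simplices, or alternatively a direct application of \rcite{HTT}{Proposition 4.4.3.2}, upgrades this to an equivalence of slice \icats{}. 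Moreover, under this equivalence the two cones at $c$ and $c'$ agree, since both recover $\phi|_{\mathcal{C}_{0}}$ composed with the projection to $\mathcal{C}_{0}$.

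Putting the pieces together: the pointwise $\pi$-limit condition for $\phi$ as a Kan extension along $j_{1}i_{1}\simeq j_{2}i_{2}$ at the object $c=j_{k}(c')$ is literally the same condition as the pointwise $\pi$-limit condition for $\phi|_{\mathcal{C}_{k}}$ as a Kan extension along $i_{k}$ (i.e.\ along ``$j_{k}$'' in the notation of the lemma) at the object $c'$. Hence running over all $c\in\mathcal{C}$ and partitioning them according to which essential image they land in, condition (A) at $\phi$ is equivalent to the conjunction of conditions (B$_{1}$) and (B$_{2}$), as required.

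The main obstacle is the technical verification that the comparison of slice \icats{} induced by the fully faithful $j_{k}$ is actually an equivalence, and not merely fully faithful. The point is that the fiber product with $\mathcal{C}_{0}$ over $\mathcal{C}$ forces every object of $\mathcal{C}_{0}\times_{\mathcal{C}}\mathcal{C}_{c/}$ to factor through $\mathcal{C}_{k}$ (because $\mathcal{C}_{0}\to\mathcal{C}$ factors through $\mathcal{C}_{k}$ via $i_{k}$), so essential surjectivity is automatic; the delicate part is just full faithfulness on $n$-simplices, which is where \rcite{HTT}{Proposition 4.4.3.2} (pullbacks of fully faithful functors along isofibrations) enters.
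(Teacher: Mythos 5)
The paper states this lemma without proof (it is offered as an ``obvious observation'' with an immediate \verb|\qed|), so there is nothing to compare against line by line; your argument is the standard one and is correct: reduce both sides to the pointwise $\pi$-limit criterion, and use full faithfulness of $j_{k}$ to identify the comma $\infty$-category $\mathcal{C}_{0}\times_{\mathcal{C}_{k}}(\mathcal{C}_{k})_{c'/}$ with $\mathcal{C}_{0}\times_{\mathcal{C}}\mathcal{C}_{j_{k}(c')/}$ compatibly with the cone diagrams. You also correctly read ``along $j_{1}$'' in the statement as ``along $i_{1}$''. One small repair: \rcite{HTT}{Proposition 4.4.3.2} is not the statement you want for the slice comparison; the cleanest justification is that the comparison is a map of left fibrations over $\mathcal{C}_{0}$ which is a fibrewise equivalence (the fibres being left mapping spaces, equivalent by full faithfulness of $j_{k}$), hence a categorical equivalence by, e.g., \rcite{HTT}{Corollary 2.2.3.13}.
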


\begin{proof}[Proof of Proposition~\ref{propn:objwisecatpatt}]
  By the uniqueness statement in Theorem~\ref{thm:catpatternmodstr} it
  is enough to check that the fibrant objects are the same in the two
  model structures. Supposing $Y \to \mathcal{C}$ is an inner
  fibration with all coCartesian morphisms over the morphisms in $S$,
  we are interested in the following conditions:
  \begin{enumerate}[(1)]
  \item For all $(x,y) \in \mathcal{C} \times \mathcal{D}$, the
    coCartesian fibration $(K_{x} \times L_{y})^{\triangleleft}
    \times_{\mathcal{C}} Y \to (K_{x} \times L_{y})^{\triangleleft}$
    is classified by a limit diagram.
  \item[(1')] For all $(x,y) \in \mathcal{C} \times \mathcal{D}$, the
    coCartesian fibrations $(K_{x}^{\triangleleft} \times \{y\})
    \times_{\mathcal{C}} Y \to K_{x}^{\triangleleft}$ and
    $(\{x\} \times L_{y}^{\triangleleft}) \times_{\mathcal{C}} Y \to
    L_{y}^{\triangleleft}$ are classified by limit diagrams.
  \item For all $(x,y) \in \mathcal{C} \times \mathcal{D}$, any
    coCartesian section $s \colon (K_{x} \times L_{y})^{\triangleleft}
    \to Y$ is a $\pi$-limit diagram.
  \item[(2')] For all $(x,y) \in \mathcal{C} \times \mathcal{D}$, any
    coCartesian sections $s \colon K_{x}^{\triangleleft} \times \{y\}
    \to Y$ and $t \colon \{x\} \times L_{y}^{\triangleleft} \to Y$ are
    $\pi$-limit diagrams.
  \end{enumerate}
  We must show that (1) and (1') are equivalent, and that (2) and (2')
  are equivalent.

  To see that (1) implies (1'), let $\phi \colon K_{x}^{\triangleleft}
  \times L_{y}^{\triangleleft} \to \CatI$ be a diagram classified by
  the coCartesian fibration $(K_{x}^{\triangleleft} \times
  L_{y}^{\triangleleft}) \times_{\mathcal{C} \times \mathcal{D}} Y \to
  K_{x}^{\triangleleft} \times L_{y}^{\triangleleft}$ for some $(x,y)
  \in \mathcal{C} \times \mathcal{D}$. We now wish to apply
  Lemma~\ref{lem:rkesquare} to the square \nolabelcsquare{K_{x} \times
    L_{y}}{(K_{x} \times
    L_{y})^{\triangleleft}}{(K_{x}^{\triangleleft} \times L_{y})
    \amalg_{K_{x} \times L_{y}} (K_{x} \times
    L_{y}^{\triangleleft})}{K_{x}^{\triangleleft} \times
    L_{y}^{\triangleleft}.}  By assumption $\phi|_{(K_{x} \times
    L_{y})^{\triangleleft}}$ is a right Kan extension of $\phi|_{K_{x}
    \times L_{y}}$, so it remains to prove that
  the restriction of $\phi$ to $(K_{x}^{\triangleleft} \times L_{y}) \amalg_{K_{x} \times
      L_{y}} (K_{x} \times L_{y}^{\triangleleft})$ is a right Kan
  extension of $\phi|_{K_{x} \times L_{y}}$. In other words, we must
  show that for any $z \in L_{y}$ the object $\phi(-\infty, z)$ is a
  limit of $\phi|_{(K_{x} \times L_{y})_{(-\infty,z)/}}$, and that for
  any $w \in K_{x}$ the object $\phi(w, -\infty)$ is a limit of
  $\phi|_{(K_{x} \times L_{y})_{(w,-\infty)/}}$. The inclusion $K_{x}
  \times \{z\} \to (K_{x} \times L_{y})_{(-\infty,z)/}$ is coinitial,
  so it suffices to prove that the restriction of $\phi$ to
  $K_{x}^{\triangleleft} \times \{z\}$ is a limit diagram. Since the
  categorical pattern is reduced, by assumption the \icat{} $L_{z}$
  has an initial object, and so there is a coinitial map $K_{x} \times
  \{z\} \to K_{x} \times L_{z}$. Moreover, the restriction of $\phi$
  to $K_{x}^{\triangleleft} \times \{z\}$ is also the restriction of
  the analogous functor $(K_{z} \times L_{z})^{\triangleleft} \to
  \CatI$, which is a limit diagram by assumption. Thus
  $\phi(-\infty,z)$ is indeed the limit of $\phi|_{K_{x} \times
    \{z\}}$, and similarly $\phi(w,-\infty)$ is the limit of
  $\phi|_{\{w\} \times L_{y}}$. It then follows from
  Lemma~\ref{lem:rkesquare} that $\phi$ is a right Kan extension of
  $\phi|_{K_{x} \times L_{y}}$.

  Now considering the factorization
  \[ K_{x} \times L_{y} \hookrightarrow K_{x}^{\triangleleft} \times
L_{y} \hookrightarrow (K_{x}^{\triangleleft} \times
L_{y})^{\triangleleft} \to K_{x}^{\triangleleft} \times
L_{y}^{\triangleleft}\] we see that $\phi|_{(K_{x}^{\triangleleft}
  \times L_{y})^{\triangleleft}}$ is a limit of
$\phi|_{K_{x}^{\triangleleft} \times L_{y}}$. Since the inclusion
$\{-\infty\} \times L_{y} \hookrightarrow K_{x}^{\triangleleft} \times
L_{y}$ is coinitial, it follows that $\phi|_{\{-\infty\}
  \times L_{y}^{\triangleleft}}$ is a limit diagram. Similarly,
$\phi|_{K_{x}^{\triangleleft} \times \{-\infty\}}$ is a limit diagram,
which proves (1').
  
  Conversely, to see that (1') implies (1) we consider the square
  \nolabelcsquare{K_{x} \times L_{y}}{(K_{x}^{\triangleleft} \times
    L_{y}) \amalg_{K_{x} \times L_{y}} (K_{x} \times
    L_{y}^{\triangleleft})}{(K_{x}^{\triangleleft} \times
    L_{y})^{\triangleleft}}{K_{x}^{\triangleleft} \times
    L_{y}^{\triangleleft}.}  Let $\phi$ be as above; then it follows
  from (1') that $\phi|_{(K_{x}^{\triangleleft} \times L_{y})
    \amalg_{K_{x} \times L_{y}} (K_{x} \times L_{y}^{\triangleleft})}$
  is a right Kan extension of $\phi|_{K_{x} \times L_{y}}$ and
  $\phi|_{K_{x}^{\triangleleft} \times L_{y}}$ is a right Kan
  extension of $\phi|_{K_{x} \times L_{y}}$. Since $\{-\infty\} \times
  L_{y} \hookrightarrow K_{x}^{\triangleleft} \times L_{y}$ is
  coinitial, (1') also implies that $\phi|_{(K_{x}^{\triangleleft}
    \times L_{y})^{\triangleleft}}$ is a right Kan extension of
  $\phi|_{K_{x}^{\triangleleft} \times L_{y}}$, and so by
  Lemma~\ref{lem:rkesquare} it follows that $\phi$ is a right Kan
  extension of $\phi|_{K_{x} \times L_{y}}$. But then $\phi|_{(K_{x}
    \times L_{y})^{\triangleleft}}$ is also a right Kan extension of
  $\phi|_{K_{x} \times L_{y}}$, which proves (1).

  It follows by the same argument, applied to a coCartesian section
  $\phi \colon K_{x}^{\triangleleft} \times L_{y}^{\triangleleft} \to
  Y$, that (2) is equivalent to (2').
\end{proof}

Applying this to the categorical patterns we're interested in, we get:
\begin{cor}\label{cor:Ontimes}\ 
  \begin{enumerate}[(i)]
  \item The model categories $(\sSet^{+})_{\mathfrak{O}_{n} \times
      \mathfrak{O}_{m}}$ and $(\sSet^{+})_{\mathfrak{O}_{n+m}}$ are identical.
  \item The model categories $(\sSet^{+})_{\mathfrak{M}_{n} \times
      \mathfrak{M}_{m}}$ and $(\sSet^{+})_{\mathfrak{M}_{n+m}}$ are
    identical.
  \item The model categories $(\sSet^{+})_{\mathfrak{O}^{\txt{gen}}_{n} \times
      \mathfrak{O}^{\txt{gen}}_{m}}$ and
    $(\sSet^{+})_{\mathfrak{O}^{\txt{gen}}_{n+m}}$ are identical.   
  \item The model categories $(\sSet^{+})_{\mathfrak{U}_{n} \times
      \mathfrak{U}_{m}}$ and $(\sSet^{+})_{\mathfrak{U}_{n+m}}$ are
    identical.
  \end{enumerate}
\end{cor}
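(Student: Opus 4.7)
The plan is to reduce all four cases to Proposition~\ref{propn:objwisecatpatt} by checking two things for each pair $(\mathfrak{P}_n, \mathfrak{P}_m)$ with $\mathfrak{P} \in \{\mathfrak{O}, \mathfrak{M}, \mathfrak{O}^{\txt{gen}}, \mathfrak{U}\}$: first, that each $\mathfrak{P}_n$ is an objectwise reduced categorical pattern, so that Proposition~\ref{propn:objwisecatpatt} applies to give an equality of model categories $(\sSet^{+})_{\mathfrak{P}_n \times \mathfrak{P}_m} = (\sSet^{+})_{\mathfrak{P}_n \boxtimes \mathfrak{P}_m}$; and second, that $\mathfrak{P}_n \boxtimes \mathfrak{P}_m = \mathfrak{P}_{n+m}$ as categorical patterns, which gives the remaining equality $(\sSet^{+})_{\mathfrak{P}_n \boxtimes \mathfrak{P}_m} = (\sSet^{+})_{\mathfrak{P}_{n+m}}$.

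Objectwiseness is immediate from the definitions, since in each case the family of diagrams is indexed by objects $I \in \Dn$ with $p_I(-\infty) = I$. For reducedness of $\mathfrak{O}_n$ and $\mathfrak{M}_n$, the image of $p_I|_{K_I}$ is $\{C_n\}$, and $K_{C_n}$ is the discrete set of inert self-maps of $C_n$, which consists only of the identity and thus trivially has an initial object. For $\mathfrak{O}^{\txt{gen}}_n$ and $\mathfrak{U}_n$, the image of $p_I|_{\CellnIop}$ is the set of cells $C_S$ admitting an inert map into $I$; for such a cell $C_S$, $K_{C_S} = \Celln_{/C_S}^{\op}$ has initial object given by the identity $\id : C_S \to C_S$ (which is terminal in $\Celln_{/C_S}$, as any inert map from a cell to $C_S$ factors uniquely through the identity).

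The identification $\mathfrak{P}_n \boxtimes \mathfrak{P}_m = \mathfrak{P}_{n+m}$ is a straightforward unwinding. The underlying \icats{} agree via the canonical isomorphism $\Dnop \times \simp^{m,\op} \cong \simp^{n+m,\op}$. The marked edges agree because a morphism in $\simp^{n+m}$ is inert (resp.\ arbitrary) if and only if its projections to $\Dn$ and $\simp^m$ are inert (resp.\ arbitrary). For the diagrams, one uses that $C_{n+m} = (C_n, C_m)$ and hence inert maps $(I,J) \to C_{n+m}$ are precisely pairs of inert maps, so $K_{(I,J)} = K_I \times K_J$ in the $\mathfrak{O}$ and $\mathfrak{M}$ cases; in the generalized cases one has the analogous decomposition $\txt{Cell}^{n+m}_{/(I,J)} \cong \CellnI \times \txt{Cell}^m_{/J}$ coming from $\txt{Cell}^{n+m} \cong \Celln \times \Cell^m$ and the fact that slice categories distribute over products. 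In all cases the functor $p_{(I,J)}$ agrees with the product $p_I \times p_J$ because the target cell decomposes as a pair.

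There is no real obstacle in the argument — the entire proof is a matter of carefully matching definitions. The slight subtlety is simply to keep track of the opposite in $\mathfrak{O}^{\txt{gen}}_n$ when verifying reducedness, since initiality in $\CellnIop$ corresponds to terminality in $\CellnI$.
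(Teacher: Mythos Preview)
Your proof is correct and follows exactly the same approach as the paper: verify that each pattern is objectwise reduced, identify $\mathfrak{P}_{n} \boxtimes \mathfrak{P}_{m}$ with $\mathfrak{P}_{n+m}$, and apply Proposition~\ref{propn:objwisecatpatt}. You have simply spelled out in detail the two verifications that the paper leaves implicit.
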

\begin{proof}
  The categorical patterns $\mathfrak{O}_{n}$, $\mathfrak{M}_{n}$,
  $\mathfrak{O}_{n}^{\txt{gen}}$ and $\mathfrak{U}_{n}$ are all
  objectwise reduced, and we have identifications
  $\mathfrak{O}_{n+m} = \mathfrak{O}_{n} \boxtimes \mathfrak{O}_{m}$,
  $\mathfrak{M}_{n+m} = \mathfrak{M}_{n} \boxtimes \mathfrak{M}_{m}$,
  $\mathfrak{O}^{\txt{gen}}_{n+m} = \mathfrak{O}^{\txt{gen}}_{n}
  \boxtimes \mathfrak{O}^{\txt{gen}}_{m}$ and $\mathfrak{U}_{n+m} =
  \mathfrak{U}_{n} \boxtimes \mathfrak{U}_{m}$. The result is
  therefore immediate from Proposition~\ref{propn:objwisecatpatt}
\end{proof}

\begin{cor}\label{cor:CartProdQuillenIOpd}
  The Cartesian product defines left Quillen bifunctors
  \[ (\sSet^{+})_{\mathfrak{O}_{n}} \times
  (\sSet^{+})_{\mathfrak{O}_{m}} \to
  (\sSet^{+})_{\mathfrak{O}_{n+m}}, \]
\[ (\sSet^{+})_{\mathfrak{M}_{n}} \times
  (\sSet^{+})_{\mathfrak{M}_{m}} \to
  (\sSet^{+})_{\mathfrak{M}_{n+m}}, \]
\[ (\sSet^{+})_{\mathfrak{O}^{\txt{gen}}_{n}} \times
  (\sSet^{+})_{\mathfrak{O}^{\txt{gen}}_{m}} \to
  (\sSet^{+})_{\mathfrak{O}^{\txt{gen}}_{n+m}}, \]
\[ (\sSet^{+})_{\mathfrak{U}_{n}} \times
  (\sSet^{+})_{\mathfrak{U}_{m}} \to
  (\sSet^{+})_{\mathfrak{U}_{n+m}}. \]
\end{cor}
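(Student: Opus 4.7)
The plan is to observe that this corollary follows immediately by combining the two results that precede it: Proposition~\ref{propn:catpattprod} tells us that the Cartesian product is a left Quillen bifunctor
\[ (\sSet^{+})_{\mathfrak{P}} \times (\sSet^{+})_{\mathfrak{Q}} \to (\sSet^{+})_{\mathfrak{P} \times \mathfrak{Q}} \]
for any pair of categorical patterns, and Corollary~\ref{cor:Ontimes} identifies the product model structures $(\sSet^{+})_{\mathfrak{O}_{n} \times \mathfrak{O}_{m}}$, $(\sSet^{+})_{\mathfrak{M}_{n} \times \mathfrak{M}_{m}}$, $(\sSet^{+})_{\mathfrak{O}^{\txt{gen}}_{n} \times \mathfrak{O}^{\txt{gen}}_{m}}$, and $(\sSet^{+})_{\mathfrak{U}_{n} \times \mathfrak{U}_{m}}$ with the corresponding model categories indexed by $n+m$.

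More concretely, I would apply Proposition~\ref{propn:catpattprod} to each of the four pairs of categorical patterns $(\mathfrak{O}_{n}, \mathfrak{O}_{m})$, $(\mathfrak{M}_{n}, \mathfrak{M}_{m})$, $(\mathfrak{O}^{\txt{gen}}_{n}, \mathfrak{O}^{\txt{gen}}_{m})$, and $(\mathfrak{U}_{n}, \mathfrak{U}_{m})$ to obtain a left Quillen bifunctor landing in the product pattern model structure, and then invoke Corollary~\ref{cor:Ontimes} to rewrite the target as the $(n+m)$-variant. Since all four cases have the same shape, one could even state and prove a single lemma and specialize.

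There is no substantial obstacle here: the real work has already been done in Proposition~\ref{propn:objwisecatpatt} (which verifies that the objectwise product pattern $\mathfrak{P} \boxtimes \mathfrak{Q}$ gives the same model structure as the naive product $\mathfrak{P} \times \mathfrak{Q}$ when the patterns are objectwise reduced) and in the identifications $\mathfrak{O}_{n+m} = \mathfrak{O}_{n} \boxtimes \mathfrak{O}_{m}$ etc. Thus the proof reduces to a one-line citation of Proposition~\ref{propn:catpattprod} together with Corollary~\ref{cor:Ontimes}.
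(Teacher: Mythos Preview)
Your proposal is correct and takes essentially the same approach as the paper: the paper's proof is the single sentence ``Combine Corollary~\ref{cor:Ontimes} with Proposition~\ref{propn:catpattprod}.''
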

\begin{proof}
  Combine Corollary~\ref{cor:Ontimes} with
  Proposition~\ref{propn:catpattprod}.
\end{proof}

Finally, we recall a useful result on functoriality of categorical
pattern model structures:
\begin{defn}
  Suppose $\mathfrak{P} = (\mathcal{C}, S, \{p_{\alpha}\})$ and
  $\mathfrak{Q} = (\mathcal{D}, T, \{q_{\beta}\})$ are categorical
  patterns. A \emph{morphism of categorical patterns} $f \colon
  \mathfrak{P} \to \mathfrak{Q}$ is a functor $f \colon \mathcal{C}
  \to \mathcal{D}$ such that $f(S) \subseteq f(T)$ and $f \circ
  p_{\alpha}$ lies in $\{q_{\beta}\}$ for all $\alpha$.
\end{defn}

\begin{propn}[Lurie, \rcite{HA}{Proposition B.2.9}]\label{propn:catpattmor}
  Suppose $f \colon \mathfrak{P} \to \mathfrak{Q}$ is a morphism of
  categorical patterns. Then composition with $f$ gives a left Quillen
  functor
  \[ f_{!} \colon (\sSet^{+})_{\mathfrak{P}} \to
  (\sSet^{+})_{\mathfrak{Q}}.\qed\]
\end{propn}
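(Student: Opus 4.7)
The plan is to verify directly that $f_{!}$, which simply post-composes a map $X \to \overline{\mathcal{C}}$ with $f \colon \overline{\mathcal{C}} \to \overline{\mathcal{D}}$, is left Quillen. Since the underlying marked simplicial set is unchanged by $f_{!}$ and cofibrations in both model categories are monomorphisms, $f_{!}$ tautologically preserves cofibrations. So the entire content is the preservation of trivial cofibrations, which I would establish by the standard route of showing that the right adjoint $f^{*} = (\blank) \times_{\overline{\mathcal{D}}} \overline{\mathcal{C}}$ preserves fibrant objects and fibrations between fibrant objects, and then invoking Ken Brown's lemma together with the fact that left adjoints preserving cofibrations between cofibrant objects and weak equivalences between fibrant objects are left Quillen (equivalently, by the adjoint form of the standard criterion).

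The first main step is to show that $f^{*}$ preserves fibrant objects. Given a $\mathfrak{Q}$-fibrant object $Y \to \overline{\mathcal{D}}$, I need to check that the pullback $Y \times_{\overline{\mathcal{D}}} \overline{\mathcal{C}} \to \overline{\mathcal{C}}$ satisfies conditions (1)--(4) of $\mathfrak{P}$-fibrancy. Condition (1) holds because inner fibrations are stable under base change. Condition (2) holds because $f$ carries $S$ into $T$ and coCartesian morphisms are stable under base change along $f$. For conditions (3) and (4), the crucial observation is that the assumption $f \circ p_{\alpha} \in \{q_{\beta}\}$ identifies the pulled-back coCartesian fibration $K_{\alpha}^{\triangleleft} \times_{\overline{\mathcal{C}}} (Y \times_{\overline{\mathcal{D}}} \overline{\mathcal{C}}) \to K_{\alpha}^{\triangleleft}$ with $K_{\alpha}^{\triangleleft} \times_{\overline{\mathcal{D}}} Y \to K_{\alpha}^{\triangleleft}$, and similarly for coCartesian sections; hence the limit/$\pi$-limit conditions transfer unchanged from $Y$ to its pullback.

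The second main step is to show $f^{*}$ preserves fibrations between fibrant objects. I would use the internal characterization of such fibrations in a categorical pattern model structure, which (cf.\ the proof of Theorem~\ref{thm:catpatternmodstr}) identifies them as the maps between $\mathfrak{P}$-fibrant objects that are inner fibrations and have the right lifting property against a collection of marked anodyne morphisms built from (a) the marked-anodyne morphisms of marked simplicial sets and (b) additional generators associated to the coCartesian edges over $S$ and to the diagrams $p_{\alpha}$. Each class of generators for $\mathfrak{P}$ is constructed from the same ingredients as the corresponding class for $\mathfrak{Q}$, and the morphism-of-patterns hypothesis $f(S) \subseteq T$ and $f \circ p_{\alpha} \in \{q_{\beta}\}$ ensures that lifting against the $\mathfrak{Q}$-generators implies lifting against the $\mathfrak{P}$-generators after pullback along $f$.

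The main obstacle is Step two, the fibration-preservation claim: the characterization of fibrations between fibrant objects in these categorical pattern model structures is delicate and in Lurie's treatment requires a careful analysis of various classes of marked anodyne morphisms. Once this characterization is in hand the transfer along $f^{*}$ is routine for the same reasons that make Step one work, namely that every structural ingredient (marked edges, coCartesian edges over them, diagrams $p_{\alpha}$ and their coCartesian sections) is carried by $f$ into a corresponding ingredient for $\mathfrak{Q}$, so pullback along $f$ only restricts --- never enlarges --- the families of test diagrams that need to be checked.
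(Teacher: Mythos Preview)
The paper does not give its own proof of this statement: it is stated with a \qed{} and attributed wholesale to \cite[Proposition B.2.9]{HA}. So there is nothing in the paper to compare your argument against.

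That said, your outline is essentially sound but slightly more laborious than necessary. Your Step one --- that $f^{*}$ preserves fibrant objects, because inner fibrations and coCartesian edges pull back and the hypothesis $f\circ p_{\alpha}\in\{q_{\beta}\}$ transfers the limit conditions --- is exactly right and is the real content. Your Step two can be avoided entirely: since every object in $(\sSet^{+})_{\mathfrak{P}}$ is cofibrant and the model structure is simplicial, a map is a weak equivalence if and only if it induces an equivalence on mapping spaces into every fibrant object. Combining this with the adjunction $f_{!}\dashv f^{*}$ and Step one shows immediately that $f_{!}$ preserves \emph{all} weak equivalences, hence in particular trivial cofibrations. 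This bypasses the delicate characterization of fibrations between fibrant objects that you flagged as the main obstacle, and is closer to how Lurie actually argues.
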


\subsection{$\Dn$-$\infty$-Operads and Symmetric
  $\infty$-Operads} \label{subsec:symDn} 

In this subsection we will relate \Dniopds{} to the symmetric \iopds{}
studied in \cite{HA}. We first recall some definitions:
\begin{defn}
  For $n$ a non-negative integer, let $\angled{n}$ denote the set
  $\{0,1,\ldots,n\}$, regarded as a pointed set with base point $0$. A
  morphism $f \colon \angled{n} \to \angled{m}$ of finite pointed sets
  is \emph{inert} if $f^{-1}(i)$ has a single element for every $i
  \neq 0$, and \emph{active} if $f^{-1}(0) = \{0\}$. Recall that the
  inert and active morphisms form a factorization system on $\bbGamma^{\op}$.
\end{defn}

\begin{defn}
  A \emph{symmetric \iopd{}} is a functor of \icats{} $\pi \colon
  \mathcal{O} \to \bbGamma^{\op}$ such that:
  \begin{enumerate}[(i)]
  \item For every inert morphism $\phi \colon \angled{m} \to \angled{n}$  in
    $\bbGamma^{\op}$ and every $X \in \mathcal{O}_{\angled{n}}$ there exists
    a $\pi$-coCartesian morphism $X \to \phi_{!}X$ over $\phi$.
  \item Let $\rho_{i} \colon \angled{n} \to \angled{1}$, $i =
    1,\ldots,n$, denote the (inert) map that sends $i$ to $1$ and
    every other element of $\angled{n}$ to $0$. For every $\angled{n}
    \in \bbGamma^{\op}$ the functor
    \[\mathcal{O}_{\angled{n}} \to
    (\mathcal{O}_{\angled{1}})^{\times n}\] induced by the
    coCartesian arrows over the maps $\rho_{i}$ is an equivalence of \icats{}.
  \item For every morphism $\phi \colon \angled{n} \to \angled{m}$ in
    $\bbGamma^{\op}$ and $Y \in \mathcal{O}_{\angled{m}}$, composition
    with coCartesian morphisms $Y \to Y_{i}$ over the inert morphisms
    $\rho_{i}$ gives an equivalence
    \[ \Map_{\mathcal{O}}^{\phi}(X, Y) \isoto \prod_{i}
    \Map_{\mathcal{O}}^{\rho_{i} \circ \phi}(X, Y_{i}),\] where
    $\Map_{\mathcal{O}}^{\phi}(X,Y)$ denotes the subspace of
    $\Map_{\mathcal{O}}(X,Y)$ of morphisms that map to $\phi$ in
    $\simp^{\op}$.
  \end{enumerate}
\end{defn}

\begin{defn}
  Let $\mathfrak{O}_{\Sigma}$ denote the categorical pattern
  $(\bbGamma^{\op}, I_{\Sigma}, \{p_{\angled{n}} \colon
  P_{\angled{n}}^{\triangleleft} \to \bbGamma^{\op}\})$, where
  $\bbGamma^{\op}$ is the category of finite pointed sets,
  $I_{\Sigma}$ denotes the set of inert morphisms in $\bbGamma^{\op}$,
  and $P_{\angled{n}}$ is the set of inert morphisms $\angled{n} \to
  \angled{1}$ in $\bbGamma^{\op}$.
\end{defn}

\begin{defn}
  The $\mathfrak{O}_{\Sigma}$-fibrant objects are precisely the
  symmetric \iopds{}, and we write $\OpdIS$ for the \icat{} associated
  to the model category $(\sSet^{+})_{\mathfrak{O}_{\Sigma}}$.
\end{defn}

\begin{defn}
  Let $u^{1} \colon \simp^{\op} \to \bbGamma^{\op}$ be the functor defined
  as in \cite[Construction 4.1.2.5]{HA} (this is the same as the
  functor introduced by Segal in \cite{SegalCatCohlgy}). Recall that
  this sends $[n]$ to $\angled{n}$, and a map $\phi \colon [n] \to
  [m]$ in $\simp$ to the map $\angled{m} \to \angled{n}$ given by 
  \[ u^{1}(\phi)(i) =
  \begin{cases}
    j & \txt{if $\phi(j-1) < i \leq \phi(j)$,} \\
    0 & \txt{if no such $j$ exists.}
  \end{cases}
  \]
  This takes inert morphisms in $\simp^{\op}$ to inert morphisms in
  $\bbGamma^{\op}$, and moreover induces a morphism of categorical
  patterns from $\mathfrak{O}_{1}$ to $\mathfrak{O}_{\Sigma}$. Let
  $\mu \colon \bbGamma^{\op} \times \bbGamma^{\op} \to \bbGamma^{\op}$
  be the functor defined in \rcite{HA}{Notation 2.2.5.1}; this takes
  $(\angled{m}, \angled{n})$ to $\angled{mn}$ and takes a morphism $(f
  \colon \angled{m} \to \angled{m'}, g \colon \angled{n} \to
  \angled{n'})$ to the morphism $\mu(f, g)$ given by 
  \[ \mu(f,g)(an + b - n) =
  \begin{cases}
    0 & \txt{if $f(a) = 0$ or $g(b) = 0$}, \\
    f(a)n' + g(b) - n' & \txt{otherwise.}
  \end{cases}
  \]
  
  The functor $\mu$ induces a morphism of categorical patterns
  $\mathfrak{O}_{\Sigma} \times \mathfrak{O}_{\Sigma} \to
  \mathfrak{O}_{\Sigma}$. We then inductively define $u^{n}\colon
  \Dnop \to \bbGamma^{\op}$ to be the composite
  \[ \simp^{\op} \times \simp^{n-1,\op} \xto{u^{1} \times
    u^{n-1}} \bbGamma^{\op} \times \bbGamma^{\op} \xto{\mu}
  \bbGamma^{\op},\] so that $u^{n}$ is a morphism of categorical patterns
  $\mathfrak{O}_{n} \to \mathfrak{O}_{\Sigma}$ for all $n$. Thus
  $u^{n}$ induces adjoint functors \[ u^{n}_{!} : \OpdIDn
  \rightleftarrows \OpdIS : u^{n,*}.\] Moreover, since the induced
  Quillen functors are enriched in marked simplicial sets we get
  equivalences
  \[ \Algn_{\mathcal{O}}(u^{n,*}\mathcal{P}) \simeq
  \AlgS_{u^{n}_{!}\mathcal{O}} (\mathcal{P}),\] where
  $\mathcal{O}$ is a \Dniopd{} and $\mathcal{P}$ is a symmetric
  \iopd{}.
\end{defn}

\begin{remark}
  The Quillen adjunction $u^{n}_{!} \dashv u^{n,*}$ is a special case
  of the adjunctions arising from morphisms of operator categories
  that are discussed in \rcite{BarwickOpCat}{Proposition 8.18}.
\end{remark}

By Corollary~\ref{cor:CartProdQuillenIOpd} and Proposition
\ref{propn:catpattmor} we then have a commutative diagram of left
Quillen functors
\[
\ltikzcd{
  (\sSet^{+})_{\mathfrak{O}_{1}} \times (\sSet^{+})_{\mathfrak{O}_{n}}
  \arrow{r}{\times} \arrow{d}{u^{1}_{!} \times u^{n}_{!}} \pgfmatrixnextcell (\sSet^{+})_{\mathfrak{O}_{1} \times \mathfrak{O}_{n}} \arrow{r}{\cong}
  \arrow{d}{(u^{1} \times u^{n})_{!}} \pgfmatrixnextcell
  (\sSet^{+})_{\mathfrak{O}_{n+1}} \arrow{d}{u^{n+1}_{!}} \\
  (\sSet^{+})_{\mathfrak{O}_{\Sigma}} \times
  (\sSet^{+})_{\mathfrak{O}_{\Sigma}} \arrow{r}{\times} \pgfmatrixnextcell
  (\sSet^{+})_{\mathfrak{O}_{\Sigma} \times \mathfrak{O}_{\Sigma}} \arrow{r}{\mu_{!}}\pgfmatrixnextcell (\sSet^{+})_{\mathfrak{O}_{\Sigma}},
  }
\]
where the left horizontal functors are given by the Cartesian
products. The Boardman-Vogt tensor product of symmetric \iopds{}, as
defined in \rcite{HA}{\S 2.2.5}, is the functor of \icats{} induced by
the composite functor along the bottom of this diagram. On the level
of \icats{} we have therefore proved the following:
\begin{propn}\label{propn:unandBVtens}
  There is a commutative diagram
  \[
  \ltikzcd{
    \OpdI^{\simp} \times \OpdIDn \arrow{r}{\times} \arrow{d}{u_{!}^{1}
      \times u_{!}^{n}} \pgfmatrixnextcell
    \OpdI^{\simp^{n+1}} \arrow{d}{u^{n+1}_{!}} \\
    \OpdIS \times \OpdIS \arrow{r}{\otimes} \pgfmatrixnextcell \OpdIS
    }
\]
\end{propn}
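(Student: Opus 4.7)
The plan is to deduce the statement directly from the commutative diagram of left Quillen functors established in the paragraph immediately preceding the proposition. The strategy is to pass from the model-categorical commutative square to the corresponding square of \icats{}, since taking the \icat{} associated to a simplicial model category is functorial with respect to left Quillen functors.

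First I would observe that each of the model categories appearing in the square from the preceding discussion has an associated \icat{}: $(\sSet^{+})_{\mathfrak{O}_{1}}$ gives $\OpdI^{\simp}$, $(\sSet^{+})_{\mathfrak{O}_{n}}$ gives $\OpdI^{\Dn}$, and so forth. Then I would argue that the top-left corner of our \icatl{} diagram, namely $\OpdI^{\simp} \times \OpdI^{\simp^{n}}$, is the \icat{} associated to the product model category $(\sSet^{+})_{\mathfrak{O}_{1}} \times (\sSet^{+})_{\mathfrak{O}_{n}}$, while the upper right corner $\OpdI^{\simp^{n+1}}$ is the \icat{} associated to $(\sSet^{+})_{\mathfrak{O}_{n+1}}$, which coincides with $(\sSet^{+})_{\mathfrak{O}_{1} \times \mathfrak{O}_{n}}$ by Corollary~\ref{cor:Ontimes}. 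The horizontal functor on top of our diagram is induced (via Corollary~\ref{cor:CartProdQuillenIOpd}) by the Cartesian product left Quillen bifunctor followed by this identification of model categories, hence it agrees with the Cartesian product functor of \icats{}.

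Similarly, the bottom row factors as the composite \[ (\sSet^{+})_{\mathfrak{O}_{\Sigma}} \times (\sSet^{+})_{\mathfrak{O}_{\Sigma}} \xto{\times} (\sSet^{+})_{\mathfrak{O}_{\Sigma} \times \mathfrak{O}_{\Sigma}} \xto{\mu_{!}} (\sSet^{+})_{\mathfrak{O}_{\Sigma}},\] whose induced functor of \icats{} is by definition (cf.\ \rcite{HA}{\S 2.2.5}) the Boardman-Vogt tensor product $\otimes$. The vertical maps are induced by $u^{1}_{!} \times u^{n}_{!}$ on the left and $u^{n+1}_{!}$ on the right, both of which are left Quillen functors. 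Since the model-categorical square commutes (as established just before the statement), applying the functor $\mathcal{C} \mapsto \mathrm{N}(\mathcal{C}^{\circ})$ from simplicial model categories to \icats{} yields the desired commutative diagram of \icats{}.

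There is essentially no serious obstacle here: the result is purely a matter of translating the commutative square of left Quillen functors from the preceding paragraph into the language of \icats{} and matching the resulting horizontal composites with the two fundamental operations (Cartesian product of \Dniopds{} and Boardman-Vogt tensor product of symmetric \iopds{}). The only minor care needed is in verifying that the product of two \icats{} associated to simplicial model categories agrees with the \icat{} associated to the product model category, which is standard.
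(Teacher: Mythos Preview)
Your proposal is correct and matches the paper's approach exactly: the paper does not give a separate proof, but simply observes (in the paragraph immediately preceding the statement) that the commutative square of left Quillen functors, combined with the identification of the bottom composite as the Boardman--Vogt tensor product, yields the desired diagram ``on the level of \icats{}''. Your write-up is a slightly more detailed unpacking of precisely this passage.
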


Invoking the Dunn-Lurie Additivity Theorem, we get:
\begin{cor}\label{cor:unDnisEn}
  The symmetric \iopd{} $u^{n}_{!}(\Dnop)$ is equivalent to $\mathbb{E}_{n}$.
\end{cor}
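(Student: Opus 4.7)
The plan is to proceed by induction on $n$, using the compatibility of $u^n_!$ with Boardman–Vogt tensor products established in Proposition~\ref{propn:unandBVtens} together with the Dunn–Lurie additivity theorem.

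For the base case $n = 1$, I would identify $u^1_!(\Dop)$ with the associative symmetric \iopd{} $\txt{Ass}^{\otimes}$, which is $\mathbb{E}_1$ by \rcite{HA}{Example 5.1.0.7}. One way to see this: $u^1 \colon \Dop \to \bbGamma^{\op}$ is Lurie's functor (Construction 4.1.2.5 of \cite{HA}) sending $[n]$ to $\angled{n}$, and under the equivalence between non-symmetric \iopds{} and symmetric \iopds{} over $\mathbb{E}_1$ (\rcite{HA}{Proposition 4.7.1.1}), the left Kan extension $u^1_!$ corresponds to the forgetful functor from symmetric \iopds{} over $\mathbb{E}_1$ to symmetric \iopds{}. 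Applied to the terminal object $\Dop$ of $\OpdIns$, this produces the initial object $\mathbb{E}_1$ in $\OpdIS_{/\mathbb{E}_1}$, which maps to $\mathbb{E}_1$ itself in $\OpdIS$.

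For the inductive step, assume $u^n_!(\Dnop) \simeq \mathbb{E}_n$. By Proposition~\ref{propn:unandBVtens} applied with $n$ replaced by $n$ (and the first factor being $\Dop$), we have
\[ u^{n+1}_!(\simp^{n+1,\op}) \simeq u^{n+1}_!(\Dop \times \Dnop) \simeq u^1_!(\Dop) \otimes u^n_!(\Dnop) \simeq \mathbb{E}_1 \otimes \mathbb{E}_n, \]
where the tensor product on the right is the Boardman–Vogt tensor product of symmetric \iopds{}. The Dunn–Lurie additivity theorem (\rcite{HA}{Theorem 5.1.2.2}) identifies $\mathbb{E}_1 \otimes \mathbb{E}_n$ with $\mathbb{E}_{n+1}$, which completes the induction.

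The main obstacle is the base case: verifying that $u^1_!(\Dop)$ really is the associative \iopd{} $\mathbb{E}_1$. The rest of the argument is formal given Proposition~\ref{propn:unandBVtens} and Dunn–Lurie. Here one has to unwind the definition of the left Quillen functor $u^1_!$ on the categorical pattern model structures and match it with Lurie's construction; the key point is that $\Dop$, regarded as a non-symmetric \iopd{} over itself, is terminal, and that its image under $u^1_!$ is characterized by the universal property that maps of symmetric \iopds{} from $u^1_!(\Dop)$ into any symmetric \iopd{} $\mathcal{P}$ correspond to non-symmetric \iopd{} maps $\Dop \to u^{1,*}\mathcal{P}$, i.e., to associative algebras in $\mathcal{P}$ in the sense of \rcite{HA}{\S 4.1}.
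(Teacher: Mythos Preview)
Your proposal is correct and follows essentially the same approach as the paper: induction on $n$ via Proposition~\ref{propn:unandBVtens} and the Dunn--Lurie additivity theorem \rcite{HA}{Theorem 5.1.2.2}. For the base case the paper simply cites \rcite{HA}{Proposition 4.1.2.10} (which shows that $\Dop \to \txt{Ass}^{\otimes}$ is an approximation) together with \rcite{HA}{Example 5.1.0.7}, which is a more direct route than your terminal-object argument via \rcite{HA}{Proposition 4.7.1.1}, but the content is the same.
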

\begin{proof}
  Applying Proposition~\ref{propn:unandBVtens} we have an equivalence
  \[u^{n}_{!}(\Dnop) \simeq u^{1}_{!}(\simp^{\op}) \otimes
  u^{n-1}_{!}(\simp^{n-1,\op}).\] By
  \rcite{HA}{Proposition 4.1.2.10} and \rcite{HA}{Example 5.1.0.7},
  the symmetric \iopd{} $u^{1}_{!}(\simp^{\op})$ is equivalent to
  $\mathbb{E}_{1}$, so by induction we have an equivalence
  $u^{n}_{!}(\Dnop) \simeq \mathbb{E}_{1}^{\otimes n}$. Now
  \rcite{HA}{Theorem 5.1.2.2} says that the symmetric \iopd{}
  $\mathbb{E}_{1}^{\otimes n}$ is equivalent to $\mathbb{E}_{n}$,
  which completes the proof.
\end{proof}

\begin{cor}\label{cor:EnDnalgeq}
  Let $\mathcal{O}$ be a symmetric \iopd{}. Then there is a natural
  equivalence
    \[ \AlgS_{\mathbb{E}_{n}}(\mathcal{O}) \simeq
    \Algn_{\Dnop}(u^{n,*}\mathcal{O}).\]
\end{cor}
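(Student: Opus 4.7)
The plan is to deduce this directly from the Quillen adjunction $u^{n}_{!} \dashv u^{n,*}$ together with the identification of $u^{n}_{!}(\Dnop)$ with $\mathbb{E}_{n}$ established in Corollary~\ref{cor:unDnisEn}.

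More precisely, first I would recall that, because the Quillen adjunction $u^{n}_{!} \dashv u^{n,*}$ between $(\sSet^{+})_{\mathfrak{O}_{n}}$ and $(\sSet^{+})_{\mathfrak{O}_{\Sigma}}$ is enriched over marked simplicial sets, it yields for every \Dniopd{} $\mathcal{M}$ and every symmetric \iopd{} $\mathcal{P}$ a natural equivalence
\[ \Algn_{\mathcal{M}}(u^{n,*}\mathcal{P}) \simeq \AlgS_{u^{n}_{!}\mathcal{M}}(\mathcal{P}), \]
as already noted in the excerpt just before Proposition~\ref{propn:unandBVtens}. Specializing to $\mathcal{M} = \Dnop$ (which is the terminal \Dniopd{}, hence automatically fibrant, so no further derivation is needed) and $\mathcal{P} = \mathcal{O}$ gives
\[ \Algn_{\Dnop}(u^{n,*}\mathcal{O}) \simeq \AlgS_{u^{n}_{!}(\Dnop)}(\mathcal{O}). \]

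Second, I would invoke Corollary~\ref{cor:unDnisEn}, which identifies $u^{n}_{!}(\Dnop)$ with the symmetric \iopd{} $\mathbb{E}_{n}$. Substituting this equivalence on the right-hand side yields the desired natural equivalence
\[ \Algn_{\Dnop}(u^{n,*}\mathcal{O}) \simeq \AlgS_{\mathbb{E}_{n}}(\mathcal{O}), \]
and naturality in $\mathcal{O}$ is immediate from naturality of the adjunction equivalence in its second variable.

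Since both ingredients are stated in the excerpt, no genuine obstacle remains; the only point worth checking carefully is that $\Dnop$, viewed as an object of $\OpdIDn$, is cofibrant (which is automatic in $(\sSet^{+})_{\mathfrak{O}_{n}}$, all objects being cofibrant by Theorem~\ref{thm:catpatternmodstr}) so that $u^{n}_{!}\Dnop$ really represents $u^{n}_{!}$ applied to $\Dnop$ at the level of \icats{}, and that the adjunction equivalence identifying algebras is natural in the operad variable---both of which are formal consequences of the Quillen adjunction and its enrichment.
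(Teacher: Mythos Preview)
Your proof is correct and follows exactly the approach the paper intends: the corollary is an immediate consequence of the enriched adjunction equivalence $\Algn_{\mathcal{M}}(u^{n,*}\mathcal{P}) \simeq \AlgS_{u^{n}_{!}\mathcal{M}}(\mathcal{P})$ (recorded just before Proposition~\ref{propn:unandBVtens}) together with Corollary~\ref{cor:unDnisEn}. The paper does not even spell out a proof, treating it as evident from these two inputs.
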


\subsection{$\Dn$-Monoid Objects}\label{subsec:dnmonoid}
We will now observe that $\Dn$-algebras in a Cartesian monoidal
\icat{} are equivalent to the $\Dn$-monoids we discussed above in
\S\ref{sec:cartenalg}. More generally, we can define
$\mathcal{O}$-monoids for any \gDniopd{} $\mathcal{O}$ as an
equivalent way of describing $\mathcal{O}$-algebras in a Cartesian
monoidal \icat{}:
\begin{defn}
  Let $\mathcal{C}$ be an \icat{} with finite products and
  $\mathcal{O}$ a \gDniopd{}. An \emph{$\mathcal{O}$-monoid} in
  $\mathcal{C}$ is a functor $F \colon \mathcal{O} \to \mathcal{C}$
  such that for every $I \in \Dnop$ and $X \in \mathcal{O}_{I}$, the
  map $F(X) \to \prod_{i \in |I|}F(X_{i})$, induced by the coCartesian
  morphisms $X \to X_{i}$ over $i$, is an equivalence.  We write
  $\Mon^{n}_{\mathcal{O}}(\mathcal{C})$ for the full subcategory of
  $\Fun(\mathcal{O}, \mathcal{C})$ spanned by the
  $\mathcal{O}$-monoids.
\end{defn}

\begin{propn}\label{propn:Dnmonalgeq}
  Suppose $\mathcal{C}$ be an \icat{} with finite products, and let
  $\mathcal{C}^{\times}$ denote the Cartesian symmetric monoidal
  structure on $\mathcal{C}$ constructed in \rcite{HA}{\S 2.4.1}. If
  $\mathcal{O}$ a (generalized) \Dniopd{}, then there is a natural
  equivalence $\Mon^{n}_{\mathcal{O}}(\mathcal{C}) \simeq
  \Alg^{n}_{\mathcal{O}}(u^{n,*}\mathcal{C}^{\times})$.
\end{propn}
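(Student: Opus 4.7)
My preferred approach is to prove the equivalence directly, by unfolding the definition of $u^{n,*}\mathcal{C}^{\times}$ as the pullback $\mathcal{C}^{\times} \times_{\bbGamma^{\op}} \Dnop$. First I would observe that, by the defining property of this pullback, a morphism of \gdniopds{} $F \colon \mathcal{O} \to u^{n,*}\mathcal{C}^{\times}$ is the same data as a functor $\widetilde{F} \colon \mathcal{O} \to \mathcal{C}^{\times}$ over $\bbGamma^{\op}$ via the composite $\mathcal{O} \to \Dnop \xrightarrow{u^{n}} \bbGamma^{\op}$. Moreover, since $u^{n}$ is a morphism of categorical patterns $\mathfrak{O}_{n} \to \mathfrak{O}_{\Sigma}$ and in particular preserves inert morphisms, a morphism in $\mathcal{O}$ lies over an inert morphism in $\Dnop$ iff its image under $\widetilde{F}$ lies over an inert morphism in $\bbGamma^{\op}$; consequently $F$ preserves inert morphisms iff $\widetilde{F}$ does.

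Next I would invoke the universal property of the Cartesian symmetric monoidal structure developed in \rcite{HA}{\S 2.4.1}: for any inner fibration $\mathcal{P} \to \bbGamma^{\op}$ equipped with coCartesian lifts of inert morphisms, the $\infty$-category of functors $\mathcal{P} \to \mathcal{C}^{\times}$ over $\bbGamma^{\op}$ preserving inert morphisms is naturally equivalent (via composition with the canonical projection to $\mathcal{C}^{\times}_{\angled{1}} \simeq \mathcal{C}$) to the full subcategory of $\Fun(\mathcal{P}, \mathcal{C})$ spanned by functors $G$ satisfying the Segal condition: for each $X \in \mathcal{P}$ over $\angled{m}$, the natural map $G(X) \to \prod_{i=1}^{m} G(X_{i})$ induced by the coCartesian morphisms over the inerts $\rho_{i} \colon \angled{m} \to \angled{1}$ is an equivalence. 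Applied with $\mathcal{P} = \mathcal{O}$ and noting that $u^{n}(I) = \angled{|I|}$ for $I = ([i_{1}],\ldots,[i_{n}])$ — so that the inert morphisms $C_{n} \to I$ in $\Dn$ correspond bijectively under $u^{n}$ to the inert morphisms $\angled{|I|} \to \angled{1}$ — this Segal condition translates to the equivalence $F(X) \isoto \prod_{i \in |I|} F(X_{i})$ of the \Dn{}-monoid definition. The equivalence is manifestly natural in $\mathcal{O}$ and $\mathcal{C}$.

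The main obstacle is justifying the universal property cited above in the needed generality: $\mathcal{O}$ viewed as lying over $\bbGamma^{\op}$ via $u^{n}$ need not be a symmetric $\infty$-operad, so one cannot cite \rcite{HA}{Proposition 2.4.1.7} verbatim. However, inspection of Lurie's proof shows that the argument depends only on having coCartesian lifts of inert morphisms in the source together with the Segal-type decomposition of fibers over $\angled{m}$ into products of fibers over $\angled{1}$ — conditions that hold for any \gDniopd{} via $u^{n}$. If one prefers to avoid this inspection, an alternative is to reduce to the symmetric case via the enriched adjunction from \S\ref{subsec:symDn}: this gives a natural chain
\[
\Alg^{n}_{\mathcal{O}}(u^{n,*}\mathcal{C}^{\times}) \simeq \AlgS_{u^{n}_{!}\mathcal{O}}(\mathcal{C}^{\times}) \simeq \Mon^{\Sigma}_{u^{n}_{!}\mathcal{O}}(\mathcal{C}),
\]
where the second equivalence is \rcite{HA}{Proposition 2.4.1.7}, and one then checks that restriction along the unit $\mathcal{O} \to u^{n,*}u^{n}_{!}\mathcal{O}$ identifies symmetric $u^{n}_{!}\mathcal{O}$-monoids in $\mathcal{C}$ with \Dn{}-monoids on $\mathcal{O}$; this relies on the fact that $u^{n}_{!}$ preserves the relevant class of inert morphisms and their Segal decompositions.
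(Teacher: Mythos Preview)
Your main approach is correct and is exactly what the paper does: its entire proof reads ``As \rcite{HA}{Proposition 2.4.2.5}'', i.e.\ one runs Lurie's argument for the symmetric case verbatim in the $\Dn$-setting. Your unpacking of why that argument carries over --- that it uses only coCartesian lifts of inert morphisms and the Segal-type decomposition of fibres, both of which $\mathcal{O}$ enjoys when viewed over $\Gop$ via $u^{n}$ --- is precisely the content of that ``as in loc.\ cit.'' (The relevant reference is 2.4.2.5 rather than 2.4.1.7.)

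Your alternative route through the adjunction $u^{n}_{!} \dashv u^{n,*}$ is not a genuine shortcut, however. The final step, identifying $\MonS_{u^{n}_{!}\mathcal{O}}(\mathcal{C})$ with $\Mon^{n}_{\mathcal{O}}(\mathcal{C})$, is not routine: $u^{n}_{!}\mathcal{O}$ is defined only as a fibrant replacement in $(\sSet^{+})_{\mathfrak{O}_{\Sigma}}$, so one has no explicit handle on its objects or inert morphisms, and ``restriction along the unit'' does not obviously induce an equivalence at the level of monoids. Indeed, the paper uses the present proposition to establish even the special case $\mathcal{O} = \Dnop$ of such a comparison (Corollary~\ref{cor:symDnmoneq}), so invoking it here for general $\mathcal{O}$ would require a separate argument your sketch does not supply.
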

\begin{proof}
  As \rcite{HA}{Proposition 2.4.2.5}.
\end{proof}

\begin{cor}\label{cor:symDnmoneq}
  Let $\mathcal{C}$ be an \icat{} with finite products. Then there is
    a natural equivalence
    \[ \MonS_{\mathbb{E}_{n}}(\mathcal{C}) \simeq
    \Mon^{n}_{\Dnop}(\mathcal{C}).\]
\end{cor}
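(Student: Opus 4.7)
The plan is to prove this by chaining together the previously established equivalences, treating the statement as a direct corollary of \ref{propn:Dnmonalgeq} together with \ref{cor:EnDnalgeq}. No genuinely new input is needed.

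First I would invoke the symmetric analogue of Proposition~\ref{propn:Dnmonalgeq} (namely \rcite{HA}{Proposition 2.4.2.5}), which gives a natural equivalence
\[ \MonS_{\mathbb{E}_{n}}(\mathcal{C}) \simeq \AlgS_{\mathbb{E}_{n}}(\mathcal{C}^{\times}), \]
identifying symmetric $\mathbb{E}_n$-monoids in $\mathcal{C}$ with $\mathbb{E}_n$-algebras in the Cartesian symmetric monoidal structure $\mathcal{C}^{\times}$. Next I would apply Corollary~\ref{cor:EnDnalgeq} with $\mathcal{O} = \mathcal{C}^{\times}$ to obtain
\[ \AlgS_{\mathbb{E}_{n}}(\mathcal{C}^{\times}) \simeq \Algn_{\Dnop}(u^{n,*}\mathcal{C}^{\times}). \]
Finally, I would apply Proposition~\ref{propn:Dnmonalgeq} in the special case $\mathcal{O} = \Dnop$, which yields
\[ \Mon^{n}_{\Dnop}(\mathcal{C}) \simeq \Alg^{n}_{\Dnop}(u^{n,*}\mathcal{C}^{\times}). \]
Composing the three equivalences gives the desired natural equivalence.

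Since each constituent equivalence is already established and natural in $\mathcal{C}$, there is essentially no obstacle; the only minor point to check is that the equivalences compose naturally in $\mathcal{C}$, which is automatic from the construction of each step as induced by the Cartesian monoidal structure functor and the morphism of categorical patterns $u^n \colon \mathfrak{O}_n \to \mathfrak{O}_{\Sigma}$.
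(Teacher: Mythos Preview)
Your proposal is correct and matches the paper's proof exactly: the paper also chains together \rcite{HA}{Proposition 2.4.2.5}, Corollary~\ref{cor:EnDnalgeq}, and Proposition~\ref{propn:Dnmonalgeq} to obtain the displayed string of equivalences $\MonS_{\En}(\mathcal{C}) \simeq \AlgS_{\En}(\mathcal{C}^{\times}) \simeq \Alg^{n}_{\Dnop}(u^{n,*}\mathcal{C}^{\times}) \simeq \Mon^{n}_{\Dnop}(\mathcal{C})$.
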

\begin{proof}
  Combine Corollary~\ref{cor:EnDnalgeq} with
  Proposition~\ref{propn:Dnmonalgeq} and \rcite{HA}{Proposition
    2.4.2.5} --- this gives a natural equivalence
  \[ \MonS_{\En}(\mathcal{C}) \simeq \AlgS_{\En}(\mathcal{C}^{\times})
  \simeq \Alg^{n}_{\Dnop}(u^{n,*}\mathcal{C}^{\times}) \simeq
  \Mon^{n}_{\Dnop}(\mathcal{C}).\qedhere\]
\end{proof}

\begin{cor}\label{cor:DnmonisEnmon}
  The \icat{} $\MonI^{\Dn}$ of $\Dn$-monoidal \icats{} is equivalent
  to the \icat{} $\MonI^{\Sigma,\mathbb{E}_{n}}$ of
  $\mathbb{E}_{n}$-monoidal \icats{}. 
\end{cor}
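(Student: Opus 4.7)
The plan is to reduce to Corollary~\ref{cor:symDnmoneq} applied to the (very large) $\infty$-category $\LCatI$ via straightening/unstraightening on both sides of the desired equivalence.

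First, I would identify $\MonI^{\Dn}$ with $\Mon^{n}_{\Dnop}(\LCatI)$. By definition $\MonI^{\Dn}$ is the $\infty$-category underlying $(\sSet^{+})_{\mathfrak{M}_{n}}$, whose fibrant objects are coCartesian fibrations $\mathcal{C}^{\otimes} \to \Dnop$ such that for every $I \in \Dnop$ the functor $\mathcal{C}^{\otimes}_{I} \to (\mathcal{C}^{\otimes}_{C_n})^{\times |I|}$ is an equivalence, and whose weak equivalences between fibrant objects are the $\Dn$-monoidal functors (i.e.\ functors preserving all coCartesian morphisms). Lurie's straightening equivalence $\rcite{HTT}{\S 3.2}$ identifies coCartesian fibrations over $\Dnop$ with functors $\Dnop \to \LCatI$, and under this equivalence the Segal condition defining $\Dn$-monoidal $\infty$-categories corresponds exactly to the condition that the associated functor is a $\Dnop$-monoid in the sense of \S\ref{subsec:dnmonoid}. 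Thus $\MonI^{\Dn} \simeq \Mon^{n}_{\Dnop}(\LCatI)$.

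Next, I would similarly identify $\MonI^{\Sigma,\En}$ with $\MonS_{\En}(\LCatI)$. An $\En$-monoidal $\infty$-category is, by definition, a coCartesian fibration over $\En$ whose straightening is an $\En$-monoid in $\LCatI$; the analogous straightening equivalence for symmetric $\infty$-operads gives this identification, with morphisms being $\En$-monoidal functors on one side and natural transformations of $\En$-monoids on the other.

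With these identifications in hand, the conclusion is immediate from Corollary~\ref{cor:symDnmoneq}: taking $\mathcal{C} = \LCatI$ (which has finite products) yields a natural equivalence
\[ \MonS_{\En}(\LCatI) \simeq \Mon^{n}_{\Dnop}(\LCatI), \]
and stringing together the three equivalences gives $\MonI^{\Sigma,\En} \simeq \MonI^{\Dn}$. The only subtle point is the naturality of the straightening identifications on morphisms --- one needs that a $\Dn$-monoidal functor between $\Dn$-monoidal $\infty$-categories corresponds under straightening to a map of $\Dnop$-monoids in $\LCatI$, and likewise on the $\En$ side. This is standard but is the main thing to check; it follows from the fact that a functor over $\Dnop$ (resp.\ over $\En$) preserves coCartesian morphisms if and only if its straightening is a natural transformation of the corresponding $\CatI$-valued functors, together with the observation that the full subcategory of $\Dn$-monoids inside $\Fun(\Dnop, \LCatI)$ is closed under arbitrary natural transformations (and similarly for $\En$).
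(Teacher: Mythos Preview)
Your proposal is correct and is essentially the same as the paper's proof, which simply says ``This is just the special case of Corollary~\ref{cor:symDnmoneq} where $\mathcal{C} = \CatI$.'' You have spelled out the implicit straightening/unstraightening identifications in more detail (and used $\LCatI$ rather than $\CatI$, a harmless size adjustment), but the underlying argument is identical.
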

\begin{proof}
  This is just the special case of Corollary~\ref{cor:symDnmoneq}
  where $\mathcal{C} = \CatI$.
\end{proof}

\subsection{Weak Operadic Colimits}
Suppose $\mathcal{C}$ is a $\Dn$-monoidal \icat{}. Then by
Corollary~\ref{cor:DnmonisEnmon} $\mathcal{C}$ is equivalently an
$\En$-monoidal \icat{}. Moreover, if $\mathcal{O}$ is a \dniopd{},
then $\mathcal{O}$-algebras in $\mathcal{C}$, regarded as
$\Dn$-monoidal, are equivalent to $u^{n}_{!}\mathcal{O}$-algebras in
$\mathcal{C}$, regarded as $\En$-monoidal, by the results of
\S\ref{subsec:symDn}. If $f \colon \mathcal{O} \to \mathcal{P}$ is a
morphism of \dniopds{}, this means that we can apply the results of
\rcite{HA}{\S 3.1.3} to $u^{n}_{!}f$ to conclude that if $\mathcal{C}$
is well-behaved, then the functor $f^{*} \colon
\Algn_{\mathcal{P}}(\mathcal{C}) \to \Algn_{\mathcal{O}}(\mathcal{C})$
has a left adjoint $f_{!}$.

In \rcite{HA}{\S 3.1.3}, such left adjoint functors are constructed by
forming certain concrete colimit diagrams. However, as we do not have
any explicit understanding of the symmetric \iopd{}
$u^{n}_{!}\mathcal{O}$, the results of \cite{HA} do not allow us to
understand what the functor $f_{!}$ does for $f$ a morphism of
\dniopds{}. For the results of \S\ref{sec:algbimod} and
\S\ref{sec:enalg} this is insufficient --- in fact, we need an
explicit description of such a left adjoint for certain maps of
\emph{generalized} \dniopds{}, which introduces another inexplicit
construction, namely the localization functor from \gdniopds{} to
\dniopds{}, before we can apply the results from \cite{HA}. For this
reason, we will in the next couple of subsections discuss analogues of
many of the results in \rcite{HA}{\S 3.1.1--3.1.3} in the setting of
\gdniopds{}. Luckily, these results can generally be obtained by minor
variations of the arguments from \cite{HA}, and when this is the case
we have not included complete details.

In this section we consider the analogue, in the setting of
\dniopds{}, of the \emph{weak operadic colimits} introduced in
\rcite{HA}{\S 3.1.1}. However, unlike in \rcite{HA}{\S 3.1.1}, we will
not consider \emph{relative} weak operadic colimits, as these are not
needed in this paper.

\begin{remark}
  In \rcite{HA}{\S 3.1.1} weak operadic colimits are considered as a
  preliminary to a notion of \emph{operadic colimits}. These do not
  have a straightforward analogue in the $\Dn$-context. Instead, we
  will introduce a notion we call a \emph{monoidal colimit}, which is
  an adequate substitute such that the required arguments from \cite{HA} still
  go through.
\end{remark}

\begin{notation}
  Suppose $\mathcal{O}$ is a \dniopd{}; we denote the subcategory of
  $\mathcal{O}$ containing only the active morphisms by
  $\mathcal{O}^{\act}$. If $p \colon K \to \mathcal{O}^{\act}$ is a
  diagram, we write $\mathcal{O}^{\act}_{C_{n},p/}$ for the \icat{}
  $\mathcal{O}_{C_{n}} \times_{\mathcal{O}^{\act}}
  \mathcal{O}^{\act}_{p/}$ --- thus an object of
  $\mathcal{O}^{\act}_{C_{n},p/}$ consists of a cone
  $K^{\triangleright} \to \mathcal{O}^{\act}$ that restricts to $p$ on
  $K$ and with the image of the cone point in the fibre over $C_{n}$.
\end{notation}

\begin{defn}
  Suppose $\mathcal{O}$ is a \dniopd{}, let $\overline{p} \colon
  K^{\triangleright} \to \mathcal{O}^{\act}$ be a diagram, and set $p
  := \overline{p}|_{K}$. We say that $\overline{p}$ is a \emph{weak
    operadic colimit diagram} if the evident forgetful map
  \[ \mathcal{O}^{\act}_{C_{n},\overline{p}/} \to
  \mathcal{O}^{\act}_{C_{n},p/}\]
  is an equivalence of \icats{}.
\end{defn}

\begin{remark}
  If the image of the cone point of $K^{\triangleright}$ under
  $\overline{p}$ lies over $C_{n}$, then $\overline{p}$ is itself an
  object of $\mathcal{O}^{\act}_{C_{n},p/}$, and so $\overline{p}$ is
  a weak operadic colimit diagram \IFF{} it is a final object of
  $\mathcal{O}^{\act}_{C_{n},p/}$.
\end{remark}

\begin{remark}
  It follows from \rcite{HTT}{Proposition 2.1.2.1} that the map
  $\mathcal{O}^{\act}_{C_{n},\overline{p}/} \to
  \mathcal{O}^{\act}_{C_{n},p/}$ is always a left
  fibration. By \rcite{HTT}{Proposition 2.4.4.6} it is therefore an
  equivalence of \icats{} \IFF{} it is a trivial Kan fibration.
\end{remark}

\begin{remark}
  Suppose $K^{\triangleright} \to
  \mathcal{O}^{\act}$ is a weak operadic colimit diagram and $L \to K$
  is a cofinal map. Then the composite $L^{\triangleright} \to
  K^{\triangleright} \to \mathcal{O}^{\act}$ is also a weak operadic
  colimit diagram.
\end{remark}

\begin{propn}
  Let $\mathcal{O}$ be a \dniopd{}. A diagram $\overline{p} \colon
  K^{\triangleright} \to \mathcal{O}^{\act}$ is a weak operadic
  colimit \IFF{} for every $n > 0$ and every diagram \liftcsquare{K
    \star \partial \Delta^{n}}{\mathcal{O}^{\act}}{K \star
    \Delta^{n}}{(\Dnop)^{\act}}{\overline{f}_{0}}{}{}{f}{\overline{f}}
  there exists an extension $\overline{f}$ of $\overline{f}_{0}$.
\end{propn}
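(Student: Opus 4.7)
The plan is to follow the template of \rcite{HA}{Proposition 3.1.1.7}, which establishes the analogous characterization for symmetric \iopds{}. First I will use that the forgetful map
\[ U \colon \mathcal{O}^{\act}_{C_n, \overline{p}/} \to \mathcal{O}^{\act}_{C_n, p/} \]
whose invertibility defines the weak operadic colimit condition is a left fibration by \rcite{HTT}{Proposition 2.1.2.1}. Since a left fibration is a categorical equivalence precisely when it is a trivial Kan fibration (\rcite{HTT}{Proposition 2.4.4.6}), this reduces (1) to the statement that $U$ satisfies the right lifting property with respect to $\partial \Delta^m \hookrightarrow \Delta^m$ for every $m \geq 0$.

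Next I will translate this lifting property via the join--slice adjunction. A lifting problem for $U$ against $\partial \Delta^m \hookrightarrow \Delta^m$ corresponds to extending a map defined on $K^{\triangleright} \star \partial \Delta^m \cup_{K \star \partial \Delta^m} K \star \Delta^m$ (restricting to $\overline{p}$ on $K^{\triangleright}$ and landing in $\mathcal{O}_{C_n}$ on the $\Delta^m$-face) to a map on $K^{\triangleright} \star \Delta^m$. Using the identifications $K^{\triangleright} \star \Delta^m \cong K \star \Delta^{m+1}$ and
\[ K^{\triangleright} \star \partial \Delta^m \cup_{K \star \partial \Delta^m} K \star \Delta^m \cong K \star \partial \Delta^{m+1}, \]
(the latter coming from the elementary identification $\Delta^0 \star \partial \Delta^m \cong \Lambda^{m+1}_0 \subset \Delta^{m+1}$, whose union with the face $\{1,\dots,m+1\}$ is $\partial \Delta^{m+1}$), the problem becomes: extend a given $K \star \partial \Delta^{m+1} \to \mathcal{O}^{\act}$---with restriction to $K$ equal to $p$, value at vertex $0$ equal to $\overline{p}(\infty)$, and restriction to the face $\{1,\dots,m+1\}$ landing in $\mathcal{O}_{C_n}$---to $K \star \Delta^{m+1} \to \mathcal{O}^{\act}$ preserving these constraints. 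With $n := m+1 \geq 1$, this is a special case of the extension condition in (2).

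For the direction (2) $\Rightarrow$ (1) the implication is then immediate by applying (2) to the particular target $f \colon K \star \Delta^n \to (\Dnop)^{\act}$ whose restriction to $\{1,\dots,n\}$ is constant at $C_n$. For (1) $\Rightarrow$ (2), the essential task will be to bootstrap from this special case to the general extension property of (2), where the target $f$ may take arbitrary values on the interior vertices of $\Delta^n$. I plan to perform this reduction using the \dniopd{} structure of $\mathcal{O}$: given $f$ with vertex values $I_v \in \Dn$, the Segal equivalences $\mathcal{O}_{I_v} \simeq \mathcal{O}_{C_n}^{\times |I_v|}$ from condition~(ii) of Definition~\ref{defn:Dniopd}, together with the $\pi$-limit compatibility from (iii) of the same definition, allow one to decompose the given extension problem over $K \star \Delta^n$ into a collection of extension problems in which every vertex of $\Delta^n$ lies over $C_n$. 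Solving each decomposed problem using (1) (via the translation established above) and reassembling then yields the required global filler $\overline{f}$.

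The hard part will be carrying out this reduction from an arbitrary target $f$ to the $C_n$-concentrated case in the direction (1) $\Rightarrow$ (2). This is essentially a formal but intricate diagram chase using the Segal decomposition and the $\pi$-limit compatibility, analogous to the way \cite{HA} handles inert--active factorizations in \rcite{HA}{\S 3.1.1}; the subtlety lies in ensuring that the reassembled extension actually lifts the originally specified $f$ rather than some $C_n$-shadow of it. Once this bookkeeping is in place, the remainder of the argument should proceed routinely following the pattern of \rcite{HA}{Proposition 3.1.1.7}.
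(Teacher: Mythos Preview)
Your reduction to the trivial Kan fibration condition and the join--slice translation are correct and match the argument of \rcite{HA}{Proposition 3.1.1.7}, which is all the paper's proof does. However, you have misread condition (2). In the HA original---which the paper is transcribing---the lifting square carries two hypotheses that the paper's statement leaves implicit: $\overline{f}_{0}|_{K \star \{0\}} = \overline{p}$, and $f$ carries $\Delta^{\{1,\ldots,n\}}$ into $\{C_{n}\} \subseteq (\Dnop)^{\act}$. With these in place, your own join--slice translation already shows that (2) is \emph{exactly} the right-lifting condition for $U$ against $\partial \Delta^{m} \hookrightarrow \Delta^{m}$ (with $m = n-1$), so both implications are purely formal.

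The ``hard part'' you propose---a Segal decomposition to pass from the $C_{n}$-concentrated case to arbitrary $f$---is therefore unnecessary: condition (2) already carries this constraint. The paper's proof is nothing more than the join--slice bookkeeping you set up in your second paragraph, and there is no reassembly step. (Your Segal-decomposition idea is not wrong in spirit---it is essentially what happens in the proof of Proposition~\ref{propn:lokeexist} when one builds operadic Kan extensions---but it does not belong here.)
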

\begin{proof}
  As \rcite{HA}{Proposition 3.1.1.7}.
\end{proof}

\begin{propn}
  Let $\mathcal{O}$ be a \dniopd{}, let $\overline{h} \colon
  \Delta^{1} \times K^{\triangleright} \to \mathcal{O}^{\act}$ be a
  natural transformation from $\overline{h}_{0} :=
  \overline{h}|_{\{0\} \times K^{\triangleright}}$ to $\overline{h}_{1} :=
  \overline{h}|_{\{1\} \times K^{\triangleright}}$. Suppose that:
  \begin{enumerate}[(1)]
  \item For every vertex $x \in K^{\triangleright}$, the restriction
    $\overline{h}|_{\Delta^{1} \times \{x\}}$ is a coCartesian edge of
    $\mathcal{O}$.
  \item The composite $\Delta^{1} \times \{\infty\} \to \mathcal{O}
    \to \Dnop$ is an identity morphism. (Equivalently, the restriction
    $\overline{h}|_{\Delta^{1} \times \{\infty\}}$ is an equivalence in
    $\mathcal{O}$.)
  \end{enumerate}
  Then $\overline{h}_{0}$ is a weak operadic colimit diagram \IFF{}
  $\overline{h}_{1}$ is a weak operadic colimit diagram.
\end{propn}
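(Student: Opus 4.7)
The plan is to mimic the proof of \cite[Proposition 3.1.1.15]{HA}, which establishes the analogous statement in the symmetric setting, with the caveat that here we work directly with \dniopds{} rather than passing through $u^{n}_{!}$. The strategy is to reduce the statement to a lifting problem and transport lifts across the natural transformation $\overline{h}$ using the coCartesian property.

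First I would apply the preceding lifting criterion, reducing the claim to the following: given $n > 0$ and a partial lift
\[
\overline{f}_{0}^{(i)} \colon K \star \partial \Delta^{n} \to \mathcal{O}^{\act}
\]
extending $\overline{h}_{i}|_{K}$ and compatible with a fixed diagram $f \colon K \star \Delta^{n} \to (\Dnop)^{\act}$, one can find a full lift for $i = 0$ if and only if one can for $i = 1$. Second, I would promote such a lifting problem for $\overline{h}_{0}$ to a diagram on the larger simplicial set $\Delta^{1} \times (K \star \partial \Delta^{n})$: on $\Delta^{1} \times K$ we use $\overline{h}|_{\Delta^{1} \times K}$, on $\{0\} \times (K \star \partial \Delta^{n})$ we use $\overline{f}_{0}^{(0)}$, and we need to extend over $\Delta^{1} \times \partial \Delta^{n}$. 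The restrictions $\overline{h}|_{\Delta^{1} \times \{x\}}$ being coCartesian morphisms of $\mathcal{O}$ (with the cone-point morphism an equivalence by hypothesis~(2)) allow us to extend one simplex at a time: each new edge $\overline{h}|_{\Delta^1 \times \{v\}}$ for a vertex $v$ of $\partial\Delta^n$ is coCartesian, so the required extensions exist by the universal property of coCartesian morphisms, provided the underlying diagram in $\Dnop$ is already given (it is, via $f$ composed with the projection $\Delta^{1} \times K^{\triangleright} \star \Delta^{n} \to K^{\triangleright} \star \Delta^{n}$). Restricting the resulting extension to $\{1\} \times (K \star \partial \Delta^{n})$ produces the desired lifting problem for $\overline{h}_{1}$; since the original problem's interior was never used, this is a genuine transfer.

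Third, the argument is symmetric: having obtained a lift for the transferred problem under $\overline{h}_{1}$, one runs the same extension in reverse to get the lift for the original problem. The key technical input needed throughout is that the map
\[
\bigl(\Delta^{1} \times (K \star \partial \Delta^{n})\bigr) \amalg_{\{0\} \times (K \star \partial\Delta^{n})} \bigl(\{0\} \times (K \star \Delta^{n})\bigr) \hookrightarrow \Delta^{1} \times (K \star \Delta^{n})
\]
admits the desired lifts against $\mathcal{O} \to \Dnop$ given that every edge $\Delta^{1} \times \{v\}$ for $v$ a vertex of $K \star \Delta^{n}$ is marked coCartesian (via $\overline{h}$ on $K$, via the just-constructed extension on $\partial\Delta^{n}$, and via an equivalence at the cone point). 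This follows by filtering by skeleta of the $\Delta^{n}$-factor and invoking the coCartesian lifting property of $\mathcal{O} \to \Dnop$ at each step, together with the fact that an active morphism in $\Dnop$ composed with an identity is active, so all relevant morphisms remain in $\mathcal{O}^{\act}$.

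The main obstacle is bookkeeping: keeping track of which edges are marked coCartesian at each stage of the filtration and verifying that the coCartesian extensions land in $\mathcal{O}^{\act}$ rather than merely $\mathcal{O}$. Hypothesis~(2) is essential at the cone point, because otherwise the extension at $\infty$ could twist the image in $\Dnop$ away from the identity and produce morphisms that fail to be active. Once the marked-edge bookkeeping is in place, the combinatorial extension argument is the same inside-out skeletal filtration as in \cite[Proposition 3.1.1.15]{HA}, and no genuinely new ideas beyond what is already contained in the appendix on categorical patterns are required.
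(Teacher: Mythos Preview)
Your proposal is correct and takes essentially the same approach as the paper: the paper's proof consists of the single line ``As \cite[Proposition 3.1.1.15]{HA},'' and your outline is precisely a sketch of that argument adapted to the $\simp^n$-setting. Your detailed unpacking of the coCartesian extension and skeletal filtration is exactly what that citation encodes.
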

\begin{proof}
  As \rcite{HA}{Proposition 3.1.1.15}.
\end{proof}

Applied to a $\Dn$-monoidal \icat{} $\mathcal{C}^{\otimes}$, this lets
us reduce the question of whether a diagram in
$(\mathcal{C}^{\otimes})^{\act}$ is a weak operadic colimit diagram to
whether a diagram in a fibre $\mathcal{C}^{\otimes}_{I}$ is a weak
operadic colimit diagram:
\begin{cor}\label{cor:wocpushforward}
  Let $\mathcal{C}^{\otimes}$ be $\Dn$-monoidal \icat{}, and suppose
  $\overline{p} \colon K^{\triangleright} \to
  (\mathcal{C}^{\otimes})^{\act}$ is a diagram lying over
  $\overline{q} \colon K^{\triangleright} \to \Dnop$. Take
  $\overline{p}'$ to be the coCartesian pushforward to the fibre over
  $\overline{q}(\infty)$. Then $\overline{p}$ is a weak operadic
  colimit diagram \IFF{} $\overline{p}'$ is a weak operadic colimit
  diagram.
\end{cor}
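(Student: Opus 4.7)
The plan is to deduce this directly from the preceding proposition by constructing an appropriate natural transformation $\overline{h} \colon \Delta^{1} \times K^{\triangleright} \to (\mathcal{C}^{\otimes})^{\act}$ with $\overline{h}|_{\{0\} \times K^{\triangleright}} = \overline{p}$ and $\overline{h}|_{\{1\} \times K^{\triangleright}} = \overline{p}'$, satisfying the hypotheses of that proposition. Specifically, the components of $\overline{h}$ at each vertex of $K^{\triangleright}$ should be coCartesian, and at the cone point $\infty$ the component should be an equivalence (in fact, an identity).

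To construct $\overline{h}$, I would invoke \rcite{HTT}{Corollary 3.2.2.12}, which guarantees that the induced map $\Fun(K^{\triangleright}, \mathcal{C}^{\otimes}) \to \Fun(K^{\triangleright}, \Dnop)$ is again a coCartesian fibration. View $\overline{q}$ as a morphism in $\Fun(K^{\triangleright}, \Dnop)$ from $q := \overline{q}|_{K}$ (or more precisely, $\overline{q}$ itself viewed as a diagram) to the constant functor at $\overline{q}(\infty)$, whose value at a point $k \in K^{\triangleright}$ is the morphism $\overline{q}(k) \to \overline{q}(\infty)$ dictated by $\overline{q}$. Lifting this coCartesianly starting from $\overline{p}$ produces the desired $\overline{h}$, with $\overline{h}|_{\{1\} \times K^{\triangleright}}$ agreeing with $\overline{p}'$ by the very definition of the coCartesian pushforward.

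I would then verify the hypotheses of the preceding proposition. The components $\overline{h}|_{\Delta^{1} \times \{x\}}$ are coCartesian by construction. At the cone point, $\overline{q}(\infty) \to \overline{q}(\infty)$ is the identity, so $\overline{h}|_{\Delta^{1} \times \{\infty\}}$ is coCartesian over an identity, hence an equivalence (with $\overline{p}'(\infty) \simeq \overline{p}(\infty)$ as required by the definition of coCartesian pushforward). Moreover, $\overline{h}$ actually factors through $(\mathcal{C}^{\otimes})^{\act}$: the morphisms appearing in $\overline{p}$ are active by assumption; those appearing in $\overline{p}'$ lie in the single fibre $\mathcal{C}^{\otimes}_{\overline{q}(\infty)}$ and so cover identities; and the components $\overline{p}(k) \to \overline{p}'(k)$ cover $\overline{q}(k) \to \overline{q}(\infty)$, which are active because $\overline{q}$ factors through $(\Dnop)^{\act}$.

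With $\overline{h}$ in hand, the preceding proposition yields that $\overline{p} = \overline{h}_{0}$ is a weak operadic colimit diagram \IFF{} $\overline{p}' = \overline{h}_{1}$ is. The only step that requires any genuine care is checking that $\overline{h}$ does factor through the active subcategory, since the preceding proposition is formulated in $\mathcal{O}^{\act}$ rather than all of $\mathcal{O}$; but this is immediate once one unpacks the definitions, as explained above.
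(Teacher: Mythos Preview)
Your proposal is correct and follows exactly the intended route: the paper states this as an immediate corollary of the preceding proposition (with no proof given), and you have simply filled in the construction of the required natural transformation $\overline{h}$ via coCartesian lifting in $\Fun(K^{\triangleright}, \mathcal{C}^{\otimes})$ and verified its hypotheses. The check that $\overline{h}$ lands in $(\mathcal{C}^{\otimes})^{\act}$ is the only point requiring a moment's thought, and your argument for it is fine.
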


\begin{propn}\label{propn:wocmonfib}
  Let $\mathcal{C}^{\otimes}$ be a $\Dn$-monoidal \icat{}, and let
  $\overline{p} \colon K^{\triangleright} \to
  \mathcal{C}^{\otimes}_{I}$ be a diagram in the fibre over some $I
  \in \Dnop$. Then $\overline{p}$ is a weak operadic colimit diagram
  \IFF{} for $m \colon I \to C_{n}$ the unique active map in $\Dnop$,
  the composite
  \[ K^{\triangleright} \xto{\overline{p}} \mathcal{C}^{\otimes}_{I}
  \xto{m_{!}} \mathcal{C}\]
  is a colimit diagram in $\mathcal{C}$.
\end{propn}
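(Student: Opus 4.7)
Given the author's explicit note that such propositions are proved by essentially the same arguments as their symmetric-operad analogues from \cite{HA}, the cleanest plan is to adapt the proof of \rcite{HA}{Proposition 3.1.1.16} to the $\Dn$-setting. Setting $p := \overline{p}|_K$, the plan is to identify the comma \icats{} $(\mathcal{C}^{\otimes})^{\act}_{C_n, p/}$ and $(\mathcal{C}^{\otimes})^{\act}_{C_n, \overline{p}/}$ with the ordinary slices $\mathcal{C}_{m_! p/}$ and $\mathcal{C}_{m_! \overline{p}/}$, compatibly with the forgetful maps between them. Once this identification is in place, the defining condition for $\overline{p}$ to be a weak operadic colimit---namely, that
\[ (\mathcal{C}^{\otimes})^{\act}_{C_n, \overline{p}/} \to (\mathcal{C}^{\otimes})^{\act}_{C_n, p/} \]
is a trivial Kan fibration---translates directly into triviality of the corresponding map $\mathcal{C}_{m_!\overline{p}/} \to \mathcal{C}_{m_! p/}$, which is exactly the assertion that $m_!\overline{p}$ is a colimit in $\mathcal{C}$.

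The combinatorial input driving the identification is that in $\Dnop$ the only active morphism $I \to C_n$ is $m$, and the only active endomorphism of $C_n$ is the identity. Consequently, any cone $K^{\triangleright} \to (\mathcal{C}^{\otimes})^{\act}$ extending $p$ with cone point $Z \in \mathcal{C}^{\otimes}_{C_n} = \mathcal{C}$ consists of a family of edges $p(k) \to Z$ in $\mathcal{C}^{\otimes}$ lying over $m$, together with higher coherences; when the source is replaced by $m_! p$ the edges are forced over $\mathrm{id}_{C_n}$ and hence belong to $\mathcal{C}$ itself, so that $(\mathcal{C}^{\otimes})^{\act}_{C_n, m_! p/} = \mathcal{C}_{m_! p/}$ on the nose (and likewise for $m_!\overline{p}$). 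I would then choose a coCartesian morphism $\overline{\alpha} \colon \overline{p} \to m_! \overline{p}$ in $\Fun(K^{\triangleright}, \mathcal{C}^{\otimes})$ lying over the constant natural transformation at $m$---coCartesianness in the functor \icat{} being furnished by \rcite{HTT}{Proposition 3.1.2.1}---and restrict it to a coCartesian morphism $\alpha \colon p \to m_! p$. Precomposition with these coCartesian edges yields the desired functors
\[ \mathcal{C}_{m_! p/} \to (\mathcal{C}^{\otimes})^{\act}_{C_n, p/}, \qquad \mathcal{C}_{m_! \overline{p}/} \to (\mathcal{C}^{\otimes})^{\act}_{C_n, \overline{p}/}, \]
which are compatible with the evident forgetful maps by naturality of $\overline{\alpha}$.

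The main technical obstacle is verifying that these precomposition functors are genuine equivalences of \icats{}, not merely bijections on objects. This reduces, via the pointwise pullback-square characterization of coCartesian morphisms, to the assertion that for each $k$ the edge $\alpha_k$ induces equivalences $\Map^{\id}_{\mathcal{C}^{\otimes}}(m_!p(k), Z) \isoto \Map^{m}_{\mathcal{C}^{\otimes}}(p(k), Z)$ for all $Z \in \mathcal{C}$---which is precisely the defining property of a coCartesian edge---and then assembling these vertex-wise equivalences into an equivalence of full slice \icats{}, a standard consequence of the theory of coCartesian fibrations. With the equivalence and its compatibility in hand, the conclusion is immediate: a trivial Kan fibration on one side is equivalent to a trivial Kan fibration on the other, giving the desired reformulation of the weak operadic colimit condition as an honest colimit condition in $\mathcal{C}$.
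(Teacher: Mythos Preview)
Your proposal is correct and matches the paper's approach exactly: the paper's proof is simply the one-line reference ``As \rcite{HA}{Proposition 3.1.1.16}'', and your write-up is a faithful unpacking of that argument in the $\Dn$-setting.
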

\begin{proof}
  As \rcite{HA}{Proposition 3.1.1.16}.
\end{proof}

\begin{defn}
  Let $\mu^{j}$ denote the map $(\id,\ldots,d_{1},\ldots,\id) \colon
  ([1], \ldots, [2],\ldots,[1]) \to ([1],\ldots,[1])$ in $\Dnop$ (with
  $d_{1}$ in the $j$th place), and let $\mathcal{C}^{\otimes}$ be a
  $\Dn$-monoidal \icat{}. We say a diagram $\overline{p} \colon
  K^{\triangleright} \to \mathcal{C}$ is a \emph{monoidal colimit
    diagram} if for every $x \in \mathcal{C}$ and every $j =
  1,\ldots,n$, the composite
  \[ K^{\triangleright} \times \{x\} \to \mathcal{C} \times
  \mathcal{C} \simeq
  \mathcal{C}^{\otimes}_{([1],\ldots,[2],\ldots,[1])}
  \xto{\mu^{j}_{!}} \mathcal{C}\] is a colimit diagram. More
  generally, if $\overline{p} \colon K^{\triangleright} \to
  (\mathcal{C}^{\otimes})^{\act}$ is a diagram with
  $\overline{p}(\infty)$ in $\mathcal{C}^{\otimes}_{C_{n}}$, then we
  say that $\overline{p}$ is a \emph{monoidal colimit diagram} if the
  coCartesian pushforward to a diagram $\overline{p}' \colon
  K^{\triangleright} \to \mathcal{C}^{\otimes}_{C_{n}}$ is a monoidal
  colimit diagram in the first sense.
\end{defn}

\begin{propn}
  Let $\mathcal{O}$ be a \dniopd{}. Suppose given, for some $I \in
  \Dnop$, a finite collection of simplicial sets $K_{i}$, $i \in |I|$,
  and diagrams $\overline{p}_{i} \colon K_{i}^{\triangleright} \to
  \mathcal{O}_{C_{n}}$. Suppose the product diagram
  \[ \prod_{i\in|I|} K_{i}^{\triangleright} \to \prod_{i\in|I|}
  \mathcal{O}_{C_{n}} \simeq \mathcal{O}_{I}\]
  is such that for every $i$ and every choice of $k_{j} \in
  K_{j}^{\triangleright}$ for all $j \neq i$ the diagram
  \[ K_{i}^{\triangleright} \isoto \{k_{1}\} \times \cdots \times
  K_{i}^{\triangleright} \times \cdots \times \{k_{n}\} \to
  \mathcal{O}_{I} \hookrightarrow \mathcal{O}^{\act}\]
  is a weak operadic colimit diagram. Then the composite
  \[ (\prod_{i=1}^{n} K_{i})^{\triangleright} \to \prod_{i=1}^{n}
  K_{i}^{\triangleright} \to \mathcal{O}_{I} \hookrightarrow \mathcal{O}^{\act}\]
  is also a weak operadic colimit diagram.
\end{propn}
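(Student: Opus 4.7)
The plan is to proceed by induction on $|I|$, reducing the statement to a two-factor case. For $|I|=1$ the canonical map $(\prod_i K_i)^{\triangleright} \to \prod_i K_i^{\triangleright}$ is the identity, and the conclusion is simply the hypothesis. For the inductive step, partition $|I| = S \amalg T$ into two nonempty subsets; by induction each of the partial products $(\prod_{i \in S} K_i)^{\triangleright}$ and $(\prod_{i \in T} K_i)^{\triangleright}$ is itself a weak operadic colimit diagram into the corresponding fibre of $\mathcal{O}$, regarded as an object of $\mathcal{O}^{\act}$. Thus it suffices to treat the case of two factors with those two partial products as the input.

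For the two-factor case I would use the horn-extension characterization of weak operadic colimits proved in the analog of \rcite{HA}{Proposition 3.1.1.7} (stated earlier in this section): one must extend any diagram $\overline{f}_0 \colon (K_1 \times K_2) \star \partial \Delta^m \to \mathcal{O}^{\act}$ over $(K_1 \times K_2) \star \Delta^m$, compatibly with a given lift $f$ of the base. I would construct the extension in stages using the filtration
\[ (K_1 \times K_2) \star \Delta^m \hookrightarrow (K_1 \times K_2^{\triangleright}) \star \Delta^m \hookrightarrow (K_1^{\triangleright} \times K_2^{\triangleright}) \star \Delta^m, \]
identifying the final cone point $(\infty_1, \infty_2)$ with that of $(K_1 \times K_2)^{\triangleright}$. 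On the first inclusion, use the hypothesis for $i = 2$ (with $k_1$ running over $K_1$) to extend in the new cone direction; on the second, apply the hypothesis for $i = 1$ with $k_2 = \infty_2$ the cone point of $K_2^{\triangleright}$, which is permitted since the hypothesis quantifies over all $k_j \in K_j^{\triangleright}$ including cone points. The restriction of the resulting extension to $(\infty_1, \infty_2)$, together with the simplex it spans with $\Delta^m$, then provides the desired $\overline{f}$.

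The main obstacle is to assemble these stagewise extensions into a coherent filtration of simplicial sets so that each stage is a pushout of an inclusion whose extension follows from one of the weak operadic colimit hypotheses. One either constructs this filtration explicitly cell by cell, in the spirit of the filtration arguments in \rcite{HA}{\S 3.1.1}, or one reformulates the reduction via a natural transformation in the style of the analog of \rcite{HA}{Proposition 3.1.1.15} recalled above, producing a suitable $\overline{h} \colon \Delta^1 \times (K_1 \times K_2)^{\triangleright} \to \mathcal{O}^{\act}$ between the diagram of interest and a diagram whose weak operadic colimit property is handled by one variable at a time. Either route amounts to a standard but somewhat intricate simplicial set manipulation, and that is where the real technical work lies.
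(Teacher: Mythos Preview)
Your proposal is correct and follows essentially the same approach as the paper, which simply defers to \rcite{HA}{Proposition 3.1.1.8}; Lurie's proof there likewise reduces by induction to two factors and then uses the lifting criterion (the analogue of 3.1.1.7) together with a filtration argument of exactly the kind you describe. Your honest acknowledgement that the filtration is where the real work lies is accurate, and that work is carried out in \cite{HA} rather than reproduced here.
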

\begin{proof}
  As \rcite{HA}{Proposition 3.1.1.8}.
\end{proof}

\begin{cor}\label{cor:moncolimprod}
  Let $\mathcal{C}^{\otimes}$ be a $\Dn$-monoidal \icat{}. Suppose
  given, for some $I \in \Dnop$, a finite collection of simplicial
  sets $K_{i}$, $i \in |I|$, and monoidal colimit diagrams
  $\overline{p}_{i} \colon K_{i}^{\triangleright} \to
  \mathcal{C}$. Then the composite diagram
  \[ (\prod_{i=1}^{n} K_{i})^{\triangleright} \to \prod_{i=1}^{n}
  K_{i}^{\triangleright} \to \prod_{i \in
    |I|}\mathcal{C} \simeq \mathcal{C}^{\otimes}_{I} \hookrightarrow (\mathcal{C}^{\otimes})^{\act}\]
  is a weak operadic colimit diagram. Moreover, the coCartesian
  pushforward of this diagram to $\mathcal{C}^{\otimes}_{C_{n}}$ is a
  monoidal colimit diagram.\qed
\end{cor}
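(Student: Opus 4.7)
The plan is to verify the hypothesis of the preceding proposition and invoke it. Fix an index $i \in |I|$ and a tuple $(k_{j})_{j \neq i}$ with $k_{j} \in K_{j}^{\triangleright}$. By Proposition~\ref{propn:wocmonfib}, showing that the induced diagram $K_{i}^{\triangleright} \to \mathcal{C}^{\otimes}_{I} \hookrightarrow (\mathcal{C}^{\otimes})^{\act}$ is a weak operadic colimit reduces to checking that its composite with the pushforward $m_{!} \colon \mathcal{C}^{\otimes}_{I} \to \mathcal{C}$ along the unique active map $m \colon I \to C_{n}$ is a colimit in $\mathcal{C}$. Under the equivalence $\mathcal{C}^{\otimes}_{I} \simeq \mathcal{C}^{\times |I|}$, the functor $m_{!}$ is the iterated tensor product in the order dictated by $I$, expressible as a composite of elementary multiplications $\mu^{\ell}$. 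Thus the composite in question sends $k_{i}$ to $\bigotimes_{j} \overline{p}_{j}(k_{j})$ with the arguments for $j \neq i$ fixed; since each step is a tensoring (on one side, via some $\mu^{\ell}$) with a \emph{fixed} object of $\mathcal{C}$, and $\overline{p}_{i}$ is a monoidal colimit diagram, iterating shows the composite is a colimit. The preceding proposition then yields the first assertion.

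For the \emph{moreover} clause, Corollary~\ref{cor:wocpushforward} gives that the coCartesian pushforward $\overline{q} \colon (\prod_{i} K_{i})^{\triangleright} \to \mathcal{C}$ is itself a weak operadic colimit, hence a colimit in $\mathcal{C}$ by Proposition~\ref{propn:wocmonfib} (applied with $I = C_{n}$, so that $m$ is the identity). To upgrade this to a monoidal colimit, fix $x \in \mathcal{C}$ and $\ell \in \{1, \ldots, n\}$, and form the object $I' \in \Dnop$ obtained from $I$ by inserting an extra factor into the $\ell$th coordinate. There is a product diagram $\prod_{i} K_{i} \to \mathcal{C}^{\otimes}_{I'}$ built from the original $\overline{p}_{i}$'s together with the constant diagram at $x$ in the new slot; the same argument with $I$ replaced by $I'$ exhibits its pushforward along the unique active map $I' \to C_{n}$ as a colimit, and this pushforward is by construction $\mu^{\ell}_{!}(\overline{q}, x)$. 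Thus $\overline{q}$ is a monoidal colimit diagram.

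The only non-routine piece is the bookkeeping: verifying that $m_{!}$ factors as a composite of elementary multiplications $\mu^{\ell}$, and that a finite composite of colimit-preserving tensorings preserves the overall colimit. Both are straightforward consequences of the $\Dn$-monoidal structure and the definition of a monoidal colimit diagram, so I expect no real obstacle in carrying out the argument.
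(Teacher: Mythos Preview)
Your proposal is correct and follows exactly the approach the paper intends: the corollary carries a \qed{} in the paper, so it is meant as an immediate consequence of the preceding proposition, and your argument does precisely the expected unpacking --- using Proposition~\ref{propn:wocmonfib} to reduce the hypothesis check to ordinary colimits preserved under tensoring, then invoking the proposition. Your treatment of the ``moreover'' clause via enlarging $I$ to $I'$ with a constant factor at $x$ is the natural way to extract it and is sound; the only care needed is the factorization of the active map $I' \to C_{n}$ through $([1],\ldots,[2],\ldots,[1])$, which you correctly identify as routine.
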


\subsection{Operadic Left Kan Extensions}
In this section we introduce the notion of \emph{operadic left Kan
  extensions} in the $\Dn$-setting. We then use the results of the
previous section to give two key results: first, we will see that
operadic left Kan extensions have a lifting property that will allow
us to conclude, in the next section, that they can be used to
construct adjoints, and second we consider an existence result for
operadic left Kan extensions.

\begin{defn}
  If $\mathcal{C}$ is an \icat{}, a \emph{$\mathcal{C}$-family} of
  \gdniopds{} is a morphism $\mathcal{M} \to \Dnop \times \mathcal{C}$
  of \gdniopds{}.
\end{defn}

\begin{defn}
  Suppose $\mathcal{M} \to \Dnop \times \Delta^{1}$ is a
  $\Delta^{1}$-family of \gdniopds{} between $\mathcal{A} :=
  \mathcal{M}_{0}$ and $\mathcal{B} := \mathcal{M}_{1}$. If $\mathcal{O}$ is a \dniopd{},
  an algebra $\mathcal{M} \to \mathcal{O}$ is an \emph{operadic left Kan
    extension} if for every $X \in \mathcal{B}$, the diagram
  \[ (\mathcal{A}_{/X}^{\act})^{\triangleright} \to \mathcal{M} \to \mathcal{O} \]
  is a weak operadic colimit diagram.
\end{defn}

\begin{propn}
  For $n >1$, let $\mathcal{M} \to \Dnop \times \Delta^{n}$ be a
  $\Delta^{n}$-family of \gdniopds{}, and let $\mathcal{O}$ be a
  \Dniopd{}. Suppose we are given a commutative diagram of \gdniopds{}
  \liftcsquare{\mathcal{M} \times_{\Delta^{n}}
    \Lambda^{n}_{0}}{\mathcal{O}}{\mathcal{M}}{\Dnop}{\overline{f}_0}{}{}{}{\overline{f}}
  such that the restriction of $\overline{f}_{0}$ to $\mathcal{M}
  \times_{\Delta^{n}} \Delta^{\{0,1\}}$ is an operadic left Kan
  extension. Then there exists an extension $\overline{f}$ of
  $\overline{f}_{0}$.
\end{propn}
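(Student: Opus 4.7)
My plan is to adapt the proof of \cite[Proposition 3.1.1.20]{HA} to the $\Dn$-setting. The first step is to reformulate the statement as a lifting problem: we must show that for every simplex of $\mathcal{M}$ that does not lie in $\mathcal{M} \times_{\Delta^{n}} \Lambda^{n}_{0}$, we can inductively choose compatible lifts to $\mathcal{O}$ over $\Dnop$. Let $\mathcal{A}_{i} := \mathcal{M} \times_{\Delta^{n}} \{i\}$ and filter the inclusion $\mathcal{M} \times_{\Delta^{n}} \Lambda^{n}_{0} \hookrightarrow \mathcal{M}$ by attaching, in order of increasing dimension, the non-degenerate simplices of $\mathcal{M}$ whose image in $\Delta^{n}$ is not contained in $\Lambda^{n}_{0}$.

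Each such attachment will either be a pushout along an inner horn inclusion (which can always be filled because $\mathcal{O} \to \Dnop$ is an inner fibration and $\overline{f}$ must be a morphism over $\Dnop$), or it will come from a simplex of the form $K \star \Delta^{m} \to \mathcal{M}$ where $K$ plays the role of a ``cone base'' consisting of objects coming from $\mathcal{A}_{0}$ mapping actively into a distinguished object $X$ of $\mathcal{A}_{k}$ for some $1 \leq k \leq n$. For this second type, we must produce a filler, and this is exactly where the weak operadic colimit property enters. By the reformulation of weak operadic colimits via extension against inclusions $K \star \partial\Delta^{m} \hookrightarrow K \star \Delta^{m}$ (the $\Dn$-analogue of \cite[Proposition 3.1.1.7]{HA}), it suffices to check that the relevant diagram $(\mathcal{A}_{0/X}^{\act})^{\triangleright} \to \mathcal{O}$ is a weak operadic colimit.

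The hard part is reducing the weak operadic colimit conditions that arise at the vertices $k \in \{2,\ldots,n\}$ of $\Delta^{n}$ to the single weak operadic colimit condition guaranteed by the hypothesis on $\Delta^{\{0,1\}}$. For an object $X \in \mathcal{A}_{k}$, we want to show that $(\mathcal{A}_{0/X}^{\act})^{\triangleright} \to \mathcal{O}$ is a weak operadic colimit, knowing the analogous statement for any object $Y \in \mathcal{A}_{1}$. The strategy is to factor the family structure: an active map $A \to X$ in $\mathcal{M}$ with $A \in \mathcal{A}_{0}$ factors through some coCartesian lift $A \to Y$ in $\mathcal{A}_{1}$, and this gives a cofinal (or coinitial, after taking opposites) functor relating $\mathcal{A}_{0/X}^{\act}$ to a colimit of slice categories $\mathcal{A}_{0/Y}^{\act}$. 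Combined with Corollary~\ref{cor:wocpushforward} and Proposition~\ref{propn:wocmonfib}, which let us transfer weak operadic colimits along coCartesian pushforwards and compare them with ordinary colimits in the fibers, we reduce the general case to the $\Delta^{\{0,1\}}$-case.

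The main obstacle I expect to be a nuisance is the bookkeeping in the filtration: we must order the attachments so that every inner horn lifting problem really is inner and so that the weak operadic colimit conditions we need are precisely those guaranteed by the hypothesis, with no stray ``outer'' horns. In Lurie's symmetric setup this is handled via the active/inert factorization system and the observation that an outer horn only arises from a coCartesian edge mapping to the cone point, which the operadic Kan extension hypothesis was specifically designed to handle; the same strategy should go through here using the active/inert factorization system on generalized $\Dn$-$\infty$-operads, but one must verify carefully that the face maps involved remain within the required classes of morphisms under the family structure $\mathcal{M} \to \Dnop \times \Delta^{n}$.
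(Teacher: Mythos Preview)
Your broad strategy—filter the inclusion, fill inner horns via the inner fibration property of $\mathcal{O} \to \Dnop$, and handle the remaining simplices using the extension characterization of weak operadic colimits—is correct and matches the paper, which simply defers to \cite[Theorem 3.1.2.3(B)]{HA} (your reference to 3.1.1.20 looks like a slip).

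There is, however, a genuine gap in your proposed reduction step. You claim one must reduce weak operadic colimit conditions at objects of $\mathcal{A}_k$ for $k \geq 2$ to the given condition at $\mathcal{A}_1$, via Corollary~\ref{cor:wocpushforward}, Proposition~\ref{propn:wocmonfib}, and coCartesian factorizations in $\mathcal{M}$. None of these tools are available here: the two cited results apply only to $\Dn$-\emph{monoidal} $\infty$-categories, whereas $\mathcal{O}$ is merely a $\Dn$-$\infty$-operad, so there is no coCartesian pushforward in $\mathcal{O}$ and no comparison with ordinary colimits in fibres; and $\mathcal{M} \to \Dnop \times \Delta^n$ is only a $\Delta^n$-family of generalized $\Dn$-$\infty$-operads, not a coCartesian fibration over $\Delta^n$, so an active map $A \to X$ with $A \in \mathcal{A}_0$ and $X \in \mathcal{A}_k$ need not factor through any object of $\mathcal{A}_1$.

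Fortunately no such reduction is needed. In Lurie's filtration the extension property of weak operadic colimits is only ever applied with cone point in $\mathcal{A}_1$. The ``outer'' lifting problems that arise are of the form $K \star \partial\Delta^m \hookrightarrow K \star \Delta^m$ where $K$ lies in $\mathcal{A}_{0}$ and the first vertex of the $\Delta^m$-part is the cone point; for a simplex projecting surjectively onto $\Delta^n$, the first vertex outside $\mathcal{A}_0$ automatically lies in $\mathcal{A}_1$, since the projection to $\Delta^n$ is order-preserving. Simplices projecting onto the missing face $d_0\Delta^n$ are filled simultaneously as the $\Delta^m$-parts of these same extensions. So the hypothesis on $\Delta^{\{0,1\}}$ is exactly what is invoked, and the ``bookkeeping'' you rightly flag as the real work consists of arranging the filtration so that every outer lifting problem takes this form—not of transferring colimit conditions between fibres.
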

\begin{proof}
  As \rcite{HA}{Theorem 3.1.2.3(B)}. (Note that when
  \rcite{HA}{Proposition 3.1.1.7} is invoked in step (1) in the
  proof it is sufficient for the diagram to be a weak operadic colimit.)
\end{proof}

\begin{cor}\label{cor:lokeprop}
  Suppose $\mathcal{M} \to \Dnop \times \Delta^{1}$ is a
  $\Delta^{1}$-family of \gdniopds{} between $\mathcal{A} :=
  \mathcal{M}_{0}$ and $\mathcal{B} := \mathcal{M}_{1}$, and let
  $\mathcal{O}$ be a \dniopd{}. Suppose that $n > 0$ and that we are
  given a diagram \liftcsquare{(\mathcal{A} \times \Delta^{n})
    \amalg_{(\mathcal{A} \times \partial \Delta^{n})}(\mathcal{M}
    \times \partial \Delta^{n})}{\mathcal{O}}{\mathcal{M} \times
    \Delta^{n}}{\Dnop}{f_{0}}{}{}{}{f} of \gdniopds{}. If the
  restriction of $f_{0}$ to $\mathcal{M} \times \{0\}$ is an operadic
  left Kan extension, then there exists an extension $f$ of $f_{0}$
  that is a map of \gdniopds{}.
\end{cor}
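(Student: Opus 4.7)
The plan is to apply the preceding proposition iteratively via the standard shuffle decomposition of the pushout-product. First I would identify $\mathcal{M} \times \Delta^{n}$ with the pullback of the $\Delta^{1}$-family $\mathcal{M}$ along the projection $\Delta^{1} \times \Delta^{n} \to \Delta^{1}$, so that the source of $f_{0}$ is the pullback of $J := (\{0\} \times \Delta^{n}) \cup (\Delta^{1} \times \partial \Delta^{n}) \hookrightarrow \Delta^{1} \times \Delta^{n}$; the extension problem then becomes one of filling in the complement of $J$ compatibly with the \gdniopd{} structure over $\Dnop$.

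Next I would decompose $\Delta^{1} \times \Delta^{n}$ into its $(n+1)$ non-degenerate shuffles $\sigma_{k} \colon \Delta^{n+1} \to \Delta^{1} \times \Delta^{n}$, where $\sigma_{k}$ sends vertex $i$ to $(0,i)$ for $i \leq k$ and to $(1,i-1)$ for $i > k$. A direct face computation, based on the identities $d_{k}(\sigma_{k}) = d_{k}(\sigma_{k-1})$ and $d_{k+1}(\sigma_{k}) = d_{k+1}(\sigma_{k+1})$ combined with the fact that all other faces of $\sigma_{k}$ either miss a $\Delta^{n}$-coordinate (and so lie in $\Delta^{1} \times \partial \Delta^{n}$) or lie in $\{0\} \times \Delta^{n}$, shows that adding the shuffles in the order $\sigma_{n}, \sigma_{n-1}, \ldots, \sigma_{0}$ realizes $J \hookrightarrow \Delta^{1} \times \Delta^{n}$ as a sequence of pushouts along horn inclusions $\Lambda^{n+1}_{k} \hookrightarrow \Delta^{n+1}$ for $k = n, n-1, \ldots, 0$. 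Pulling $\mathcal{M}$ back along the composite $\sigma_{k} \colon \Delta^{n+1} \to \Delta^{1}$ yields for each $k$ a $\Delta^{n+1}$-family $\mathcal{M}_{k}$ of \gdniopds{} with fiber $\mathcal{A}$ over the first $k+1$ vertices and $\mathcal{B}$ over the rest.

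The decisive step is the last, $k = 0$: here the horn is the outer horn $\Lambda^{n+1}_{0}$ to which the preceding proposition applies directly. The edge $\Delta^{\{0,1\}} \subset \sigma_{0}$ corresponds to the vertical edge $(0,0) \to (1,0)$ in $\Delta^{1} \times \Delta^{n}$, so the pullback family $\mathcal{M}_{0} \times_{\Delta^{n+1}} \Delta^{\{0,1\}}$ is exactly $\mathcal{M}$ and the operadic-left-Kan-extension hypothesis of the proposition translates precisely into the assumption that $f_{0}|_{\mathcal{M} \times \{0\}}$ is an operadic left Kan extension. For the intermediate inner horns $\Lambda^{n+1}_{k}$ with $0 < k < n+1$ the extension follows from inner-fibration-ness of $\mathcal{O} \to \Dnop$, with inert-preservation automatic for $n + 1 \geq 3$ since no new edges are introduced. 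The principal obstacle is the first step $k = n$, which produces the ``right-outer'' horn $\Lambda^{n+1}_{n}$ not directly covered by the preceding proposition; I would handle this either by invoking an order-reversed analogue of that proposition (proved in parallel via the dual of the same argument) or by exploiting that the pivotal edge $\Delta^{\{n,n+1\}}$ of $\sigma_{n}$ is coCartesian for the projection $\mathcal{M}_{n} \to \Delta^{n+1}$ inherited from the $\Delta^{1}$-family structure on $\mathcal{M}$, which reduces the step to a standard coCartesian horn-filling. The low-dimensional case $n = 1$, where dimension-$2$ fillings introduce a new edge, should be verified separately by the same reasoning.
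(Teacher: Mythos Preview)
Your shuffle-decomposition approach is the right one and is essentially Lurie's argument for \rcite{HA}{Lemma 3.1.3.16}, which is all the paper's proof cites. However, you have miscounted at the first step: the horn $\Lambda^{n+1}_{n}$ is an \emph{inner} horn (we have $0 < n < n+1$ whenever $n > 0$), not a right-outer one --- the right-outer horn would be $\Lambda^{n+1}_{n+1}$, which never appears in your filtration. So your ``principal obstacle'' does not exist, and the two workarounds you propose are unnecessary. (The coCartesian-edge workaround would not have worked anyway: the edge $\Delta^{\{n,n+1\}}$ of $\sigma_{n}$ projects to $(0,n)\to(1,n)$, so the pulled-back family there is $\mathcal{M}$ itself, which is only assumed to be a $\Delta^{1}$-family of \gdniopds{}, not a coCartesian fibration over $\Delta^{1}$.) With your filtration, every step $k=n,n-1,\ldots,1$ is an inner-horn filling and only the final step $k=0$ invokes the preceding proposition --- exactly the pattern in Lurie's proof.

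One genuine point deserves more care. The inner-horn steps are not literally horn-fillings in $\mathcal{O}$: the lifting problem at step $k$ is along the inclusion
\[
\mathcal{M}\times_{\Delta^{1}}\Lambda^{n+1}_{k}\;\hookrightarrow\;\mathcal{M}\times_{\Delta^{1}}\Delta^{n+1}
\]
against $\mathcal{O}\to\Dnop$, so ``inner-fibration-ness of $\mathcal{O}\to\Dnop$'' is not sufficient on its own. What one actually uses is that pullback along the inner fibration $\mathcal{M}\to\Delta^{1}$ takes the inner-anodyne map $\Lambda^{n+1}_{k}\hookrightarrow\Delta^{n+1}$ to a categorical equivalence (inner fibrations over $\Delta^{1}$ are flat in the sense of \rcite{HA}{\S B.3}), and then that $\mathcal{O}\to\Dnop$ is a categorical fibration because $\Dnop$ has no nontrivial equivalences. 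Your observation that no new edges are added when $n+1\geq 3$ then handles inert-preservation, and the residual check at $n=1$ you flag is straightforward.
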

\begin{proof}
  As \rcite{HA}{Lemma 3.1.3.16}.
\end{proof}

\begin{defn}\label{defn:extendable}
  We say a $\Delta^{1}$-family of \gdniopds{} $\mathcal{M} \to \Dnop
  \times \Delta^{1}$ is \emph{extendable} if for every object $B \in
  \mathcal{M}_{1}$, lying over $I \in \Dnop$, with inert projections
  $B \to B_{i}$ over $i \in |I|$, the map $\mathcal{M}_{0,/B}^{\act}
  \to \prod_{i \in |I|} \mathcal{M}_{0,/B_{i}}^{\act}$ is cofinal.
\end{defn}

\begin{propn}\label{propn:lokeexist}
  Let $\mathcal{M} \to \Dnop \times \Delta^{1}$ be an extendable
  $\Delta^{1}$-family of \gdniopds{} and let $\mathcal{C}^{\otimes}$
  be a $\Dn$-monoidal \icat{}. Suppose given a diagram
  \liftcsquare{\mathcal{M}_{0}}{\mathcal{C}^{\otimes}}{\mathcal{M}}{\Dnop}{f_{0}}{}{}{}{f}
  such that for every $x \in \mathcal{M}_{C_{n},1}$, the diagram 
  \[ \mathcal{M}^{\act}_{0,/x} \to \mathcal{M}_{0} \xto{f_{0}}
  \mathcal{C}^{\otimes} \] can be extended to a \emph{monoidal} colimit
  diagram lifting the map
  $(\mathcal{M}^{\act}_{0,/x})^{\triangleright} \to \mathcal{M} \to
  \Dnop$. Then there exists an extension $f \colon \mathcal{M} \to
  \mathcal{C}^{\otimes}$ of $f_{0}$ that is an operadic left Kan
  extension.
\end{propn}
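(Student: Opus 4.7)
The plan is to imitate the argument for~\rcite{HA}{Proposition 3.1.3.2}, adapted to \gdniopds{}. The construction has two phases: first I would produce the desired values of $f$ on objects of $\mathcal{M}_{1}$ together with the universal cones witnessing the operadic left Kan extension property; then I would build the full map of \gdniopds{} $f \colon \mathcal{M} \to \mathcal{C}^{\otimes}$ by a cellular induction whose inductive step is the lifting property of Corollary~\ref{cor:lokeprop}.

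For the first phase, fix $x \in \mathcal{M}_{1,I}$ for some $I \in \Dnop$. When $I = C_{n}$, the hypothesis directly provides an extension of $\mathcal{M}_{0,/x}^{\act} \to \mathcal{C}^{\otimes}$ to a monoidal colimit diagram in $\mathcal{C}$; by Proposition~\ref{propn:wocmonfib} (and Corollary~\ref{cor:wocpushforward}) this is a weak operadic colimit diagram in $(\mathcal{C}^{\otimes})^{\act}$, so defines the value $f_{1}(x)$ and the required cone. For general $I$, let $x \to x_{i}$ ($i \in |I|$) be the inert projections to objects $x_{i} \in \mathcal{M}_{1,C_{n}}$; the extendability hypothesis says that $\mathcal{M}_{0,/x}^{\act} \to \prod_{i} \mathcal{M}_{0,/x_{i}}^{\act}$ is cofinal, and Corollary~\ref{cor:moncolimprod} then assembles the individual monoidal colimits into a weak operadic colimit diagram over $\mathcal{M}_{0,/x}^{\act}$ whose apex is the object of $\mathcal{C}^{\otimes}_{I}$ corresponding to $(f_{1}(x_{i}))_{i}$ under the Segal equivalence $\mathcal{C}^{\otimes}_{I} \simeq \mathcal{C}^{\times |I|}$. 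Cofinality ensures this is independent of the choice of decomposition and behaves correctly with respect to active morphisms in $\mathcal{M}_{1}$.

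For the second phase, I would filter the inclusion $\mathcal{M}_{0} \hookrightarrow \mathcal{M}$ by attaching non-degenerate simplices of $\mathcal{M}$ not contained in $\mathcal{M}_{0}$ in order of dimension (after first attaching the objects of $\mathcal{M}_{1}$ using the data produced in the first phase). At each stage the attachment presents as a pushout along a boundary inclusion $\partial \Delta^{n} \hookrightarrow \Delta^{n}$ relative to $\mathcal{M}_{0}$, and applying Corollary~\ref{cor:lokeprop} to each such attaching simplex (viewed as giving a $\Delta^{1}$-family with ``$0$''-fibre $\mathcal{M}_{0}$) produces the required extension, provided that the restriction of the partial functor to the $\{0\}$-slice is already an operadic left Kan extension. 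By construction this property is maintained throughout the induction.

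The main obstacle is ensuring that the induction in the second phase can be set up cleanly: one must check that the cellular filtration of $\mathcal{M}$ relative to $\mathcal{M}_{0}$ presents each attaching map as (a pushout along) a map to which Corollary~\ref{cor:lokeprop} applies, and that the weak operadic colimit property produced in the first phase for objects $x$ lying over arbitrary $I \in \Dnop$ interacts correctly with the inert morphisms in $\mathcal{M}$ so that the resulting $f$ is a morphism of \gdniopds{}, not merely a functor over $\Dnop$. The first of these is essentially formal once the filtration is arranged correctly (following the pattern of~\rcite{HA}{Theorem 3.1.2.3}); the second reduces, via the Segal condition for $\mathcal{M}$ and for $\mathcal{C}^{\otimes}$, to the compatibility built into the first phase using extendability.
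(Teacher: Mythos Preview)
Your first phase is essentially the paper's argument verbatim: the paper also follows \rcite{HA}{Theorem 3.1.2.3(A)}, defines $f$ on objects of $\mathcal{M}_{1}$ over $C_{n}$ using the assumed monoidal colimits, and then for $B$ over a general $I$ uses extendability together with Corollary~\ref{cor:moncolimprod} and Corollary~\ref{cor:wocpushforward} to verify that the resulting cones are weak operadic colimit diagrams.

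The second phase, however, is miswired. You cite \rcite{HA}{Proposition 3.1.3.2} as the template and propose to use Corollary~\ref{cor:lokeprop} as the inductive lifting step. But in this paper \rcite{HA}{Proposition 3.1.3.2} corresponds to Proposition~\ref{propn:lokeisfree} (operadic left Kan extensions are free algebras), and Corollary~\ref{cor:lokeprop} is exactly the lemma used to prove \emph{that} statement, not the present one. Look at its hypothesis: it requires $f_{0}|_{\mathcal{M} \times \{0\}}$, i.e.\ a map on all of $\mathcal{M}$, to already be an operadic left Kan extension; the $\Delta^{n}$ there is an \emph{external} parameter direction, not a simplex of $\mathcal{M}$. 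So you cannot invoke it while you are still in the process of building $f$ on $\mathcal{M}$ simplex by simplex. The correct engine for the cellular induction is the lifting characterization of weak operadic colimits (the analogue of \rcite{HA}{Proposition 3.1.1.7} stated just before Corollary~\ref{cor:wocpushforward}): once you know the cones over each $B \in \mathcal{M}_{1}$ are weak operadic colimit diagrams, that proposition supplies the horn-filling needed at each stage of the filtration of \rcite{HA}{Theorem 3.1.2.3(A)}. Replace Corollary~\ref{cor:lokeprop} by that proposition and your outline becomes the paper's proof.
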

\begin{proof}
  Essentially as \rcite{HA}{Theorem 3.1.2.3(A)}, with a slight
  difference in step (1): To extend the functor to the 0-simplices of
  $\mathcal{M}_{1}$ we use the monoidal colimits that exist by
  assumption. Then for the construction of the higher-dimensional
  simplices we need to show that the maps $\delta \colon
  (\mathcal{M}_{0,/B}^{\act})^{\triangleright} \to
  \mathcal{C}^{\otimes}$ are weak operadic colimits. If $B$ lies over
  $I \in \Dnop$, let $\delta' \colon
  (\mathcal{M}_{0,/B}^{\act})^{\triangleright} \to
  \mathcal{C}^{\otimes}_{I}$ denote the coCartesian pushforward along
  the active maps to I; by Corollary~\ref{cor:wocpushforward} it
  suffices to show that $\delta'$ is a weak operadic colimit. Choose
  coCartesian morphisms $B \to B_{i}$ over the inert maps $i \colon I
  \to C_{n}$. Then $\delta'$ factors as
  \[ (\mathcal{M}_{0,/B}^{\act})^{\triangleright} \to (\prod_{i}
  \mathcal{M}_{0,/B_{i}}^{\act})^{\triangleright} \to \prod_{i}
  (\mathcal{M}_{0,/B_{i}}^{\act})^{\triangleright} \xto{\prod \delta_{i}} \prod_{i}
  \mathcal{C} \simeq \mathcal{C}^{\otimes}_{I}.\]
  The map $\mathcal{M}_{0,/B}^{\act} \to \prod_{i}
  \mathcal{M}_{0,/B_{i}}^{\act}$ is cofinal since $\mathcal{M}$ is
  extendable, hence it suffices to show that the map from $(\prod_{i}
  \mathcal{M}_{0,/B_{i}}^{\act})^{\triangleright}$ is a weak operadic
  colimit diagram. This follows from Corollary~\ref{cor:moncolimprod},
  since the maps $\delta_{i}$ are monoidal colimit diagrams.
\end{proof}

\begin{defn}
  Suppose $\mathcal{C}^{\otimes}$ is a $\Dn$-monoidal
  \icat{}. If $\mathcal{K}$ is some class of
  simplicial sets we say that $\mathcal{C}^{\otimes}$ is \emph{compatible with
    $\mathcal{K}$-indexed colimits} if
  \begin{enumerate}[(1)]
  \item the underlying \icat{} $\mathcal{C}$ has $\mathcal{K}$-indexed
    colimits,
  \item for $j = 1,\ldots,n$, the functor $\mu^{j}_{!} \colon
    \mathcal{C} \times \mathcal{C} \to \mathcal{C}$ preserves
    $\mathcal{K}$-indexed colimits in each variable.
  \end{enumerate}
\end{defn}

\begin{cor}\label{cor:lokecompcolim}
  Let $\mathcal{M} \to \Dnop \times \Delta^{1}$ be an extendable
  $\Delta^{1}$-family of \gdniopds{} and let $\mathcal{C}^{\otimes}$
  be a $\Dn$-monoidal \icat{} that is compatible with
  $\mathcal{M}_{0,/x}^{\act}$-indexed colimits for all $x \in
  \mathcal{M}_{C_{n},1}$. Suppose given a diagram
  \liftcsquare{\mathcal{M}_{0}}{\mathcal{C}^{\otimes}}{\mathcal{M}}{\Dnop,}{f_{0}}{}{}{}{f}
  then there exists an extension $f \colon \mathcal{M} \to
  \mathcal{C}^{\otimes}$ of $f_{0}$ that is an operadic left Kan
  extension.\qed
\end{cor}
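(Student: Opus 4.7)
The plan is to deduce this directly from Proposition~\ref{propn:lokeexist} by verifying that the compatibility hypothesis on $\mathcal{C}^{\otimes}$ supplies, for each $x \in \mathcal{M}_{C_n,1}$, a monoidal colimit extending the composite diagram $\delta \colon \mathcal{M}_{0,/x}^{\act} \to \mathcal{M}_0 \xto{f_0} \mathcal{C}^{\otimes}$ over the induced map to $\Dnop$. The only content is to translate ``monoidal colimit'' into an ordinary colimit in $\mathcal{C}$ together with a preservation condition, both of which the hypothesis directly provides.

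First I would observe that since $x$ lies over $C_n$ and every morphism in $\mathcal{M}_{0,/x}^{\act}$ is active, the composite map $(\mathcal{M}_{0,/x}^{\act})^{\triangleright} \to \Dnop$ we wish to lift has cone point $C_n$ and lands in $(\Dnop)^{\act}$. Using that $\mathcal{C}^{\otimes} \to \Dnop$ is a coCartesian fibration, I can form the coCartesian pushforward of $\delta$ along the active maps to $C_n$, yielding a diagram $\delta' \colon \mathcal{M}_{0,/x}^{\act} \to \mathcal{C}^{\otimes}_{C_n} \simeq \mathcal{C}$. By hypothesis~(1) of compatibility, $\delta'$ admits a colimit $\overline{\delta}' \colon (\mathcal{M}_{0,/x}^{\act})^{\triangleright} \to \mathcal{C}$.

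Next I would check that $\overline{\delta}'$ is a \emph{monoidal} colimit diagram: by definition this means that for every $j = 1, \ldots, n$ and every $c \in \mathcal{C}$, tensoring with $c$ via $\mu^j_!$ preserves this colimit, which is exactly hypothesis~(2) of compatibility with $\mathcal{M}_{0,/x}^{\act}$-indexed colimits. Finally, via Corollary~\ref{cor:wocpushforward} and Proposition~\ref{propn:wocmonfib} (applied in reverse), $\overline{\delta}'$ corresponds to a monoidal colimit diagram $(\mathcal{M}_{0,/x}^{\act})^{\triangleright} \to (\mathcal{C}^{\otimes})^{\act}$ extending $\delta$ and lifting the prescribed map to $\Dnop$.

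With these monoidal colimits in hand for every $x \in \mathcal{M}_{C_n,1}$, the hypotheses of Proposition~\ref{propn:lokeexist} are satisfied, and the desired operadic left Kan extension $f \colon \mathcal{M} \to \mathcal{C}^{\otimes}$ exists. There is no real obstacle here; the whole point is that ``compatible with $\mathcal{M}_{0,/x}^{\act}$-indexed colimits'' has been designed as exactly the condition needed to upgrade the existence statement of the previous proposition from a hypothesis about monoidal colimits to a hypothesis only about colimits in the underlying \icat{}.
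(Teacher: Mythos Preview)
Your proof is correct and matches the paper's approach: the paper marks this corollary with \qed\ and no proof, treating it as an immediate consequence of Proposition~\ref{propn:lokeexist}, and you have simply made explicit the verification that ``compatible with $\mathcal{M}_{0,/x}^{\act}$-indexed colimits'' supplies the required monoidal colimits.
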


We end this subsection with the following observation, which will be
useful for recognizing operadic left Kan extension:
\begin{lemma}\label{lem:lokerecognize}
  Let $i \colon \mathcal{A} \to \mathcal{B}$ be a morphism of
  \gdniopds{}, let $\mathcal{C}^{\otimes}$ be a $\Dn$-monoidal
  \icat{}, and suppose given a $\mathcal{B}$-algebra $B$ in
  $\mathcal{C}$ and a morphism $A \to i^{*}B$ of
  $\mathcal{A}$-algebras. Choose a factorization of the induced map
  $\phi \colon \mathcal{A} \times \Delta^{1} \amalg_{\mathcal{A} \times \{1\}}
  \mathcal{B} \to \mathcal{C}^{\otimes}$ through a $\Delta^{1}$-family
  of \gdniopds{} $\mathcal{M}$. Then the following are equivalent:
  \begin{enumerate}[(1)]
  \item The map $\mathcal{M} \to \mathcal{C}^{\otimes}$ is an operadic
    left Kan extension.
  \item Choose a coCartesian pushforward
    \[\phi' \colon \mathcal{A}^{\txt{act}} \times \Delta^{1}
    \amalg_{\mathcal{A}^{\txt{act}} \times \{1\}}
    \mathcal{B}^{\txt{act}} \to \mathcal{C} \simeq
    \mathcal{C}^{\otimes}_{C_{n}}\] of the restriction of $\phi$ to
    the subcategories of active maps, along the unique active maps to
    $C_{n}$. Then $\phi'$ is a left Kan extension in the sense of
    \rcite{HTT}{Definition 4.3.3.2}.
  \end{enumerate}
\end{lemma}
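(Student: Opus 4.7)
The plan is to establish the equivalence objectwise in $X \in \mathcal{B}$ by combining Corollary~\ref{cor:wocpushforward} with Proposition~\ref{propn:wocmonfib} to translate weak operadic colimits in $\mathcal{C}^{\otimes}$ into ordinary colimits in $\mathcal{C}$. Unpacking definitions, condition (1) says that for every $X \in \mathcal{B}$ the diagram $D_X \colon (\mathcal{A}^{\act}_{/X})^{\triangleright} \to \mathcal{M} \to \mathcal{C}^{\otimes}$, sending $(A_0, f \colon A_0 \to X)$ to $\phi(A_0)$ and the cone point to $\phi(X)$, is a weak operadic colimit diagram, while condition (2) says that for every $X \in \mathcal{B}^{\act}$ the analogous diagram $D'_X \colon (\mathcal{A}^{\act}_{/X})^{\triangleright} \to \mathcal{C}$ built from $\phi'$, whose cone point is $m_!\phi(X)$ with $m \colon \pi(X) \to C_n$ the unique active map, is an ordinary colimit diagram. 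It therefore suffices to prove that for each fixed $X \in \mathcal{B}$ with $\pi(X) = I \in \Dnop$, $D_X$ is a weak operadic colimit \IFF{} $D'_X$ is a colimit in $\mathcal{C}$.

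Fix such an $X$. Every morphism in the cone $(\mathcal{A}^{\act}_{/X})^{\triangleright}$ lies over an active map in $\Dnop$, so $D_X$ factors through $(\mathcal{C}^{\otimes})^{\act}$. By Corollary~\ref{cor:wocpushforward}, $D_X$ is a weak operadic colimit \IFF{} its coCartesian pushforward $\widetilde{D}_X \colon (\mathcal{A}^{\act}_{/X})^{\triangleright} \to \mathcal{C}^{\otimes}_{I}$ to the fibre over $I$ is a weak operadic colimit, and by Proposition~\ref{propn:wocmonfib} this holds \IFF{} the further pushforward $m_! \circ \widetilde{D}_X \colon (\mathcal{A}^{\act}_{/X})^{\triangleright} \to \mathcal{C}$ is an ordinary colimit diagram.

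It remains to identify $m_! \circ \widetilde{D}_X$ with $D'_X$. For an object $(A_0, f)$ with $A_0$ over $I' \in \Dnop$, the value $\widetilde{D}_X(A_0, f)$ is the coCartesian pushforward of $\phi(A_0)$ along the active map $I' \to I$ underlying $f$, and $m_!$ then pushes further along $m$; by uniqueness of coCartesian lifts, the composite is the coCartesian pushforward along the unique active map $I' \to C_n$, which is exactly $\phi'(A_0)$ by the definition of $\phi'$. The cone point is sent to $m_!\phi(X) = \phi'(X)$ similarly, and functoriality of coCartesian transport gives the matching identification on morphisms, so $m_! \circ \widetilde{D}_X \simeq D'_X$. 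Chaining these equivalences establishes (1) $\Leftrightarrow$ (2). The proof is essentially bookkeeping combining the two fibrewise recognition results; the only mild subtlety is checking that the composite of coCartesian pushforwards agrees with $\phi'$ on morphisms as well as on objects.
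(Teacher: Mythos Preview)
Your proof is correct and follows exactly the approach indicated in the paper, which simply states that the result is immediate from Corollary~\ref{cor:wocpushforward} and Proposition~\ref{propn:wocmonfib}. You have faithfully unpacked what ``immediate'' means here: reducing the weak operadic colimit condition to the fibre via Corollary~\ref{cor:wocpushforward}, then to an ordinary colimit via Proposition~\ref{propn:wocmonfib}, and finally identifying the resulting diagram with the one appearing in the left Kan extension condition.
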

\begin{proof}
  Immediate from the description of weak operadic colimits in
  $\Dn$-monoidal \icats{} from Corollary~\ref{cor:wocpushforward} and
  Proposition~\ref{propn:wocmonfib}.
\end{proof}

\subsection{Free Algebras}
\begin{defn}
  Suppose $i \colon \mathcal{O} \to \mathcal{P}$ is a morphism of
  \gdniopds{} and $\mathcal{C}^{\otimes}$ is a $\Dn$-monoidal
  \icat{}. If $\overline{A}$ is a $\mathcal{P}$-algebra in
  $\mathcal{C}$ and $\phi \colon A \to i^{*}\overline{A}$ is a
  morphism of $\mathcal{O}$-algebras, then we say that $\phi$ exhibits
  $\overline{A}$ as the \emph{free $\mathcal{P}$-algebra} generated by
  $A$ along $i$ if for every $\mathcal{P}$-algebra $B$ the composite
  \[ \Map_{\Algn_{\mathcal{P}}(\mathcal{C})}(\overline{A}, B) \to
  \Map_{\Algn_{\mathcal{O}}(\mathcal{C})}(i^{*}\overline{A}, i^{*}B) \to
  \Map_{\Algn_{\mathcal{O}}(\mathcal{C})}(A, i^{*}B)\] is an
equivalence.
\end{defn}

\begin{lemma}\label{lem:freeadj}
  Let $i \colon \mathcal{O} \to \mathcal{P}$ be a morphism of
  \gdniopds{} and let $\mathcal{C}^{\otimes}$ be a $\Dn$-monoidal
  \icat{}. If for every $\mathcal{O}$-algebra $A$ in $\mathcal{C}$
  there exists a $\mathcal{P}$-algebra $\overline{A}$ and a morphism
  $A \to i^{*}\overline{A}$ that exhibits $\overline{A}$ as the free
  $\mathcal{P}$-algebra generated by $A$ along $i$, then the functor
  $i^{*} \colon \Alg_{\mathcal{P}}(\mathcal{C}) \to
  \Alg_{\mathcal{O}}(\mathcal{C})$ induced by composition with $i$
  admits a left adjoint $i_{!}$, such that the unit morphism $A \to
  i^{*}i_{!}A$ exhibits $i_{!}A$ as the free $\mathcal{P}$-algebra
  generated by $A$ along $i$ for all $A \in
  \Alg_{\mathcal{O}}(\mathcal{C})$.
\end{lemma}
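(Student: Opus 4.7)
The plan is to deduce the existence of $i_{!}$ from the pointwise criterion for left adjoints of $\infty$-categorical functors, which converts the hypothesis (pointwise existence of units) directly into the existence of an adjunction. The hypothesis is precisely that for every $\mathcal{O}$-algebra $A$ in $\mathcal{C}$, the map $A \to i^{*}\overline{A}$ witnesses $\overline{A}$ as corepresenting the functor
\[ \Map_{\Algn_{\mathcal{O}}(\mathcal{C})}(A, i^{*}(\blank)) \colon \Algn_{\mathcal{P}}(\mathcal{C}) \to \mathcal{S}, \]
that is, the map induces an equivalence $\Map_{\Algn_{\mathcal{P}}(\mathcal{C})}(\overline{A}, \blank) \isoto \Map_{\Algn_{\mathcal{O}}(\mathcal{C})}(A, i^{*}(\blank))$ of functors.

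First, I would reformulate this using the universal property of initial objects in slice categories. By \rcite{HTT}{Proposition 1.2.12.4}, the map $A \to i^{*}\overline{A}$ corepresenting the above functor is equivalent to asserting that the object $(\overline{A}, A \to i^{*}\overline{A})$ is an initial object of the comma \icat{} $(\Algn_{\mathcal{P}}(\mathcal{C}))_{A/}$ obtained by the pullback
\[ (\Algn_{\mathcal{P}}(\mathcal{C}))_{A/} := \Algn_{\mathcal{P}}(\mathcal{C}) \times_{\Algn_{\mathcal{O}}(\mathcal{C})} \Algn_{\mathcal{O}}(\mathcal{C})_{A/}. \]
The hypothesis therefore says that for every $A \in \Algn_{\mathcal{O}}(\mathcal{C})$, the \icat{} $(\Algn_{\mathcal{P}}(\mathcal{C}))_{A/}$ has an initial object.

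Next, I would apply \rcite{HTT}{Proposition 5.2.4.2} (or equivalently Proposition 5.2.2.8 combined with the criterion of Proposition 5.2.2.12), which states that a functor $F \colon \mathcal{X} \to \mathcal{Y}$ of \icats{} admits a left adjoint \IFF{} for every $y \in \mathcal{Y}$ the comma \icat{} $\mathcal{X} \times_{\mathcal{Y}} \mathcal{Y}_{y/}$ has an initial object; moreover, the left adjoint $G$ can be chosen so that the unit morphism $y \to FG(y)$ is an initial object for every $y$. Applying this to $F = i^{*}$ produces the desired left adjoint $i_{!}$ together with unit morphisms $A \to i^{*}i_{!}A$ that are initial in $(\Algn_{\mathcal{P}}(\mathcal{C}))_{A/}$, which by construction exhibit $i_{!}A$ as the free $\mathcal{P}$-algebra generated by $A$ along $i$. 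There is no substantial obstacle here: the only content is packaging the corepresentability hypothesis into the pointwise-initial-object form required by the \icatl{} adjoint recognition principle.
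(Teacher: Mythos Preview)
Your argument is correct and is essentially the same as the paper's: both deduce the existence of $i_{!}$ from the standard pointwise adjoint-recognition criterion in \cite{HTT}. The paper's one-line proof phrases this as ``apply \rcite{HTT}{Lemma 5.2.2.10} to the coCartesian fibration associated to $i^{*}$'', which is the fibrational formulation of exactly the same principle you invoke via comma \icats{} with initial objects; your version simply unpacks the logic more explicitly and cites the representability form of the criterion instead.
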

\begin{proof}
  Apply \rcite{HTT}{Lemma 5.2.2.10} to the coCartesian fibration
  associated to $i^{*}$.
\end{proof}

\begin{defn}
  Suppose $i \colon \mathcal{O} \to \mathcal{P}$ is a morphism of
  \gdniopds{} and $\mathcal{C}^{\otimes}$ is a $\Dn$-monoidal
  \icat{}. If $\overline{A}$ is a $\mathcal{P}$-algebra in
  $\mathcal{C}$ and $\phi \colon A \to i^{*}\overline{A}$ is a
  morphism of $\mathcal{O}$-algebras, we have an induced diagram
  \[ (\mathcal{O} \times \Delta^{1}) \amalg_{\mathcal{O} \times \{1\}}
  \mathcal{P}  \to \mathcal{C}^{\otimes}.\]
  Choose a factorization of this as 
  \[ (\mathcal{O} \times \Delta^{1}) \amalg_{\mathcal{O} \times \{1\}}
  \mathcal{P}  \hookrightarrow \mathcal{M} \to \mathcal{C}^{\otimes}\]
  such that the first map is inner anodyne and $\mathcal{M}$ is a
  $\Delta^{1}$-family of \gdniopds{}. 
  We say that $\phi$ exhibits
  $\overline{A}$ as an \emph{operadic left Kan extension} of $A$ along
  $i$ if the map $\mathcal{M} \to \mathcal{C}^{\otimes}$ is an
  operadic left Kan extension.
\end{defn}

\begin{propn}\label{propn:lokeisfree}
  Suppose $i \colon \mathcal{O} \to \mathcal{P}$ is a morphism of
  \gdniopds{} and $\mathcal{C}^{\otimes}$ is a $\Dn$-monoidal
  \icat{}. If $\overline{A}$ is a $\mathcal{P}$-algebra in
  $\mathcal{C}$ and $\phi \colon A \to i^{*}\overline{A}$ is a
  morphism of $\mathcal{O}$-algebras that exhibits $\overline{A}$ as
  an operadic left Kan extension of $A$ along $i$, then $\phi$
  exhibits $\overline{A}$ as the free $\mathcal{P}$-algebra generated
  by $A$ along $i$.
\end{propn}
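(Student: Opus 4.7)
The plan is to mimic the proof of the analogous statement for symmetric $\infty$-operads from \cite[\S 3.1.3]{HA}. We wish to show that for every $\mathcal{P}$-algebra $B$, composition with $\phi$ induces an equivalence
\[ \Map_{\Alg_{\mathcal{P}}(\mathcal{C})}(\overline{A}, B) \isoto \Map_{\Alg_{\mathcal{O}}(\mathcal{C})}(A, i^{*}B). \]
The strategy is to factor this map through a mapping space in $\Algn_{\mathcal{M}}(\mathcal{C}^{\otimes})$ associated to the chosen $\Delta^{1}$-family $\mathcal{M}$ encoding $\phi$, and show each factor is an equivalence by using Corollary~\ref{cor:lokeprop}.

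First I would construct an auxiliary $\mathcal{M}$-algebra $\psi_{B} \colon \mathcal{M} \to \mathcal{C}^{\otimes}$ that restricts to $i^{*}B$ on $\mathcal{O} = \mathcal{M}_{0}$ and to $B$ on $\mathcal{P} = \mathcal{M}_{1}$, and is ``constant'' in the $\Delta^{1}$-direction. Concretely, I would extend the tautological map $(\mathcal{O} \times \Delta^{1}) \amalg_{\mathcal{O} \times \{1\}} \mathcal{P} \to \mathcal{P} \xto{B} \mathcal{C}^{\otimes}$ (using $i$ on $\mathcal{O} \times \Delta^{1}$ via the projection to $\mathcal{O}$) along the inner anodyne inclusion into $\mathcal{M}$, using that $\mathcal{C}^{\otimes} \to \Dnop$ is an inner fibration and checking (this is a technicality) that the extension can be chosen to preserve inert morphisms. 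Writing $F \colon \mathcal{M} \to \mathcal{C}^{\otimes}$ for the given operadic left Kan extension realizing $\phi$, consider the restriction maps
\[ \Map_{\Alg_{\mathcal{P}}(\mathcal{C})}(\overline{A}, B) \xleftarrow{\,r_{1}\,} \Map_{\Algn_{\mathcal{M}}(\mathcal{C}^{\otimes})}(F, \psi_{B}) \xto{\,r_{0}\,} \Map_{\Alg_{\mathcal{O}}(\mathcal{C})}(A, i^{*}B). \]
There is a natural section of $r_{1}$ sending $f \colon \overline{A} \to B$ to the $\mathcal{M}$-algebra map determined by $(f,\, i^{*}f \circ \phi)$ together with its canonical compatibility datum in the $\Delta^{1}$-direction (using that $\psi_{B}$ is constant there); the composite of this section with $r_{0}$ agrees with precomposition with $\phi$.

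Second, I would show that $r_{0}$ is a trivial Kan fibration. Unravelling the marked-simplicial-set model for $\Algn_{(\blank)}(\mathcal{C}^{\otimes})$, a lifting problem $\partial \Delta^{n} \hookrightarrow \Delta^{n}$ against $r_{0}$ translates into precisely an extension problem
\[
\begin{tikzcd}[column sep=large]
(\mathcal{O} \times \Delta^{n}) \amalg_{\mathcal{O} \times \partial \Delta^{n}} (\mathcal{M} \times \partial \Delta^{n}) \ar[r, "f_{0}"] \ar[d] & \mathcal{C}^{\otimes} \ar[d] \\
\mathcal{M} \times \Delta^{n} \ar[r] \ar[ur, dashed, "f"'] & \Dnop
\end{tikzcd}
\]
of \gdniopds{}, whose restriction $f_{0}|_{\mathcal{M} \times \{0\}}$ is exactly the operadic left Kan extension $F$; such a filler $f$ exists by Corollary~\ref{cor:lokeprop}.

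Third, I would show $r_{1}$ is also a trivial Kan fibration, by a similar but easier argument: since $\psi_{B}$ factors through a retraction $\mathcal{M} \to \mathcal{P}$, the extension problems governing lifts against $r_{1}$ reduce to inner-anodyne lifting problems against $\mathcal{C}^{\otimes} \to \Dnop$, because the required data along the $\Delta^{1}$-direction is already determined by $\psi_{B}$'s constancy there. The main obstacle I expect is this last paragraph --- verifying in detail that $\psi_{B}$ can be chosen as a morphism of \gdniopds{}, and that its constant character makes $r_{1}$ a trivial fibration --- which requires careful bookkeeping of inner anodyne products with boundary inclusions in the marked simplicial set model for algebras. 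Once both $r_{0}$ and $r_{1}$ are shown to be trivial Kan fibrations, the composite $\Map_{\Alg_{\mathcal{P}}(\mathcal{C})}(\overline{A}, B) \to \Map_{\Alg_{\mathcal{O}}(\mathcal{C})}(A, i^{*}B)$ is an equivalence, and by construction it is precomposition with $\phi$, completing the proof.
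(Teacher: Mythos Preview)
Your approach is correct and matches the paper's, which simply cites \cite[Proposition 3.1.3.2]{HA} together with Corollary~\ref{cor:lokeprop}; you have faithfully reconstructed that argument, with the same key input. The step you flag as the main obstacle (that $r_{1}$ is a trivial fibration) is resolved by observing that $\psi_{B} = r^{*}B$ is a $p$-right Kan extension of $B$ along $\mathcal{P} \hookrightarrow \mathcal{M}$, since $\mathcal{M}$ is categorically equivalent to the mapping cylinder of $i$ and hence the canonical edge $X \to i(X)$ is initial in $\mathcal{P} \times_{\mathcal{M}} \mathcal{M}_{X/}$ for every $X \in \mathcal{O}$.
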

\begin{proof}
  As \rcite{HA}{Proposition 3.1.3.2}, using Corollary~\ref{cor:lokeprop}.
\end{proof}

\begin{cor}\label{cor:lokegivesadj}
  Let $i \colon \mathcal{O} \to \mathcal{P}$ be a morphism of
  \gdniopds{} and let $\mathcal{C}^{\otimes}$ be a $\Dn$-monoidal
  \icat{}. If for every $\mathcal{O}$-algebra $A$ in $\mathcal{C}$
  there exists a $\mathcal{P}$-algebra $\overline{A}$ and a morphism
  $A \to i^{*}\overline{A}$ that exhibits $\overline{A}$ as the
  operadic left Kan extension of $A$ along $i$, then the functor
  $i^{*} \colon \Alg_{\mathcal{P}}(\mathcal{C}) \to
  \Alg_{\mathcal{O}}(\mathcal{C})$ induced by composition with $i$
  admits a left adjoint $i_{!}$, such that the unit morphism $A \to
  i^{*}i_{!}A$ exhibits $i_{!}A$ as the operadic left Kan extension of
  $A$ along $i$ for all $A \in
  \Alg_{\mathcal{O}}(\mathcal{C})$. Moreover, if $i$ is fully
  faithful, then so is $i_{!}$.
\end{cor}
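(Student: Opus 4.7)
The plan is to obtain the first assertion by combining the two immediately preceding results, and to handle the full-faithfulness claim by a direct analysis of the unit map.

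For the existence of $i_{!}$: The hypothesis is precisely that every $\mathcal{O}$-algebra $A$ admits a morphism $A \to i^{*}\overline{A}$ exhibiting $\overline{A}$ as an operadic left Kan extension along $i$. By Proposition~\ref{propn:lokeisfree}, any such $\overline{A}$ is automatically a free $\mathcal{P}$-algebra generated by $A$ along $i$. Thus the hypothesis of Lemma~\ref{lem:freeadj} is satisfied, giving a left adjoint $i_{!} \colon \Alg_{\mathcal{O}}(\mathcal{C}) \to \Alg_{\mathcal{P}}(\mathcal{C})$ to $i^{*}$ whose unit $A \to i^{*}i_{!}A$ exhibits $i_{!}A$ as the free algebra --- hence, tracing the equivalences backward, as the operadic left Kan extension of $A$ along $i$.

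For the full faithfulness of $i_{!}$ when $i$ is fully faithful: It suffices to show that the unit map $\eta_{A} \colon A \to i^{*}i_{!}A$ is an equivalence for every $\mathcal{O}$-algebra $A$. Fix $X \in \mathcal{O}$. By construction, the value of $i_{!}A$ at $i(X)$ is obtained from the weak operadic colimit diagram
\[ (\mathcal{O}^{\act}_{/i(X)})^{\triangleright} \to \mathcal{C}^{\otimes} \]
arising from the chosen $\Delta^{1}$-family $\mathcal{M}$, where $\mathcal{O}^{\act}_{/i(X)}$ denotes the \icat{} of pairs $(Y \in \mathcal{O}, \alpha \colon i(Y) \to i(X))$ with $\alpha$ active in $\mathcal{P}$. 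Since $i$ is fully faithful, the assignment $(Y, \alpha) \mapsto (Y, \alpha')$, where $\alpha'$ is the unique active map $Y \to X$ in $\mathcal{O}$ with $i(\alpha') \simeq \alpha$, gives an equivalence $\mathcal{O}^{\act}_{/i(X)} \simeq \mathcal{O}^{\act}_{/X}$. The pair $(X, \id_{X})$ is a terminal object of the latter, and therefore the inclusion $\{(X,\id)\} \hookrightarrow \mathcal{O}^{\act}_{/i(X)}$ is cofinal. A weak operadic colimit indexed on a simplicial set with a terminal object agrees with the value at that object; this identifies $(i_{!}A)(i(X))$ with $A(X)$, and the unit $\eta_{A}$ evaluated at $X$ with this identification. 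Hence $\eta_{A}$ is an equivalence on each object of $\mathcal{O}$, and so an equivalence of $\mathcal{O}$-algebras. Equivalently, $i_{!}$ is fully faithful.

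The only point that requires any care is the identification of $\mathcal{O}^{\act}_{/i(X)}$ with $\mathcal{O}^{\act}_{/X}$ when $i$ is fully faithful, and specifically the fact that the active morphisms in the $\Delta^{1}$-family $\mathcal{M}$ between an object of $\mathcal{O}$ and $i(X)$ are exactly the active morphisms in $\mathcal{P}$ out of the image; this is immediate from the construction of $\mathcal{M}$ as an inner-anodyne replacement of $(\mathcal{O} \times \Delta^{1}) \amalg_{\mathcal{O} \times \{1\}} \mathcal{P}$ over $\Dnop$. Given this, the argument is essentially formal, and no further input is needed.
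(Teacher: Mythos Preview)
Your proposal is correct and follows the same route as the paper: both combine Proposition~\ref{propn:lokeisfree} with Lemma~\ref{lem:freeadj} for the adjunction, and both deduce full faithfulness by showing the unit is an equivalence via the colimit description of operadic left Kan extensions. The paper compresses the latter into a single sentence (``it is immediate from this that if $i$ is fully faithful then the unit morphism $A \to i^{*}i_{!}A$ is an equivalence''), whereas you spell out the mechanism---that full faithfulness of $i$ forces the indexing category $\mathcal{O}^{\act}_{/i(X)}$ to have a terminal object, reducing the weak operadic colimit to the value at $X$. One minor remark: strictly speaking it suffices to check the unit is an equivalence at objects $X \in \mathcal{O}_{C_{n}}$, since $\mathcal{C}^{\otimes}$ is a $\Dn$-monoidal \icat{} and hence satisfies the Segal condition; your argument works cleanly there and this is enough.
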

\begin{proof}
  Combine Corollary~\ref{propn:lokeisfree} with
  Lemma~\ref{lem:freeadj}. The full faithfulness follows from the
  description of operadic left Kan extensions in terms of colimits: it
  is immediate from this that if $i$ is fully faithful then the unit
  morphism $A \to i^{*}i_{!}A$ is an equivalence.
\end{proof}

\begin{defn}\label{defn:icompatible}
  Let $i \colon \mathcal{O} \to \mathcal{P}$ be an extendable morphism
  of \gdniopds{}. We say that a $\Dn$-monoidal \icat{}
  $\mathcal{C}^{\otimes}$ is \emph{$i$-compatible} if for every
  $\mathcal{O}$-algebra $A$ in $\mathcal{C}$ and every $x \in
  \mathcal{P}_{C_{n}}$, the diagram
  \[ \mathcal{O}^{\act}_{/x} \to \mathcal{O} \xto{A}
  \mathcal{C}^{\otimes}\]
  can be extended to a monoidal colimit diagram.
\end{defn}

\begin{cor}\label{cor:opdkanext}
  Let $i \colon \mathcal{O} \to \mathcal{P}$ be an extendable morphism
  of \gdniopds{}. If $\mathcal{C}^{\otimes}$ is a $\Dn$-monoidal
  \icat{} that is $i$-compatible, then the functor $i^{*} \colon
  \Alg_{\mathcal{P}}(\mathcal{C}) \to \Alg_{\mathcal{O}}(\mathcal{C})$
  admits a left adjoint $i_{!}$, such that the unit morphism $A \to
  i^{*}i_{!}A$ exhibits $i_{!}A$ as the operadic left Kan extension of
  $A$ along $i$ for all $A \in
  \Alg_{\mathcal{O}}(\mathcal{C})$.
\end{cor}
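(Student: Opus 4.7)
The plan is to deduce this corollary by combining the existence result for operadic left Kan extensions (Proposition~\ref{propn:lokeexist}), the identification of operadic left Kan extensions with free algebras (Proposition~\ref{propn:lokeisfree}), and the standard criterion for producing an adjoint (Lemma~\ref{lem:freeadj}), which is already packaged as Corollary~\ref{cor:lokegivesadj}.

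Concretely, given a morphism of \gdniopds{} $i \colon \mathcal{O} \to \mathcal{P}$, first associate to it a $\Delta^{1}$-family $\mathcal{M} \to \Dnop \times \Delta^{1}$ by taking an inner anodyne factorization of the map $\mathcal{O} \times \Delta^{1} \amalg_{\mathcal{O} \times \{1\}} \mathcal{P} \to \Dnop \times \Delta^{1}$ through a $\Delta^{1}$-family. The condition that $i$ be \emph{extendable} is precisely the statement that this $\Delta^{1}$-family is extendable in the sense of Definition~\ref{defn:extendable}; this is the condition we need in order to apply Proposition~\ref{propn:lokeexist}. Next, given an $\mathcal{O}$-algebra $A \colon \mathcal{O} \to \mathcal{C}^{\otimes}$, we regard it as a map $f_{0}\colon \mathcal{M}_{0} \to \mathcal{C}^{\otimes}$ over $\Dnop$. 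The hypothesis that $\mathcal{C}^{\otimes}$ is $i$-compatible exactly supplies, for every $x \in \mathcal{P}_{C_{n}}$, a monoidal colimit diagram extending $\mathcal{O}^{\act}_{/x} \to \mathcal{C}^{\otimes}$, which is the input needed by Proposition~\ref{propn:lokeexist}.

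Applying that proposition yields an extension $f \colon \mathcal{M} \to \mathcal{C}^{\otimes}$ which is an operadic left Kan extension; the restriction of $f$ to $\mathcal{M}_{1} \simeq \mathcal{P}$ then provides a $\mathcal{P}$-algebra $\overline{A}$ together with a morphism $A \to i^{*}\overline{A}$ that exhibits $\overline{A}$ as the operadic left Kan extension of $A$ along $i$. By Proposition~\ref{propn:lokeisfree} this morphism also exhibits $\overline{A}$ as the free $\mathcal{P}$-algebra generated by $A$ along $i$. Since such a free algebra exists for every $A$, Lemma~\ref{lem:freeadj} (equivalently, Corollary~\ref{cor:lokegivesadj}) produces the desired left adjoint $i_{!} \colon \Alg_{\mathcal{O}}(\mathcal{C}) \to \Alg_{\mathcal{P}}(\mathcal{C})$ whose unit $A \to i^{*}i_{!}A$ is the chosen operadic left Kan extension morphism.

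There is essentially no obstacle left: every ingredient has already been set up in the previous two subsections. The only point worth verifying carefully is that one may indeed present an \emph{arbitrary} morphism of \gdniopds{} $i$ by a $\Delta^{1}$-family via inner anodyne factorization and that the resulting family inherits extendability from $i$, but this is a formal consequence of the definition together with the fact that inner anodyne maps are categorical equivalences, so the slice \icats{} $\mathcal{M}^{\act}_{0,/B}$ appearing in Definition~\ref{defn:extendable} agree up to equivalence with the slices $\mathcal{O}^{\act}_{/i^{-1}B}$ governing the extendability of $i$.
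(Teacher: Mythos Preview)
Your proposal is correct and follows essentially the same approach as the paper: the paper's proof is simply ``Combine Corollary~\ref{cor:lokegivesadj} with Proposition~\ref{propn:lokeexist},'' and you have spelled out exactly how that combination works. Your additional remarks on constructing the $\Delta^{1}$-family from $i$ and verifying that extendability transfers are appropriate elaborations of the implicit steps.
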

\begin{proof}
  Combine Corollary~\ref{cor:lokegivesadj} with
  Proposition~\ref{propn:lokeexist}.
\end{proof}

\begin{cor}
  Let $i \colon \mathcal{O} \to \mathcal{P}$ be an extendable morphism
  of \gdniopds{}. If $\mathcal{C}^{\otimes}$ is a $\Dn$-monoidal
  \icat{} that is compatible with $\mathcal{O}^{\act}_{/p}$-indexed
  colimits for all $p \in \mathcal{P}_{C_{n}}$, then the functor
  $i^{*} \colon \Alg_{\mathcal{P}}(\mathcal{C}) \to
  \Alg_{\mathcal{O}}(\mathcal{C})$ admits a left adjoint $i_{!}$, such
  that the unit morphism $A \to i^{*}i_{!}A$ exhibits $i_{!}A$ as the
  operadic left Kan extension of $A$ along $i$ for all $A \in
  \Alg_{\mathcal{O}}(\mathcal{C})$.
\end{cor}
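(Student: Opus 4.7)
The plan is to reduce to the preceding Corollary~\ref{cor:opdkanext} by verifying that the stated hypothesis implies $i$-compatibility of $\mathcal{C}^{\otimes}$ in the sense of Definition~\ref{defn:icompatible}. Once $i$-compatibility is established, that corollary directly delivers the left adjoint $i_!$ together with the identification of its values as operadic left Kan extensions.

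To check $i$-compatibility, fix an $\mathcal{O}$-algebra $A$ in $\mathcal{C}$ and an object $p \in \mathcal{P}_{C_n}$; I must extend the diagram
\[
A|_{\mathcal{O}^{\act}_{/p}} \colon \mathcal{O}^{\act}_{/p} \to \mathcal{O} \xto{A} \mathcal{C}^{\otimes}
\]
to a monoidal colimit diagram $(\mathcal{O}^{\act}_{/p})^{\triangleright} \to (\mathcal{C}^{\otimes})^{\act}$ whose cone point lies in $\mathcal{C}^{\otimes}_{C_n} \simeq \mathcal{C}$. First I would take the coCartesian pushforward of $A|_{\mathcal{O}^{\act}_{/p}}$ along the unique active maps to $C_n$ in $\Dnop$, producing a diagram $A' \colon \mathcal{O}^{\act}_{/p} \to \mathcal{C}$. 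By the first clause of the compatibility hypothesis, $A'$ admits a colimit in $\mathcal{C}$, giving a colimit cone $\overline{A'} \colon (\mathcal{O}^{\act}_{/p})^{\triangleright} \to \mathcal{C}$.

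Using the coCartesian structure of $(\mathcal{C}^{\otimes})^{\act} \to (\Dnop)^{\act}$ and the fibrewise characterization of relative colimits from \rcite{HTT}{Propositions 4.3.1.9 and 4.3.1.10}, I would then lift $\overline{A'}$ to an extension $\overline{A} \colon (\mathcal{O}^{\act}_{/p})^{\triangleright} \to (\mathcal{C}^{\otimes})^{\act}$ restricting to $A|_{\mathcal{O}^{\act}_{/p}}$ with cone point $\overline{A'}(\infty)$. Unwinding the definition of a monoidal colimit diagram, $\overline{A}$ qualifies precisely when, for any $c \in \mathcal{C}$ and $j = 1,\ldots,n$, the cone obtained by applying $\mu^{j}_{!}(-,c)$ or $\mu^{j}_{!}(c,-)$ remains a colimit diagram in $\mathcal{C}$; this is exactly the content of the second clause of the hypothesis. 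Hence $\mathcal{C}^{\otimes}$ is $i$-compatible and Corollary~\ref{cor:opdkanext} applies.

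The main piece of bookkeeping is the lifting step: assembling the coCartesian morphisms $A(q) \to A'(q)$ into a coherent natural transformation $A|_{\mathcal{O}^{\act}_{/p}} \Rightarrow A'$ and splicing this with the cone $\overline{A'}$ to obtain $\overline{A}$ landing in $(\mathcal{C}^{\otimes})^{\act}$ over the prescribed cone in $\Dnop$. This is a routine application of the straightening/unstraightening dictionary for coCartesian fibrations, but is where most of the concrete work lives.
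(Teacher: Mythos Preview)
Your proposal is correct and essentially matches the paper's approach. The paper's one-line proof combines Corollary~\ref{cor:lokegivesadj} with Corollary~\ref{cor:lokecompcolim}; you instead verify that the compatibility hypothesis implies $i$-compatibility and invoke Corollary~\ref{cor:opdkanext}, but since Corollary~\ref{cor:opdkanext} is itself Corollary~\ref{cor:lokegivesadj} plus Proposition~\ref{propn:lokeexist}, and your verification of $i$-compatibility is exactly what underlies Corollary~\ref{cor:lokecompcolim}, the two arguments are the same content with the intermediate steps regrouped.
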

\begin{proof}
  Combine Corollary~\ref{cor:lokegivesadj} with
  Corollary~\ref{cor:lokecompcolim}.
\end{proof}

We will also need an observation on the functoriality of free
algebras, requiring some terminology:
\begin{defn}\label{defn:ftricomp}
  Let $i \colon \mathcal{O} \to \mathcal{P}$ be an extendable morphism
  of \gdniopds{}. If $\mathcal{C}^{\otimes}$ and
  $\mathcal{D}^{\otimes}$ are $i$-compatible $\Dn$-monoidal \icats{},
  we say that a $\Dn$-monoidal functor $F^{\otimes} \colon
  \mathcal{C}^{\otimes} \to \mathcal{D}^{\otimes}$ is
  \emph{$i$-compatible} if for every $\mathcal{O}$-algebra $A$ in
  $\mathcal{C}$ and every $x \in \mathcal{P}_{C_{n}}$, the underlying
  functor $F \colon \mathcal{C} \to \mathcal{D}$ preserves the
  (monoidal) colimit of the diagram $\mathcal{O}^{\txt{act}}_{/x} \to
  \mathcal{C}$.
\end{defn}

\begin{lemma}\label{lem:icompftr}
  Suppose $i \colon \mathcal{O} \to \mathcal{P}$ is an extendable
  morphism of \gdniopds{}, $\mathcal{C}^{\otimes}$ and
  $\mathcal{D}^{\otimes}$ are $i$-compatible $\Dn$-monoidal \icats{},
  and $F^{\otimes} \colon \mathcal{C}^{\otimes} \to
  \mathcal{D}^{\otimes}$ is an $i$-compatible $\Dn$-monoidal
  functor. Then we have a commutative diagram
  \csquare{\Algn_{\mathcal{O}}(\mathcal{C})}{\Algn_{\mathcal{O}}(\mathcal{D})}{\Algn_{\mathcal{P}}(\mathcal{C})}{\Algn_{\mathcal{P}}(\mathcal{D}).}{F_{*}}{i_{!}}{i_{!}}{F_{*}}
\end{lemma}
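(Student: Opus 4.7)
The strategy is to construct the canonical Beck--Chevalley (mate) transformation $i_{!} F_{*} \to F_{*} i_{!}$ associated to the strictly commuting square of right adjoints
\csquare{\Algn_{\mathcal{P}}(\mathcal{C})}{\Algn_{\mathcal{P}}(\mathcal{D})}{\Algn_{\mathcal{O}}(\mathcal{C})}{\Algn_{\mathcal{O}}(\mathcal{D})}{F_{*}}{i^{*}}{i^{*}}{F_{*}}
(which commutes strictly since both $F_{*}$ and $i^{*}$ are defined by composition in the obvious sense, so $i^{*} F_{*} = F_{*} i^{*}$), and then to show that this mate is an equivalence on every object $A \in \Algn_{\mathcal{O}}(\mathcal{C})$. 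The left adjoints $i_{!}$ exist on both sides by Corollary~\ref{cor:opdkanext}, since $\mathcal{C}^{\otimes}$ and $\mathcal{D}^{\otimes}$ are $i$-compatible.

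By Proposition~\ref{propn:lokeisfree}, to show that the mate is an equivalence it suffices to show that for every $\mathcal{O}$-algebra $A$ in $\mathcal{C}$, the composite morphism
\[ F_{*}A \to F_{*}i^{*}i_{!}A \simeq i^{*}F_{*}i_{!}A \]
obtained from the unit of $i_{!} \dashv i^{*}$ in $\mathcal{C}$ exhibits $F_{*}i_{!}A$ as the operadic left Kan extension of $F_{*}A$ along $i$. Using Lemma~\ref{lem:lokerecognize}, this in turn reduces to checking, for every $x \in \mathcal{P}_{C_{n}}$, that the coCartesian pushforward of the relevant diagram on $(\mathcal{O}^{\act}_{/x})^{\triangleright}$ to a diagram in $\mathcal{D} \simeq \mathcal{D}^{\otimes}_{C_{n}}$ is a colimit diagram in $\mathcal{D}$.

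Now the coCartesian pushforward of the diagram associated to $F_{*}i_{!}A$ on $(\mathcal{O}^{\act}_{/x})^{\triangleright} \to \mathcal{P}$ can, since $F^{\otimes}$ is $\Dn$-monoidal (and hence preserves coCartesian morphisms over active maps), be identified with $F$ applied to the coCartesian pushforward of the diagram associated to $i_{!}A$. By Lemma~\ref{lem:lokerecognize} applied on the $\mathcal{C}$-side, the latter is a colimit diagram in $\mathcal{C}$, and moreover by the proof of Proposition~\ref{propn:lokeexist} it is the monoidal colimit of the coCartesian pushforward of the $\mathcal{O}^{\act}_{/x}$-diagram coming from $A$. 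The hypothesis that $F^{\otimes}$ is $i$-compatible (Definition~\ref{defn:ftricomp}) is exactly the statement that the underlying functor $F$ preserves this monoidal colimit. Hence the image under $F$ is again a (monoidal) colimit diagram in $\mathcal{D}$, which is what we needed.

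The main obstacle is purely bookkeeping: one must verify that the mate transformation, which is defined only up to the chosen adjunction data, is evaluated at $A$ by precisely the map that Lemma~\ref{lem:lokerecognize} asks us to check, and that the identification of the coCartesian pushforward of $F_{*}i_{!}A$ with $F$ applied to the coCartesian pushforward of $i_{!}A$ is natural enough to transport colimit diagrams. Both issues are handled by the fact that $F^{\otimes}$ is a $\Dn$-monoidal functor, so it strictly preserves the coCartesian morphisms used to form the pushforward; once this is in place the identification of diagrams is tautological, and $i$-compatibility of $F$ finishes the argument.
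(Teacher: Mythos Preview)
Your proof is correct and follows essentially the same approach as the paper: both reduce to showing that $F_{*}A \to i^{*}F_{*}i_{!}A$ exhibits $F_{*}i_{!}A$ as an operadic left Kan extension of $F_{*}A$, which follows from $i$-compatibility of $F$, and then invoke Proposition~\ref{propn:lokeisfree}. Your version is simply more explicit, spelling out the mate construction and invoking Lemma~\ref{lem:lokerecognize} to unpack why $i$-compatibility gives the operadic left Kan extension, whereas the paper leaves this implicit.
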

\begin{proof}
  We must show that for every $\mathcal{O}$-algebra $A$ in
  $\mathcal{C}$, the map $F_{*}A \to F_{*}i^{*}i_{!}A \simeq
  i^{*}F_{*}i_{!}A$ exhibits $F_{*}i_{!}A$ as the free algebra
  generated by $F_{*}A$ along $i$. This follows from
  Proposition~\ref{propn:lokeisfree} and the assumption that $F$ is
  $i$-compatible, since this implies that $F_{*}i_{!}A$ is a
  left operadic Kan extension of $F_{*}A$.
\end{proof}

\subsection{Monoidal Properties of the Algebra
  Functor}\label{subsec:algfun}
In this subsection we observe that the Cartesian product of
\gDniopds{} leads to natural monoidal structures on \icats{} of
algebras.

\begin{defn}
  For any categorical pattern $\mathfrak{P}$, the model category
  $(\sSet^{+})_{\mathfrak{P}}$ is enriched in marked simplicial sets
  by Proposition~\ref{propn:catpattprod}. The enriched Yoneda functor
  therefore gives a right Quillen bifunctor
  \[ \mathfrak{H}_{\mathfrak{P}} \colon (\sSet^{+})_{\mathfrak{P}}^{\op} \times
  (\sSet^{+})_{\mathfrak{P}} \to \sSet^{+}.\]
  Applied to $\mathfrak{P} = \mathfrak{O}_{n}^{\txt{gen}}$, this
  induces at the level of \icats{} a functor
  \[ \Algn_{(\blank)}(\blank) \colon (\OpdIDng)^{\op} \times
  \OpdIDng \to \CatI.\]
  We write $\Alg^{n} \to (\OpdIDng)^{\op} \times
  \OpdIDng$ for an associated coCartesian fibration.
\end{defn}

\begin{defn}
  Since $(\sSet^{+})_{\mathfrak{P}}$ is a (marked simplicially
  enriched) symmetric monoidal model category with respect to the
  Cartesian product, the functor $\mathfrak{H}_{\mathfrak{P}}$ is lax
  symmetric monoidal with respect to the Cartesian product. Thus, for
  $\mathfrak{P} = \mathfrak{O}_{n}^{\txt{gen}}$ it induces on the
  level of \icats{} a lax symmetric monoidal
  functor \[((\OpdIDng)^{\op})^{\amalg} \times_{\Gop}
  (\OpdIDng)^{\times} \to \CatI^{\times},\] where we write
  $((\OpdIDng)^{\op})^{\amalg}$ for the symmetric monoidal structure
  on $(\OpdIDng)^{\op}$ given by the Cartesian product in $\OpdIDng$,
  since this is the coCartesian monoidal structure on the opposite
  \icat{}.  Using \rcite{HA}{Proposition 2.4.2.5} this corresponds to
  a functor
  \[\phi \colon ((\OpdIDng)^{\op})^{\amalg} \times_{\Gop} (\OpdIDng)^{\times} \to
  \CatI\]
  that is a $((\OpdIDng)^{\op})^{\amalg} \times_{\Gop}
  (\OpdIDng)^{\times}$-monoid in $\CatI$ (i.e. it satisfies the
  relevant Segal conditions). Let
  \[ \Alg^{n,\boxtimes} \to ((\OpdIDng)^{\op})^{\amalg} \times_{\Gop}
  (\OpdIDng)^{\times}.\] be the coCartesian fibration associated to
  $\phi$; since the functor $\phi$ satisfies the Segal conditions,
  this is a coCartesian fibration of generalized symmetric \iopds{}.
\end{defn}
This construction describes the ``external product'' that combines two
algebras $A \colon \mathcal{O} \to \mathcal{O}'$ and $B \colon
\mathcal{P} \to \mathcal{P}'$ to $A \boxtimes B := A \times_{\Dnop} B
\colon \mathcal{O}\times_{\Dnop}\mathcal{O}' \to \mathcal{P}
\times_{\Dnop} \mathcal{P}'$. Since we are considering the coCartesian
symmetric monoidal structure on $(\OpdIDng)^{\op}$, by
\rcite{HA}{Example 2.4.3.5} there is a morphism of generalized
symmetric \iopds{} $\alpha \colon \Gop \times (\OpdIDng)^{\op} \to
((\OpdIDng)^{\op})^{\amalg}$. (Informally, this takes $(\angled{n},
\mathcal{O})$ to the list $(\mathcal{O},\ldots,\mathcal{O})$ with $n$
copies of $\mathcal{O}$.) We define $\Alg^{n,\otimes}$ by the pullback
square \csquare{\Alg^{n,\otimes}}{\Alg^{n,\boxtimes}}{(\OpdIDng)^{\op}
  \times (\OpdIDng)^\times}{((\OpdIDng)^{\op})^{\amalg} \times_{\Gop}
  (\OpdIDng)^{\times}.}{}{}{}{\alpha \times_{\Gop} \id} Then the
projection $\pi \colon \Alg^{n,\otimes} \to (\OpdIDng)^{\op} \times
(\OpdIDng)^\times$ is again a coCartesian fibration of generalized symmetric
\iopds{}. Over $\mathcal{O} \in (\OpdIDng)^{\op}$ this describes the
``half-internalized'' tensor product of $\mathcal{O}$-algebras given
by, for $A \colon \mathcal{O} \to \mathcal{P}$ and $B \colon
\mathcal{O} \to \mathcal{Q}$,
\[ A \otimes B \colon \mathcal{O} \xto{\Delta} \mathcal{O}
\times_{\Dnop} \mathcal{O} \xto{A \boxtimes B} \mathcal{P} \times_{\Dnop}\mathcal{Q}.\]

The functor associated to the coCartesian fibration $\pi$ is a $(\OpdIDng)^{\op} \times
(\OpdIDng)^\times$-monoid in $\CatI$, or equivalently a lax symmetric monoidal
functor $\OpdIDng \to \Fun((\OpdIDng)^{\op}, \CatI)$. Similarly, pulling back
$\pi$ along an arbitrary functor in the first variable, we get:
\begin{propn}\label{propn:alglaxmon}
  Let $F \colon \mathcal{C} \to \OpdIDng$ be any functor of
  \icats{}. Then the functor \[\AlgDn_{F(\blank)}(\blank) \colon
  \mathcal{C}^{\op} \times \OpdIDng \to \CatI\] induces a lax symmetric
  monoidal functor $\OpdIDng \to \Fun(\mathcal{C}^{\op}, \CatI)$.\qed
\end{propn}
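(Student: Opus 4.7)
The plan is to obtain this immediately from the coCartesian fibration of generalized symmetric $\infty$-operads
\[
\pi \colon \Alg^{n,\otimes} \to (\OpdIDng)^{\op} \times (\OpdIDng)^{\times}
\]
constructed just above the statement, by pulling back along $F$. First, I would form
\[
\Alg^{n,\otimes}_{F} := \Alg^{n,\otimes} \times_{(\OpdIDng)^{\op}} \mathcal{C}^{\op},
\]
equipped with its projection $\pi_{F}$ to $\mathcal{C}^{\op} \times (\OpdIDng)^{\times}$, obtained by pulling back $\pi$ along the map $F^{\op} \times \id$. Since coCartesian fibrations are stable under pullback, $\pi_{F}$ is a coCartesian fibration.

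Next, I would verify that $\pi_{F}$ is in fact a coCartesian fibration of generalized symmetric $\infty$-operads over $\mathcal{C}^{\op} \times (\OpdIDng)^{\times}$, where $\mathcal{C}^{\op}$ is regarded as carrying no operadic structure (i.e.\ it plays no role in the Segal conditions, which are formulated only over the $\Gop$-factor of $(\OpdIDng)^{\times}$). This is essentially a bookkeeping check: the Segal conditions and $\pi_{F}$-limit conditions at each $c \in \mathcal{C}^{\op}$ concern only the fibres $(\Alg^{n,\otimes}_{F})_{(c, \angled{m})} \simeq \Alg^{n,\otimes}_{(F(c), \angled{m})}$ and the coCartesian pushforwards between them, which are unchanged by the pullback and thus inherited directly from $\pi$.

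Finally, applying straightening (and \rcite{HA}{Proposition 2.4.2.5}) to the coCartesian fibration of generalized symmetric $\infty$-operads $\pi_{F}$ over the symmetric monoidal base $\mathcal{C}^{\op} \times (\OpdIDng)^{\times}$ produces a functor $\mathcal{C}^{\op} \times (\OpdIDng)^{\times} \to \CatI$ which satisfies the Segal condition in the $(\OpdIDng)^{\times}$-variable, or equivalently a lax symmetric monoidal functor
\[
\OpdIDng \to \Fun(\mathcal{C}^{\op}, \CatI).
\]
By construction its underlying functor is $\AlgDn_{F(\blank)}(\blank)$, as required. I do not anticipate any real obstacle here: once the lax symmetric monoidal refinement of $\AlgDn_{(\blank)}(\blank)$ has been constructed above, the proposition is purely a stability-under-pullback and currying statement, and the only care needed is to keep track of which factor carries the operadic structure through the pullback and straightening equivalences.
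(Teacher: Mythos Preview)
Your proposal is correct and matches the paper's approach exactly: the paper states the result with a \qed immediately after remarking that one obtains it by ``pulling back $\pi$ along an arbitrary functor in the first variable,'' which is precisely the pullback-and-straighten argument you spell out. Your additional detail about verifying the Segal conditions and invoking \rcite{HA}{Proposition 2.4.2.5} simply makes explicit what the paper leaves implicit.
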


\begin{cor}\label{cor:AlgEnmon}
  Suppose $\mathcal{O}$ is a \gDniopd{} and $\mathcal{C}$ is an
  $\mathbb{E}_{n+m}$-monoidal \icat{}. Then
  $\AlgDn_{\mathcal{O}}(\mathcal{C})$ is an $\mathbb{E}_{m}$-monoidal
  \icat{}.
\end{cor}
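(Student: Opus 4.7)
The plan is to deduce this directly from Proposition~\ref{propn:alglaxmon}. Specializing the first variable to the constant functor $F \colon \{*\} \to \OpdIDng$ picking out the object $\mathcal{O}$, we obtain a lax symmetric monoidal functor
\[ \AlgDn_{\mathcal{O}}(\blank) \colon \OpdIDng \to \CatI,\]
where both sides are equipped with their Cartesian symmetric monoidal structures. Since any lax symmetric monoidal functor carries $\mathbb{E}_m$-algebra objects to $\mathbb{E}_m$-algebra objects, it will suffice to exhibit $\mathcal{C}$ as an $\mathbb{E}_m$-algebra in $\OpdIDng^{\times}$.

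To do this, I would first recall that the full subcategory $\MonI^{\Dn} \hookrightarrow \OpdIDng$ of $\Dn$-monoidal \icats{} is closed under Cartesian products (since a product of $\Dn$-monoidal \icats{} is again $\Dn$-monoidal over $\Dnop$), so this inclusion is symmetric monoidal with respect to the Cartesian structures. By Corollary~\ref{cor:DnmonisEnmon}, $\MonI^{\Dn}$ is equivalent as a symmetric monoidal \icat{} to $\mathbb{E}_n$-monoidal \icats{} with Cartesian product. Applying the Dunn--Lurie additivity theorem (\rcite{HA}{Theorem 5.1.2.2}), an $\mathbb{E}_m$-algebra in $\mathbb{E}_n$-monoidal \icats{} (under Cartesian product) is the same datum as an $\mathbb{E}_{n+m}$-monoidal \icat{}. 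Thus $\mathcal{C}$, by hypothesis, is an $\mathbb{E}_m$-algebra object of $\MonI^{\Dn,\times}$ and hence of $\OpdIDng^{\times}$.

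Applying the lax symmetric monoidal functor $\AlgDn_{\mathcal{O}}(\blank)$ to this $\mathbb{E}_m$-algebra yields an $\mathbb{E}_m$-algebra in $\CatI^{\times}$, i.e.\ an $\mathbb{E}_m$-monoidal \icat{} whose underlying \icat{} is $\AlgDn_{\mathcal{O}}(\mathcal{C})$, as required.

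The main obstacle is the bookkeeping in the middle paragraph: one must verify that the Cartesian symmetric monoidal structure on $\MonI^{\Dn}$ viewed as a subcategory of $\OpdIDng$ really agrees with the Cartesian structure on $\MonI^{\Sigma,\mathbb{E}_n}$ transported across the equivalence of Corollary~\ref{cor:DnmonisEnmon}, so that the Dunn--Lurie identification of $\mathbb{E}_m$-algebras is valid. This boils down to checking that Cartesian products in both settings are computed fibrewise on underlying \icats{}, which is routine but must be done carefully since the equivalence of Corollary~\ref{cor:DnmonisEnmon} was established only as an equivalence of \icats{}, not as a symmetric monoidal equivalence.
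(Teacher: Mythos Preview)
Your argument is correct and follows essentially the same route as the paper's: specialize Proposition~\ref{propn:alglaxmon} to obtain a lax symmetric monoidal functor $\AlgDn_{\mathcal{O}}(\blank)\colon \OpdIDng \to \CatI$, restrict along the product-preserving forgetful functor $\MonI^{\Dn} \to \OpdIDng$, and then use additivity together with Corollary~\ref{cor:DnmonisEnmon} to identify $\mathbb{E}_{n+m}$-monoidal \icats{} with $\mathbb{E}_m$-algebras in $\MonI^{\Dn}$.

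Two small remarks. First, $\MonI^{\Dn}$ is not a \emph{full} subcategory of $\OpdIDng$: its morphisms are $\Dn$-monoidal functors (preserving all coCartesian edges), not merely operad maps (preserving only inert ones). This does not affect your argument, since all you need is that the forgetful functor preserves finite products, which it does. Second, the ``obstacle'' you flag at the end is genuinely routine: the symmetric monoidal structures on both $\MonI^{\Dn}$ and $\MonI^{\Sigma,\mathbb{E}_n}$ are Cartesian, and a Cartesian symmetric monoidal structure is determined up to contractible choice by the underlying \icat{}, so the equivalence of Corollary~\ref{cor:DnmonisEnmon} is automatically symmetric monoidal.
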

\begin{proof}
  By Proposition~\ref{propn:alglaxmon}, applied to the functor
  $\{\mathcal{O}\} \to \OpdIDng$, there is a lax symmetric monoidal
  functor $\OpdIDng \to \CatI$, which sends $\mathcal{P}$ to
  $\AlgDn_{\mathcal{O}}(\mathcal{P})$. The forgetful functor
  $\MonI^{\Dn} \to \OpdIDng$ preserves products, so we get a lax
  symmetric monoidal functor $\MonI^{\Dn} \to \CatI$, and hence a
  functor
  \[ \MonI^{\Sigma,\mathbb{E}_{n+m}} \simeq
  \AlgS_{\mathbb{E}_{n+m}}(\CatI) \simeq \AlgS_{\mathbb{E}_{m}}(\MonI^{\Dn})
  \to \AlgS_{\mathbb{E}_{m}}(\CatI) \simeq
  \MonI^{\Sigma,\mathbb{E}_{m}},\] which sends an
  $\mathbb{E}_{n+m}$-monoidal \icat{} $\mathcal{C}$ to a natural
  $\mathbb{E}_{m}$-monoidal structure on
  $\AlgDn_{\mathcal{O}}(\mathcal{C})$.
\end{proof}

\begin{remark}\label{rmk:opdkanextprod}
  Let $i \colon \mathcal{O} \to \mathcal{P}$ be an extendable morphism
  of \gdniopds{}, and let $\mathcal{C}^{\otimes}$ and
  $\mathcal{D}^{\otimes}$ be $i$-compatible $\Dn$-monoidal
  \icats{}. If the \icats{} $\mathcal{O}^{\txt{act}}_{/P}$ are all
  sifted, then the description of free algebras in
  terms of weak operadic colimits implies that there is a commutative diagram
  \[\ltikzcd{
    \Alg^{n}_{\mathcal{O}}(\mathcal{C}) \times
    \Alg^{n}_{\mathcal{O}}(\mathcal{D}) \arrow{r} \arrow{d}{i_{!}
      \times i_{!}} \pgfmatrixnextcell \Alg^{n}_{\mathcal{O} \times_{\Dnop}
    \mathcal{O}}(\mathcal{C} \times \mathcal{D}) \arrow{r}
  \arrow{d}{(i \times_{\Dnop} i)_{!}} \pgfmatrixnextcell
  \Alg^{n}_{\mathcal{O}}(\mathcal{C} \times \mathcal{D})\arrow{d}{i_{!}} \\
    \Alg^{n}_{\mathcal{P}}(\mathcal{C}) \times
    \Alg^{n}_{\mathcal{P}}(\mathcal{D}) \arrow{r}  \pgfmatrixnextcell \Alg^{n}_{\mathcal{P} \times_{\Dnop}
    \mathcal{P}}(\mathcal{C} \times \mathcal{D}) \arrow{r} \pgfmatrixnextcell
  \Alg^{n}_{\mathcal{P}}(\mathcal{C} \times \mathcal{D}).
}\] 
In other words, $i_{!}(A \otimes B) \simeq i_{!}A
\otimes i_{!}B$ where $\otimes$ denotes the ``half-internalized''
tensor product of algebras. If $\mathcal{C}$ is a
$\simp^{n+1}$-monoidal \icat{} such that its tensor product, regarded
as a $\simp^{n}$-monoidal functor $\mathcal{C}^{\otimes} \times_{\Dnop}
\mathcal{C}^{\otimes} \to \mathcal{C}^{\otimes}$, is $i$-compatible,
then by Lemma~\ref{lem:icompftr} we get a commutative square
\csquare{\Alg^{n}_{\mathcal{O}}(\mathcal{C}) \times
    \Alg^{n}_{\mathcal{O}}(\mathcal{C})}{\Alg^{n}_{\mathcal{O}}(\mathcal{C})}{\Alg^{n}_{\mathcal{P}}(\mathcal{C}) \times
    \Alg^{n}_{\mathcal{P}}(\mathcal{C})}{\Alg^{n}_{\mathcal{P}}(\mathcal{C}).}{\otimes}{i_{!}
    \times i_{!}}{i_{!}}{\otimes}
\end{remark}

\subsection{$\Dn$-uple Envelopes}\label{subsec:env}
It is immediate from the definition of the model categories
$(\sSet^{+})_{\mathfrak{O}^{\txt{gen}}_{n}}$ and
$(\sSet^{+})_{\mathfrak{U}_{n}}$ that the identity is a left Quillen
functor $(\sSet^{+})_{\mathfrak{O}^{\txt{gen}}_{n}} \to
(\sSet^{+})_{\mathfrak{U}_{n}}$. On the level of \icats{}, this means
that the inclusion $\txt{Upl}_{\infty}^{n} \to \OpdIDng$ has a left
adjoint. In this subsection we observe that the arguments of
\rcite{HA}{\S 2.2.4} give an explicit description of this left
adjoint.

\begin{defn}
  Let $\txt{Act}(\Dnop)$ be the full subcategory of
  $\Fun(\Delta^{1}, \Dnop)$ spanned by the active morphisms. If
  $\mathcal{M}$ is a \gDniopd{}, we define $\txt{Env}_{n}(\mathcal{M})$ to
  be the fibre product
  \[ \mathcal{M} \times_{\Fun(\{0\}, \Dnop)}
  \txt{Act}(\Dnop).\]
\end{defn}

We will refer to $\txt{Env}_{n}(\mathcal{M})$ as the \emph{$\Dn$-uple
  envelope} of $\mathcal{M}$ --- this terminology is justified by the
next results:
\begin{propn}\label{propn:EnvDbl}
  The map $\txt{Env}_{n}(\mathcal{M}) \to \Dnop$ induced
  by evaluation at $1$ in $\Delta^{1}$ is a $\Dn$-uple \icat{}.
\end{propn}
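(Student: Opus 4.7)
The plan is to mimic Lurie's construction of the symmetric monoidal envelope of a symmetric \iopd{} in \rcite{HA}{Proposition 2.2.4.4}, adapted to the $\Dn$-setting. Two things must be verified: that the evaluation-at-$1$ map $p \colon \txt{Env}_{n}(\mathcal{M}) \to \Dnop$ is a coCartesian fibration, and that its fibres satisfy the Segal condition characterizing $\Dn$-uple \icats{}.

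First, I would construct coCartesian lifts directly using the active/inert factorization on $\Dnop$. Given an object $(M, \alpha \colon I \to J) \in \txt{Env}_{n}(\mathcal{M})$, with $\pi(M) = I$ and $\alpha$ active, and a morphism $\phi \colon J \to K$ in $\Dnop$, factor the composite $\phi \circ \alpha \colon I \to K$ as an inert morphism $\beta \colon I \to I'$ followed by an active morphism $\alpha' \colon I' \to K$. Since $\mathcal{M}$ is a \gDniopd{}, there is an inert coCartesian lift $M \to \beta_{!}M$ of $\beta$ in $\mathcal{M}$. The resulting morphism $(M, \alpha) \to (\beta_{!}M, \alpha')$ lies over $\phi$, and I would verify it is $p$-coCartesian by a standard mapping-space computation: for any $(N, \gamma \colon L \to P) \in \txt{Env}_{n}(\mathcal{M})$ and map $P \leftarrow K$, the fibre of mapping spaces decomposes using the universal property of the inert-active factorization on $\Dnop$ together with the coCartesianness of $M \to \beta_{!}M$ in $\mathcal{M}$. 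Innerness of $p$ follows since $\Dnop$ is an ordinary category, so this is enough.

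Next, I would verify the Segal condition. The fibre $\txt{Env}_{n}(\mathcal{M})_{I}$ is canonically $\mathcal{M} \times_{\Dnop} (\Dnop_{\txt{act}})_{/I}$. For $I = ([i_{1}], \ldots, [i_{n}])$, any active morphism $\alpha \colon K \to I$ in $\Dnop$ determines, for each cell $c \colon C \to I$, a canonical pulled-back active morphism $\alpha_{c} \colon K_{c} \to C$ by taking the inert-active factorization of $c$ precomposed with $\alpha$; this assignment is compatible with morphisms in $\CellnIop$. Passing to limits over $\CellnIop$ thus produces a comparison map
\[ \txt{Env}_{n}(\mathcal{M})_{I} \to \lim_{C \to I \in \CellnIop} \txt{Env}_{n}(\mathcal{M})_{C}, \]
and I would show this is an equivalence by combining two observations: the Segal condition for $\mathcal{M}$ itself (Definition~\ref{defn:gDniopd}(ii)) gives $\mathcal{M}_{K} \simeq \lim \mathcal{M}_{C}$ for each source $K$, and the corresponding statement for the indexing category $(\Dnop_{\txt{act}})_{/I}$ follows because active maps in $\Dnop$ are determined by their components under the cell decomposition of the target.

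The main obstacle I expect is the combinatorics of the second step: one must carefully check that the cellular decomposition is compatible on the ``active-slice'' side, i.e.\ that $(\Dnop_{\txt{act}})_{/I} \simeq \lim_{C \to I} (\Dnop_{\txt{act}})_{/C}$ as \icats{} (or more precisely, that the resulting comparison of \icats{} of pairs is an equivalence). This should however reduce by Corollary~\ref{cor:Ontimes} to the $n=1$ case, where it is a direct check on $\Delta^{\op}$ using that active maps preserve extremal elements; alternatively, one can observe that the Cartesian product decomposition $\Dn \cong \simp^{n}$ together with Proposition~\ref{propn:objwisecatpatt} lets one conclude the general case from the one-variable statement. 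Once this is granted, the Segal condition for $\txt{Env}_{n}(\mathcal{M})$ follows, completing the proof.
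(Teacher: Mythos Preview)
Your proposal is correct and is precisely the $\Dn$-analogue of Lurie's argument for \rcite{HA}{Proposition 2.2.4.4}, which is exactly what the paper invokes: its proof consists of the single line ``As \rcite{HA}{Proposition 2.2.4.4}.'' You have simply spelled out what that reference entails in the present setting.
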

\begin{proof}
  As \cite[Proposition 2.2.4.4]{HA}.
\end{proof}

\begin{propn}\label{propn:EnvAdj}
  Suppose $\mathcal{N}$ is a $\Dn$-uple \icat{} and $\mathcal{M}$ a
  \gDniopd{}. The inclusion $\mathcal{M} \to
  \txt{Env}_{n}(\mathcal{M})$ induces an equivalence
  \[ \Fun^{\otimes,n}(\txt{Env}_{n}(\mathcal{M}), \mathcal{N}) \to
  \Alg^{n}_{\mathcal{M}}(\mathcal{N}).\]
\end{propn}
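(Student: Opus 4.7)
The plan is to follow the pattern of \rcite{HA}{Proposition 2.2.4.9} for the symmetric monoidal envelope, adapted to the $\Dn$-uple setting; the geometry is exactly parallel, so only minor bookkeeping changes are needed. First I would observe that there is a canonical inclusion $\iota \colon \mathcal{M} \hookrightarrow \txt{Env}_{n}(\mathcal{M})$ sending $x \in \mathcal{M}_{I}$ to the pair $(x, \id_{I})$, and that this is a morphism of generalized $\Dn$-$\infty$-operads (it clearly preserves inert morphisms and lies over $\Dnop$). The restriction functor $\iota^{*} \colon \Fun^{\otimes,n}(\txt{Env}_{n}(\mathcal{M}), \mathcal{N}) \to \Alg^{n}_{\mathcal{M}}(\mathcal{N})$ is the map in question.

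I would then construct an explicit inverse $F \colon \Alg^{n}_{\mathcal{M}}(\mathcal{N}) \to \Fun^{\otimes,n}(\txt{Env}_{n}(\mathcal{M}), \mathcal{N})$. Given an algebra $f \colon \mathcal{M} \to \mathcal{N}$ and an object $(x, \alpha \colon I \to J) \in \txt{Env}_{n}(\mathcal{M})$ (where $\alpha$ is active), since $\mathcal{N} \to \Dnop$ is a coCartesian fibration over \emph{all} morphisms (it is $\Dn$-uple, not merely a \gDniopd{}), there is a coCartesian lift $f(x) \to \alpha_{!} f(x)$ of $\alpha$ in $\mathcal{N}$; set $F(f)(x,\alpha) := \alpha_{!}f(x)$. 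A morphism $(x, \alpha) \to (x', \alpha')$ in $\txt{Env}_{n}(\mathcal{M})$ is given by a morphism $\phi \colon x \to x'$ in $\mathcal{M}$ together with a commuting triangle of active maps; the image under $F(f)$ is determined by the universal property of the coCartesian lift. To make this genuinely functorial I would model it as a pullback of coCartesian lifts, exactly as in \rcite{HA}{Construction 2.2.4.3}: consider the inner fibration $\txt{Env}_{n}(\mathcal{M}) \to \mathcal{M} \times_{\Dnop} \txt{Act}(\Dnop)$ and use the pushforward functor on $\mathcal{N}$ associated with evaluation at $1$.

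The key verifications are then: (a) that $F(f)$ is a $\Dn$-uple functor, i.e.\ preserves \emph{all} coCartesian morphisms, not only the inert ones; (b) that $\iota^{*} F(f) \simeq f$ (which is immediate since $\alpha = \id$ gives $\alpha_{!}f(x) = f(x)$); and (c) that for any $\Dn$-uple functor $g \colon \txt{Env}_{n}(\mathcal{M}) \to \mathcal{N}$ the canonical comparison $F(\iota^{*} g) \to g$ is an equivalence. For (a), a morphism in $\txt{Env}_{n}(\mathcal{M})$ is coCartesian \IFF{} its underlying morphism in $\mathcal{M}$ is inert and the triangle of active maps has its vertical edge an identity; these are sent by $F(f)$ to coCartesian morphisms in $\mathcal{N}$ by construction (for the inert part, using that $f$ is an algebra, and for the active part, by the coCartesian pushforward). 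For (c), the composition of a coCartesian lift of an active morphism with the image of the inert identification $(x, \id_{I})$ reconstructs $g(x, \alpha)$ up to canonical equivalence, since $g$ itself preserves the active coCartesian edges exhibiting $(x,\alpha)$ as $\alpha_{!}(x, \id_{I})$ in $\txt{Env}_{n}(\mathcal{M})$ (cf.\ Proposition~\ref{propn:EnvDbl}).

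The main obstacle is the bookkeeping in assembling $F$ into an actual functor of \icats{} rather than a map on objects, together with the verification that its image lands in $\Fun^{\otimes,n}$. The cleanest route is probably to mimic \rcite{HA}{Remark 2.2.4.7 and Proposition 2.2.4.9} verbatim, replacing the role of $\bbGamma^{\op}$ by $\Dnop$ and the active/inert factorization system on $\bbGamma^{\op}$ by the one on $\Dnop$ (Definition~\ref{defn:activeinert}); since all statements used in Lurie's proof concern only the existence of the active/inert factorization and the characterization of coCartesian fibrations in terms of them, no additional content is required. This justifies omitting the detailed proof.
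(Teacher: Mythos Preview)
Your proposal is correct and takes exactly the same approach as the paper: the paper's entire proof reads ``As \rcite{HA}{Proposition 2.2.4.9}'', and your sketch simply unpacks what that reference does in the $\Dn$-setting, with the active/inert factorization on $\Dnop$ replacing that on $\bbGamma^{\op}$.
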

\begin{proof}
  As \cite[Proposition 2.2.4.9]{HA}.
\end{proof}

\begin{lemma}\label{lem:envprod}
  Suppose $\mathcal{O}$ is a \gDniopd{} and $\mathcal{P}$ is a
  generalized $\simp^{m}$-\iopd{}. There is a natural equivalence
  \[\txt{Env}_{n}(\mathcal{O}) \times \txt{Env}_{m}(\mathcal{P}) \simeq
  \txt{Env}_{n+m}(\mathcal{O} \times \mathcal{P}).\]
\end{lemma}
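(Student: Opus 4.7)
The plan is to unwind the definition of the $\Dn$-uple envelope and use that both the identification $\simp^{n+m} \cong \simp^{n} \times \simp^{m}$ and the notion of active morphism (Definition~\ref{defn:activeinert}) are compatible with taking products. The key combinatorial observation is that a morphism $(\phi_{1},\ldots,\phi_{n+m})$ in $\simp^{n+m}$ is active if and only if each component $\phi_{i}$ is active, which means that the splitting $\simp^{n+m} \cong \simp^{n} \times \simp^{m}$ restricts to an isomorphism $\simp^{n+m}_{\txt{act}} \cong \simp^{n}_{\txt{act}} \times \simp^{m}_{\txt{act}}$ of subcategories.

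First, I would argue that this product decomposition lifts to the arrow categories: applying $\Fun(\Delta^{1}, \blank)$ to $\simp^{n+m,\op} \cong \simp^{n,\op} \times \simp^{m,\op}$ and restricting to those arrows whose image is active in each factor produces a natural equivalence
\[ \txt{Act}(\simp^{n+m,\op}) \isoto \txt{Act}(\simp^{n,\op}) \times \txt{Act}(\simp^{m,\op}), \]
and moreover this equivalence commutes with the source projections to $\Fun(\{0\}, \simp^{n+m,\op}) \cong \Fun(\{0\}, \simp^{n,\op}) \times \Fun(\{0\}, \simp^{m,\op})$.

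Next, I would use the fact that fibre products of $\infty$-categories commute with Cartesian products to compute
\begin{align*}
\txt{Env}_{n+m}(\mathcal{O} \times \mathcal{P})
&= (\mathcal{O} \times \mathcal{P}) \times_{\Fun(\{0\}, \simp^{n+m,\op})} \txt{Act}(\simp^{n+m,\op}) \\
&\simeq (\mathcal{O} \times \mathcal{P}) \times_{\Fun(\{0\}, \simp^{n,\op}) \times \Fun(\{0\}, \simp^{m,\op})} \bigl(\txt{Act}(\simp^{n,\op}) \times \txt{Act}(\simp^{m,\op})\bigr) \\
&\simeq \bigl(\mathcal{O} \times_{\Fun(\{0\}, \simp^{n,\op})} \txt{Act}(\simp^{n,\op})\bigr) \times \bigl(\mathcal{P} \times_{\Fun(\{0\}, \simp^{m,\op})} \txt{Act}(\simp^{m,\op})\bigr) \\
&\simeq \txt{Env}_{n}(\mathcal{O}) \times \txt{Env}_{m}(\mathcal{P}),
\end{align*}
where in the middle step the structure maps $\mathcal{O} \to \Fun(\{0\}, \simp^{n,\op})$ and $\mathcal{P} \to \Fun(\{0\}, \simp^{m,\op})$ are used to recognize the pullback over the product base as the product of pullbacks.

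The argument is essentially formal, so I do not expect any serious obstacle; the only point worth verifying is naturality, which is automatic since each equivalence involved is induced by a canonical map of simplicial sets (the product decompositions above and the universal property of pullbacks) and therefore depends functorially on the pair $(\mathcal{O}, \mathcal{P})$ of generalized operads.
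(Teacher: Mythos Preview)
Your proof is correct and is exactly what the paper intends: the paper's own proof is the single line ``This is immediate from the definition,'' and you have simply unpacked that immediacy by using the componentwise characterization of active morphisms and the commutation of Cartesian products with fibre products.
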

\begin{proof}
  This is immediate from the definition.
\end{proof}

\subsection{The Internal Hom}
In this subsection we observe, following \rcite{BarwickOpCat}{\S 9},
that if $\mathcal{O}$ is a \gDniopd{} and $\mathcal{P}$ is a
generalized $\simp^{m+n}$-\iopd{} then the \icat{}
$\Algn_{\mathcal{O}}(\mathcal{P})$ has a natural generalized $\simp^{m}$-\iopd{}
structure. When $\mathcal{P}$ is a $\simp^{m+n}$-monoidal \icat{} we
will prove that this makes $\Algn_{\mathcal{O}}(\mathcal{P})$ a
$\simp^{m}$-monoidal \icat{}, and that this structure agrees with that
we described in \S\ref{subsec:algfun}.
\begin{defn}
  By Corollary~\ref{cor:CartProdQuillenIOpd}, the Cartesian product gives a left
  Quillen bifunctor
  \[ (\sSet^{+})_{\mathfrak{O}_{n}^{\txt{gen}}} \times
  (\sSet^{+})_{\mathfrak{O}_{m}^{\txt{gen}}} \to
  (\sSet^{+})_{\mathfrak{O}_{n+m}^{\txt{gen}}}.\]
  It therefore induces a right Quillen bifunctor
  \[ \ALG_{(\blank)}^{n,m}(\blank) \colon
  (\sSet^{+})_{\mathfrak{O}_{n}^{\txt{gen}}}^{\op} \times
  (\sSet^{+})_{\mathfrak{O}_{m+n}^{\txt{gen}}} \to
  (\sSet^{+})_{\mathfrak{O}_{m}^{\txt{gen}}}.\]
  Similarly, there is a right Quillen bifunctor
  \[ \txt{FUN}^{\otimes,n,m}_{(\blank)}(\blank) \colon 
  (\sSet^{+})_{\mathfrak{U}_{n}}^{\op} \times
  (\sSet^{+})_{\mathfrak{U}_{m+n}} \to
  (\sSet^{+})_{\mathfrak{U}_{m}},\]
  right adjoint to the Cartesian product.
\end{defn}

On the level of \icats{}, these right Quillen bifunctors induce
functors
  \[\ALG_{(\blank)}^{n,m}(\blank) \colon \OpdIDng \times
  \OpdI^{\simp^{n+m},\txt{gen}} \to \OpdI^{\simp^{m},\txt{gen}},\]
  \[\txt{FUN}_{(\blank)}^{\otimes,n,m}(\blank) \colon \txt{Upl}_{\infty}^{\Dn} \times
  \txt{Upl}_{\infty}^{\simp^{n+m}}  \to
  \txt{Upl}_{\infty}^{\simp^{m}},\]
with the universal property that there are natural equivalences of \icats{}
  \[  \Alg_{\mathcal{O}}^{m}(\ALG_{\mathcal{P}}^{n,m}(\mathcal{Q}))
  \simeq \Alg_{\mathcal{O} \times \mathcal{P}}^{n+m}(\mathcal{Q}),\]
where $\mathcal{O}$ is a generalized $\simp^{m}$-\iopd{},
$\mathcal{P}$ is a \gDniopd{}, and $\mathcal{Q}$ is a generalized
$\simp^{m+n}$-\iopd{}, and
\[ \Fun^{\otimes,m}(\mathcal{L}, \txt{FUN}^{\otimes,n,m}(\mathcal{M},
\mathcal{N})) \simeq \Fun^{\otimes,m+n}(\mathcal{L} \times
\mathcal{M}, \mathcal{N}),\] where $\mathcal{L}$ is a $\simp^{m}$-uple
\icat{}, $\mathcal{M}$ is a $\Dn$-uple \icat{}, and $\mathcal{N}$ is a
$\simp^{m+n}$-uple \icat{}.

\begin{lemma}\label{lem:ALGopdFUNmon}\ 
  \begin{enumerate}[(i)]
  \item If $\mathcal{O}$ is a $\simp^{n+m}$-\iopd{}, then
    $\ALG^{n,m}_{\mathcal{M}}(\mathcal{O})$ is a \dniopd{} for any
    \gdniopd{} $\mathcal{M}$.
  \item If $\mathcal{C}^{\otimes}$ is a $\simp^{n+m}$-monoidal
    \icat{}, then $\FUN^{\otimes,n,m}(\mathcal{M},
    \mathcal{C}^{\otimes})$ is a $\simp^{m}$-monoidal \icat{} for any
    $\Dn$-uple \icat{} $\mathcal{M}$.
  \end{enumerate}
\end{lemma}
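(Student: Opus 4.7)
The plan is to prove both statements by analyzing the fibres over $\simp^{m,\op}$. In both cases the construction is already known to produce a generalized $\simp^{m}$-\iopd{} (respectively a $\simp^{m}$-uple \icat{}), so the remaining content is to upgrade these to their non-generalized counterparts: the product Segal condition for (i), and, in addition, the coCartesian fibration property for (ii).

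For (i), the universal property of $\ALG^{n,m}_{\mathcal{M}}(\mathcal{O})$ identifies, for each $I \in \simp^{m,\op}$, the fibre
\[
\ALG^{n,m}_{\mathcal{M}}(\mathcal{O})_{I} \simeq \Algn_{\mathcal{M}}(\mathcal{O}_{I,\bullet}),
\]
where $\mathcal{O}_{I,\bullet}$ denotes the \gdniopd{} over $\simp^{n,\op}$ obtained by pulling $\mathcal{O}$ back along $\{I\} \times \simp^{n,\op} \hookrightarrow \simp^{m+n,\op}$. Since $\mathcal{O}$ is a $\simp^{m+n}$-\iopd{}, the Segal condition applied to the inert maps $(\rho_{k}, \id_{J}) \colon (I,J) \to (C_{m}, J)$, naturally in $J$, yields an equivalence
\[
\mathcal{O}_{I,\bullet} \isoto \underbrace{\mathcal{O}_{C_{m},\bullet} \times_{\simp^{n,\op}} \cdots \times_{\simp^{n,\op}} \mathcal{O}_{C_{m},\bullet}}_{|I| \text{ factors}}
\]
of \gdniopds{} over $\simp^{n,\op}$. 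Applying $\Algn_{\mathcal{M}}(\blank)$, which preserves products in $\OpdI^{\simp^{n},\txt{gen}}$, then produces the required Segal equivalence for $\ALG^{n,m}_{\mathcal{M}}(\mathcal{O})$.

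For (ii), the same fibrewise argument gives the Segal condition for $\FUN^{\otimes,n,m}(\mathcal{M}, \mathcal{C}^{\otimes})$. To upgrade this to a coCartesian fibration, I would use that each morphism $\phi \colon I \to J$ in $\simp^{m,\op}$ induces a $\simp^{n}$-monoidal pushforward $\phi_{*} \colon \mathcal{C}^{\otimes}_{I,\bullet} \to \mathcal{C}^{\otimes}_{J,\bullet}$: the coCartesian lifts of $(\phi, \id_{K}) \colon (I,K) \to (J,K)$ for varying $K$ assemble into a fibrewise functor, and the factorization $(\id_{J},\psi) \circ (\phi, \id_{K}) \simeq (\phi, \id_{L}) \circ (\id_{I}, \psi)$ in $\simp^{m+n,\op}$ (for $\psi \colon K \to L$), combined with uniqueness of coCartesian lifts, shows that $\phi_{*}$ preserves coCartesian morphisms over $\simp^{n,\op}$. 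Composition with $\phi_{*}$ then supplies, for every $F$ in the fibre over $I$, a candidate lift $F \to \phi_{*}F$ in $\FUN^{\otimes,n,m}(\mathcal{M}, \mathcal{C}^{\otimes})$ covering $\phi$.

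The main technical point will be verifying that this candidate lift is genuinely coCartesian with respect to the projection to $\simp^{m,\op}$. I would address this by exploiting the marked simplicial enrichment defining $\FUN^{\otimes,n,m}$ as a right Quillen bifunctor: a morphism in this internal hom is coCartesian over $\phi$ exactly when, evaluated at every object of $\mathcal{M}$, the induced edge in $\mathcal{C}^{\otimes}$ is coCartesian over a morphism of the form $(\phi, \id_K)$. Since our construction is built objectwise from precisely such coCartesian lifts in $\mathcal{C}^{\otimes}$, this criterion holds automatically, completing the proof.
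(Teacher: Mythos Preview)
Your argument for (i) is correct, but the paper's is considerably shorter. Rather than verifying the full product Segal condition, the paper observes that, since $\ALG^{n,m}_{\mathcal{M}}(\mathcal{O})$ is already a generalized $\simp^{m}$-\iopd{}, it suffices to show that its fibre over each non-maximal cell $C_{S}$ of $\simp^{m,\op}$ is contractible. Using the universal property with $\mathcal{P}=\{C_{S}\}$ one gets
\[
\ALG^{n,m}_{\mathcal{M}}(\mathcal{O})_{C_{S}} \simeq \Alg^{n+m}_{\{C_{S}\}\times\mathcal{M}}(\mathcal{O}),
\]
and this is contractible because $\{C_{S}\}\times\mathcal{M}$ lives entirely over objects of $\simp^{m+n,\op}$ with a $[0]$ in some coordinate, where the (non-generalized) \iopd{} $\mathcal{O}$ has contractible fibres. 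Your route computes all the fibres and matches them to a product, which amounts to the same thing but with more bookkeeping.

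For (ii), however, there is a genuine confusion. You correctly note at the outset that $\FUN^{\otimes,n,m}(\mathcal{M},\mathcal{C}^{\otimes})$ is already a $\simp^{m}$-uple \icat{}, but a $\simp^{m}$-uple \icat{} is by definition a coCartesian fibration over $\simp^{m,\op}$. So all of your work constructing the pushforward $\phi_{*}$ and verifying that it supplies coCartesian lifts is unnecessary: those lifts are already there. Upgrading from a $\simp^{m}$-uple \icat{} to a $\simp^{m}$-monoidal \icat{} requires exactly the same thing as upgrading a generalized \iopd{} to an \iopd{}, namely that the fibres over the non-maximal cells are contractible. The paper simply says the proof is ``similar'' to (i), and it is: the fibre over $C_{S}$ is $\Fun^{\otimes,n}(\mathcal{M},\mathcal{C}^{\otimes}_{C_{S},\bullet})$, and since $\mathcal{C}^{\otimes}$ is $\simp^{m+n}$-monoidal the target has contractible fibres whenever $C_{S}\neq C_{m}$.
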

\begin{proof}
  We will prove (i); the proof of (ii) is similar. Suppose $C_{S} \neq
  C_{n}$ is
  a cell of $\simp^{m,\op}$. Then we have
  \[ \ALG^{n,m}_{\mathcal{M}}(\mathcal{O})_{C_{S}} \simeq
  \Alg^{m}_{\{C_{S}\}}(\ALG^{n,m}_{\mathcal{M}}(\mathcal{O})) \simeq
  \Alg^{n+m}_{\{C_{S}\} \times \mathcal{M}}(\mathcal{O}),\] which is
  contractible if $\mathcal{O}$ is a $\simp^{m+n}$-\iopd{}.
\end{proof}

\begin{lemma}\label{lem:ALGFUNeq}
  Suppose $\mathcal{M}$ is a $\simp^{n+m}$-uple \icat{}. Then there is
  a natural equivalence
  \[ \ALG^{n,m}_{\mathcal{O}}(\mathcal{M}) \simeq
  \txt{FUN}^{\otimes,n,m}(\txt{Env}_{n}(\mathcal{O}), \mathcal{M}).\]
  for all \gdniopds{} $\mathcal{O}$. In particular,
  $\ALG^{n,m}_{\mathcal{O}}(\mathcal{M})$ is a $\simp^{m}$-uple
  \icat{}.
\end{lemma}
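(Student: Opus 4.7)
The plan is to deduce the equivalence from the universal properties satisfied by $\ALG^{n,m}$, $\FUN^{\otimes,n,m}$, and $\txt{Env}_n$, together with the multiplicativity of the envelope (Lemma~\ref{lem:envprod}). Specifically, I will show that both \gdniopds{} corepresent the same functor on $\OpdI^{\simp^m,\txt{gen}}$, so that the equivalence follows from the Yoneda lemma in the \icat{} of generalized $\simp^m$-\iopds{}.

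First, I would fix a generalized $\simp^m$-\iopd{} $\mathcal{L}$ and compute $\Alg^m_{\mathcal{L}}$ of both sides. For the left-hand side, the universal property of $\ALG^{n,m}$ immediately gives
\[ \Alg^m_{\mathcal{L}}(\ALG^{n,m}_{\mathcal{O}}(\mathcal{M})) \simeq \Alg^{n+m}_{\mathcal{L}\times\mathcal{O}}(\mathcal{M}). \]
For the right-hand side, note first that by Lemma~\ref{lem:ALGopdFUNmon}(ii) the object $\FUN^{\otimes,n,m}(\txt{Env}_n(\mathcal{O}),\mathcal{M})$ is a $\simp^m$-uple \icat{}; hence by Proposition~\ref{propn:EnvAdj} applied to $\mathcal{L}$ and this uple \icat{} we have
\[ \Alg^m_{\mathcal{L}}(\FUN^{\otimes,n,m}(\txt{Env}_n(\mathcal{O}),\mathcal{M})) \simeq \Fun^{\otimes,m}(\txt{Env}_m(\mathcal{L}),\FUN^{\otimes,n,m}(\txt{Env}_n(\mathcal{O}),\mathcal{M})). \]

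Next, I would chain together the universal property of $\FUN^{\otimes,n,m}$, Lemma~\ref{lem:envprod}, and Proposition~\ref{propn:EnvAdj} once more. By the universal property,
\[ \Fun^{\otimes,m}(\txt{Env}_m(\mathcal{L}), \FUN^{\otimes,n,m}(\txt{Env}_n(\mathcal{O}),\mathcal{M})) \simeq \Fun^{\otimes,n+m}(\txt{Env}_m(\mathcal{L}) \times \txt{Env}_n(\mathcal{O}), \mathcal{M}), \]
and Lemma~\ref{lem:envprod} identifies $\txt{Env}_m(\mathcal{L}) \times \txt{Env}_n(\mathcal{O})$ with $\txt{Env}_{n+m}(\mathcal{L}\times\mathcal{O})$. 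Proposition~\ref{propn:EnvAdj}, applied to the \gdniopd{} $\mathcal{L}\times\mathcal{O}$ (which is a generalized $\simp^{n+m}$-\iopd{}) and the $\simp^{n+m}$-uple \icat{} $\mathcal{M}$, then gives
\[ \Fun^{\otimes,n+m}(\txt{Env}_{n+m}(\mathcal{L}\times\mathcal{O}),\mathcal{M}) \simeq \Alg^{n+m}_{\mathcal{L}\times\mathcal{O}}(\mathcal{M}). \]
Composing these equivalences produces an equivalence
\[ \Alg^m_{\mathcal{L}}(\ALG^{n,m}_{\mathcal{O}}(\mathcal{M})) \simeq \Alg^m_{\mathcal{L}}(\FUN^{\otimes,n,m}(\txt{Env}_n(\mathcal{O}),\mathcal{M})) \]
that is natural in $\mathcal{L} \in \OpdI^{\simp^m,\txt{gen}}$.

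Finally, I would invoke the Yoneda lemma: since the mapping spaces in $\OpdI^{\simp^m,\txt{gen}}$ from $\mathcal{L}$ to a generalized $\simp^m$-\iopd{} $\mathcal{N}$ are computed by $\iota\Alg^m_{\mathcal{L}}(\mathcal{N})$, taking maximal subgroupoids and letting $\mathcal{L}$ vary produces the desired equivalence of \gdniopds{}
\[ \ALG^{n,m}_{\mathcal{O}}(\mathcal{M}) \simeq \FUN^{\otimes,n,m}(\txt{Env}_n(\mathcal{O}),\mathcal{M}). \]
The ``in particular'' statement is then immediate from Lemma~\ref{lem:ALGopdFUNmon}(ii), since the right-hand side is a $\simp^m$-uple \icat{}. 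The only genuine subtlety is verifying that all the intermediate equivalences assemble into a natural transformation of functors $\OpdI^{\simp^m,\txt{gen},\op} \to \CatI$; this is routine but somewhat tedious since it requires keeping track of the coherences coming from each of the right Quillen bifunctors, and is the one step that would require care.
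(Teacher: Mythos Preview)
Your proof is correct and follows essentially the same Yoneda argument as the paper, chaining together the universal properties of $\ALG^{n,m}$ and $\FUN^{\otimes,n,m}$, the envelope adjunction (Proposition~\ref{propn:EnvAdj}), and the multiplicativity of envelopes (Lemma~\ref{lem:envprod}). One small correction: you cite Lemma~\ref{lem:ALGopdFUNmon}(ii) for the fact that $\FUN^{\otimes,n,m}(\txt{Env}_n(\mathcal{O}),\mathcal{M})$ is a $\simp^m$-uple \icat{}, but that lemma concerns \emph{monoidal} targets; the fact you need is immediate from the definition of $\FUN^{\otimes,n,m}$ as a functor landing in $\txt{Upl}_{\infty}^{\simp^m}$.
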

\begin{proof}
  Using Lemma~\ref{lem:envprod}, we have natural equivalences
  \[
  \begin{split}
    \Map_{\OpdI^{\simp^{m},\txt{gen}}}(\mathcal{P}, \ALG^{n,m}_{\mathcal{O}}(\mathcal{M})) &  \simeq
  \Map_{\OpdI^{\simp^{m+n},\txt{gen}}}(\mathcal{P} \times \mathcal{O},
  \mathcal{M}) \\ & \simeq
  \Map_{\txt{Upl}_{\infty}^{\simp^{n+m}}}(\txt{Env}_{n+m}(\mathcal{P}
  \times \mathcal{O}), \mathcal{M}) \\
  & \simeq
  \Map_{\txt{Upl}_{\infty}^{\simp^{n+m}}}(\txt{Env}_{m}(\mathcal{P}) \times \txt{Env}_{n}(\mathcal{O}),
  \mathcal{M})\\ & \simeq
  \Map_{\txt{Upl}_{\infty}^{\simp^{m}}}(\txt{Env}_{m}(\mathcal{P}),
  \txt{FUN}^{\otimes,n,m}(\txt{Env}_{n}(\mathcal{O}), \mathcal{M})) \\
  & \simeq
  \Map_{\OpdI^{\simp^{m},\txt{gen}}}(\mathcal{P}, \txt{FUN}^{\otimes,n,m}(\txt{Env}_{n}(\mathcal{O}),
  \mathcal{M})).\qedhere
  \end{split}
 \]
\end{proof}

If $\mathcal{C}^{\otimes}$ is a $\simp^{n+m}$-monoidal \icat{},
combining Lemmas~\ref{lem:ALGopdFUNmon} and \ref{lem:ALGFUNeq} we see
that $\ALG^{n,m}_{\mathcal{O}}(\mathcal{C})$ is a $\simp^{m}$-monoidal
\icat{} for any \gDniopd{} $\mathcal{O}$; the underlying \icat{} of
this is $\Alg^{n}_{\mathcal{O}}(\mathcal{C})$. On the other
hand, we saw in Corollary~\ref{cor:AlgEnmon} that
$\Alg^{n}_{\mathcal{O}}(\mathcal{C})$ inherits an
$\mathbb{E}_{m}$-monoidal structure from the lax monoidal
functoriality of $\Alg^{n}_{\mathcal{O}}(\blank)$; let us denote the
resulting $\simp^{m}$-monoidal \icat{} by
$\Alg^{n,\otimes}_{\mathcal{O}}(\mathcal{C})$. We will now show that
these two $\mathbb{E}_{m}$-monoidal structures agree:
\begin{propn}
  Let $\mathcal{C}^{\otimes}$ be a $\simp^{n+m}$-monoidal \icat{},
  $\mathcal{O}$ a \gDniopd{} and $\mathcal{M}$ a $\simp^{m}$-uple
  \icat{}. Then we have a natural equivalence
  \[ \Map_{\txt{Upl}_{\infty}^{\simp^{m}}}(\mathcal{M},
  \Alg^{n,\otimes}_{\mathcal{O}}(\mathcal{C})) \simeq
  \Map_{\txt{Upl}_{\infty}^{\simp^{m+n}}}(\mathcal{M} \times \txt{Env}_{n}(\mathcal{O}), \mathcal{C}^{\otimes}).\]
\end{propn}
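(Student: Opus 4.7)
The plan is to reduce the RHS to a mapping space into $\ALG^{n,m}_{\mathcal{O}}(\mathcal{C}^{\otimes})$ and then identify the two $\simp^{m}$-monoidal structures on $\Alg^{n}_{\mathcal{O}}(\mathcal{C})$. By the $\simp^{m+n}$-uple adjunction defining $\FUN^{\otimes,n,m}$, composed with Lemma~\ref{lem:ALGFUNeq} applied to $\mathcal{C}^{\otimes}$ regarded as a $\simp^{m+n}$-uple \icat{}, the RHS is naturally equivalent to
\[ \Map_{\txt{Upl}_{\infty}^{\simp^{m}}}(\mathcal{M}, \FUN^{\otimes,n,m}(\txt{Env}_{n}(\mathcal{O}), \mathcal{C}^{\otimes})) \simeq \Map_{\txt{Upl}_{\infty}^{\simp^{m}}}(\mathcal{M}, \ALG^{n,m}_{\mathcal{O}}(\mathcal{C}^{\otimes})). \]
It therefore suffices, by the Yoneda lemma in $\txt{Upl}_{\infty}^{\simp^{m}}$, to exhibit a natural equivalence $\Alg^{n,\otimes}_{\mathcal{O}}(\mathcal{C}) \simeq \ALG^{n,m}_{\mathcal{O}}(\mathcal{C}^{\otimes})$ of $\simp^{m}$-monoidal \icats{}.

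Both sides have underlying \icat{} $\Alg^{n}_{\mathcal{O}}(\mathcal{C})$: on the left by construction of $\Alg^{n,\otimes}$ from the external tensor product $\Alg^{n,\boxtimes}$ pulled back along $\alpha$, and on the right because the Segal condition for a \gDniopd{} gives $\ALG^{n,m}_{\mathcal{O}}(\mathcal{C}^{\otimes})_{C_{m}} \simeq \Alg^{n+m}_{\{C_{m}\} \times \mathcal{O}}(\mathcal{C}^{\otimes}) \simeq \Alg^{n}_{\mathcal{O}}(\mathcal{C})$. To compare the $\simp^{m}$-monoidal structures, I would construct a comparison map of $\simp^{m}$-uple \icats{} and check it is a fibrewise equivalence over $\Dop^{m}$, using that both structures satisfy the Segal condition. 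The comparison map arises from the universal property: regarding $\mathcal{C}^{\otimes}$ as a $\simp^{m}$-algebra object in the \icat{} $\MonI^{\Dn}$ (via the equivalence $\MonI^{\simp^{m+n}} \simeq \Alg^{m}_{\simp^{m,\op}}(\MonI^{\Dn})$), the $\simp^{m}$-monoidal structure on $\Alg^{n,\otimes}_{\mathcal{O}}(\mathcal{C})$ arises by applying the lax monoidal functor $\Alg^{n}_{\mathcal{O}}(\blank) \colon \MonI^{\Dn} \to \CatI$ of Proposition~\ref{propn:alglaxmon}. So for each $I \in \simp^{m,\op}$, the fibre $\Alg^{n,\otimes}_{\mathcal{O}}(\mathcal{C})_{I}$ is the image under $\Alg^{n}_{\mathcal{O}}$ of the $\Dn$-monoidal \icat{} obtained by restricting $\mathcal{C}^{\otimes}$ to $\{I\} \times \Dnop \subseteq \Dop^{m+n}$.

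On the other hand, the fibre $\ALG^{n,m}_{\mathcal{O}}(\mathcal{C}^{\otimes})_{I}$ is $\Alg^{n+m}_{\{I\} \times \mathcal{O}}(\mathcal{C}^{\otimes})$, which by the Segal conditions for $\simp^{m}$-uple \icats{} applied to $\mathcal{M} \times \mathcal{O}$ decomposes as $\Alg^{n}_{\mathcal{O}}$ applied to the same restricted $\Dn$-monoidal \icat{} $\mathcal{C}^{\otimes}_{I, \bullet}$. Thus the two fibres are canonically identified; the map induced on fibres by any $\phi \colon I \to J$ in $\simp^{m}$ coincides on both sides with the map $\Alg^{n}_{\mathcal{O}}(\mathcal{C}^{\otimes}_{I,\bullet}) \to \Alg^{n}_{\mathcal{O}}(\mathcal{C}^{\otimes}_{J,\bullet})$ induced by the coCartesian pushforward $\phi_{!} \colon \mathcal{C}^{\otimes}_{I,\bullet} \to \mathcal{C}^{\otimes}_{J,\bullet}$ of $\Dn$-monoidal \icats{}. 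These fibrewise equivalences assemble into the desired $\simp^{m}$-uple equivalence because both structures are by construction coCartesian fibrations over $\Dop^{m}$ straightening to the same functor $\Dop^{m} \to \CatI$.

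The main obstacle will be producing the comparison as an honest map of $\simp^{m}$-uple \icats{} (not just a fibrewise equivalence), since the structure on $\Alg^{n,\otimes}_{\mathcal{O}}(\mathcal{C})$ is defined through the external product $\boxtimes$ pulled back along the diagonal $\alpha$, while the structure on $\ALG^{n,m}_{\mathcal{O}}(\mathcal{C}^{\otimes})$ comes directly from an internal hom. Unwinding the construction of $\Alg^{n,\otimes}$ in \S\ref{subsec:algfun}, a $\simp^{m}$-uple functor $\mathcal{M} \to \Alg^{n,\otimes}_{\mathcal{O}}(\mathcal{C})$ corresponds to the data of, for each inert $f \colon J \to I$ in $\simp^{m}$, a compatible family of $\mathcal{O}$-algebras in the Cartesian power $\mathcal{C}^{\otimes,\times|J|}$ together with structure maps encoding the $\simp^{m}$-monoidal tensor products; the same data on the other side arises as a $\simp^{m+n}$-uple functor $\mathcal{M} \times \mathcal{O} \to \mathcal{C}^{\otimes}$. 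Matching these requires tracing through the adjunction identities, which is most cleanly done by verifying that both constructions represent the same $\simp^{m}$-monoid object in $\CatI$ and invoking straightening.
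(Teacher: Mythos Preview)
Your first reduction is correct: by the defining adjunction of $\FUN^{\otimes,n,m}$ and Lemma~\ref{lem:ALGFUNeq}, the right-hand side is indeed $\Map_{\txt{Upl}_{\infty}^{\simp^{m}}}(\mathcal{M}, \ALG^{n,m}_{\mathcal{O}}(\mathcal{C}^{\otimes}))$. But from there you are trying to prove directly that $\Alg^{n,\otimes}_{\mathcal{O}}(\mathcal{C}) \simeq \ALG^{n,m}_{\mathcal{O}}(\mathcal{C}^{\otimes})$ as $\simp^{m}$-monoidal \icats{} --- and that equivalence is precisely Corollary~\ref{cor:inthomeq}, which the paper \emph{derives from} this proposition rather than the other way around. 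Your fibrewise identification is fine, but as you yourself say, the obstacle is assembling a genuine map of $\simp^{m}$-uple \icats{}; the last paragraph gestures at ``tracing through the adjunction identities'' and ``invoking straightening'' without actually doing so, leaving a real gap.

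The paper's proof shows how to close that gap cleanly, and in fact bypasses the need to build a comparison map at all. Straighten everything over $\simp^{m,\op}$: write $\mu \colon \simp^{m,\op} \to \CatI$ for the functor corresponding to $\mathcal{M}$ and $\gamma \colon \simp^{m,\op} \to \MonI^{\Dn}$ for the $\simp^{m}$-monoid corresponding to $\mathcal{C}^{\otimes}$. The crucial point is that, \emph{by construction}, $\Alg^{n,\otimes}_{\mathcal{O}}(\mathcal{C})$ is the $\simp^{m}$-monoidal \icat{} obtained by applying the lax monoidal functor $\Alg^{n}_{\mathcal{O}}(\blank)$ to $\gamma$ --- so its straightening is literally the composite $\Alg^{n}_{\mathcal{O}}\circ\gamma$. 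Then
\[
\Map_{\txt{Upl}_{\infty}^{\simp^{m}}}(\mathcal{M}, \Alg^{n,\otimes}_{\mathcal{O}}(\mathcal{C})) \simeq \Map_{\Fun(\simp^{m,\op},\CatI)}(\mu, \Alg^{n}_{\mathcal{O}}\circ\gamma),
\]
and one applies the envelope equivalence $\Alg^{n}_{\mathcal{O}}(\blank)\simeq\Fun^{\otimes,n}(\txt{Env}_{n}(\mathcal{O}),\blank)$ and the product--hom adjunction \emph{pointwise in $\simp^{m,\op}$} (i.e.\ inside $\Fun(\simp^{m,\op},\Cat_{\infty/\Dnop}^{\txt{cocart}})$) to reach $\Map_{\txt{Upl}_{\infty}^{\simp^{n+m}}}(\mathcal{M}\times\txt{Env}_{n}(\mathcal{O}),\mathcal{C}^{\otimes})$. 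Working in the functor category is what lets you avoid constructing any comparison map of coCartesian fibrations by hand.
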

\begin{proof}
  We may identify $\txt{Upl}_{\infty}^{\simp^{m}}$ with a full
  subcategory of the \icat{} of coCartesian fibrations over
  $\simp^{m,\op}$, which is equivalent to $\Fun(\simp^{m,\op},
  \CatI)$; under this equivalence $\mathcal{M}$ corresponds to a
  functor $\mu \colon \simp^{m,\op} \to \CatI$. If $\gamma \colon
  \simp^{m,\op} \to \MonI^{\simp^{n}}$ is the $\simp^{m}$-monoid
  corresponding to $\mathcal{C}^{\otimes}$, then we have a
  natural equivalence
  \[
  \begin{split}
    \Map_{\txt{Upl}_{\infty}^{\simp^{m}}}(\mathcal{M},
  \Alg^{n,\otimes}_{\mathcal{O}}(\mathcal{C})) &  \simeq
  \Map_{\Fun(\simp^{m,\op}, \CatI)}(\mu,
  \Alg^{n}_{\mathcal{O}}(\gamma)) \\
  & \simeq \Map_{\Fun(\simp^{m,\op}, \CatI)}(\mu,
  \Fun^{\otimes,n}(\txt{Env}_{n}(\mathcal{O}), \gamma))\\ & \simeq
  \Map_{\Fun(\simp^{m,\op}, \Cat_{\infty/\Dnop}^{\txt{cocart}})}(\mu
  \times \txt{Env}_{n}(\mathcal{O}), \gamma)\\ & \simeq
  \Map_{\txt{Upl}_{\infty}^{\simp^{n+m}}}(\mathcal{M} \times
  \txt{Env}_{n}(\mathcal{O}), \mathcal{C}^{\otimes}). \qedhere
  \end{split}\]
\end{proof}

Combining this with Lemma~\ref{lem:ALGFUNeq}, we get:
\begin{cor}\label{cor:inthomeq}
  Let $\mathcal{C}^{\otimes}$ be a $\simp^{n+m}$-monoidal \icat{} and
  $\mathcal{O}$ a \gDniopd{}. Then the $\mathbb{E}_{m}$-monoidal
  \icats{} $\ALG^{n,m}_{\mathcal{O}}(\mathcal{C})$ and
  $\Alg^{n,\otimes}_{\mathcal{O}}(\mathcal{C})$ are naturally equivalent.\qed
\end{cor}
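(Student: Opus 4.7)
The plan is to apply the Yoneda lemma in the $\infty$-category $\txt{Upl}_{\infty}^{\simp^{m}}$ (within which $\simp^{m}$-monoidal $\infty$-categories sit as a full subcategory), by showing that both $\ALG^{n,m}_{\mathcal{O}}(\mathcal{C})$ and $\Alg^{n,\otimes}_{\mathcal{O}}(\mathcal{C})$ corepresent the same functor, namely $\mathcal{M} \mapsto \Map_{\txt{Upl}_{\infty}^{\simp^{n+m}}}(\mathcal{M} \times \txt{Env}_{n}(\mathcal{O}), \mathcal{C}^{\otimes})$. Since this is to be an equivalence of $\mathbb{E}_{m}$-monoidal $\infty$-categories, it is enough to exhibit a natural equivalence of mapping spaces out of an arbitrary $\simp^{m}$-uple $\infty$-category $\mathcal{M}$.

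First, I would handle the side of $\ALG^{n,m}_{\mathcal{O}}(\mathcal{C})$: by Lemma~\ref{lem:ALGFUNeq} this $\simp^{m}$-uple $\infty$-category is equivalent to $\FUN^{\otimes,n,m}(\txt{Env}_{n}(\mathcal{O}), \mathcal{C}^{\otimes})$, and then the defining adjunction for $\FUN^{\otimes,n,m}$ (the internal hom in $\txt{Upl}_{\infty}^{\simp^{n+m}}$) yields
\[ \Map_{\txt{Upl}_{\infty}^{\simp^{m}}}(\mathcal{M}, \ALG^{n,m}_{\mathcal{O}}(\mathcal{C})) \simeq \Map_{\txt{Upl}_{\infty}^{\simp^{n+m}}}(\mathcal{M} \times \txt{Env}_{n}(\mathcal{O}), \mathcal{C}^{\otimes}).\]
On the other side, the proposition immediately preceding this corollary gives the same description for $\Alg^{n,\otimes}_{\mathcal{O}}(\mathcal{C})$. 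Comparing these two identifications produces a natural equivalence of representable functors on $\txt{Upl}_{\infty}^{\simp^{m}}$, and Yoneda then supplies the desired equivalence of the two $\simp^{m}$-monoidal $\infty$-categories.

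The only nontrivial point to verify is that the two identifications are genuinely natural in $\mathcal{M}$ and fit into the same functor, i.e.\ that the zig-zag of equivalences is compatible with the $\simp^{m}$-uple structure rather than merely producing an equivalence of underlying $\infty$-categories. This is traced through by checking that both constructions produce objects representing the same functor $\txt{Upl}_{\infty}^{\simp^{m},\op} \to \mathcal{S}$ obtained by postcomposing $\mathcal{M} \mapsto \mathcal{M} \times \txt{Env}_{n}(\mathcal{O})$ with $\Map(\blank, \mathcal{C}^{\otimes})$. I do not expect any serious obstacle here, since the preceding proposition was proved by an essentially identical chain of adjunctions (passing through $\Fun(\simp^{m,\op}, \CatI)$), and matching the two derivations is a direct bookkeeping exercise once one notes that under the inclusion of $\simp^{m}$-monoidal $\infty$-categories into $\simp^{m}$-uple $\infty$-categories both $\ALG^{n,m}_{\mathcal{O}}(\mathcal{C})$ (by Lemmas~\ref{lem:ALGopdFUNmon} and \ref{lem:ALGFUNeq}) and $\Alg^{n,\otimes}_{\mathcal{O}}(\mathcal{C})$ (by Corollary~\ref{cor:AlgEnmon}) are genuinely $\simp^{m}$-monoidal.
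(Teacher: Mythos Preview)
Your proposal is correct and follows exactly the approach the paper intends: the corollary is marked \qed{} because it is the immediate combination of the preceding proposition (identifying the representable functor of $\Alg^{n,\otimes}_{\mathcal{O}}(\mathcal{C})$) with Lemma~\ref{lem:ALGFUNeq} and the universal property of $\FUN^{\otimes,n,m}$ (doing the same for $\ALG^{n,m}_{\mathcal{O}}(\mathcal{C})$), followed by Yoneda in $\txt{Upl}_{\infty}^{\simp^{m}}$. Your remark about naturality is the only thing one might want to spell out, and you handle it appropriately.
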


\bibliographystyle{gtart}

\end{document}